\theoremstyle{plain}
\newtheorem{lem}{Lemma}[subsection]
\newtheorem{prop}[lem]{Proposition}
\newtheorem{thm}[lem]{Theorem}
\theoremstyle{definition}
\newtheorem{const}{Construction}[subsection]
\newtheorem{defi}[const]{Definition}
\theoremstyle{remark}
\newtheorem*{remark}{Remark}
\newtheorem{example}{Example}[subsection]
\newtheorem{nota}{Notation}[section]\newcommand{\trop}{\textnormal{trop}}
\numberwithin{equation}{section}
\newcommand{\legval}{\operatorname{legval}}
\newcommand{\HS}{\operatorname{HS}}
\newcommand{\VS}{\operatorname{VS}}
\newcommand{\bbC}{\mathbb{C}}
\newcommand{\bbG}{\mathbb{G}}
\newcommand{\bbP}{\mathbb{P}}
\newcommand{\bbQ}{\mathbb{Q}}
\newcommand{\bbR}{\mathbb{R}}
\newcommand{\bbZ}{\mathbb{Z}}
\newcommand{\cM}{\mathcal{ M}}
\newcommand{\cC}{\mathcal{ C}}
\newcommand{\cX}{\mathcal{ X}}
\newcommand{\cY}{\mathcal{ Y}}
\newcommand{\cZ}{\mathcal{ Z}}
\newcommand{\op}{\textnormal{op}}
\newcommand{\Hom}{\operatorname{Hom}}
\newcommand{\Top}{\operatorname{Top}}
\newcommand{\Lin}{\operatorname{Lin}}
\newcommand{\Id}{\operatorname{Id}}
\newcommand{\abs}[1]{\lvert #1 \rvert}
\newcommand{\colim}{\operatorname{colim}}
\newcommand{\Star}{\operatorname{Star}}
\newcommand{\val}{\operatorname{val}}
\newcommand{\Mtrop}{\operatorname{M}^{\trop}}
\renewcommand{\Im}{\operatorname{Im}}
\newcommand{\ord}{\operatorname{ord}}
\newcommand{\Aut}{\operatorname{Aut}}
\newcommand{\POIC}{\operatorname{POIC}}
\newcommand{\POICCmplxs}{\operatorname{pCmplxs}}
\newcommand{\POICSpcs}{\operatorname{pSpcs}}
\newcommand{\dist}{\operatorname{dist}}
\newcommand{\ST}{\operatorname{ST}}
\newcommand{\st}{\operatorname{st}}
\newcommand{\est}{\operatorname{est}}
\newcommand{\ram}{\operatorname{Ram}}
\newcommand{\bbAC}{\mathbb{A}\mathbb{C}}
\newcommand{\bbEAC}{\mathbb{E}\mathbb{A}\mathbb{C}}
\newcommand{\src}{\textnormal{src}}
\newcommand{\trgt}{\textnormal{trgt}}
\newcommand{\virt}{\operatorname{virt}}
\newcommand{\rel}{\operatorname{rel}}
\newcommand{\Eq}{\operatorname{Eq}}
\newcommand{\Obj}{\operatorname{Obj}}
\newcommand{\ft}{\operatorname{ft}}
\newcommand{\cAC}{\mathcal{A}\mathcal{C}}
\newcommand{\tint}{\textnormal{int}}
\newcommand{\pspcs}{\textnormal{s}}
\newcommand{\pcmplxs}{\textnormal{c}}
\newcommand{\AC}{\mathrm{AC}}
\newcommand{\EST}{\mathrm{EST}}
\newcommand{\Subd}{\operatorname{Subd}}
\newcommand{\mt}{\textnormal{mult}}
\title{Tropical cycles of discrete admissible covers.}
\author{Diego A. Robayo Bargans}
\address{FB Mathematik, RPTU Kaiserslautern, 67663 Kaiserslautern, Germany}
\email{robayo@mathematik.uni-kl.de}
\begin{document}
\begin{abstract}
    This article applies the technical framework developed in previous work by the author to discrete admissible covers and their moduli spaces. More precisely, we construct a poic-space that parameterizes the discrete admissible covers after fixing the genus of the target, the number of marked legs, and prescribing the ramification profiles above these marked legs. We then construct a linear poic-fibration over this poic-space and show that the usual weight assignment of covers produces an equivariant cycle of this poic-fibration. This poic-fibration comes naturally with source and target maps, and after taking the weak pushforward in top dimension through the source map and subsequently forgetting the marking, this gives rise to an equivariant tropical cycle of the corresponding spanning tree fibration. Through this framework we obtain a generalization of previously known results on these cycles pertaining to the gonality of tropical curves. Special cases of these tropical cycles yield the following result: \emph{A generic genus-$g$ $r$-marked tropical curve $\Gamma$ (with $g+r$ even) has $C_{\frac{g+r}{2}}$ many $r$-marked discrete admissible covers of degree $\frac{g+r}{2}+1$ of a tree (counted with multiplicity) that have a tropical modification of $\Gamma$ as a source, where $C_{\frac{g+r}{2}}$ is the $(\frac{g+r}{2})$th Catalan number}.
\end{abstract}

\keywords{discrete admissible covers, tropical curves, tropical cycles, tropical moduli spaces.}

\subjclass{14T15, 14T20. }

\maketitle

\section{Introduction}
The origin of this project lies in the divisor theory of graphs and tropical curves (\cite{BakerNorine}), more precisely in the gonality of tropical curves and the results of \cite{VargasDraisma} and \cite{VargasThesis}. It is shown in \cite{VargasDraisma} that any genus-$g$ tropical curve has a tropical modification that is the source of a (full-rank) DT-morphism of degree $(\lceil\frac{g}{2}\rceil+1)$ to a tree. The methods therefrom follow a detailed analysis and classification of (certain) DT-morphisms of degree $\lceil\frac{g}{2}\rceil+1$ from genus-$g$ connected tropical curves to trees. By means of the technical framework developed in \cite{DARB}, we manage to recover the aforementioned result by showing that the corresponding loci of tropical curves are top dimensional equivariant tropical cycles of the corresponding spanning tree fibrations. More generally, we show that if we fix the genus $h$ and marking $m$ of the target, the degree $d$ of the covers (DT-morphisms), and the ramification profile above the $m$ marked legs, then the locus of genus-$g$ $n$-marked tropical curves (here $g$ and $n$ are determined by the previous data) having a tropical modification that is the source of a degree-$d$ full-rank DT-morphism of a genus-$h$ $m$-marked tropical curve with the prescribed ramification above the marked legs is an equivariant tropical cycle of the corresponding spanning tree fibration. In addition, the irreducibility of this linear poic-fibration implies an enumerative result generalizing that of \cite{VargasThesis}. Namely, we show that generic genus-$g$ $r$-marked tropical curve $\Gamma$ (with $g+r$ even) has $C_{\frac{g+r}{2}}$ many $r$-marked discrete admissible covers of degree $\frac{g+r}{2}+1$ of a tree (counted with multiplicity) that have a tropical modification of $\Gamma$ as a source, where $C_{\frac{g+r}{2}}$ is the $(\frac{g+r}{2})$th Catalan number

In what follows, a tropical curve simply means a discrete graph (it may have legs) with a given metric. It is well known that general harmonic morphisms to rational tropical curves provide a method of obtaining rank $\geq1$ divisors on a given curve. Special types of harmonic morphisms are tropical covers or (discrete) admissible covers, where there is more control on the Riemann-Hurwitz numbers and the situation mimics its algebraic counterpart. We seek to study the loci of curves that admit a discrete admissible cover from (a tropical modification of) themselves onto a tree, and more generally the moduli space of discrete admissible covers. We introduce marked legs to control the tropical modifications and phrase our enterprise in terms of the framework developed in \cite{DARB}. We are interested in a description of a moduli space of discrete admissible covers that equally permits the introduction and study of tropical cycles therein. 

More precisely, for these moduli spaces of discrete admissible covers we consider integers $h,d\geq0$ and a vector $\vec{\mu}=(\mu_i)_{i=1}^m$ of integer partitions of $d$ with $2h+m-2>0$, set $n = \sum_{i=1}^m\ell(\mu_i)$ and let $g$ be defined by the Riemann-Hurwitz formula (this directly implies that $2g+n-2>0$). With these data we construct a poic-space $\AC_{d,h,\vec{\mu}}$ parameterizing degree-$d$ discrete admissible covers of a genus-$h$ $m$-marked curve with ramification above the marked ends prescribed by $\vec{\mu}$. This poic-space comes with two natural source and target morphisms of poic-spaces $\src\colon \AC_{d,h,\vec{\mu}}\to \Mtrop_{g,n}$ and $\trgt\colon \AC_{d,h,\vec{\mu}}\to \Mtrop_{h,m}$.

By means of these maps and the linear poic-fibrations $\st_{g,n}$ and $\st_{h,m}$, we construct a linear poic-fibration $\est_{d,h,\vec{\mu}}\colon \EST_{d,h,\vec{\mu}}\to \AC_{d,h,\vec{\mu}}$
that also comes with the corresponding source and target morphisms $\mathfrak{src}\colon \est_{d,h,\vec{\mu}}\to \st_{g,n}$ and $\mathfrak{trgt}\colon \est_{d,h,\vec{\mu}}\to \st_{h,m}$. Motivated by the usual weight assignment (\cite{CavalieriMarkwigRanganathan}), we produce an $\est_{d,h,\vec{\mu}}$-equivariant Minkowski weight whose weak pushforward in top dimension through the morphisms $\mathfrak{src}$ and forgetting the marking produces an $\st_{g,n}$-equivariant cycle whose support corresponds to the sought after loci of curves. The balancing condition on this weight is reduced to several ``local'' balancing conditions, which are obtained by means of J. Li's degeneration formula. It is worth noting that these balancing conditions can also be obtained through the log geometric version of this degeneration formula as developed in \cite{KimLhoRuddat}. In specific cases, we subsequently use the balancing of this Minkowski weight to produce a top dimensional equivariant tropical cycle of the corresponding spanning tree fibration. These tropical cycles yield the following enumerative result: A generic genus-$g$ $r$-marked tropical curve $\Gamma$ (with $g+r$ even) has $C_{\frac{g+r}{2}}$ many $r$-marked discrete admissible covers of degree $\frac{g+r}{2}+1$ of a tree (counted with multiplicity) that have a tropical modification of $\Gamma$ as a source, where $C_{\frac{g+r}{2}}$ is the $(\frac{g+r}{2})$th Catalan number.

The structure of the paper is as follows: In section 2 we gather some preliminaries from \cite{DARB}, these refer to discrete graphs and partially open polyhedral cones, constructions and results on poic-complexes, -spaces, and -fibrations. We point the reader toward loc. cit. for examples, details, and proofs, as we have spared these from this article due to space constraints. In section 3 we follow the spirit of the construction of both $\Mtrop_{g,n}$ and $\st_{g,n}$ (for integers $g,n\geq0$ with $2g+n-2>0$), and introduce the relevant categories of discrete admissible covers, together with the respective poic-space and linear poic-fibration. More precisely, we introduce the poic-space $\AC_{d,h,\vec{\mu}}$ of degree-$d$ discrete admissible covers of genus-$h$ $m$-marked graphs with ramification $\vec{\mu}$ and the extended spanning tree fibration $\est_{d,h,\vec{\mu}}$ of degree-$d$ discrete admissible covers of genus-$h$ $m$-marked graphs with ramification $\vec{\mu}$, where $h,m\geq0$ are integers and $\vec{\mu}$ is a length-$m$ vector of partitions of $d$. In section 4 we describe the standard weight on $\est_{d,h,\vec{\mu}}$ and show that it is an $\est_{d,h,\vec{\mu}}$-equivariant Minkowski weight, which we then use to obtain the previously mentioned enumerative statement. We close by relating our results with those of \cite{VargasDraisma} and \cite{VargasThesis}.

\subsection*{Acknowledgments:} I would like to thank A. Gathmann for the multiple helpful and inspiring discussions that made this project possible and brought it into existence, as well as for proofreading, commentary, and patience. Interactions made possible by the SFB-TRR 195 of the DFG have also benefited this project. I also thank A. Vargas for helpful conversations, discussions, and proofreading of preliminary versions. This project has been carried out and realized through financial support of the DAAD.

\section{Preliminaries}
In this section we recall several definitions, notational conventions, and constructions from \cite{DARB}. More precisely, we quickly cover most of the significant constructions of loc. cit. and point the reader thereto for examples, proofs, and much more detail. 

\subsection{Discrete graphs.} In the spirit of clarity, we use this section to fix notation and state precise definitions for everything concerning discrete graphs that will be significant to our work. We follow the definition of a graph with legs of \cite{LenUlirschZakharovATC}.
\begin{defi}
    A \emph{discrete graph} $G$ consists of the data $(F(G),r_G,\iota_G)$ where:
    \begin{itemize}
        \item $F(G)$ is a finite set.
        \item $r_G:F(G)\to F(G)$ is a map of sets (the root map).
        \item $\iota_G:F(G)\to F(G)$ is an involution, such that $\iota_G\circ r_G=r_G$.
    \end{itemize}
The \emph{set of vertices} of $G$ is the subset $V(G)$ of $F(G)$ given by $\Im(r_G)$, and its complement is denoted by $H(G)$.
\end{defi}

Suppose $G$ is a discrete graph. The involution $\iota_G$ of $F(G)$ restricts to an involution of $H(G)$, and hence partitions $H(G)$ into orbits of size $1$ and $2$. The following definitions will be used:
\begin{itemize}
    \item A \emph{leg} of $G$ is a size $1$ orbit of $H(G)$. The set of legs will be denoted by $L(G)$, and the \emph{boundary of a leg} $\ell\in L(G)$ is the subset $\partial e :=r_G(\ell) \subset V(G)$.
    \item An \emph{edge} of $G$ is a size $2$ orbit of $H(G)$. The set of edges will be denoted by $E(G)$, and the \emph{boundary of an edge} $e\in E(G)$ is the subset $\partial e:= r_G(e)\subset V(G)$.
    \item A vertex is \emph{incident} to an edge or a leg if it belongs to its boundary.
    \item The \emph{valency} of $V\in V(G)$ is simply $\val(V) := \# r_G^{-1}(V)-1$. 
    \item A \emph{subgraph} $K$ of $G$ is a graph $(F(K),r_K,\iota_K)$ such that $F(K)\subset F(G)$ with $r_K=r_G|_{F(K)}$ and $\iota_K=\iota_G|_{F(K)}$.
    \item  The graph $G$ is \emph{connected}, if for any two vertices $V,W\in V(G)$ there exists a sequence $e_1,\dots,e_n\in E(G)$ such that:
        \begin{enumerate}
            \item $V\in \partial e_1$ and $W\in \partial e_n$.
            \item For any $1\leq i\leq n-1$, $\partial e_i\cap \partial e_{i+1} \neq \varnothing$.
        \end{enumerate}
    \item A \emph{connected component} of $G$ is a maximal connected subgraph. Any graph decomposes as the disjoint union of its components.
    \item The \emph{genus} of a connected graph $G$ is the number $g(G):=\#E(G)-\#V(G)+1$.
    \item A \emph{tree} is a connected graph of genus-$0$. A \emph{forest} is a graph whose connected components are trees. A tree is \emph{trivial}, if it consists of just a vertex. A forest is \emph{trivial}, if it consists of trivial trees.
\end{itemize}

We kindly refer the reader to \cite{DARB} for examples and depictions of the previous definitions. We now turn our attention towards maps of graphs.

\begin{defi}\label{defi: maps of graphs}
 Suppose $G_1$ and $G_2$ are discrete graphs. A \emph{map of graphs} $f\colon G_1\to G_2$ is a map of sets $f\colon F(G_1)\to F(G_2)$ that commutes with the root maps (i. e. $r_{G_2}\circ f = f\circ r_{G_1}$) and the involution maps (i. e. $\iota_{G_2}\circ f = f\circ \iota_{G_1}$). If a map of graphs is a bijection, then we call it a bijective map of graphs.
\end{defi}

It follows directly from the definition that a map of graphs cannot map a leg to an edge or vice versa. However, it is possible that a map of graphs contracts edges or legs into vertices.

\begin{const}\label{const: edge contraction}\label{lem: edge contraction}
    Suppose $G$ is a discrete graph and $e\in E(G)$. Let $F(G/e)$ denote the set obtained from $F(G)$ by identifying the subset $e\cup\partial e\subset F(G)$ into a single element $V_e$. This identification gives rise to a natural surjective map $p_e:F(G)\to F(G/e)$. Furthermore, the following maps are well-defined:
    \begin{align*}
        &r_{G/e}: F(G/e)\to F(G/e),& H\mapsto \begin{cases}
            p_e(r_G(H)), &\textnormal{if }H\neq V_e, \\
            r_{G/e}(V_e) = V_e,&\textnormal{if } H=V_e,\\
        \end{cases}\\
        &\iota_{G/e}:F(G/e)\to F(G/e),&H \mapsto \begin{cases}
            p_e(\iota_G(H)), &\textnormal{if }H\neq V_e,\\
            \iota_{G/e}(V_e) = V_e,&\textnormal{if }H=V_e.\\
        \end{cases}\\
    \end{align*}
    A direct computation shows that the maps $r_{G/e}$ and $\iota_{G/e}$ satisfy $\iota_{G/e}\circ r_{G/e} = r_{G/e}$. Therefore the triple $(F(G/e),r_{G/e},\iota_{G/e})$ is a discrete graph, which we call the \emph{contraction of $e$ in $G$} and denote by $G/e$. We remark that if $e_1,e_2\in E(G)$, then $(G/e_1)/e_2$ is naturally bijective to $(G/e_2)/e_1$, and the distinction between them only comes from the newly introduced vertices in the last step of the successive contractions.
\end{const}

\begin{defi}\label{defi: contraction of legless subgraph}
    Let $K$ be a legless subgraph of $G$, with $K=K_1\sqcup \dots \sqcup K_m$ its connected components. The \emph{contraction of $K$ in $G$} is the graph $G/K$ obtained by the succesive contractions of all the edges of $K_1, \dots ,K_m$, where the new vertex obtained from the contraction of all the edges of $K_i$ is denoted by $V_{K_i}$.
\end{defi}

Observe that if $K$ is a subgraph of $G$ then the contraction of $K$ in $G$ is a new graph $G/K$ whose set of vertices consists of $V(G)\backslash V(K)$ and one additional vertex from each component of $K$. If $G$ is connected, then for arbitrary $K$ $G/K$ is connected and $g(G/K) = g(G)-\sum_{i=1}^mg(K_i)$ where the $K_i$ are the connected components of $K$.

\begin{lem}\label{lem: map of graphs is contraction}
    Suppose $f\colon G_1\to G_2$ is a map of graphs such that:
    \begin{enumerate}
        \item For every $V\in V(G_2)$, the inverse image $f^{-1}(V)\subset F(G_1)$ is a legless tree.
        \item For every $e\in E(G_2)\cup L(G_2)$, the inverse image under $f$ consists of a single edge or leg.
    \end{enumerate}
    Then there is a natural bijective map $G_1/f^{-1}\left(V(G_2)\right)\to G_2$.
\end{lem}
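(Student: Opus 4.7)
The plan is to set $K := f^{-1}(V(G_2)) \subset F(G_1)$ and first verify that $K$ is a legless subgraph of $G_1$ whose connected components are precisely the legless trees $f^{-1}(V)$ for $V \in V(G_2)$. Closure of $K$ under $r_{G_1}$ and $\iota_{G_1}$ follows from the compatibilities $r_{G_2}\circ f = f\circ r_{G_1}$ and $\iota_{G_2}\circ f = f\circ \iota_{G_1}$ of Definition~\ref{defi: maps of graphs}, together with the facts that $r_{G_2}$ fixes $V(G_2)$ and that $\iota_{G_2}$ fixes $V(G_2)$ (the latter being a consequence of $\iota_{G_2}\circ r_{G_2}=r_{G_2}$). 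Hence Definition~\ref{defi: contraction of legless subgraph} applies and $G_1/K$ is defined.

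Next I would identify the flag set $F(G_1/K)$. Since $f$ carries vertices to vertices, every vertex of $G_1$ belongs to some $f^{-1}(V)$, so $V(K) = V(G_1)$. By iterated application of Construction~\ref{const: edge contraction}, the flag set $F(G_1/K)$ is the disjoint union of $F(G_1)\setminus F(K)$ and one new vertex $V_{f^{-1}(V)}$ for each component $f^{-1}(V)$ of $K$. I would then define
\[
\phi\colon F(G_1/K)\to F(G_2),\qquad \phi(V_{f^{-1}(V)}):=V,\qquad \phi(h):=f(h)\ \text{for } h\in F(G_1)\setminus F(K).
\]

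The remaining work is to verify that $\phi$ is a bijective map of graphs. Well-definedness uses that $h\notin F(K) = f^{-1}(V(G_2))$ forces $f(h)\in H(G_2)$. Bijectivity on the vertex part is tautological, while bijectivity $F(G_1)\setminus F(K)\to H(G_2)$ is exactly hypothesis~(2): for each $h\in H(G_2)$, the edge or leg of $G_2$ containing $h$ pulls back to a unique edge or leg of $G_1$, which is necessarily disjoint from $K$. Compatibility with the root map is the only subtle check: for $h\in F(G_1)\setminus F(K)$ one has $r_{G_1}(h)\in V(G_1)\subset F(K)$, so $r_{G_1/K}(h)$ is the new vertex $V_{f^{-1}(V)}$ with $V = f(r_{G_1}(h))$, whence $\phi(r_{G_1/K}(h)) = V = r_{G_2}(f(h)) = r_{G_2}(\phi(h))$. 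Compatibility with $\iota$ is analogous.

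The main obstacle is essentially bookkeeping: one must handle the successive-contraction construction of Definition~\ref{defi: contraction of legless subgraph} carefully enough to see that the flag set of $G_1/K$ has the form claimed above, and to confirm that the identifications induced by edge contractions within each component $f^{-1}(V)$ precisely match the fibers $f^{-1}(V)\subset F(G_1)$. Once that identification is in place, hypotheses~(1) and~(2) reduce the remaining checks to routine verifications on the root and involution maps.
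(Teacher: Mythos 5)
Your argument is correct and complete: the verification that $K=f^{-1}(V(G_2))$ is a legless subgraph with components the fibers $f^{-1}(V)$, the identification of $F(G_1/K)$ with $\bigl(F(G_1)\setminus F(K)\bigr)\sqcup\{V_{f^{-1}(V)}\}_{V\in V(G_2)}$, and the check that $\phi$ is a bijective map of graphs (with hypothesis~(2) supplying bijectivity on $H(G_2)$ and the relation $\iota_{G_2}\circ r_{G_2}=r_{G_2}$ supplying that $\iota_{G_2}$ fixes vertices) are all sound. The paper itself gives no proof here --- it only refers to Lemma~2.2.5 of \cite{DARB} --- so there is nothing to compare against in this document, but your proof is the natural one and fills that gap correctly.
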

\begin{proof}
    The reader is kindly referred to Lemma 2.2.5 of \cite{DARB}
\end{proof}

\subsection{Partially open integral polyhedral cones.} Let $N$ be a free abelian group of finite rank and consider the vector space $N_\bbR:=N\otimes_\bbZ\bbR$. The dual of $N$ is the free abelian group (of finite rank) $N^\vee = \Hom_\bbZ(N,\bbZ)$, and there is a natural morphism $N^\vee\to N_\bbR^\vee$ that gives rise to an isomorphism $N^\vee \otimes_\bbZ\bbR\cong N_\bbR^\vee$. Therefore, an element $f\in N^\vee$ gives rise to a closed and an open half-space of $N_\bbR$ respectively:
\begin{align*}
    H_f &\colon = \{ x\in N_\bbR \colon f(x)\geq 0\}, &H_f^o &\colon = \{ x\in N_\bbR\colon  f(x)>0\}.
\end{align*} 

\begin{defi}\label{defi: poic of Nr}
A \emph{partially open integral (polyhedral) cone of $N_\bbR$} is a non-empty subset $\sigma\subset N_\bbR$ given as a finite intersection of open and closed half-spaces. That is $\sigma = \bigcap_{i=0}^nH_{f_i} \cap \bigcap_{j=0}^m H_{g_j}^o,$
where $f_i\in N^\vee$ and $g_j\in N^\vee$ for $0\leq i\leq n$ and $0\leq j\leq m$.
\end{defi}

\begin{nota}
If $\sigma$ is a partially open integral cone of $N_\bbR$, then the relative interior of $\sigma$ is also a partially open integral cone of $N_\bbR$ which we denote by $\sigma^o$.
\end{nota}

\begin{defi}
Suppose $\sigma$ is a partially open integral cone of $N_\bbR$. A \emph{face} $\tau$ of $\sigma$ is a partially open integral cone given by $\tau = \sigma\cap H_f$, where $f\in N^\vee$ is such that $\sigma\subset H_{-f}$. We denote this situation by $\tau\leq \sigma$. If $\tau\leq \sigma$ and $\tau\neq \sigma$, then $\tau$ is called a \emph{proper face} and we denote this situation by $\tau\lneq\sigma$.
\end{defi}

\begin{nota}
     For a partially open integral cone (of $N_\bbR$) $\sigma$, we will denote by $\Lin(\sigma)$ the linear subspace generated by $\sigma$ and its dimension by $\dim(\sigma)$ (namely $\dim(\sigma) = \dim \Lin(\sigma)$).
\end{nota}

\begin{defi}\label{defi: poic}
A \emph{partially open integral (polyhedral) cone} is a pair $(\sigma, N)$ where 
\begin{itemize}
    \item $N$ is a free abelian group of finite rank, 
    \item $\sigma$ is a partially open integral cone of $N_\bbR$ with $\dim(\sigma)=\dim(N_\bbR)$.
\end{itemize}
A \emph{face} of a partially open integral cone $(\sigma,N)$ is $(\tau,\Lin_N(\tau))$ where $\tau$ is a face of $\sigma$ in $N_\bbR$ and $\Lin_N(\tau) = N\cap \Lin(\tau)$.
\end{defi}

\begin{remark}
    If $(\sigma,N)$ is a partially open integral cone, then $(\sigma^o,N)$ is also a partially open integral cone.
\end{remark}

Hereafter, we will refer to a partially open integral cone simply as a poic. We remark that Definition \ref{defi: poic of Nr} is different from Definition \ref{defi: poic}, as the latter consists of a pair, whereas the former merely refers to a subset of an specified vector space. 

\begin{nota}
If the pair $(\sigma,N)$ is a poic, then $N$ will be denoted by $N^\sigma$, and we will denote the pair just by $\sigma$. Suppose that $\xi$ is an additional poic, then there is a natural identification $N^\sigma_\bbR\oplus N^\xi_\bbR \cong (N^\sigma\oplus N^\xi)_\bbR$. It is clear that the product $\sigma\times \xi$ is a partially open integral polyhedral cone of $N^\sigma_\bbR\oplus N^\xi_\bbR$ and, abusing the previous natural identification, we denote by $\sigma\times\xi$ the corresponding cone of $(N^\sigma\oplus N^\xi)_\bbR$. Hence, we regard $\sigma\times \xi$ as a poic with $N^{\sigma\times\xi}=N^\sigma\oplus N^\xi$.
\end{nota}

\begin{defi}
Let $\sigma$ and $\xi$ be two poics. A \emph{morphism} $f\colon\sigma\to \xi$ consists of an integral linear map $f\colon  N^\sigma\to N^\xi$, such that the induced linear map $f_\bbR:N^\sigma_\bbR\to N^\xi_\bbR$ sends $\sigma$ to $\xi$. We say that a morphism $f\colon \sigma\to \xi$ is a \emph{face-embedding}, if the integral linear map is injective and $f_\bbR(\sigma)\leq \xi$. 
\end{defi}

\begin{remark}
    If $\sigma$ and $\xi$ are two poics, then both projections $\sigma\times\xi\to\sigma$ and $\sigma\times\xi\to \xi$ are morphisms of poics. If $\tau$ is a face of $\sigma$ then the inclusion $\tau\to \sigma$ is a morphism (and undoubtedly a face-embedding) where the morphism between the lattices is the natural inclusion.
\end{remark}

\begin{nota}
For a finite set $A$, we let $\bbR_{\geq0}^A$ denote the poic given by the non-negative orthant in the vector space $(\bbZ^A)_\bbR$ and the lattice $\bbZ^A$. We let $\bbR_{>0}^A$ denote its corresponding relative interior. It is clear that subsets $B\subset A$ give rise to face-embeddings $\bbR_{\geq0}^B\to \bbR_{\geq0}^A$.   
\end{nota}

The data of poics and morphisms thereof, together with natural composition of maps, defines a category, which we denote by $\POIC$. A poic $\sigma$ carries naturally an underlying topological space: the cone itself. More precisely, if $(\sigma,N^\sigma)$ is a poic, let $|\sigma|$ denote the topological space given by $\sigma\subset N_\bbR$ with the Euclidean topology. If $\xi$ is also a poic and $f\colon \sigma\to\xi$ is a morphism, then $f$ induces a linear map $f\colon N^\sigma_\bbR\to N^\xi_\bbR$ which restricts to a continuous map
$|f|\colon |\sigma|\to|\xi|$. This association gives rise to a functor
\begin{equation}
    |\bullet|\colon \POIC\to \Top,\label{eqdefi: realization of poics}
\end{equation}
which we call the \emph{realization functor}. Furthermore, if $\sigma$ is a poic then we call $|\sigma|$ the \emph{realization of $\sigma$} (analogously with morphisms). For brevity, we will refer to a morphism of poics simply as a poic-morphism.
\subsection{Categories of graphs and cones of metrics.} \label{subsect: cones of metrics of graphs.} We describe a category of graphs that is central to our constructions. Then we use this category to describe the moduli space of genus-$g$ $A$-marked tropical curves, for a non-negative integer $g$ and a finite set $A$ such that $2g+\#A-2>0$.
\begin{defi}
Let $A$ be a finite set. A \emph{discrete graph with $A$-marked legs} is a discrete graph $G$ with a bijection 
\begin{equation*}
\ell_\bullet(G)\colon A\to L(G),i\mapsto \ell_i(G),
\end{equation*} 
which we will call the \emph{marking}. For $a\in A$, the \emph{$a$-leg of $G$} is simply the leg $\ell_a(G)\in L(G)$. A \emph{map of $A$-marked graphs} is a map of graphs that commutes with the respective markings.
\end{defi}

\begin{defi}\label{defi: morphs Ggn}
    A map of graphs $f\colon G_1\to G_2$ is a \emph{contraction} if it satisfies the hypotheses of Lemma \ref{lem: map of graphs is contraction}, that is:
    \begin{itemize}
        \item For every $V\in V(G_2)$ the subgraph defined by the inverse image $f^{-1}(V)\subset F(G_1)$ is a legless tree of $G_1$.
        \item For every $e\in E(G_2)\cup L(G_2)$, the inverse image under $f$ consists of a single edge or leg.
    \end{itemize}
If both graphs are $A$-marked (for a finite set $A$), then we say that $f$ is a \emph{contraction of $A$-marked graphs} if it is additionally a morphism of $A$-marked graphs. Naturally, composition of contractions is a contraction. In addition, any bijective map of graphs is a contraction.
\end{defi}
With the previous definition, the following category of graphs is introduced.
\begin{defi}
    For a finite set $A$ and a non-negative integer $g$ with $2g+\#A-2>0$, the \emph{category of $A$-marked graphs of genus-$g$} is the category $\bbG_{g,A}$ specified by:
\begin{itemize}
    \item The class of objects consists of the connected genus-$g$ discrete graphs with $A$-marked legs whose vertices are at least $3$-valent.
    \item For two objects $G_1$ and $G_2$ of $\bbG_{g,A}$, the set of morphisms $\Hom_{\bbG_{g,A}}(G_1,G_2)$ consists of the contractions of $A$-marked graphs $f\colon G_1\to G_2$.
    \item Composition of morphisms is just composition of maps.
\end{itemize}
\end{defi}

For examples and depictions of these categories, we refer the reader to \cite{DARB}.

\begin{nota}
    If $g,n\geq0$ are integers with $2g+n-2>0$ and $A=\{1,\dots,n\}$, then we will denote the category $\bbG_{g,A}$ simply by $\bbG_{0,n}$.
\end{nota}

\begin{prop}\label{prop: Ggn iso classes}
    Any object of $\bbG_{0,A}$ has no non-trivial automorphisms and the skeleton of this category is a finite poset. For positive genus there are finitely many isomorphism classes in $\bbG_{g,A}$. 
\end{prop}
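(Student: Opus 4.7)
The plan is to prove the three assertions in turn: (a) rigidity of $\bbG_{0,A}$, (b) finiteness of the skeleton in both cases, and (c) antisymmetry of the preorder in the genus-zero case.

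For (a), let $G\in\bbG_{0,A}$ and let $f\colon G\to G$ be an automorphism. Since $f$ is a morphism of $A$-marked graphs, it fixes every leg $\ell_a(G)$, and hence fixes every boundary vertex $r_G(\ell_a(G))$; in particular, because $\#A\geq 3$, there is at least one vertex that is fixed by $f$. The key observation is that since $G$ is a tree, removing any fixed vertex $V$ decomposes $G$ into components $G_1,\dots,G_k$, and each $G_i$ carries at least one leg: any leaf of $G_i$ (or the unique vertex of $G_i$, should it have only one) has at most two edges in $G$, so the assumption $\val(W)\geq 3$ for every $W\in V(G)$ forces it to carry a leg. The leg subsets $A_{G_i}\subset A$ are thus nonempty and pairwise disjoint, and since $f$ fixes legs it must send each $G_i$ to itself; the unique edge joining $V$ to $G_i$ is therefore fixed as well. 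Iterating this from an initially fixed vertex, every neighbour of a fixed vertex is fixed, so by connectedness every vertex and every edge of $G$ is fixed. Since $G$ has no loops, $f$ then also fixes every half-edge, so $f=\Id$.

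For (b), a counting argument suffices in both the genus-zero and positive-genus cases. From the genus formula $\#E(G)=\#V(G)+g-1$ and the identity $\sum_{V\in V(G)}\val(V)=2\#E(G)+\#L(G)$, the assumption that every vertex is at least $3$-valent yields
\[
3\,\#V(G)\leq 2\,\#V(G)+2g+\#A-2,
\]
i.e.\ $\#V(G)\leq 2g+\#A-2$. Both $\#V(G)$ and $\#E(G)$ are therefore uniformly bounded, leaving only finitely many possibilities for the triple $(F(G),r_G,\iota_G)$ together with the marking, and hence only finitely many isomorphism classes in $\bbG_{g,A}$ for every $g\geq 0$.

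For (c), suppose $f\colon G_1\to G_2$ and $g\colon G_2\to G_1$ are contractions between representatives of the skeleton of $\bbG_{0,A}$. Any contraction is either bijective---and hence an isomorphism, since the set-theoretic inverse automatically commutes with $r$ and $\iota$---or it strictly decreases the number of edges. If neither $f$ nor $g$ were bijective, we would obtain $\#E(G_2)<\#E(G_1)<\#E(G_2)$, a contradiction. Hence at least one of them is an isomorphism, so $G_1=G_2$ inside the skeleton, and by (a) we must then have $f=g=\Id$. This yields antisymmetry of the preorder on the skeleton, completing the proof. The main obstacle lies in step (a); the subtlety there is guaranteeing the existence of legs in each connected component of $G\setminus\{V\}$, which is exactly where the $\geq 3$-valency hypothesis enters decisively. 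Once rigidity is in hand, (b) and (c) reduce to routine counting and edge-counting.
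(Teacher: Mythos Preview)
The paper does not actually give a proof here; it simply records that both claims are well-known and can be directly verified. Your argument is therefore far more substantive than what the paper offers, and parts (a), (b), and (c) are all correct as written.

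There is, however, one genuine gap. For the skeleton of $\bbG_{0,A}$ to be a poset \emph{as a category}, you need the category to be \emph{thin} (at most one morphism between any two objects), not merely that the existence-of-a-morphism preorder is antisymmetric. These are different statements, and the paper relies essentially on thinness later: it is exactly what makes $\Mtrop_{0,n}$ a poic-\emph{complex} rather than only a poic-space. Your part (c) establishes antisymmetry, and your part (a) handles endomorphisms, but uniqueness of morphisms between distinct objects is not addressed.

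Fortunately, thinness follows from your rigidity argument plus one extra observation. Every edge $e$ of a tree $G$ in $\bbG_{0,A}$ determines a partition of $A$ into two nonempty parts (the marked legs on each side of $e$), and your $\geq 3$-valency argument from (a) shows that distinct edges yield distinct partitions. A contraction $f\colon G_1\to G_2$ preserves these partitions on the non-contracted edges, so the set of edges that $f$ contracts is precisely the set of edges of $G_1$ whose partition does not occur among the edge-partitions of $G_2$; in particular this set is independent of $f$. The residual bijection $G_1/K\to G_2$ is then unique by your (a) applied to $G_2$, forcing $f$ to be unique. With this paragraph inserted, your proof is complete.
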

\begin{proof}
    Both statements can be directly verified, and are well-known facts.
\end{proof}

\begin{const}
    A \emph{metric} on a discrete graph $G$ is a function $\delta\colon  E(G)\to \bbR_{>0}$. 
   If $\delta$ is a metric on $G$, then the pair $(G,\delta)$ gives rise to a metric space as follows: To each edge is associated a closed interval of length prescribed by $\delta$, to each leg is associated a half line $\bbR_{\geq 0}$, and all these are glued according to the incidence relations of $G$.
\end{const}

\begin{defi}
   Let $A$ be a finite set. An \emph{$A$-marked tropical curve} $\Gamma$ is a metric space so arising from a pair $(G,\delta)$, where $G$ is an $A$-marked discrete graph and $\delta$ is a metric on $G$.
\end{defi}

\begin{remark}
     By definition, a metric on $G$ corresponds uniquely to a point of $\bbR_{>0}^{E(G)}$.
\end{remark}

\begin{const}\label{const: face-embeddings of subgraphs}
    Let $G$ be as above and suppose $K$ is an unmarked subgraph of $G$ whose connected components are non-trivial trees, then $G/K$ is an object of $\bbG_{g,A}$. Furthermore, $E(G/K)=E(G)\backslash K$, and the inclusion $E(G/K)\subset E(G)$ gives rise to a face-embedding
    \begin{equation*}
        \bbR^{E(G/K)}_{\geq 0}\to \bbR^{E(G)}_{\geq0}.
    \end{equation*}
    The \emph{cone of metrics} of $G$ is the poic $\sigma_G$ defined as the subcone of $\bbR_{\geq0}^{E(G)}$ that contains $\bbR^{E(G)}_{>0}$ and all the images of $\bbR_{>0}^{E(G/K)}$, where $K$ is as above, under the corresponding face-embeddings. 
\end{const}

\begin{remark}
    We regard the cone of metrics of $G$ as the poic $\sigma_G$ lying inside of $\bbR_{\geq0}^{E(G)}$ consisting of all the metrics of $G$ together with those of all its possible genus preserving contractions.
\end{remark}

\begin{const}
    Suppose $G^\prime$ is an additional object of $\bbG_{g,A}$ and $f\in \Hom_{\bbG_{g,A}}(G^\prime,G)$. By definition $f^{-1}(V(G))$ is an unmarked subgraph of $G^\prime$ (with possibly trivial connected components), let $K$ denote the subgraph of $G^\prime$ defined by removing isolated vertices from $f^{-1}(V(G))$. From Lemma \ref{lem: map of graphs is contraction}, it follows that $G^\prime/K\cong G$ and therefore the morphism $f$ gives rise to:
    \begin{itemize}
            \item A face-embedding $\iota_K\colon \sigma_{G^\prime/K}\to \sigma_{G^\prime}$, given by the natural inclusion $E(G^\prime/K)\subset E(G^\prime)$.
            \item A bijection $E(G)\to E(G^\prime/K)$ which gives rise to an isomorphism of poics $L_f\colon \sigma_G\to \sigma_{G^\prime/K}$.
    \end{itemize}
    The composition $\iota_K\circ L_f$ is then a face-embedding, which we will denote by $\iota_f\colon\sigma_G\to\sigma_{G^\prime}$. It is shown in Lemma 2.4.3 of \cite{DARB} that the cone of metrics construction gives rise to a functor $\Mtrop_{g,A}\colon \bbG_{g,A}^{\op}\to \textnormal{POIC},\label{eq: moduli space is poic-space}$ defined as follows: 
    \begin{itemize}
        \item At an object $G$ of $\bbG_{g,n}$, $\Mtrop_{g,n}(G)$ is defined to be the poic $\sigma_G$,
        \item At a morphism $f\in\Hom_{\bbG_{g,n}}(G^\prime,G)$ the poic-morphism $\Mtrop_{g,n}(f)$ is defined as the face-embedding $\iota_f\colon \sigma_G\to \sigma_{G^\prime}$.
    \end{itemize}
    It is also shown there that if $\tau$ is a face of $\sigma_{G}$, then there exists up to isomorphism a unique morphism $h\in \Hom_{\bbG_{g,n}}(G,H)$ such that $\Mtrop_{g,n}(h)\colon \sigma_H\to\sigma_G$ is isomorphic to $\tau\leq \sigma_G$.
\end{const}

Since $\bbG_{g,n}^{\op}$ has finitely many isomorphism classes, it is necessarily equivalent to a finite category. As small colimits are representable in $\Top$, this implies that the colimit of any functor from $\bbG_{g,n}^{\op}$ to $\Top$ is representable.

\begin{defi}
    For a finite set $A$ and an integer $g\geq0$ with $2g+\#A-2>0$, the \emph{moduli space of genus-$g$ $A$-marked tropical curves} is the topological space 
    \begin{equation*}
        \cM_{g,A}^\trop \colon = \colim_{\bbG_{g,A}^{\op}} \left|\Mtrop_{g,n}\right|.\end{equation*}    
\end{defi}

After making a choice for a full set of representatives for $\bbG_{g,A}^{\op}$, the above construction shows that $\cM_{g,A}^\trop$ consists of taking the cones of metrics and identifying points representing isometric genus-$g$ $A$-marked tropical curves. This presentation is useful for our purposes as it emphasizes the combinatorial relations between cones of metrics that lie at the core of our program: face relations by edge contractions and point identifications due to isometry.

\subsection{Poic-spaces and poic-complexes.} The idea for poic-spaces originates from spaces obtained by glueing finitely many cells given as quotients of partially open cones by actions of finite groups (preserving their integral structure). Of special interest are those where just finitely many partially open cones are glued with respect to face relations. Our motivating and leading class of examples consists of the spaces $\cM_{g,n}^\trop$ (for integers $g,n\geq0$ with $2g+n-2>0$).

\begin{defi}
    A \emph{poic-space} $\cX$ is a functor $\cX\colon C_\cX\to \POIC$, where $C_\cX$ is an essentially finite category with finite hom-sets, and $\cX$ is subject to the following conditions:
    \begin{itemize}
        \item The functor $\cX$ maps morphisms of $C_\cX$ to face-embeddings.
        \item If $x$ is an object of $C_\cX$ and $\tau$ is a face of $\cX(x)$, then there exists a unique up to isomorphism $w\to x$ in $C_\cX$ with $\cX(w\to x) \cong \tau\leq \cX(x)$.
    \end{itemize} 
\end{defi}

When no confusion seems near, we will refer to objects of $C_\cX$ as cones of $\cX$, and to the morphisms of $C_\cX$ as morphisms of $\cX$. For a cone $q$ of $C_\cX$ we will denote the dimension $\dim (\cX(q))$ simply by $\dim(q)$. We denote the set of isomorphism classes of $C_\cX$ by $[\cX]$, and the set of isomorphism classes of $k$-dimensional cones of $\cX$ by $[\cX](k)$ (these form a partition of $[\cX]$). The poic-space $\cX$ is called \emph{pure of dimension $n$} if every cone $t$ of $\cX$ appears as the source of a morphism to an $n$-dimensional cone of $\cX$.

\begin{defi}
    Suppose $\cX$ and $\cY$ are poic-spaces. A \emph{morphism of poic-spaces} $F\colon \cX\to\cY$ consists of the following data: 
    \begin{itemize}
        \item A functor $F\colon C_\cX\to C_\cY$.
        \item A natural transformation $\eta_F\colon \cX\implies \cY\circ F$.
    \end{itemize}
    These are subject to the following condition: for a cone $x$ of $\cX$, the cone $\eta_{F,x}\left(\cX(x)\right)$ does not lie in a proper face of $\cY(F(x))$.\\
    Composition of morphisms of poic-spaces is defined as follows. Let $\cZ$ denote an additional poic-space and $G\colon \cY\to\cZ$ a morphism. Then $G\circ F\colon \cX\to \cZ$ is given by the functor $G\circ F$, together with the natural transformation $\eta_{G\circ F}\colon \cX\implies {\cZ}\circ G\circ F$ defined at a cone $s$ of $\cX$ by
\begin{equation*}
    \eta_{G\circ F,s} := \eta_{G,F(s)}\circ \eta_{F,s}\colon \cX(s)\to \cZ(G\circ F(s)).
\end{equation*}
\end{defi} 

\begin{nota}
Let $F\colon \cX\to\cY$ be a morphism of poic-spaces, and let $s$ be a cone of $\cX$. To simplify notation we will denote the poic-morphism $\eta_{F,s}$ simply by $F_s$.
\end{nota}

\begin{const} Suppose $\cX$ and $\cY$ are poic-spaces. The product of poics is a poic, and the functors $\cX$ and $\cY$ give rise to the functor
\begin{equation*}{\cX\times\cY}\colon C_\cX\times C_\cY\to \POIC, (q,s)\mapsto \cX(q)\times\cY(s).\end{equation*}
It is shown in \cite{DARB} that this is an actual poic-space.
\end{const}

\begin{defi}
    A poic-space $\cZ$ is a \emph{poic-subspace} of a poic-space $\cX$, if $C_\cZ$ is a full subcategory of $\cX$ and $\cZ=\cX|_{C_\cZ}$.
\end{defi}

We remark that poic-spaces and their morphisms form a category, which we denote by $\POICSpcs$. The functor \eqref{eqdefi: realization of poics} can be extended naturally to poic-spaces, thereby giving rise to the \emph{realization functor}:
\begin{equation}
    |\bullet|\colon \POICSpcs\to\Top,\cX\mapsto \colim |\cX|.
\end{equation}

\begin{defi}
A \emph{poic-complex} $\Phi$ is a poic-space, where the underlying category $C_\Phi$ is essentially finite and thin. We denote by $\POICCmplxs$ the full subcategory of $\POICSpcs$ consisting of poic-complexes. Namely, a morphism of poic-complexes is just a morphism of the underlying poic-spaces. If $\Phi$ is a poic-complex and $\Upsilon$ is a poic-subspace, then $\Upsilon$ is also a poic-complex. We say then that $\Upsilon$ is a \emph{poic-subcomplex} of $\Phi$.
\end{defi}

\begin{const}
Suppose $N$ is a free abelian group of finite rank. The pair $(N_\bbR,N)$ is naturally a poic. Since the cone $N_\bbR$ has no faces, $\{(N_\bbR,N)\}$ is vacuously a poic-complex (the functor being tautological in this case). We will denote this poic-complex by $\underline{N_\bbR}$. If $M$ is an additional free abelian group of finite rank, then under the used notation $\underline{N_\bbR}\times \underline{M_\bbR} = \underline{(N\oplus M)_\bbR}$.
\end{const}

\begin{example}
    Proposition \ref{prop: Ggn iso classes} shows that for $n\geq 3$, the functor $\Mtrop_{0,n}$ is a poic-complex.
\end{example}

\begin{defi}\label{defi: general weights}
Suppose $\Phi$ is a poic-complex. A \emph{$k$-dimensional weight $\omega$ on $\Phi$} is simply a function $\omega\colon  [\Phi](k)\to \bbZ$. We denote the set of $k$-dimensional weights on $\Phi$ by $W_k(\Phi)$. This set is naturally an abelian group, and since $[\Phi](k)$ is necessarily a finite set, it is free of finite rank.
\end{defi}

\begin{defi}
A \emph{linear poic-complex} is a tuple $(\Phi,X)$ such that:
\begin{itemize}
    \item $\Phi$ is a poic-complex. 
    \item $X\colon \Phi\to \underline{(N_X)_\bbR}$ is a morphism of poic-complexes, where $N_X$ is a free abelian group of finite rank.
\end{itemize}
We will denote the linear poic-complex $(\Phi,X)$ simply by $\Phi_X$. If $\Phi_X$ and $\Psi_Y$ are linear poic-complexes, then a \emph{morphism of linear poic-complexes} $\phi:\Phi_X\to\Psi_Y$ consists of the following data: 
\begin{itemize}
    \item A morphism of poic-complexes $\phi\colon\Phi\to\Psi$
    \item An integral linear map $\phi_{\tint}\colon N_X\to N_Y$ (this is equivalent to a morphism of poic-complexes $\underline{\left(N_X\right)_\bbR}\to \underline{\left(N_Y\right)_\bbR}$).
    \item These yield a commutative square of poic-complexes.
\end{itemize}

\end{defi}
Clearly, if $\Phi_X$ is a linear poic-complex and $\Upsilon$ is a poic-subcomplex of $\Phi$, then $(\Upsilon,X|_\Upsilon)$ is also a linear poic-complex. In this case, we say that $\Upsilon_{X|_\Upsilon}$ (or simply $\Upsilon_X$) is a \emph{linear poic-subcomplex} of $\Phi_X$.

\begin{nota}
We use the following notation for a poic-complex $\Phi$:
\begin{itemize}
    \item If $s$ is a a cone of $\Phi$, then we denote the lattice $N^{\Phi(s)}$ simply by $N^s$.
    \item If $s$ is a cone of $\Phi$, then $\Star^1_\Phi(s)$ consists of the set of isomorphisms classes of morphisms $s\to t$ of $\Phi$, where $\dim(t)=\dim(s)+1$. The isomorphism class of such a morphism will be denoted by $[s\to t]$.
\end{itemize}
\end{nota}

\begin{const}[Relative linear maps]\label{const: rel lin maps}

Suppose $\Phi_X$ is a linear poic-complex and consider a morphism $s\to t$ of $\Phi$, with $\Phi(s)\leq \Phi(t)$ a codimension $1$ face. 
The quotient $N^t/N^s$ is a rank $1$ lattice and the morphism $A$ induces a linear map
\begin{equation} 
X_{s\to t}\colon  N^t/N^s\to N_X/X_s(N^{s}).\label{eq: relative linear maps}
\end{equation}
If $s\to t^\prime$ is an additional morphism of $\Phi$ isomorphic to $s\to t$, then it is shown in Lemma 3.2.8 of \cite{DARB} that under the maps $X_{s\to t}$ and $X_{s\to t^\prime}$ the images of the generators of $N^t/N^s$ and $N^{t^\prime}/N^s$ lying correspondingly in the projections of $\Phi(t)$ and $\Phi(t^\prime)$ coincide, and hence define a unique element of $N_X/X_s(N^s)$. We denote this element by $u^X_{[s\to t]}$ and call it the \emph{normal vector $u^X_{[s\to t]}$} associated to the class $[s\to t]\in\Star^1_\Phi(s)$.
\end{const}

\begin{defi}\label{defi: balanced weight}
Suppose $\Phi_X$ is a linear poic-complex and let $s$ be a cone of $\Phi$ with $d(s)=k-1$. A $k$-dimensional weight $\omega\in W_k(\Phi)$ is \emph{$X$-balanced at $s$}, if 
\begin{equation*}
    \sum_{[s\to t]\in\Star^1_\Phi(s)} \omega([t]) u_{[s\to t]}^X =0.
\end{equation*}
It is said to be \emph{$X$-balanced}, if it is $X$-balanced at every object of $\Phi$ of dimension $k-1$. A weight is $X$-balanced if and only if it is $X$-balanced at a set of representatives of $[\Phi](k-1)$ (Lemma 3.2.11 of loc. cit.). The set of \emph{$k$-dimensional Minkowski weights} is the subset $M_k(\Phi_X)$ of $W_k(\Phi)$ consisting of the $X$-balanced weights $\omega\in W_k(\Phi)$, which is actually a subgroup of $W_k(\Phi)$.
\end{defi}

Tropical cycles on a linear complex are be defined by means of subdivisions. For the purposes of this article, we will only provide the definition of a subdivision and kindly point the reader toward \cite{DARB} for every related detail. 

\begin{defi}
Let $\Phi$ be a poic-complex. A \emph{subdivision} $S\colon \Phi^\prime\to\Phi$ is a morphism of poic-complexes $S\colon \Phi^\prime\to\Phi$, such that: 
\begin{itemize}
    \item The underlying functor $S\colon C_{\Phi^\prime}\to C_\Phi$ is essentially surjective, and the underlying integral linear maps of the natural transformation $\eta_S\colon \Phi^\prime\implies \Phi\circ S$ are injective. Furthermore, if $t$ is a cone of $\Phi^\prime$ such that $d(t) = d(S(t))$, then the underlying integral linear map of $S_t\colon \Phi^\prime(t)\to \Phi(S(t))$ is an isomorphism.
    \item For every cone $s$ of $\Phi$ lying in the image of $S$, the relative interior of $\Phi(s)$ is the disjoint union of the images of the relative interiors of the cones $p$ of $\Phi^\prime$ with $S(p)\cong s$, and the images of these relative interiors characterize these cones of $\Phi^\prime$ up to isomorphism (if they coincide, then they are isomorphic in $C_{\Phi^\prime}$).
    \item For every cone $t$ of $\Phi^\prime$ and every $s\to S(t)$, there exists $q\to t$ in $\Phi^\prime$ such that $S(q\to t) \cong s\to S(t)$.
\end{itemize}
The \emph{category of subdivisions of $\Phi$} is the category $\Subd(\Phi)$ whose class of objects consists of subdivisions $S\colon \Phi^\prime\to \Phi$ and morphisms are commuting subdivision morphisms.
\end{defi}

Let $k\geq0$ be an integer. If $\Phi_X$ is a linear poic-complex and $S\colon \Phi^\prime\to\Phi$ is a subdivision, then $S$ induces an injective linear map $S^*\colon M_k(\Phi_X)\to M_k(\Phi^\prime_{X\circ S})$. Furthermore, by means of these maps, the Minkowski weights give rise to a functor
    \begin{equation} M_k(\bullet)\colon  \Subd(\Phi)^{\op} \to \textnormal{Ab},\label{eq: minkowski weights functor}\end{equation}
    whose colimit is representable.
\begin{defi}
For a linear poic-complex $\Phi_X$, the \emph{group of tropical $k$-cycles} $Z_k(\Phi_X)$ is defined as the colimit of \eqref{eq: minkowski weights functor}.
\end{defi}


\begin{defi}
    A linear poic-complex $\Phi_X$ pure of dimension $n$ is called \emph{irreducible}, if $Z_n(\Phi_X)$ is free of rank $1$.
\end{defi}

It is shown in Proposition 3.4.12 of \cite{DARB} that if $\Phi_X$ and $\Psi_Y$ are irreducible linear poic-complexes with $\Phi$ pure of dimension $n$ and $\Psi$ pure of dimension $m$, then $(\Phi\times\Psi)_{X\times Y}$ is also irreducible.

\subsection{Poic-fibrations and their cycles.}
We make use of poic-fibrations to describe cycles of a poic-space. More specifically, we speak of equivariant tropical cycles with respect to the poic-fibration. These poic-fibrations are special kind of morphisms from a (linear) poic-complex to a poic-space with certain lifting properties. In what follows, suppose $\Phi$ is a poic-complex and $\cX$ is a poic-space.
\begin{defi}
     A morphism of poic-spaces $\pi\colon \Phi\to \cX$ is called a \emph{poic-fibration}, if:
\begin{enumerate}
    \item The functor is essentially surjective.
    \item For any cone $p$ of $\Phi$ the map $\eta_{\pi,p}\colon  \Phi(p)\to \cX\left( \pi(p)\right)$ induces an isomorphism between their relative interiors.
    \item For any object $p$ of $\Phi$ and any morphism $f\colon \pi(p)\to x$ in $\cX$, there exists a morphism $h\colon p\to q$ in $\Phi$ and an isomorphism $g\colon  \pi(q)\to x$ such that $f=g\circ \pi(h)$, where $h$ is unique up to isomorphism under $p$, and $g$ is unique up to isomorphism over $x$.
\end{enumerate}
If $\Phi_X$ is additionally a linear poic-complex, and $\pi\colon\Phi\to\cX$ is a poic-fibration, then we say that $\pi\colon\Phi_X\to\cX$ is a linear poic-fibration. A morphism of poic-fibrations is simply a pair of morphisms yielding a commutative square. However, for convenience of notation, we use the following convention. Suppose $\rho\colon \Psi\to \cY$ is an additional poic-fibration. A morphism of poic-fibrations $\mathfrak{f}\colon \pi\to\rho$ consists of: 
\begin{itemize}
    \item A morphism of poic-complexes $\mathfrak{f}^\pcmplxs\colon \Phi\to\Psi$ (the superscript stands for complexes).
    \item A morphism of poic-spaces $\mathfrak{f}^\pspcs\colon\cX\to \cY$ (the superscript stands for spaces).
    \item These are subject to the condition $\rho\circ \mathfrak{f}^\pcmplxs = \mathfrak{f}^\pspcs\circ \pi$.
\end{itemize}
If, in addition, $\pi\colon \Phi_X\to\cX$ and $\rho\colon\Psi_Y\to\cY$ are linear poic-fibrations, then $\mathfrak{f}^\pcmplxs$ is also a morphism linear poic-complexes. 
\end{defi}

Let $\pi\colon\Phi_X\to\cX$ be a linear poic-fibration. The first and second condition imply that the fibration $\pi$ induces a surjective map of sets
    \begin{equation*}
    [\pi]\colon [\Phi](k)\to [\cX](k).
\end{equation*} 

\begin{defi}
    Let $k\geq 0$ be an integer. A \emph{$k$-dimensional $\pi$-equivariant Minkowski weight} $\omega$ is a weight $\omega\in M_k(\Phi_X)$ such that $\omega$ is constant on the fibers of $[\pi]$. The set of $\pi$-equivariant Minkowski weights is denoted by $M_k(\cX_{\pi,X})$, which is in fact a subgroup of $M_k(\Phi_X)$.
\end{defi}

As in the case of linear poic-complexes subdivisions of $\Phi$ are necessary to $\pi$-equivariant cycles. However, we are not interested in general subdivisions of $\Phi_X$, but rather on subdivisions compatible with the poic-fibration. We call these \emph{$\pi$-compatible subdivisions} and introduce them in Definition 4.3.3 of \cite{DARB}. The following facts can be found in loc. cit.:
\begin{itemize}
    \item If $S\colon\Phi^\prime\to\Phi$ is a subdivision, then there exists a subdivision $S^{\prime}\colon\Phi^{\prime\prime}\to\Phi^\prime$ such that $S\circ S^\prime\colon\Phi^{\prime\prime}\to\Phi$ is $\pi$-compatible (Lemma 4.3.4).
    \item For a $\pi$-compatible subdivision $S\colon\Phi^\prime\to\Phi$, being a $\pi$-equivariant Minkowski weight is equivalent to being invariant under a finite number of permutations of $[\Phi^\prime](k)$, and $M_k(\cX_{\pi,X\circ S})$ is an abelian subgroup of $M_k(\Phi^\prime_{X\circ S})$ (Lemma 4.3.7).
    \item If $S\colon\Phi^\prime\to\Phi$ and $S^\prime\colon \Phi^{\prime\prime}\to \Phi^\prime$ are subdivisions, such that $S$ and $S^\prime\circ S$ are $\pi$-compatible, then the map $(S^\prime)^*\colon M_k(\Phi^\prime_{X\circ S})\hookrightarrow M_k(\Phi^{\prime\prime}_{X\circ S^\prime})$ maps $\pi$-equivariant Minkowski weights to $\pi$-equivariant Minkowski weights for any integer $k\geq0$ (Lemma 4.3.8).
    \item If $\pi\textnormal{-}\Subd(\Phi)$ denotes the full subcategory of $\pi$-compatible subdivisions of $\Phi$, then for any integer $k\geq0$ the $k$-dimensional $\pi$-equivariant Minkowski weights give rise to a representable functor $M_k(\cX_{\pi,\bullet})\colon          \pi\textnormal{-}\Subd(\Phi)^{\op}\to \text{Ab}$.
\end{itemize}
\begin{defi}
    If $\pi:\Phi_X\to\cX$ is a poic-fibration, the \emph{group of tropical $k$-cycles of the fibration} is defined as the colimit of $M_k(\cX_{\pi,\bullet})$ over $\pi\textnormal{-}\Subd(\Phi)^{\op}$.
\end{defi}

It is also shown in Lemma 4.3.11 of loc. cit. that if $\pi\colon\Phi_X\to\cX$ is a linear poic-fibration, then the natural inclusions give rise to an injective linear map $\pi_k^*\colon Z_k(\cX_{\pi,X})\to Z_k(\Phi_X)$.

\begin{defi}
    A linear poic-fibration $\pi\colon \Phi_X\to \cX$ (with $\Phi_X$ pure of dimension $n$) is called \emph{irreducible}, if $Z_n(\cX_{\pi,X})$ is free of rank $1$.
\end{defi}

\subsection{Technichalities on pushforwards.} We want to pushforward Minkowski weights tropical cycles through morphisms. It is necessary to introduce the notion of weakly proper morphism to do so, since we are dealing with partially open cones and a general morphism does not automatically reflect the face relations of the target (which are relevant for balancing). Moreover, for the purposes of this paper we introduce the slightly more general notion of weakly proper morphism in a given dimension. We first discuss pushforwards in the context of (linear) poic-complexes, and then pushforwards in the context of linear poic-fibrations.

\begin{defi}
    Let $k\geq0$ be an integer. A morphism of poic-complexes $P\colon \Phi\to\Psi$ is called \emph{weakly proper in dimension $k$}, if for every $k$-dimensional cone $s$ of $\Phi$ and every facet $\tau$ in the closure of $P_s(\Phi(s))$ in $\Psi(P(s))$, there exists a facet $q\to s$ of $\Phi$ which under the morphism $P$ gives the facet $\tau$. A morphism of linear poic-complexes is called \emph{weakly proper in dimension $k$}, if the underlying morphism of poic-complexes is weakly proper in dimension $k$.
\end{defi}

\begin{const}\label{const: pushforward general weights}
    Consider two poic-complexes $\Phi$ and $\Psi$ together with a morphism of linear poic-complexes $\phi\colon \Phi\to\Psi$. To pushforward weights of $\Phi$ to weights of $\Psi$, we may need to pass to a subdivision of $\Psi$. This is described in detail in Section 3.5 of \cite{DARB}. For the purposes of this article we content ourselves with the assumption that $\Psi$ is sufficiently subdivided. This means that we assume the following: for every cone $s$ of $\Phi$ for which the integral linear part of $\phi_s$ is injective, there exists a cone $t$ of $\Psi$ such that: 
   \begin{itemize}
       \item There is an isomorphism $f\colon \phi(s) \xrightarrow{\sim} t$ of $\Psi$.
       \item The images of $\Psi(t)^o$ and $\Phi(s)^o$ under the linear part of $\Psi(f)\circ \phi_s$ coincide.
   \end{itemize}    
   In this case, it is clear that $\dim(t)=\dim(s)$ and such a $t$ is unique up to isomorphism. Following the notation, we write $(\phi)_*([s])= [t]$, for the classes of $s$ and $t$ as above. Let $k\geq0$ be an integer, and suppose $\omega\in W_k(\Phi)$. The \emph{pushforward of $\omega$} along $\phi$ is the weight
    \begin{equation*}
    (\phi)_*\omega\colon  [\Psi](k) \to \bbZ,[t]\mapsto \sum_{\substack{[p]\in [\Phi](k),\\
    (\phi)_*([p]) = [t]}}\omega([p])[N^{t}\colon  (F_\phi)_p N^p].
    \end{equation*}
\end{const}
\begin{lem}\label{lem: pushforward}
   Suppose $\Psi_Y$ is a linear poic-complex and $P\colon \Phi\to\Psi$ is a morphism of poic-complexes that is weakly proper in dimension $k$ for an integer $k\geq0$. If $\Psi$ is subdivided enough as in Construction \ref{const: pushforward general weights} and $\omega\in M_k(\Phi_{Y\circ P})$, then $P_*\omega\in M_k(\Psi_Y)$.
\end{lem}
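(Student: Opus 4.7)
The plan is to verify the balancing of $P_*\omega$ at each $(k-1)$-dimensional cone $s_0$ of $\Psi$ by projecting the balancing relations of $\omega$ at the $(k-1)$-dimensional cones $q$ of $\Phi$ with $P(q)\cong s_0$ into the quotient $N_Y/Y_{s_0}(N^{s_0})$. Concretely, fix such an $s_0$; what I want is
$$\sum_{[s_0\to t]\in\Star^1_\Psi(s_0)} (P_*\omega)([t])\,u^Y_{[s_0\to t]}=0\quad\text{in }N_Y/Y_{s_0}(N^{s_0}).$$

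For every codimension-$1$ cone $q$ of $\Phi$ with $P(q)\cong s_0$, the inclusion $(F_P)_q(N^q)\subseteq N^{s_0}$ induces a natural surjection
$$\pi_q\colon N_Y/(Y\circ P)_q(N^q)\twoheadrightarrow N_Y/Y_{s_0}(N^{s_0}),$$
so $\omega\in M_k(\Phi_{Y\circ P})$ gives $\sum_{[q\to p']}\omega([p'])\pi_q(u^{Y\circ P}_{[q\to p']})=0$. I would then analyze each summand case by case. If $\dim P(p')<k$, then $P_{p'}(N^{p'})$ is contained in $N^{P(p')}\subseteq N^{s_0}$ via the identification supplied by $P(q\to p')$, so $\pi_q(u^{Y\circ P}_{[q\to p']})=0$. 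Otherwise $\dim P(p')=k$, so $P_*([p'])=[t]$ for some $k$-dimensional $t$ with $[s_0\to t]\in\Star^1_\Psi(s_0)$, and a computation with a primitive lift $\tilde e\in N^{p'}\cap\Phi(p')^o$ of the generator of $N^{p'}/N^q$ yields
$$\pi_q(u^{Y\circ P}_{[q\to p']})=m_{q\to p'}\,u^Y_{[s_0\to t]},$$
where $m_{q\to p'}\in\bbZ_{>0}$ is the index of the image of $N^{p'}/N^q$ inside the rank-$1$ lattice $N^t/N^{s_0}$.

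The key lattice identity, obtained by applying the snake lemma to $0\to N^q\to N^{p'}\to N^{p'}/N^q\to 0$ under the injective map $(F_P)_{p'}$ together with the analogous sequence for $s_0\to t$, is
$$[N^t:(F_P)_{p'}N^{p'}]=[N^{s_0}:(F_P)_q(N^q)]\cdot m_{q\to p'}.$$
Multiplying the projected balancing relation at $q$ by the positive integer $[N^{s_0}:(F_P)_q(N^q)]$ (which depends only on $q$) and substituting this identity rewrites the contribution of each $[q\to p']$ with $\dim P(p')=k$ as $\omega([p'])[N^t:(F_P)_{p'}N^{p'}]\,u^Y_{[s_0\to t]}$, exactly one summand in the definition of $(P_*\omega)([t])\,u^Y_{[s_0\to t]}$.

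To finish, I would sum the rescaled projected balancing relations over a set of representatives $q$ for the isomorphism classes of $(k-1)$-dimensional cones of $\Phi$ with $P(q)\cong s_0$, and match the resulting double sum with $\sum_{[s_0\to t]}(P_*\omega)([t])\,u^Y_{[s_0\to t]}$. The matching reduces to the claim that for every $[p']$ with $P_*([p'])=[t]$ and every $[s_0\to t]$, there is a unique facet class $q\to p'$ of $\Phi$ with $P(q\to p')\cong(s_0\to t)$: existence follows because $P$ is weakly proper in dimension $k$ and the subdivision-enough hypothesis makes $\overline{P_{p'}(\Phi(p'))}=\Psi(t)$ so that facets of this closure are exactly facets of $\Psi(t)$, and uniqueness follows from the face property of poic-spaces, since the hyperplane $\ker(\ell_t\circ(F_P)_{p'})$ (for $\ell_t$ a facet-defining form for $s_0\leq t$) cuts out a unique face of $\Phi(p')$. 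The main obstacle is precisely this bookkeeping together with the snake-lemma identity; granted these, balancing of $P_*\omega$ at $s_0$ follows directly from the $(Y\circ P)$-balancedness of $\omega$.
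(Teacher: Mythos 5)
Your argument is correct and is exactly the standard balancing-of-the-pushforward computation: project each balancing relation of $\omega$ at a $(k-1)$-cone $q$ over $s_0$ into $N_Y/Y_{s_0}(N^{s_0})$, kill the terms with $\dim P(p')<k$, and use the lattice-index identity $[N^t:(F_P)_{p'}N^{p'}]=[N^{s_0}:(F_P)_q N^q]\cdot m_{q\to p'}$ together with weak properness (existence) and the face axiom of poic-spaces (uniqueness) to match the surviving terms with the definition of $(P_*\omega)([t])$. The paper itself omits the proof, deferring verbatim to Lemma 3.5.5 of \cite{DARB}, and your proposal reconstructs precisely that intended argument.
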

\begin{proof}
    The proof of this Lemma follows verbatim that of Lemma 3.5.5 of \cite{DARB}, hence we omit it. 
\end{proof}

\begin{defi}
Let $k\geq0$ be an integer. A morphism of linear poic-complexes $\phi\colon \Phi_X\to \Psi_Y$ is called \emph{weakly proper in dimension $k$}, if the morphism of poic-complexes $\phi\colon \Phi\to\Psi$ is weakly proper in dimension $k$. In this case, the map $\phi_\tint$ induces a linear map $(\phi_\tint)_*\colon  M_k(\Phi_X)\to M_k(\Phi_{Y\circ \phi})$, and if $\Psi$ is subdivided enough as in Construction \ref{const: pushforward general weights}, then precomposition with $(\phi_\tint)_*$ induces the map $({\phi})_*\colon M_k(\Phi_{X})\to M_k(\Psi_{Y})$. In addition, the inclusion $M_k(\Psi_Y)\hookrightarrow Z_k(\Psi_Y)$ and the previous map define the \emph{weak pushforward map in dimension $k$}
\begin{equation}
    ({\phi})_*\colon M_k(\Phi_{X})\to Z_k(\Psi_{Y}). \label{eq: weak pushforward in dimension k}
\end{equation}
More generally, following Lemma 3.5.6 of loc. cit. the morphism \eqref{eq: weak pushforward in dimension k} is always defined (it is not necessary for $\Psi$ to be subdivided enough).
\end{defi}

We now describe the situation with poic-fibrations.

\begin{defi}\label{defi: proper morphism poic-fibrations}
    Suppose $\pi\colon\Phi\to\cX$ and $\rho\colon\Psi\to\cY$ are poic-fibrations.  A morphism of poic-fibrations $\mathfrak{f}\colon \pi\to\rho$ is called \emph{weakly proper in dimension $k$} if:
    \begin{enumerate}
        \item $\mathfrak{f}^\pcmplxs$ is a weakly proper in dimension $k$ morphism of poic-complexes.
        \item $\mathfrak{f}^\pspcs$ satisfies the following lifting property: if $f\colon s \to t$ is an isomorphism of $\cY$, then for every cone $s^\prime$ of $\cX$ with $\mathfrak{f}^\pspcs(s^\prime)=s$ there exists a unique isomorphism $f^\prime\colon s^\prime\to t^\prime$ of $\cX$ with $\mathfrak{f}^\pspcs(f^\prime)=f$ (in particular $\mathfrak{f}^\pspcs(t^\prime)=t$).
    \end{enumerate}
\end{defi}

Similar to the case of linear poic-complexes we must move to further subdivisions of this poic-complex and these must also be compatible with the poic-fibration $\rho$. For the sake of sparing numerous details and technicalities, for which we refer the reader to Section 4.4 of \cite{DARB}, we remark that if $k\geq0$ is an integer and $\mathfrak{f}\colon \pi_X\to\rho_Y$ is morphism of linear poic-fibrations weakly proper in dimension $k$, then the pushforward \eqref{eq: weak pushforward in dimension k} defines a linear map
\begin{equation}
    (\mathfrak{f})_*\colon M_k(\cX_{\pi,X})\to Z_k(\cY_{\rho,Y}),\label{eq: pushforward minkowkski weights of fibration}
\end{equation}
which we also call the \emph{weak pushforward in dimension $k$}.
\subsection{Rational weights} This is a brief digression towards rational Minkowski weights, which are relevant in \ref{sec: standard weight}. We just expand on the case of poic-complexes, leaving the analogous case of poic-fibrations to the reader.

\begin{defi}\label{defi: rational mw}
    Suppose $\Phi$ is a poic-complex and let $k\geq0$ be an integer. The \emph{group of rational weights on $\Phi$} is defined simply as $W_k(\Phi)_\bbQ:= W_k(\Phi)\otimes_\bbZ\bbQ$. If $\Phi_X$ is additionally a linear poic-complex, we analogously define:
    
    \begin{itemize}
        \item The \emph{group of $k$-dimensional rational Minkowski weights} as  $M_k(\Phi_X)_\bbQ:=M_k(\Phi_X)\otimes_\bbZ \bbQ$.
        \item The \emph{group of $k$-dimensional rational tropical cycles} as $Z_k(\Phi_X)_\bbQ := Z_k(\Phi_X)\otimes_\bbZ \bbQ$.
    \end{itemize}
\end{defi}

\begin{lem}
    Suppose $\Phi_X$ is a linear poic-complex and $k\geq0$ is an integer. The natural maps $M_k(\Phi^\prime_{X\circ S})_\bbQ\to Z_k(\Phi_X)_\bbQ$, where $S\colon \Phi^\prime\to\Phi$ is a subdivision, induce an isomorphism $\varinjlim\limits_{\Subd(\Phi)} M_k(\bullet_X)_\bbQ\cong Z_k(\Phi_X)_\bbQ$.
\end{lem}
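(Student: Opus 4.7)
The plan is to reduce the statement to the standard categorical fact that $-\otimes_\bbZ \bbQ$ commutes with all small colimits, being left adjoint to the restriction-of-scalars functor from $\bbQ$-vector spaces to abelian groups.

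First, I would recall from the paragraph preceding Definition \ref{defi: rational mw} that $Z_k(\Phi_X)$ is by construction the colimit of the functor $M_k(\bullet_X)\colon \Subd(\Phi)^{\op}\to \text{Ab}$ computed in the category of abelian groups, with the natural maps $M_k(\Phi^\prime_{X\circ S}) \to Z_k(\Phi_X)$ playing the role of the structure morphisms of this colimit. This reduces the lemma to comparing two tensor-coloring operations on the same diagram.

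Next, applying $-\otimes_\bbZ \bbQ$ to the colimit presentation would yield the chain of isomorphisms
\begin{equation*}
Z_k(\Phi_X)_\bbQ = Z_k(\Phi_X)\otimes_\bbZ \bbQ \cong \bigl(\varinjlim_{\Subd(\Phi)^{\op}} M_k(\bullet_X)\bigr)\otimes_\bbZ \bbQ \cong \varinjlim_{\Subd(\Phi)^{\op}} M_k(\bullet_X)_\bbQ,
\end{equation*}
where the last isomorphism invokes the adjunction. By the naturality of the tensor product, this composite is induced precisely by the maps $M_k(\Phi^\prime_{X\circ S})_\bbQ \to Z_k(\Phi_X)_\bbQ$ obtained by tensoring the colimit structure maps with $\bbQ$, which is the content of the lemma.

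I do not anticipate any serious obstacle, as the argument is purely formal. The only bookkeeping required is to verify that the maps in the statement do form a cocone over the diagram $S\mapsto M_k(\Phi^\prime_{X\circ S})_\bbQ$, which is immediate from the functoriality of $-\otimes_\bbZ\bbQ$ applied to the transition maps $S^*$, and that the induced universal arrow agrees with the canonical comparison isomorphism above.
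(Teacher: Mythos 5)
Your proposal is correct and follows essentially the same route as the paper: the paper's proof likewise identifies $M_k(\bullet_X)_\bbQ$ with $M_k(\bullet_X)\otimes_\bbZ\bbQ$ as functors and passes the tensor product through the colimit defining $Z_k(\Phi_X)$, with the induced isomorphism factoring through the natural maps $M_k(\Phi^\prime_{X\circ S})_\bbQ\to Z_k(\Phi_X)_\bbQ$. You merely make explicit the justification (left adjointness of $-\otimes_\bbZ\bbQ$) that the paper leaves implicit.
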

\begin{proof}
    The functor $M_k(\bullet_X)_\bbQ$ is naturally isomorphic to $M_k(\bullet_X)\otimes_\bbZ\bbQ$. Hence, these natural isomorphisms induce an isomorphim $\varinjlim\limits_{\Subd(\Phi)} M_k(\bullet_X)_\bbQ\cong Z_k(\Phi_X)_\bbQ$, which factors through the natural maps $M_k(\bullet_X)\otimes_\bbZ\bbQ \to Z_k(\Phi_X)\otimes_\bbZ\bbQ$.
\end{proof}

\begin{remark}
    The above definition has the same outcome as considering $\bbQ$-valued functions in Definitions \ref{defi: general weights} and \ref{defi: balanced weight}. This direction would define a functor with the same properties regarding subdivisions, and the analogous colimit would necessarily be isomorphic to the rational tropical cycles defined above.
\end{remark}

\subsection{Realization of moduli spaces of rational tropical curves.} \label{ssection: realization of mod spaces of rat trop curves}
Let $n\geq 3$ be an integer. We quickly explain, following  \cite{GathmannKerberMarkwigTFMSTC}, how the topological space $\cM_{0,n}^{\trop}$ can be embedded as an $(n-3)$-dimensional fan through the distance map. As previously mentioned, a point in this space consists of an $n$-marked metric graph $\Gamma$ (as a metric space), where we denote the marked legs by $\ell_1(\Gamma),\dots,\ell_n(\Gamma)$. We let $\dist_\Gamma$ denote the distance function of this metric space. Consider the vector space $\bbR^{\binom{n}{2}}$ indexed by the $2$-element subsets of $\{1,\dots,n\}$ together with the linear map $M_n:\bbR^n\to \bbR^{\binom{n}{2}},(x_i)_{1\leq i\leq n}\mapsto (x_i+x_j)_{\{i,j\}\subset\{1,\dots,n\}}$. Let $Q_n$ denote the cokernel of $M_n$. It is shown in Theorem 4.2 of \cite{SpeyerSturmfels} that the distance map 
\begin{equation}
    \dist: \cM^{\trop}_{0,n}\to Q_n, \Gamma\mapsto (\dist_\Gamma(\ell_i(\Gamma),\ell_j(\Gamma)))_{\{i,j\}\subset \{1,\dots,n\}}\label{eq: distance map}
\end{equation}
embeds the space $\cM^{\trop}_{0,n}$ as a simplicial fan of pure dimension $(n-3)$, where the $k$-dimesional cones are given by the combinatorial types. These are the same (by definition) as the isomorphism classes of our category $\bbG_{0,n}$, and the cones of this fan correspond with the respective cones of metrics. Furthermore, this fan is rational with respect to the lattice $N_{\dist_n}$ generated by the vectors $v_I$, where $I\subset \{1,\dots,n\}$ with $1<\#I<n-1$, given by the image of the tree with a unique bounded edge of length $1$ having the marked legs with labels in $I$ on one side of the edge and the marked legs with labels in $\{1,\dots,n\}\backslash I$ on the other. So, in other words, the distance map \eqref{eq: distance map} defines a morphism of poic-complexes $\dist_n\colon \Mtrop_{0,n}\to \underline{\left(N_{\dist_n}\right)_\bbR}$. This makes $\Mtrop_{0,n}$ into a linear poic-complex. We call this linear poic-complex \emph{the linear poic-complex of $n$-marked trees}. We will always consider $\Mtrop_{0,n}$ as a linear poic-complex in this way, so we will simply denote it by $\Mtrop_{0,n}$. We further remark that this linear poic-complex is irreducible.
\subsection{Spanning trees and forgetting the marking} We describe the spanning tree fibrations (Definition 4.5.7 of \cite{DARB}), which consists of linear poic-fibrations over the moduli spaces of tropical curves. Suppose that $A$ is a finite set and $g\geq0$ is an integer with $2g+\#A-2>0$. From Proposition \ref{prop: Ggn iso classes}, it follows that $\Mtrop_{g,A}$ is a poic-space. Let $\mathbbm{g}$ denote the set $\{1,\dots,g,1^*,\dots,g^*\}$ where $i^*$ is just a symbol for $1\leq i\leq g$. We seek to produce a genus-$g$ $A$-marked discrete graph out of a $(A\sqcup \mathbbm{g})$-marked tree by joining the $i$- and $i^*$-legs into a new edge.

\begin{const}\label{const: glue cut cycles}
   Let $T$ be an object of $\bbG_{0,A\sqcup \mathbbm{g}}$. Observe that the following map
    \begin{equation*} \iota_{\st_{g,A}(T)}\colon  F(T)\to F(T), h\mapsto\begin{cases}
        \iota_T(h),& h\not\in L(T),\\
        \ell_i,& h=\ell_i \,(i\in A),\\
        \ell_{i^*},& h=\ell_{i} \,(1\leq i\leq g),\\
        \ell_{i},& h=\ell_{i^*} \,(1\leq i\leq g),
    \end{cases}\end{equation*}
    is a well defined an involution of $F(T)$ which together with the root map $r_T$ and the marking on the $A$-legs gives rise to a connected $A$-marked graph $\st_{g,A}(T)$ of genus-$g$ (the $\st$ stands for \emph{spanning tree}). In Proposition 4.5.2 of \cite{DARB} we show that this construction gives rise to an essentially surjective functor
    \begin{equation}
    {\st_{g,n}}\colon  \bbG^{\op}_{0,2g+n}\to \bbG_{g,n}^{\op}.\label{eq: spanning tree functor genus g n marked}
    \end{equation}
    Some notational remarks are in order. Out of $T$ we obtained an object $\st_{g,A}(T)$ of $\bbG_{g,A}^{\op}$. For $1\leq i\leq g$ let $e_i$ denote the edge of $\st_{g,A}(T)$ given by the orbit $\{\ell_{i}(T),\ell_{i^*}(T)\}$. Observe that the graph $\st_{g,A}(T)$ has the following set of edges
    \begin{equation*}
        E(\st_{g,A}(T)) = E(T) \cup \{e_1,\dots,e_g\}.
    \end{equation*} 
    Therefore the poic $\sigma_T$ can be identified with a face of $\sigma_{\st_{g,A}(T)}$. Furthermore, by identifying the $i$th-coordinate with the coordinate corresponding to the edge $e_i$, the poic $\bbR_{>0}^g$ corresponds to the relative interior of the face of $\sigma_{\st_{g,A}(T)}$ given by the $\{e_i\}_{i=1}^g$. Therefore the product $\sigma_T\times \bbR_{>0}^g$ has a natural inclusion morphism into $\sigma_{\st_{g,A} (T)}$. We let $\ST_T:=\sigma_T\times \bbR_{>0}^g$, and denote the inclusion morphism by $\eta_{\st_{g,A},T }\colon \ST_T\to \sigma_{\st_{g,A}(T)}$. In addition, the $\dist_{A\sqcup\mathbbm{g}}$ morphism and the natural projection give a morphism of poic-complexes 
    \begin{equation}
        D_{g,A}\colon \ST_{g,A}\to \underline{\left(N_{\dist_{A\sqcup\mathbbm{g}}}\right)_\bbR}.\label{eq: distance morphism general}
    \end{equation}
\end{const}
\begin{defi}
    The morphism of poic-complexes \eqref{eq: distance morphism general} makes $\ST_{g,A}$ into a linear poic-complex. We call this the \emph{linear poic-complex of spanning trees of genus-$g$ $A$-marked graphs}. In the spirit of notational simplicity, and, since we will always consider $\ST_{g,A}$ as a linear poic-complex in this way, we will just denote it by $\ST_{g,A}$. 
\end{defi}
It is shown in Proposition 4.5.8 that $\ST_{g,A}$ is pure of rank $3g+\#A-3$ and the poic-fibration $\st_{g,A}$ is irreducible. We now describe the forgetting the marking morphism which has readily been described in Section 4.6 of \cite{DARB}.
\begin{const}
    Let $G$ be an object of $\bbG_{g,A}$. We construct an object $\ft_{a}G$ of $\bbG_{g,A\backslash a}$ given by forgetting the $a$-leg of $G$. The construction follows a detailed analysis around the vertex incident to this leg. Let $V_a\in V(G)$ denote the vertex incident to $\ell_a(G)$.
    \begin{enumerate}
        \item If $\val (V_a) >3$, then set $F\left(\ft_{a}G\right) := F(G)\backslash\ell_a(G)$. The root, involution, and marking maps of $G$ restrict to respective root, involution and marking maps of $F(\ft_{a}G)$, and hence define an $A\backslash a$-marked graph $\ft_{a}G$.
        \item If $\val (V_a) =3$ with $V_a$ incident to two edges, then $r_G^{-1}(V_a) = \ell_a\cup\{f_1,f_2,V_a\}$. In this case, we set $F(\ft_{a}G) = F(G)\backslash r_G^{-1}(V_a)$ and $r_{\ft_{a}G} = r_G$. To define $\iota_{\ft_{a}G}$ it is only necessary to specify where to map $\iota_G(f_1)$ and $\iota_G(f_2)$, for which we simply put
        \begin{align*}
            &\iota_{\ft_{a}G}\left(\iota_G(f_1)\right) = \iota_G(f_2),
            &\iota_{\ft_{a}G}\left(\iota_G(f_2)\right) = \iota_G(f_1).
        \end{align*}
        In this case, the triple $(F(\ft_{a}G),r_{\ft_{a}G},\iota_{\ft_{a}G})$ defines an $A\backslash a$-marked graph $\ft_{a}G$.
        \item If $\val V_a = 3$ with $V_a$ incident to an additional leg, then $r_G^{-1}(V_a) = \ell_a\cup \{l,f,V_a\}$ with $\{l\}\in L(G)$. In this situation, set $F(\ft_aG) =F(G)\backslash \left(\ell_a\cup \{f,\iota_G(f), V_a\}\right)$ and $\iota_{\ft_aG} = \iota_G$. To define $r_{\ft_aG}$ it is sufficient to specify where to map $l$, for which we set $r_{\ft_aG}(l) = r_G(\iota_G(f))$. Therefore, the triple $(F(\ft_a(G)),r_{\ft_a(G)},\iota_{\ft_aG})$ defines an $A\backslash a$-marked graph $\ft_aG$.
    \end{enumerate}
    We refer directly to Construction 4.6.1 of \cite{DARB} for the case of morphisms and move forward to the introduction of poic-morphism $\eta_{\ft_a,G}\colon\sigma_G\to \sigma_{\ft_aG}$ by studying the same three cases. The only possibilities for $E(\ft_{a}G)$ are as follows:
    \begin{enumerate}
        \item If $\val(V_a)>3$, then $E(G)=E(\ft_{a}G)$ and we set $\eta_{\ft_{a},G}= \Id_{\bbR_{\geq0}^{E(G)}}$.
        \item If $\val(V_a) =3$ with $V_a$ incident to two edges, then $r_G^{-1}(V_a) = \ell_a\cup\{f_1,f_2,V_a\}$, and $e=\{\iota_G(f_1),\iota_G(f_2)\}$ defines an edge of $E(\ft_{a}G)$. For $i=1,2$, let $e_i\in E(G)$ denote the edge given by $\{f_i,\iota_G(f_i)\}$. In this case
            \begin{equation*}
                E(\ft_{a}G) = E(G)\backslash\{e_1,e_2\} \cup \{e\},
            \end{equation*}
        and we let $\eta_{\ft_{a},G}\colon\sigma_G\to\sigma_{\ft_{a}G}$ be the map given by 
            \begin{equation*}
                \eta_{\ft_{a},G}(\delta):=\begin{cases}
                \delta(h) = \delta(h), & \textnormal{ if }h\in E(\ft_{a}G)\backslash\{e\},\\
                \delta(h)=\delta(e_1)+\delta(e_2), &\textnormal{ if } h=e.\\
                \end{cases}
            \end{equation*}
        \item If $\val V_a = 3$ with $V_a$ incident to an additional leg, then $r_G^{-1}(V_a) = \ell_a\cup \{l,f,V_a\}$ with $\{l\}\in L(G)$. The set $\{f,\iota_G(f)\}$ defines an edge $e^\prime\in E(G)$ and $E(\ft_aG) = E(G)\backslash e^\prime$. We set $\eta_{\ft_a,G}\colon\sigma_G\to\sigma_{\ft_aG}$ as the projection to the face defined by $E(\ft_aG)\subset E(G)$. 
    \end{enumerate}
\end{const}
 It is shown in Lemma 4.6.2 of \cite{DARB} that the above defines a morphism of poic-spaces
    \begin{equation}
        \ft_{a}:\bbG_{g,A}\to\bbG_{g,A\backslash a}, G\mapsto \ft_{a}G. \label{eq: forgetting the a-marking functor}
    \end{equation}
    Moreover, if $b\in A\backslash a$ is an additional element, then ${\ft_{a}}\circ {\ft_{b}}={\ft_{b}}\circ {\ft_{a}}$. For a general subset $I\subset A$ we set (as forgetting everything but $I$) $\ft_I := \ft_{i_1}\circ \ft_{i_2}\circ\dots\circ \ft_{i_k}$, where $I^c=\{i_1,\dots,i_k\}$.
    
    In the case of trivial genus, the natural projection maps $\bbR^{A}\to\bbR^{A\backslash a}$ and $\bbR^{\binom{A}{2}}\to\bbR^{\binom{A\backslash a}{2}}$, given respectively by forgetting the $a$-coordinate and forgetting any coordinate containing $a$, induce a linear map between the lattices $p_a\colon N_{\dist_{A}}\to N_{\dist_{A\backslash a}}$. If we set $(\ft_{a})_\tint := p_a$, then Construction 4.6.4 of loc. cit. shows that we obtain a morphism of linear poic-complexes 
    \begin{equation*}
        \ft_{a}\colon \Mtrop_{0,A}\to\Mtrop_{0,A\backslash a},
    \end{equation*}
    and Lemma 4.6.2 of loc. cit. shows that it is actually a proper morphism of linear poic-complexes. We take this space to state the following useful lemma, which is Lemma 2.3 of \cite{GathmannMarkwigOchseTMSSMC}.
    \begin{lem}\label{lem: zero vectors in QN}
    A vector $\vec{v}\in (N_{\dist_A})_\bbR$ is zero if and only if $p_{I^c}(\vec{v}) = 0$ for every $I\subset A$ with $\# I = 4$.
    \end{lem}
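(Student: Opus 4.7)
The ``only if'' direction is immediate from linearity of the projections $p_{I^c}$. For the converse, I first dispose of small cases: if $\#A \leq 3$, then $M_A$ is surjective and $(N_{\dist_A})_\bbR = 0$, so there is nothing to prove; if $\#A = 4$, then the only $4$-subset is $A$ itself, and the hypothesis already reads $\vec v = 0$. Thus I focus on the case $\#A \geq 5$.

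My plan is to lift $\vec v$ to a representative $\tilde v = (\tilde v_{\{i,j\}}) \in \bbR^{\binom{A}{2}}$ and construct a global $y = (y_i)_{i \in A} \in \bbR^A$ with $\tilde v_{\{i,j\}} = y_i + y_j$ for every pair, which shows $\tilde v \in M_A(\bbR^A)$ and hence $\vec v = 0$. For each $4$-subset $I \subset A$, the hypothesis gives $\tilde v|_{\binom{I}{2}} \in M_I(\bbR^I)$; since $M_I$ is injective whenever $\#I \geq 3$ (the equations $x_i + x_j = 0$ for all $i \neq j$ force $x \equiv 0$), there is a \emph{unique} $x^I = (x_i^I)_{i \in I} \in \bbR^I$ with $\tilde v_{\{i,j\}} = x_i^I + x_j^I$ on $\binom{I}{2}$. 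The same uniqueness applied to any $K \subseteq I \cap J$ with $\#K \geq 3$ yields the crucial consistency relation $x^I|_K = x^J|_K$. I then define $y_i := x_i^I$ for any $4$-subset $I$ containing $i$; once this is shown to be independent of the choice of $I$, the identity $\tilde v_{\{i,j\}} = y_i + y_j$ follows immediately by picking any $4$-subset $I \supset \{i,j\}$.

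The main obstacle is precisely the well-definedness of $y_i$. Given two $4$-subsets $I, J$ both containing $i$, the idea is a chain argument in the Johnson graph of $3$-subsets of $A \setminus \{i\}$: the $3$-subsets $I \setminus \{i\}$ and $J \setminus \{i\}$ can be joined by a sequence of single-element swaps, producing $4$-subsets $I = I_0, \ldots, I_n = J$ all containing $i$ and satisfying $\#(I_k \cap I_{k+1}) = 3$ for every $k$. The consistency relation then propagates $x_i^{I_k} = x_i^{I_{k+1}}$ step by step along the chain, giving $x_i^I = x_i^J$. This step is the only place where the hypothesis at \emph{all} $4$-subsets simultaneously (rather than just a single one) is genuinely used; everything else is routine bookkeeping with the injectivity of $M_I$ on sets of size $\geq 3$.
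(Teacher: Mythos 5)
Your proof is correct. The paper itself gives no argument for this lemma --- it simply quotes it as Lemma~2.3 of \cite{GathmannMarkwigOchseTMSSMC} --- so there is no internal proof to compare against; your write-up supplies a self-contained justification. The key steps all check out: $M_K$ is indeed injective for $\#K\geq 3$ (the three equations $x_i+x_j=x_i+x_k=x_j+x_k=0$ force the coordinates to vanish), the restriction identity $\tilde v|_{\binom{K}{2}}=M_K(x^I|_K)$ gives the consistency relation $x^I|_K=x^J|_K$ for any $K\subseteq I\cap J$ with $\#K\geq3$, and the Johnson-graph chain of single-element swaps among $3$-subsets of $A\setminus\{i\}$ (which is connected since $\#(A\setminus\{i\})\geq 4$ when $\#A\geq 5$) propagates $x_i^I=x_i^J$, making $y_i$ well defined. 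The resulting $y$ satisfies $\tilde v=M_A(y)$, so $\vec v=0$ in the cokernel; the degenerate cases $\#A\leq 4$ are correctly disposed of, and your argument in fact proves the statement for all of $Q_A$, not just the subspace $(N_{\dist_A})_\bbR$, which is harmless.
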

To finalize this subsection, we discuss the extension of the forgetting the marking morphisms to the case of the linear poic-fibrations of spanning trees. 
\begin{const}\label{const: forgetting marking spanning tree}
   Let $T$ be an object of $\bbG_{0,\mathbbm{g}\sqcup A}$ and consider the morphism of poics 
   \begin{equation}
   \eta_{\mathfrak{ft}_a^\pcmplxs,T}\colon \ST_{g,A}(T)\to\ST_{g,A\backslash a}(\ft_aT) \label{eq: morphism of poics}    
   \end{equation}
   given by the linear map
    \begin{equation*}
        \begin{pmatrix}
        \eta_{\ft_a,T}&0\\
        M(T,a)&\Id_{g\times g}
        \end{pmatrix}\colon \sigma_T\times\bbR_{>0}^g\to \sigma_{\ft_aT}\times\bbR_{>0}^g,
    \end{equation*}
    where $M(T,a)$ is the $g\times\#E(T)$-matrix such that for $1\leq i\leq g$ and $e\in E(T)$ its $(i,e)$-entry is
    \begin{equation*}
        M(T,a)_{i,e} = \begin{cases}1,& \textnormal{ if }\val r_T\left(\ell_a(T)\right)=3 \textnormal{ and }\partial\ell_a(T)=\partial\ell_{i}(\ft_aT)\subset \partial e,\\
        1,& \textnormal{ if }\val r_T\left(\ell_a(T)\right)=3 \textnormal{ and }\partial\ell_a(T)=\partial\ell_{i^*}(\ft_aT)\subset \partial e,\\
        0,& \textnormal{ else}.
        \end{cases}
    \end{equation*}
    It is shown in Proposition 4.6.5 of \cite{DARB} that we obtain a weakly proper morphism of linear poic-fibrations 
    \begin{equation}
        \mathfrak{ft}_{a}\colon \st_{g,A}\to\st_{g,A\backslash a},\label{eq: forgetting the a-mark fibrations.}
    \end{equation}
    by letting $\mathfrak{ft}_a^\pcmplxs$ denote the morphism of linear poic-complexes given by the morphisms \eqref{eq: morphism of poics} and $(\mathfrak{ft}^\pcmplxs_a)_{\tint}:=(\ft_a)_{\tint} =p_a$, and letting $\mathfrak{ft}_a^\pspcs$ denote the morphism of poic-spaces $\ft_a\colon \Mtrop_{g,A}\to \Mtrop_{g,A\backslash a}$.
\end{const}

\section{Discrete admissible covers.}
Let $h,m\geq 0$ be integers with $2h+m-2>0$, let $d>0$ be an integer, and let $\vec{\mu}=(\mu_1,\dots,\mu_m)$ be a vector of partitions of $d$. In this section we introduce the category $\bbAC_{d,h}(\vec{\mu})$ of degree-$d$ discrete admissible covers of genus-$h$ $m$-marked curves with ramification profiles above the marked legs prescribed by $\vec{\mu}$. We recall some of the well-known combinatorial results underlying discrete admissible covers and subsequently introduce the cones of metrics associated to a cover. This association gives rise to the poic-space $\AC_{d,h}(\vec{\mu})$, which lies in the heart of our enterprise. This poic-space comes with poic-morphisms $\src\colon \AC_{d,h}(\vec{\mu})\to \Mtrop_{g,n}$ and $\trgt\colon\AC_{d,h}(\vec{\mu})\to \Mtrop_{h,m}$, where $n=\sum_i \ell(\mu_i)$ is the marking of the source curve and $g$ its genus (this is given by the Riemann-Hurwitz formula). Subsequently, we revisit the linear poic-fibrations $\st_{g,n}$ and $\st_{h,m}$, and use them together with these maps to introduce a collection of linear poic-fibrations $\est^J_{d,h,\vec{\mu}}\colon \EST^J_{d,h,\vec{\mu}}\to \AC_{d,h}(\vec{\mu})$ where $J\subset \{1,\dots,n\}$. The section is closed with a revisit of the forgetful morphisms on the spanning tree fibrations.

\subsection{Discrete admissible covers.} In this section we let $G_1$ and $G_2$ denote two discrete graphs. We will call a map of graphs $f\colon G_1\to G_2$ \emph{non-contracting} if it does not involve the contraction of edges or legs of $G_1$ to vertices. In other words, $f^{-1}(V(G_2))\subset V(G_1)$.
\begin{defi}
    A \emph{degree assignment} on $G_1$ is a map $d\colon F(G_1)\to \bbZ_{\geq0}$ such that $d\circ \iota = d$.
\end{defi}

\begin{nota}
    Let $d$ be a degree assignment on $G_1$. If $e\in E(G_1)$, then there is an $E\in F(G_1)$ such that $e=\{ E,\iota(E)\}$ and by definition $d(E)=d(\iota(E))$. We will abuse notation and refer to this common value simply as $d(e)$. Analogously, if $\ell\in L(G_1)$ and $\ell=\{L\}$, then $d(\ell)$ will denote $d(L)$.
\end{nota}

\begin{remark}
    It is clear that a degree assignment on $G_1$ is simply an integral valued function on $V(G_1)\cup E(G_1)\cup L(G_1)$. 
\end{remark}

\begin{defi}
     A \emph{harmonic morphism} $\pi\colon G_1\to G_2$ consists of a surjective non-contracting map of graphs $\pi\colon G_1\to G_2$ and a degree assignment $d_\pi$ on $G_1$ subject to the following condition: For any $V\in V(G_1)$ and $H^\prime\in r_{G_2}^{-1}\left(\pi(V)\right)\backslash\{\pi(V)\}$ 
     \begin{equation*}
     d_\pi(V) = \sum_{H\in \pi^{-1}(H^\prime)\cap r_{G_1}^{-1}(V)}d_\pi(H).
     \end{equation*}
    If $\pi\colon G_1\to G_2$ is a harmonic morphism, then the \emph{local degree at } $V\in V(G_1)$ is the integer $d_\pi(V)$.
\end{defi}
\begin{remark}
Suppose $\pi\colon G_1\to G_2$ is a harmonic morphism. As has been previously remarked, a map of graphs cannot map a leg to an edge, or an edge to a leg. Additionally, the condition on $\pi$ being non-contracting implies that it must map edges to edges, legs to legs, and vertices to vertices.
\end{remark}
\begin{lem}
Let $\pi\colon G_1\to G_2$ be a harmonic morphism. If $G_2$ is connected, then the following equality holds
    \begin{equation*}
    \sum_{V\in \pi^{-1}(A)}d_\pi(V) = \sum_{W\in \pi^{-1}(B)}d_\pi(W),
    \end{equation*}
    for any $A,B\in V(G_2)$.
\end{lem}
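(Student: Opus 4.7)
The plan is to show that the sum $S(A):=\sum_{V\in\pi^{-1}(A)}d_\pi(V)$ is constant as a function of $A\in V(G_2)$. By the connectivity of $G_2$, any two vertices can be joined by a chain of edges with pairwise intersecting boundaries, so it suffices to prove that $S(A)=S(B)$ whenever $A$ and $B$ are the two endpoints of a single edge $e'$ of $G_2$.

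So fix an edge $e'=\{H_A,H_B\}$ of $G_2$ with $r_{G_2}(H_A)=A$, $r_{G_2}(H_B)=B$, and $\iota_{G_2}(H_A)=H_B$. The first step is to rewrite $S(A)$ as a sum over $\pi^{-1}(H_A)$. For each $V\in\pi^{-1}(A)$, applying the harmonic condition at $V$ with the choice $H^\prime=H_A\in r_{G_2}^{-1}(A)\setminus\{A\}$ gives
\begin{equation*}
d_\pi(V)=\sum_{H\in\pi^{-1}(H_A)\cap r_{G_1}^{-1}(V)}d_\pi(H).
\end{equation*}
Summing over $V\in\pi^{-1}(A)$ and using the fact that every $H\in\pi^{-1}(H_A)$ satisfies $\pi(r_{G_1}(H))=r_{G_2}(H_A)=A$, so that $r_{G_1}(H)\in\pi^{-1}(A)$, the iterated sum collapses into
\begin{equation*}
S(A)=\sum_{H\in\pi^{-1}(H_A)}d_\pi(H).
\end{equation*}
By exactly the same reasoning (applied with $H_B$ instead of $H_A$), $S(B)=\sum_{H\in\pi^{-1}(H_B)}d_\pi(H)$.

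The final step compares these two sums via the involution. Since $\pi\circ\iota_{G_1}=\iota_{G_2}\circ\pi$, the map $\iota_{G_1}$ restricts to a bijection $\pi^{-1}(H_A)\to\pi^{-1}(\iota_{G_2}(H_A))=\pi^{-1}(H_B)$, and since $d_\pi\circ\iota_{G_1}=d_\pi$ by the definition of a degree assignment, the two sums above are equal term by term. Hence $S(A)=S(B)$, which proves the claim. No step is a serious obstacle here; the only thing to be careful about is correctly handling the case of loops (where $A=B$, and the statement is trivial) and verifying that the harmonic condition may indeed be applied with $H_A$ and $H_B$, which is immediate because these elements lie in $r_{G_2}^{-1}(A)\setminus\{A\}$ and $r_{G_2}^{-1}(B)\setminus\{B\}$ respectively.
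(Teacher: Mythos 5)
Your proof is correct. The paper itself gives no argument here---it simply declares the statement a well-known fact from the theory of harmonic morphisms---so there is nothing to compare against, but your write-up is the standard and complete argument: reduce by connectivity to adjacent vertices, use the harmonic condition at each preimage vertex to rewrite $S(A)$ as $\sum_{H\in\pi^{-1}(H_A)}d_\pi(H)$ (the collapse of the double sum is justified because $r_{G_1}$ sends $\pi^{-1}(H_A)$ into $\pi^{-1}(A)$, partitioning the fiber), and then transport via the involution using $d_\pi\circ\iota_{G_1}=d_\pi$. You also correctly verify the one point that needs checking, namely that $H_A$ is a legitimate choice of $H^\prime$ in the harmonic condition since $H_A\in r_{G_2}^{-1}(A)\setminus\{A\}$.
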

\begin{proof}
     This is a well-known result in the theory of harmonic morphisms on graphs.
\end{proof}
\begin{defi}
    The \emph{degree} of a harmonic morphism $\pi\colon G_1\to G_2$ with $G_2$ connected is defined as the integer
    \begin{equation*}
        \deg \pi := \sum_{V\in \pi^{-1}(W)}d_\pi(V),
    \end{equation*}
    where $W\in V(G_2)$ is any vertex. The previous lemma implies the invariance of this number with respect to the vertices of $G_2$.
\end{defi}
\begin{defi}
    Given a harmonic morphism $\pi\colon G_1\to G_2$, the \emph{Riemann-Hurwitz number (RH number) of $\pi$ at $V\in V(G_1)$} is defined as
    \begin{equation*}
    r_\pi(V):= \val (V)-2 - d_\pi(V)(\val(\pi(V))-2).
    \end{equation*}
    If $G_2$ is connected, then the degrees of the edges in the fiber of $e$ form a partition of $\deg \pi$. This unordered partition is called the \emph{ramification profile of $\pi$ at $e$} and we denote it by $\ram_\pi(e)$.
\end{defi}

\begin{prop}[Riemann-Hurwitz equality] Suppose $G_2$ is connected and $\pi\colon G_1\to G_2$ is a harmonic morphism. The genera $g(G_1)$ and $g(G_2)$ are related to the degree of $\pi$ by:
\begin{equation}
\#L(G_1) + 2(g(G_1)-1) = \deg\pi \cdot ( \#L(G_2) + 2(g(G_2)-1)) + \sum_{V\in V(G_1)} r_\pi(V).\label{eq: Riemann-Hurwitz}
\end{equation}
\end{prop}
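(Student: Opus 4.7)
The plan is to establish the equality by a direct counting argument based on the definitions: expand $\sum_{V \in V(G_1)} r_\pi(V)$ using the formula for the Riemann-Hurwitz number, split it into three sums, and evaluate each one using basic combinatorics of flags together with the degree condition on $\pi$.

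First, I would unpack the Riemann-Hurwitz number to write
\[
\sum_{V \in V(G_1)} r_\pi(V) \;=\; \sum_{V \in V(G_1)} \val(V) \;-\; 2\,\#V(G_1) \;-\; \sum_{V \in V(G_1)} d_\pi(V)\bigl(\val(\pi(V)) - 2\bigr).
\]
For the first term, the definition $\val(V) = \#r_{G_1}^{-1}(V) - 1$ identifies $\val(V)$ with the number of non-vertex flags rooted at $V$, so summing over vertices yields $\sum_V \val(V) = \#H(G_1) = 2\,\#E(G_1) + \#L(G_1)$, since each edge contributes two flags and each leg contributes one.

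For the third sum, I would group vertices of $G_1$ by their image in $G_2$ and apply the preceding lemma: for each $W \in V(G_2)$, the total local degree over the fiber is $\sum_{V \in \pi^{-1}(W)} d_\pi(V) = \deg \pi$. This gives
\[
\sum_{V \in V(G_1)} d_\pi(V)\bigl(\val(\pi(V)) - 2\bigr) \;=\; \deg\pi \cdot \sum_{W \in V(G_2)} \bigl(\val(W) - 2\bigr) \;=\; \deg\pi \cdot \bigl(2\,\#E(G_2) + \#L(G_2) - 2\,\#V(G_2)\bigr),
\]
by the same flag-count identity applied to $G_2$.

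Finally, I would substitute the genus formula $g(G_i) - 1 = \#E(G_i) - \#V(G_i)$ for both $G_1$ (assumed connected, so that its genus is defined; otherwise the proof goes through componentwise) and $G_2$, to rewrite
\[
\sum_{V \in V(G_1)} r_\pi(V) \;=\; 2(g(G_1) - 1) + \#L(G_1) \;-\; \deg\pi \cdot \bigl(2(g(G_2)-1) + \#L(G_2)\bigr),
\]
and a simple rearrangement yields \eqref{eq: Riemann-Hurwitz}. There is no real obstacle here; the only subtle point is ensuring the identity $\sum_V \val(V) = 2\#E + \#L$ is applied correctly in light of the convention $\val(V) = \#r_G^{-1}(V) - 1$ (the $-1$ accounts for $r_G$ being idempotent on $V(G)$), and that the fiberwise degree identity is invoked before rearranging the sum over $V(G_1)$ into a sum over $V(G_2)$.
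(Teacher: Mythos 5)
Your proposal is correct and follows essentially the same route as the paper's proof: expand the Riemann--Hurwitz numbers, use the flag count $\sum_V \val(V) = 2\#E(G_1)+\#L(G_1)$, and regroup the third sum fiberwise over $V(G_2)$ via the constant-degree lemma before substituting the genus formula. The only additions are your (correct) remarks on the valency convention and on $G_1$ needing to be connected for $g(G_1)$ to be defined.
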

\begin{proof}
    Again, this is a well-known result. However, to convey familiarity with our notation and for the sake of completeness we provide a proof. For $V\in V(G_1)$ the RH number is defined as
    \begin{equation*}
    r_\pi(V) = \val V-2 - d_\pi(V) ( \val \pi(V)-2).
    \end{equation*}
    Therefore 
    \begin{equation*}
        \sum_{V\in V(G_1)}r_\pi(V) = \sum_{V\in V(G_1)}(\val V -2) - \sum_{V\in V(G_1)}d_\pi(V)(\val \pi( V) -2).
    \end{equation*}
    Furthermore, we observe that $\sum_{V\in V(G_1)}\val V = \#L(G_1) + 2\#E(G_1)$, and the previous equation yields:
    \begin{equation*}
        \sum_{V\in V(G_1)}r_\pi(V)=\#L(G_1)+2(g(G_1)-1)-\sum_{V\in V(G_1)}d_\pi(V)(\val \pi (V)-2).
    \end{equation*}
  The set $V(G_1)$ is partitioned as $V(G_1) = \bigsqcup_{A\in V(G_2)} \pi^{-1}(A)$, and we can therefore reorganize the sum:
   \begin{align*}
       \sum_{V\in V(G_1)}d_\pi(V)(\val \pi( V) -2) &= \sum_{A\in V(G_2)} \sum_{V\in \pi^{-1}(A)}d_\pi(V) (\val A-2)= \deg\pi\sum_{A\in V(G_2)}( \val A-2)\\
       &= \deg\pi( \#L(G_2) +2(g(G_2)-1)). 
   \end{align*}
   From this, the claimed equality holds.
\end{proof}
We now study the interaction between harmonic morphisms and edge contractions of the target. 
\begin{const}\label{const: edge contraction hm}
Suppose $\pi\colon G_1\to G_2$ is a harmonic morphism, with both $G_1$ and $G_2$ connected. Let $e\in E(G_2)$ be such that $\pi^{-1}(e)$ is a forest with corresponding connected components $T_1,\dots,T_m$. Since $e$ is an edge, it follows that $L(T_i)=\varnothing$ for all $1\leq i\leq m$. Following the notation introduced in Definition \ref{defi: contraction of legless subgraph}, the graph $G_1/\pi^{-1}(e)$ has the set of vertices
\begin{equation*}
    V(G/\pi^{-1}(e)) =\left( V(G)\backslash \bigcup_{1\leq i\leq m}V(T_i)\right) \cup \{V_{T_1},\dots, V_{T_m}\}.
\end{equation*}
In addition, the observation following this same definition shows that $g(G_1/\pi^{-1}(e)) = g(G_1)$. Let $p\colon G_2\to G_2/e$ denote the corresponding edge contraction map. The map $\pi$ induces a map of graphs $\pi_e\colon G_1/\pi^{-1}(e)\to G_2/e$ given by
\begin{equation*}
    \pi_e\colon F(G_1/\pi^{-1}(e))\to F(G_2/e), H\mapsto \begin{cases}
    p(f(H)),&\textnormal{ if }H\neq V_{T_i},\\
    V_e,&\textnormal{ if }H=V_{T_i}.
\end{cases}
\end{equation*}
The degree assignment $d_\pi$ induces the degree assignment
\begin{equation*}
d_e\colon F(G_1/\pi^{-1}(e))\to \bbZ_{>0}, H\mapsto \begin{cases}
    d_\pi(H),& \textnormal{ if }H\neq V_{T_i},\\
    \sum_{h\in E(T_i)}d_\pi(h),& \textnormal{ if }H=V_{T_i}.
\end{cases}
\end{equation*}
\end{const}
\begin{prop}\label{prop: RH numbers add in contractions}
    Following the notation of Construction \ref{const: edge contraction hm}, the map $\pi_e$ and the degree assignment $d_e$ define a harmonic morphism $\pi_e\colon G_1/\pi^{-1}(e)\to G_2/e$, such that for any $1\leq i\leq m$
    \begin{equation*}
    r_{\pi_e}(V_{T_i}) = \sum_{V\in V(T_i)} r_\pi(V).
    \end{equation*}
\end{prop}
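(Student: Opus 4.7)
The proof naturally splits into two steps: verifying that the pair $(\pi_e, d_e)$ defines a harmonic morphism, and then establishing the additivity of Riemann--Hurwitz numbers at the newly introduced vertex $V_{T_i}$. The first step is straightforward away from $V_{T_i}$: for a vertex $V$ of $G_1/\pi^{-1}(e)$ not lying in any of the $T_i$, the local incidence structure around $V$ and the values of $d_e$ there coincide with those of $\pi$ and $d_\pi$ at $V$ in $G_1$, so the harmonicity condition is inherited directly from $\pi$. Hence the nontrivial content of the harmonicity assertion is concentrated at the vertices $V_{T_i}$.

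To check harmonicity at $V_{T_i}$, I would fix a flag $H'$ of $G_2/e$ rooted at $V_e$ and distinct from $V_e$; such an $H'$ corresponds to a flag of $G_2$ rooted at some $A\in \partial e$ and not belonging to $e$. Because the flags of edges of $T_i$ all map into $e$ and therefore cannot map to $H'$, the harmonicity condition at $V_{T_i}$ with respect to $H'$ reduces to the identity
\begin{equation*}
    \sum_{V\in V(T_i)\cap \pi^{-1}(A)} d_\pi(V) \,=\, \sum_{h\in E(T_i)} d_\pi(h) \,=\, d_e(V_{T_i}).
\end{equation*}
This would follow by applying harmonicity of $\pi$ at each $V\in V(T_i)\cap \pi^{-1}(A)$ to a suitable flag of $e$ at $A$: this expresses $d_\pi(V)$ as a sum of the edge-degrees $d_\pi(h)$ over the edges $h\in E(T_i)$ incident to $V$ whose $V$-flag maps to that chosen flag of $e$. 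Summing over $V$ and observing that each edge of $T_i$ has two distinct endpoints in $V(T_i)$ (no loops, since $T_i$ is a tree) exactly one of which has its flag of $h$ mapping to the chosen flag of $e$, the outer sum collapses to $\sum_{h\in E(T_i)} d_\pi(h)$, as required.

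For the Riemann--Hurwitz additivity, the tree identity $\#E(T_i) = \#V(T_i)-1$ and a direct flag count give $\val(V_{T_i})-2 = \sum_{V\in V(T_i)}(\val(V)-2)$, and similarly $\val(V_e)-2 = \sum_{A\in \partial e}(\val(A)-2)$ in the case where $e$ is not a loop. Combining these with the harmonicity identity above, applied separately at each endpoint of $e$, would yield
\begin{equation*}
    \sum_{V\in V(T_i)} d_\pi(V)\bigl(\val(\pi(V))-2\bigr) \,=\, d_e(V_{T_i})\bigl(\val(V_e)-2\bigr),
\end{equation*}
and subtracting this equation from the valence identity produces the claim $r_{\pi_e}(V_{T_i}) = \sum_{V\in V(T_i)} r_\pi(V)$.

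The main obstacle I anticipate is the flag-level bookkeeping in the harmonicity identity: one has to coordinate harmonicity of $\pi$ simultaneously at several vertices of $T_i$, track which flag of $e$ each $V$-flag of an edge of $T_i$ maps to, and exploit the fact that $T_i$ has no loops to ensure every edge of $T_i$ is counted exactly once on each side. Once this combinatorial identity is in place, the Riemann--Hurwitz computation reduces to a routine consequence of the valence formulas and the tree identity for $T_i$.
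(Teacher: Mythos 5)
Your proposal is correct and follows essentially the same route as the paper's proof: reduce harmonicity to the vertices $V_{T_i}$, establish the key identity $\sum_{h\in E(T_i)}d_\pi(h)=\sum_{V\in V(T_i)\cap\pi^{-1}(A)}d_\pi(V)$ by applying harmonicity of $\pi$ at the vertices over each endpoint of $e$ and using that every edge of the tree $T_i$ has exactly one endpoint over $A$, and then combine the valency identity $\val(V_{T_i})-2=\sum_{V\in V(T_i)}(\val V-2)$ with the partition of $V(T_i)$ into the fibers over $A$ and $B$ to get the Riemann--Hurwitz additivity. The only difference is presentational: you spell out the flag-level bookkeeping that the paper compresses into the sentence "the facts that $T_i$ is a tree and $\pi$ is harmonic imply...".
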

\begin{proof}
    Since $\pi$ is already a harmonic morphism, it is only necessary to show the condition on harmonic morphisms for $\pi_e$ at the vertices $V_{T_1},\dots,V_{T_m}$. Let $e\in E(G_2)$ be such that $\partial e = \{A,B\}$, and observe that for any $1\leq i\leq m$ the incident vertices to any edge of $T_i$ must lie over $A$ or $B$. The facts that $T_i$ is a tree and $\pi$ is harmonic, imply that
    \begin{equation*}
    \sum_{h\in E(T_i)}d_\pi(h) = \sum_{V\in V(T_i)\cap \pi^{-1}(A)}d_\pi(V) = \sum_{ V\in V(T_i)\cap \pi^{-1}(B)}d_\pi(V) .
    \end{equation*}
    Let $H\in H(G_2/e)$ be such that $r_{G_2/e}(H)=V_e$. It holds that $H\in F(G_2)$ and let us assume without loss of generality that $r_{G_2}(H) =A$. The set of flags of $G_1/\pi^{-1}(e)$ incident to $V_{T_i}$ and mapping to $H$ correspond to the flags of $G_1$ incident to the vertices of $T_i$ lying over $A$ and mapping to $H$. The latter implies that
    \begin{align*}
        \sum_{H^\prime\in {\pi_e}^{-1}(H)\cap r_{G_1/\pi^{-1}(e)}^{-1}(V_{T_i})}d_e(H^\prime) &= \sum_{V\in V(T_i)\cap \pi^{-1}(A)}\left(\sum_{H^\prime\in \pi^{-1}(H)\cap r_{G_1}^{-1}(V)}d_\pi(H^\prime)\right)\\
        &=\sum_{V\in V(T_i)\cap \pi^{-1}(A)}d_\pi(V)=d_e(V_{T_i}),
    \end{align*}   
    and shows that $\pi_e$ is a harmonic morphism. It remains to show the equality concerning RH numbers. As before, let $1\leq i\leq m$ be fixed, and let us keep the same notation. Notice that $\val V_e = \val A + \val B -2 $ and similarly
    \begin{equation}
    \val V_{T_i} = \sum_{V\in V(T_i)}\val (V)-2\#E(T_i) = 2+\sum_{V\in V(T_i)}(\val V-2).\label{eq: 3}
    \end{equation}
    We can rewrite the RH number of $V_{T_i}$ as
    \begin{equation*}
        r_{\pi_e}(V_{T_i})=\val V_{T_i}-2-\sum_{V\in V(T_i)\cap \pi^{-1}(A)}d_e(V) (\val A-2) - \sum_{V\in V(T_i)\cap \pi^{-1}(A)}d_e(V)(\val B-2).
    \end{equation*}
    Finally, we use the fact that $\pi^{-1}(A)\cap V(T_i)$ and $\pi^{-1}(B)\cap V(T_i)$ partition $V(T_i)$ (and express $\val V_{T_i}-2$ by means of \eqref{eq: 3}) to obtain
    \begin{equation*}
        r_{\pi_e}(V_{T_i}) =\sum_{V\in V(T_i)\cap \pi^{-1}(A)}r_\pi(V)+\sum_{V\in V(T_i)\cap \pi^{-1}(B)}r_\pi(V)=\sum_{V\in V(T_i)}r_\pi(V).
    \end{equation*}
    
\end{proof}
\begin{defi}
    A harmonic morphism $\pi\colon G_1\to G_2$ is a \emph{discrete admissible cover} if for every vertex $V\in V(G_1)$ the RH number vanishes: $r_\pi(V)=0$. From the previous proposition, it follows that for a discrete admissible cover the contraction of edges (with acyclic fibers) of the target gives rise to further discrete admissible covers.
\end{defi}

\begin{remark}
    We remark that the Riemann-Hurwitz formula shows that if $\pi\colon G_1\to G_2$ is a discrete admissible cover, then
    \begin{equation*}
        \#L(G_1) + 2(g(G_1)-1) = \deg\pi ( \#L(G_2) + 2(g(G_2)-1))
    \end{equation*}
\end{remark}

\subsection{Categories of discrete admissible covers.} 

In the same spirit as for $\bbG_{g,n}$, we construct a category of discrete admissible covers (and their contractions) by fixing the marking and genus of the target, as well as the degree of the covers and ramification profiles over the marked ends. For this we use the following notation:
\begin{nota}\label{nota: standing hdmmu}
    Let $h,d,m\geq 0$ be integers with $2h+m-2>0$, a vector $\vec{\mu}=(\mu_1,\dots,\mu_m)$ of partitions of $d$, set $n= \sum_{i=1}^m\ell(\mu_i)$, and let $g$ be defined by
\begin{equation*}
    n+2(g-1) = d\cdot(m+2(h-1)).
\end{equation*}
We assume that $g$ is an integer, which imposes further conditions on $d$, $m$ and $\vec{\mu}$. It is apparent (but worth mentioning) that this equation for $g$ comes from the Riemann-Hurwitz equality \eqref{eq: Riemann-Hurwitz}.
\end{nota}
\begin{defi}\label{defi: category achm mu}
Let $d$, $h$, $m$, $\vec{\mu}$, $n$ and $g$ be as above. The \emph{category of discrete admissible covers of genus-$h$ $m$-marked curves with ramification $\vec{\mu}$} is the category $\bbAC_{d,h}(\vec{\mu})$ given by the following data:
\begin{itemize}
    \item The objects of $\bbAC_{d,h}(\vec{\mu})$ consist of degree-$d$ discrete admissible covers $\pi\colon G\to H$ where:
    \begin{itemize}
        \item $H$ is an object of $\bbG_{h,m}$.
        \item $G$ is an object of $\bbG_{g,n}$ (it is immediate that $2g+n-2>0$). 
        \item For every $1\leq k\leq m$:
            \begin{equation*}
                \pi\left(\bigcup_{j=1}^{\ell(\mu_k)}\ell_{N+j}(G) \right) =\ell_k(H), 
            \end{equation*}
        where $N=\sum_{i<k}\ell(\mu_i)$.
        \item For every $1\leq k\leq m$ and every $1\leq j\leq \ell(\mu_k)$, the weight of the $\sum_{i<k}\ell(\mu_i)+j$-th leg of $G$ is $\mu_k^j$.
    \end{itemize}
    \item For two objects $\pi_1\colon G_1\to H_1$ and $\pi_2\colon G_2\to H_2$ of $\bbAC_{d,h}(\vec{\mu})$, the set of morphisms $\Hom_{\bbAC_{d,h}(\vec{\mu})}(\pi_1,\pi_2)$ consists of the tuples $f=(f_\src,f_\trgt)$ where:
    \begin{itemize}
        \item $f_\src\in \Hom_{\bbG_{g,n}}(G_1,G_2)$,
        \item $f_\trgt\in\Hom_{\bbG_{h,m}}(H_1,H_2)$,
        \item $\pi_2\circ f_\src= f_\trgt\circ \pi_1$. 
    \end{itemize}
    \item Composition of morphisms is just composition of tuples of maps.
\end{itemize}
If $\pi\colon G\to H$ is an object of $\bbAC_{d,h}(\vec{\mu})$, then we set $\src(\pi):=G$ (as in source) and $\trgt(\pi):=H$ (as in target).
\end{defi}
\begin{const}
    The objects of the category $\bbAC_{d,h}(\vec{\mu})$ come with two natural projections:
    \begin{itemize}
        \item Onto $\bbG_{h,m}$ by forgetting the source and the cover, but keeping the target.
        \item Onto $\bbG_{g,n}$ by forgetting the target and the cover, but keeping the source.
    \end{itemize}
    More precisely, there are the following natural associations
    \begin{align*}
        \src&\colon\Obj(\bbAC_{d,h}(\vec{\mu}))\to \Obj(\bbG_{g,n}), \pi\mapsto \src(\pi),\\
        \trgt&\colon\Obj(\bbAC_{d,h}(\vec{\mu}))\to \Obj(\bbG_{h,m}), \pi\mapsto \trgt(\pi).
    \end{align*}
    These are further improved to morphisms of $\bbAC_{d,h}(\vec{\mu})$ by setting for $f=(f_\src,f_\trgt)$:
    \begin{align*}
        &\src(f) := f_\src\in\Hom_{\bbG_{g,n}}(G_1,G_2), &\trgt(f) :=f_\trgt\in \Hom_{\bbG_{h,m}}(H_1,H_2),
    \end{align*}
    for $(f_\src,f_\trgt)\in \Hom_{\bbAC_{d,h}(\vec{\mu})}(\pi_1,\pi_2)$, and $\src(\pi_i)=G_i$ and $\trgt(\pi_i) = H_i$ (for $i=1,2$). It is clear that the above constructions give rise to functors:
    \begin{align*}
        &\src\colon\bbAC_{d,h}(\vec{\mu})\to \bbG_{g,n},
        &\trgt\colon\bbAC_{d,h}(\vec{\mu})\to \bbG_{h,m}.
    \end{align*} 
    In particular, the product of these two functors gives rise to the functor
\begin{equation}
    \src\times\trgt\colon \bbAC_{d,h}(\vec{\mu})\to \bbG_{g,n}\times \bbG_{h,m},\pi\mapsto (\src(\pi),\trgt(\pi)). \label{eq: prod of src and trgt achm mu}
\end{equation} 
Furthermore, for an object $\pi\colon G\to H$ of $\bbAC_{d,h}(\vec{\mu})$ there are the following poic-morphisms given by the corresponding projections
\begin{align}
    &\eta_{\src, \pi}\colon \sigma_\pi\to \sigma_G,
    &\eta_{\trgt, \pi}\colon \sigma_\pi\to \sigma_H.\label{eq: src trgt poic-morphism}
\end{align}
\end{const}

\subsection{Cones of metrics of covers and their moduli spaces.}\label{ssec: mod spaces of tropical admissible covers} We now explain how to relate metrics through a (harmonic morphism) discrete admissible cover. The underlying idea is to ensure that the realization of such a map induces a piecewise integral linear map between the realizations of the source and target graphs. 

\begin{defi}
    Suppose $\pi\colon G\to H$ is a discrete admissible cover and $\delta^\prime$ is a metric on $H$. The \emph{induced metric} on $G$ is the metric $\delta\colon E(G)\to\bbR_{>0}$ given by $\delta(e) = \frac{\delta^\prime\pi(e)}{d_\pi(e)}$.
\end{defi}
    
\begin{remark}
    Under these metrics, the map induced by the cover between the corresponding metric spaces is integral linear when restricted to each interval.
\end{remark}
\begin{defi}
    Suppose $\pi\colon G\to H$ is an admissible cover. The \emph{matrix associated to $\pi$} is the linear map $F_\pi:\bbR^{E(H)} \to \bbR^{E(G)},v\mapsto F_\pi(v),$ where for $h\in E(G)$ the $h$-coordinate of $F_\pi(v)$ is defined by
    \begin{equation}
    \left(F_\pi({v})\right)_h = \frac{\mathrm{lcm}\left((d_\pi(e))_{e\in \pi^{-1}(\pi(h))}\right)}{d_\pi(h)}({v})_{\pi(h)},\label{eq: assoc matrix}
    \end{equation}
     where $\mathrm{lcm}(\bullet)$ denotes the least common multiple.
\end{defi}
\begin{example}
    Consider the object of $\bbAC_{2,0}((2^4))$ depicted in Figure \ref{fig: admissible cover from genus 1}. The weight of the middle edges of the source is $1$ and the weight of every leg of the source is $2$. 
    \begin{figure}[h!]
        \centering
        \begin{tikzpicture}
            \draw[fill=none,line width=2pt] (0,1) ellipse [x radius = 2, y radius = 0.75];
            \draw[red,line width=2pt,dashed] (-2,1) --(-4,1.5);
            \node[red] at (-4.2,1.5) {$2$};
            \draw[blue,line width=2pt,dashed] (-2,1) --(-3.5,0.5);
            \node[blue] at (-3.7,0.5) {$2$};
            \draw[purple,line width=2pt,dashed] (2,1) --(3.5,1.5);
            \node[purple] at (3.7,1.5) {$2$};
            \draw[olive,line width=2pt,dashed] (2,1) --(4,0.5);
            \node[olive] at (4.2,0.5) {$2$};
            \node at (0,2) {$x$};
            \node at (0,0) {$y$};
            \draw[fill=black] (-2,1) circle (3pt);
            \draw[fill=black] (2,1) circle (3pt);

            \draw[line width=2pt] (-2,-1)--(2,-1);
            \draw[red,line width=2pt,dashed] (-2,-1) --(-4,-0.5);
            \draw[blue,line width=2pt,dashed] (-2,-1) --(-3.5,-1.5);
            \draw[purple,line width=2pt,dashed] (2,-1) --(3.5,-0.5);
            \draw[olive,line width=2pt,dashed] (2,-1) --(4,-1.5);
            \node at (0,-1.3) {$t$};
            \draw[fill=black] (-2,-1) circle (3pt);
            \draw[fill=black] (2,-1) circle (3pt);
            
            \draw (-4.5,2.3)--(6.5,2.3);
            \draw (-4.5,-0.3)--(6.5,-0.3);
            \draw (-4.5,-2)--(6.5,-2);
            \draw (4.5,2.3)--(4.5,-2);
            \draw (6.5,2.3)--(6.5,-2);
            \draw (-4.5,2.3)--(-4.5,-2);
            \node at (5.5,1) {Source};
            \node at (5.5,-1.1) {Target};
        \end{tikzpicture}
        \caption{An object of $\bbAC_{2,0}((2^4))$.}
        \label{fig: admissible cover from genus 1}
    \end{figure}
    We remark that $\bbAC_{2,0}((2^4))$ is a groupoid (every morphism is an isomorphism). To obtain representative objects of the other isomorphism classes, it suffices to permute the markings (colors in the picture) of the legs accordingly. The matrix is then simply given by the map: $\bbR \to \bbR^2, t\mapsto \begin{pmatrix}
        t\\
        t
        \end{pmatrix}$.

\end{example}
\begin{defi}
    Let $\pi\colon G\to H$ be an admissible cover. The \emph{cone of metrics of $\pi$} is the cone $\sigma_{\pi}$ given as the closure of the graph of the restriction of $F_\pi$ to $\sigma_H^o$ in $\sigma_H\times \sigma_G$.
    In other words, for an admissible cover $\pi\colon G\to H$ the partially open polyhedral cone of $\left(N^{\sigma_H}\oplus N^{\sigma_G}\right)_\bbR$ underlying the poic $\sigma_\pi$ is given as the set of points
\begin{equation*}
    \sigma_\pi = \{(x,F_\pi(x)): x\in F_\pi^{-1}(\sigma_G)\cap\sigma_H\} \subset \sigma_H\times \sigma_G.
\end{equation*}
The corresponding abelian group is just 
\begin{equation*}
    N^{\sigma_\pi}:= \{(v,F_\pi(v)) :v\in N^{\sigma_H}\}\subset N^{\sigma_H}\oplus F_\pi (N^{\sigma_H}).
\end{equation*}
By definition $\dim(\sigma_\pi) = \dim(\sigma_H)$, and in fact projection onto the first factor yields an isomorphism $\sigma_\pi\cong F_\pi^{-1}(\sigma_G)\cap\sigma_H$. We work out some examples.
\end{defi}

\begin{example}
    In the previous example, the cone of metrics of the target is the half line $\bbR_{\geq0}$, while the cone of metrics of the source can be given as the intersection of the non-negative orthant $\bbR^2_{\geq0}$ and the poic $\xi$ defined by the partially open integral cone of $\bbR^2$ given by $x+y>0$. The matrix maps the relative interior of $\bbR_{\geq0}$ to $\bbR^2_{\geq0}\cap \xi$, and, in particular, the cone of metrics of the cover is the poic given by the partially open integral cone of $\left( \bbZ^2\oplus \langle \begin{pmatrix}
        1\\
        1
    \end{pmatrix}\rangle \right)_\bbR\cong \bbR^3$ defined by $\{(x,y,z) : x=y=z>0\}\subset \bbR^2_{\geq0}\times\bbR_{\geq0}$.
\end{example}

\begin{example}
    Let $h=0$, $m=5$, $\vec{\mu} = ({\color{red}(2)},{\color{blue}(2)},{\color{purple}(2)},{\color{olive}(2)},{\color{green}(1,1)})$, $n=6$ and $g=1$. Below we depict some objects of $\bbAC_{2,0}({\color{red}(2)},{\color{blue}(2)}$,${\color{purple}(2)}$,${\color{olive}(2)}$,${\color{green}(1,1)})$, where we abuse notation slightly and ignore the source markings. In Figure \ref{fig: cones of metrics ac20} we explicitly write their associated matrix and depict their corresponding cones of metrics (as a subcone of that from the target tree).
    \begin{figure}[h!]
        \centering
        \begin{tikzpicture}
            \draw[fill=none, line width=2pt ] (-1,4) ellipse [x radius=1,y radius=0.5];
            \draw[line width= 4pt](0,4)--(2,4) node [midway, above]{$c$} node [midway, below] {$2$};
            \draw[line width = 2pt, dashed,red](-2,4)--(-4,4.5) node [pos = 1.1] {$2$};
            \draw[line width = 2pt, dashed,blue](-2,4)--(-3.5,3.5) node [pos = 1.2] {$2$};
            \draw[line width = 2pt, dashed,purple](0,4)--(1,3) node [pos = 1.2] {$2$};
            \draw[line width=2pt, dashed, olive](2,4)--(4,3.5) node [pos = 1.1] {$2$};
            \draw[line width=2pt, dashed, green](2,4)--(3.5,4.2);
            \draw[line width=2pt, dashed, green](2,4)--(3.5,4.7);
            \draw[fill = black] (-2,4) circle (3pt);
            \draw[fill = black] (0,4) circle (3pt);
            \draw[fill = black] (2,4) circle (3pt);
            \node[] at (-1,4.7) {$a$};
            \node[] at (-1,3.8) {$b$};

            \draw[line width =2pt, dashed, red] (-2,2) -- (-4,2.5);
            \draw[line width =2pt, dashed, blue] (-2,2) -- (-3.5,1.5);
            \draw[line width = 2pt] (-2,2)--(0,2) node [midway, below] {$x$};
            \draw[line width =2pt, dashed, purple] (0,2) -- (1,1);
            \draw[line width = 2pt] (0,2)--(2,2) node [midway, below] {$y$};
            \draw[line width=2pt, dashed, olive](2,2)--(4,1.5);
            \draw[line width=2pt, dashed, green](2,2)--(3.5,2.2);
            \draw[fill = black] (-2,2) circle (3pt);
            \draw[fill = black] (0,2) circle (3pt);
            \draw[fill = black] (2,2) circle (3pt);

            \matrix (1col0) [matrix of math nodes] at (5,3) {
            a\\
            b\\
            c\\
            };
            \matrix (1row0) [matrix of math nodes] at (6,4) {
            x&y\\
            };
            \matrix (1m) [matrix of math nodes,left delimiter={[ },right delimiter={]}] at (6,3) {
            1&0\\
            1&0\\
            0&1\\
            };

            \draw[fill=gray,opacity=0.3,line width=0pt] (8,1.5)--(8,4.5)--(11,4.5)--(11,1.5);
            \draw[dashed,line width=2pt](8,1.5)--(8,4.5) node [midway, left] {$y$};
            \draw[line width=2pt](8,1.5)--(11,1.5) node [midway, below] {$x$};
            \draw[line width=1pt,fill=white](8,1.5) circle (3pt);

            \draw[fill=none, line width= 2pt] (0,-2) ellipse [x radius = 2, y radius = 0.5];
            \draw[line width = 2pt, dashed,red](-2,-2)--(-4,-1.5) node [pos = 1.1] {$2$};
            \draw[line width = 2pt, dashed,blue](-2,-2)--(-3.5,-2.5) node [pos = 1.2] {$2$};
            \draw[line width = 2pt, dashed,purple](2,-2)--(3.5,-1.5) node [pos = 1.2] {$2$};
            \draw[line width=2pt, dashed, olive](2,-2)--(4,-2.5) node [pos = 1.1] {$2$};
            \draw[line width=2pt, dashed, green](0,-1.5)--(1,-2.5);
            \draw[line width=2pt, dashed, green](0,-2.5)--(1,-3.5);
            \draw[fill = black] (-2,-2) circle (3pt);
            \draw[fill = black] (0,-1.5) circle (3pt);
            \draw[fill = black] (0,-2.5) circle (3pt);
            \draw[fill = black] (2,-2) circle (3pt);
            \node[] at (-1,-1.3) {$a$};
            \node[] at (-1,-2.2) {$b$};
            \node[] at (1,-1.3) {$c$};
            \node[] at (1,-2.2) {$d$};

            \draw[line width =2pt, dashed, red] (-2,-4) -- (-4,-3.5);
            \draw[line width =2pt, dashed, blue] (-2,-4) -- (-3.5,-4.5);
            \draw[line width = 2pt] (-2,-4)--(0,-4)node [midway, below] {$x$};
            \draw[line width = 2pt] (0,-4)--(2,-4)node [midway, below] {$y$};
            \draw[line width=2pt, dashed, green](0,-4)--(1,-5.2);
            \draw[line width=2pt, dashed, olive](2,-4)--(4,-4.5);
            \draw[line width =2pt, dashed, purple] (2,-4) -- (3.5,-3.5);
            \draw[fill = black] (-2,-4) circle (3pt);
            \draw[fill = black] (0,-4) circle (3pt);
            \draw[fill = black] (2,-4) circle (3pt);
            
            \matrix (2col0) [matrix of math nodes] at (5,-3.3) {
            a\\
            b\\
            c\\
            d\\
            };
            \matrix (2row0) [matrix of math nodes] at (6,-2) {
            x&y\\
            };
            \matrix (2m) [matrix of math nodes,left delimiter={[ },right delimiter={]}] at (6,-3.3) {
            1&0\\
            1&0\\
            0&1\\
            0&1\\
            };
            \draw[fill=gray,opacity=0.3,line width=0pt] (8,-4.5)--(8,-1.5)--(11,-1.5)--(11,-4.5);
            \draw[line width=2pt](8,-4.5)--(8,-1.5)node [midway, left] {$y$};
            \draw[line width=2pt](8,-4.5)--(11,-4.5) node [midway, below] {$x$};
            \draw[line width=1pt,fill=white](8,-4.5) circle (3pt);

            \draw (-5,5.2)--(12,5.2);
            \draw (-5,6)--(4.5,6) node [midway, below] {Object of $\bbAC_{2,0}({\color{red}(2)},{\color{blue}(2)}$,${\color{purple}(2)}$,${\color{olive}(2)}$,${\color{green}(1,1)})$};
            \draw (4.5,6)--(7.3,6) node [midway, below]{ Ass. map};
            \draw (7.3,6)--(12,6) node [midway, below] {Cone of metrics};
            \draw (-5,0.5)--(12,0.5);
            \draw (-5,-5.5)--(12,-5.5);
            \draw (-5,6)--(-5,-5.5);
            \draw (4.5,6)--(4.5,-5.5);
            \draw (7.3,6)--(7.3,-5.5);
            \draw (12,6)--(12,-5.5);

        \end{tikzpicture}
        \caption{Cones of metrics of some objects of $\bbAC_{2,0}({\color{red}(2)},{\color{blue}(2)}$,${\color{purple}(2)}$,${\color{olive}(2)}$,${\color{green}(1,1)})$.}
        \label{fig: cones of metrics ac20}
    \end{figure}
\end{example}

\begin{const}\label{const: functoriality of achm}
   Following Notation \ref{nota: standing hdmmu} consider objects $\pi\colon G\to H$ and $\pi^\prime\colon G^\prime\to H^\prime$ of $\bbAC_{d,h}(\vec{\mu})$ together with a morphism $f\colon\pi\to\pi^\prime$. As in Lemma \ref{lem: map of graphs is contraction}, these morphisms induce natural identifications $G/f_\src^{-1}(V(G^\prime))\cong G^\prime$ and $H/f_\trgt^{-1}(V(H^\prime))\cong H^\prime$. Moreover, since $\pi^{-1}(f_\trgt^{-1}(V(H)))\subset f_\src^{-1}(V(G))$, it follows that $\pi$ induces a map $\pi/(f_\src,f_\trgt)\colon G/f_\src^{-1}(V(G^\prime))\to H/f_\trgt^{-1}(V(H^\prime))$. This map together with the natural identifications $G/f_\src^{-1}(V(G^\prime))\cong G^\prime$ and $H/f_\trgt^{-1}(V(H^\prime))\cong H^\prime$ yields a commutative diagram of maps of graphs
    \begin{equation*}
        \begin{tikzcd}
            G/f_\src^{-1}(V(G^\prime))\arrow{rr}{\sim} \arrow{d}[swap]{\pi/(f_\src,f_\trgt)}&&G^\prime \arrow{d}{\pi^\prime}\\
            H/f_\trgt^{-1}(V(H^\prime))\arrow{rr}[swap]{\sim}&& H^\prime
        \end{tikzcd}
    \end{equation*}
    In particular, the morphism $f$ induces the face-embedding $\sigma_{f_\src}\times\sigma_{f_\trgt}\colon \sigma_{H^\prime}\times\sigma_{G^\prime}\to \sigma_H\times\sigma_G$. We can then restrict this face-embedding to $\sigma_\pi$, and, in this way, obtain a face-embedding $\sigma_{(f_\src,f_\trgt)}\colon\sigma_\pi\to \sigma_{\pi^\prime}$.
\end{const}

\begin{lem}\label{lem: functoriality of achm}
     Following the notation of Construction \ref{const: functoriality of achm}, the cone of metrics construction gives rise to a functor
     \begin{equation*}
        \AC_{d,h,\vec{\mu}}\colon\bbAC_{d,h}^{\op}(\vec{\mu})\to\POIC,\pi\mapsto\sigma_\pi.
     \end{equation*}
     Moreover, if $\pi$ is an object of $\bbAC_{d,h}(\vec{\mu})$ and $\tau$ is a face of $\sigma_\pi$, then there exists a morphism $\pi\to \pi^\prime$ of $\bbAC_{d,h}(\vec{\mu})$ unique up to isomorphism with $\AC_{d,h,\vec{\mu}}(\pi^\prime\to\pi)$ isomorphic to this face-embedding. In other words, $\AC_{d,h,\vec{\mu}}$ is a poic-space, and, in addition, the poic-morphisms \eqref{eq: src trgt poic-morphism} define natural transformations 
     \begin{align*}
         &\eta_{\src}\colon \AC_{d,h,\vec{\mu}}\to \Mtrop_{g,n}\circ \src,
         &\eta_{\trgt}\colon \AC_{d,h,\vec{\mu}}\to \Mtrop_{h,m}\circ \trgt,
     \end{align*}
     which give rise to morphisms of poic-spaces $\src\colon \AC_{d,h,\vec{\mu}}\to \Mtrop_{g,n}$ and $\trgt\colon \AC_{d,h,\vec{\mu}}\to \Mtrop_{h,m}$.
\end{lem}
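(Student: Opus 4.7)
The plan is to verify in turn: (i) that $\AC_{d,h,\vec{\mu}}$ is a well-defined functor on $\bbAC_{d,h}^{\op}(\vec{\mu})$, (ii) the face-lifting property promoting it to a poic-space, and (iii) the naturality and non-face conditions required for $\src$ and $\trgt$ to be morphisms of poic-spaces.

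For (i), the key algebraic input is the identity $F_\pi\circ \iota_{f_\trgt}=\iota_{f_\src}\circ F_{\pi^\prime}$ for any morphism $f=(f_\src,f_\trgt)\colon \pi\to\pi^\prime$ in $\bbAC_{d,h}(\vec{\mu})$. Writing $K_G\subset G$ and $K_H\subset H$ for the subgraphs contracted by $f_\src$ and $f_\trgt$, the commutativity $\pi^\prime\circ f_\src=f_\trgt\circ \pi$ forces $E(K_G)=\pi^{-1}(E(K_H))$. For $h\in E(G)\setminus E(K_G)\cong E(G^\prime)$, the fibers $\pi^{-1}(\pi(h))$ and $(\pi^\prime)^{-1}(\pi^\prime(h))$ match canonically with matching degrees, so both sides of the identity agree via \eqref{eq: assoc matrix}; for $h\in E(K_G)$, both sides vanish. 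This identity guarantees that $\iota_{f_\trgt}\times \iota_{f_\src}\colon \sigma_{H^\prime}\times\sigma_{G^\prime}\to \sigma_H\times\sigma_G$ restricts to a face-embedding $\sigma_{\pi^\prime}\to \sigma_\pi$, and functoriality is inherited from that of $\Mtrop_{g,n}$ and $\Mtrop_{h,m}$.

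For (ii), let $\pi\colon G\to H$ be an object and $\tau\leq\sigma_\pi$ a face. Projection onto the first factor realizes $\sigma_\pi\cong F_\pi^{-1}(\sigma_G)\cap \sigma_H\subset \sigma_H$, so $\tau$ corresponds to collapsing an unmarked subgraph $K_H\subset H$ with non-trivial tree components, subject to the additional requirement that $K_G:=\pi^{-1}(K_H)$ also be an unmarked subgraph of $G$ with non-trivial tree components; this is exactly the constraint cut out by $F_\pi^{-1}(\sigma_G)$, since the image under $F_\pi$ of a point whose zero locus is $E(K_H)$ has zero locus $\pi^{-1}(E(K_H))$. Iteratively applying Construction \ref{const: edge contraction hm} together with Proposition \ref{prop: RH numbers add in contractions} yields a discrete admissible cover $\pi^\prime\colon G/K_G\to H/K_H$ with vanishing Riemann-Hurwitz numbers, and preservation of $d$, $h$, $m$, and $\vec{\mu}$ is transparent, so $\pi^\prime\in\bbAC_{d,h}(\vec{\mu})$. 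The pair of contractions gives a morphism $f\colon\pi\to\pi^\prime$ whose image under $\AC_{d,h,\vec{\mu}}$ equals $\tau\leq\sigma_\pi$ by Construction \ref{const: functoriality of achm}. Uniqueness up to isomorphism reduces to the analogous uniqueness in $\Mtrop_{h,m}$: $\tau$ determines $K_H$ and hence $H\to H/K_H$ up to isomorphism, and the source contraction is then forced by $K_G=\pi^{-1}(K_H)$.

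For (iii), naturality of $\eta_\src$ and $\eta_\trgt$ is automatic because $\sigma_{(f_\src,f_\trgt)}$ is by construction the restriction of $\iota_{f_\trgt}\times \iota_{f_\src}$, so composing with the corresponding projections reproduces $\iota_{f_\trgt}\circ \eta_{\trgt,\pi^\prime}$ and $\iota_{f_\src}\circ \eta_{\src,\pi^\prime}$. The non-face condition is likewise quick: projection $\sigma_\pi\to \sigma_H$ is an isomorphism onto a subcone containing $\sigma_H^o$, so $\eta_{\trgt,\pi}(\sigma_\pi)$ meets the relative interior of $\sigma_H$; formula \eqref{eq: assoc matrix} shows $F_\pi(\sigma_H^o)\subset \sigma_G^o$, so $\eta_{\src,\pi}(\sigma_\pi)$ meets the relative interior of $\sigma_G$. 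The main obstacle throughout is step (ii), specifically confirming that honest faces of $\sigma_\pi$ are in bijection with pairs $(K_H, K_G)$ of the specified type---this is the combinatorial heart of the argument and is what distinguishes $\sigma_\pi$ from an arbitrary subcone of $\sigma_H\times \sigma_G$.
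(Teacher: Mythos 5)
Your proof is correct and follows the same route the paper intends: the paper's own proof is a two-line deferral to the properties of the $\Mtrop_{g,A}$ construction and the definitions of $\Hom_{\bbAC_{d,h}(\vec{\mu})}$ and $\sigma_\pi$, and your argument is precisely that argument with the details filled in (the intertwining identity $F_\pi\circ\iota_{f_\trgt}=\iota_{f_\src}\circ F_{\pi^\prime}$ for functoriality, the identification of faces of $\sigma_\pi\cong F_\pi^{-1}(\sigma_G)\cap\sigma_H$ with admissible target contractions having acyclic preimage, and the observation that $\sigma_\pi$ contains the graph of $F_\pi$ over $\sigma_H^o$ with $F_\pi(\sigma_H^o)\subset\sigma_G^o$ for the non-face condition). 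In particular you correctly isolate and justify the one point the paper leaves entirely implicit, namely that honest faces of $\sigma_\pi$ correspond to pairs $(K_H,\pi^{-1}(K_H))$ of genus-preserving contractions.
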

\begin{proof}
    This follows from the properties of \eqref{eq: moduli space is poic-space}. Analogously, the fact that the poic-morphisms \eqref{eq: src trgt poic-morphism} define natural transformations follows from the definition of the Hom-sets of the category $\bbAC_{d,h}(\vec{\mu})$ and the cone of metrics of the cover.
\end{proof}

We remark that $\bbAC_{d,h}(\vec{\mu})^{\op}$ has finitely many isomorphism classes (for all the possible choices of $d$, $h$, $m$, and $\vec{\mu}$), and thus is necessarily equivalent to a finite category. As before, this implies that the colimit of any functor from $\bbAC_{d,h}(\vec{\mu})^{\op}$ to $\Top$ is representable.

\begin{defi}
    Let $d$, $h$, $m$, $\vec{\mu}$, $n$ and $g$ be as in Definition \ref{defi: category achm mu}. The \emph{moduli space of discrete admissible covers with ramification $\vec{\mu}$ of genus-$h$ $m$-marked curves} is the topological space 
    \begin{equation*}
        \cAC_{d,h}^\trop(\vec{\mu}) := \colim_{\bbAC_{d,h}(\vec{\mu})^{\op}} \left|\AC_{d,h,\vec{\mu}}\right|.\end{equation*}    
\end{defi}
The above construction shows that $\cAC_{d,h}^\trop(\vec{\mu})$ can be constructed by making a choice of a full set of representatives for $\bbAC_{d,h}(\vec{\mu})^{\op}$, taking the corresponding cones of metrics, and identifying points representing isomorphic covers. This presentation not only emphasizes the combinatorial relations between cones of metrics given by face relations and isomorphic identifications, but also our construction of the associated cone of metrics allows us to take edge contractions with respect to the target curve while controlling them with respect to the source curve.

\subsection{Extended spanning trees.}
We now proceed with a construction for our categories of discrete admissible covers analogous to that of the spanning tree fibrations. Suppose $h\geq0$, $d\geq 0$, $m\geq 0$, $\vec{\mu}=(\mu_1,\dots,\mu_m)$, $n$ and $g$ are as in Notation \ref{nota: standing hdmmu}. We first observe that the product of linear poic-fibrations is a linear poic-fibration, and therefore the linear poic-fibrations
\begin{align*}
    &\st_{g,n}\colon \ST_{g,n}\to \Mtrop_{g,n},
    &\st_{h,m}\colon \ST_{h,m}\to \Mtrop_{h,m},
\end{align*}
give a linear poic-fibration 
\begin{equation*}
    \st_{g,n}\times\st_{h,m}\colon \ST_{g,n}\times\ST_{h,m}\to \Mtrop_{g,n}\times\Mtrop_{h,m}.
\end{equation*}
In order to get a poic-fibration over $\AC_{d,h,\vec{\mu}}$ we pullback the previously constructed linear poic-fibration through the functor $\src\times\trgt\colon \bbAC_{d,h}(\vec{\mu})\to \bbG_{g,n}\times\bbG_{h,m}$. More precisely, let $J\subset \{1,\dots,n\}$ be arbitrary. We will construct a linear poic-complex $\EST^J_{d,h,\vec{\mu}}$ together with a poic-fibration $\est^J_{d,h,\vec{\mu}}\colon \EST^J_{d,h,\vec{\mu}}\to \AC_{d,h}(\vec{\mu})$, in a similar way, but by pulling back $\est_{g,J}\times\est_{h,m}$ through $\ft_{J^c}\circ \mathfrak{src}\times \mathfrak{trgt}$. The motivation underlying such a construction is to study tropical  cycles in $\AC_{d,h,\vec{\mu}}$ with respect to a linear poic-complex with a morphism $\mathfrak{src}\colon \est_{d,h,\vec{\mu}}^J\to \st_{g,J}$ that is weakly proper in top dimension.

\begin{defi}\label{defi: J extended ac}
Let $h$, $m$, $d$, $\vec{\mu}$, $n$, and $g$ be as in Notation \ref{nota: standing hdmmu}, and let $J\subset\{1,\dots,n\}$ be arbitrary. The \emph{category of $J$-marked extended admissible covers of degree $d$ to genus-$h$ $m$-marked graphs and ramification $\vec{\mu}$} is the category $\bbEAC_{g,h}^J(\vec{\mu})$ given by the following data:
\begin{itemize}
    \item The objects are the triples $(T_s,T_t,\pi)$, where $T_s$ is an object of $\bbG_{0, J\sqcup \mathbbm{g}}$, $T_t$ is an object of $\bbG_{0,m+2h}$, and $\pi$ is an object of $\bbAC_{d,h}(\vec{\mu})$ such that 
    \begin{align*}
        &\ft_{J^c}\left(\src(\pi)\right) = \st_{g,J}(T_s), & &\trgt(\pi) = \st_{h,m}(T_t). 
    \end{align*}
    \item For two objects $(T_s,T_t,\pi)$ and $(T^\prime_s,T^\prime_t,\pi^\prime)$ the set of morphisms is defined as the subset of $\Hom_{\bbG_{0,J\sqcup\mathbbm{g}}}(T_t,T^\prime_t)\times\Hom_{\bbG_{0,h+2m}}(T_s,T^\prime_s)\times\Hom_{\bbAC_{d,h}(\vec{\mu}}(\pi,\pi^\prime) $ consisting of the triples $(f_s,f_t,f)$ such that
    \begin{align}
        &\ft_{J^c}(f_\src) = \st_{g,J}(f_s), &f_\trgt = \st_{h,m}(f_t).
    \end{align}
    \item Composition of morphisms is just composition of triples.
\end{itemize}
It is clear that this category is essentially finite. By construction, this category comes naturally equipped with three functors:
\begin{align}
        {\est^J_{d,h,\vec{\mu}}}&\colon \bbEAC^J_{d,h}(\vec{\mu})\to \bbAC_{d,h}(\vec{\mu}), &&(T_s,T_t,\pi)\mapsto \pi, \label{eq: underlying functor estJ fibration}\\
        \src&\colon \bbEAC^J_{d,h}(\vec{\mu})\to \bbG_{0,J\sqcup\mathbbm{g}}, &&(T_s,T_t,\pi)\mapsto T_s,  \label{eq: src functor eachmJ mu}\\
        \trgt&\colon \bbEAC^J_{d,h}(\vec{\mu})\to \bbG_{0,2h+m}, &&(T_s,T_t,\pi)\mapsto T_t. \label{eq: trgt functor eachmJ mu}
    \end{align}
\end{defi}
\begin{lem}\label{lem: extended is thin}
Let $h$, $m$, $d$, $\vec{\mu}$, $n$, and $g$ be as in Notation \ref{nota: standing hdmmu}. The category $\bbEAC^J_{d,h}(\vec{\mu})$ is thin.
\end{lem}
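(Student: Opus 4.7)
The plan is to show that between any two objects of $\bbEAC^J_{d,h}(\vec{\mu})$ there is at most one morphism. A morphism is a triple $(f_s, f_t, f)$, so I will argue uniqueness of each component separately. Suppose $(f_s, f_t, f)$ and $(f_s^\prime, f_t^\prime, f^\prime)$ are two morphisms from $(T_s, T_t, \pi)$ to $(T_s^\prime, T_t^\prime, \pi^\prime)$. First I will invoke Proposition \ref{prop: Ggn iso classes}: the categories $\bbG_{0, J \sqcup \mathbbm{g}}$ and $\bbG_{0, 2h + m}$ are thin, so $f_s = f_s^\prime$ and $f_t = f_t^\prime$. Writing $f = (f_\src, f_\trgt)$, the defining compatibility $f_\trgt = \st_{h,m}(f_t)$ then forces $f_\trgt = f_\trgt^\prime$.

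The remaining task is to show uniqueness of $f_\src \in \Hom_{\bbG_{g,n}}(\src(\pi), \src(\pi^\prime))$, subject to $\ft_{J^c}(f_\src) = \st_{g,J}(f_s)$ and $\pi^\prime \circ f_\src = f_\trgt \circ \pi$. I will argue flag by flag. Since both $f_\src$ and $f_\src^\prime$ are morphisms of $n$-marked graphs, they send the $i$-th leg to the $i$-th leg, and (via the root map) the vertex incident to each marked leg to the correspondingly marked vertex. On flags surviving in $\ft_{J^c}(\src(\pi))$, the forgetful compatibility determines $f_\src$ directly. The only possible ambiguity lies at flags destroyed by the iterative stabilization embedded in $\ft_{J^c}$, namely the edge-flags at a valence-$3$ vertex $V$ of $\src(\pi)$ that is incident to a leg marked in $J^c$.

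At such a vertex $V$, admissibility gives $r_\pi(V) = 0$, which combined with $\val V = 3$ forces $d_\pi(V) = 1$ and $\val \pi(V) = 3$. The positivity of the leg weights (each $\mu_k^j \geq 1$) together with the harmonic-morphism equation at $V$ rules out the stabilization configuration in which $V$ carries two legs and one edge (the analogue of Case 3 in the forgetful construction), leaving only the configuration of one leg and two edges (Case 2); in the former configuration there is in any case no ambiguity, since the unique edge-flag at $V$ has its image pinned down by the marking of its root. In Case 2, the forgetful compatibility alone determines the images of the two edges at $V$ only as an unordered pair of edge-flags at $f_\src(V)$ in $\src(\pi^\prime)$, and the cover compatibility $\pi^\prime(f_\src(F)) = f_\trgt(\pi(F))$, combined with the fact that $\pi^\prime$ is also locally a degree-$1$ bijection on flags at $f_\src(V)$, specifies each image uniquely. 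The compatibility of morphisms with the involution then propagates this assignment to the opposite ends of the two affected edges.

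The hard part is the combinatorial bookkeeping around the composite $\ft_{J^c} = \ft_{a_k} \circ \cdots \circ \ft_{a_1}$: each intermediate application $\ft_{a_i}$ may lower the valence of further vertices to $3$, triggering additional stabilization steps that must be handled recursively. Organizing the flag-by-flag argument along this iteration, while tracking the markings, the root and involution maps, and the admissibility of $\pi$ and $\pi^\prime$, is where the technical effort is concentrated; once done, one concludes $f_\src = f_\src^\prime$ and therefore $\bbEAC^J_{d,h}(\vec{\mu})$ is thin.
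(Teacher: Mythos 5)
Your proposal follows the same route as the paper's proof: thinness of $\bbG_{0,J\sqcup\mathbbm{g}}$ and $\bbG_{0,2h+m}$ gives $f_s=f_s^\prime$ and $f_t=f_t^\prime$, hence $f_\trgt=f_\trgt^\prime$ via $f_\trgt=\st_{h,m}(f_t)$, and then $f_\src$ is pinned down using $\ft_{J^c}(f_\src)=\st_{g,J}(f_s)$. The only divergence is in the final step, where the paper asserts in one line that this equality together with preservation of the $n$-marking forces $f_\src=f_\src^\prime$, while you additionally bring in the cover compatibility $\pi^\prime\circ f_\src=f_\trgt\circ\pi$ and admissibility in a local flag analysis whose bookkeeping you defer; this is acceptable (and no less complete than the paper's own justification), but note that the extra input from the cover is not actually needed for the conclusion the paper draws.
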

\begin{proof}
Let $(T_s,T_t,\pi)$ and $(T^\prime_s,T^\prime_t,\pi^\prime)$ be two objects of $\bbEAC^J_{d,h}(\vec{\mu})$, and consider two morphisms 
\begin{equation}
(f_s,f_t,f),(f_s^\prime,f_t^\prime,f^\prime)\in\Hom_{\bbEAC^J_{d,h}(\vec{\mu})}\left( (T_s,T_t,\pi),(T^\prime_s,T^\prime_t,\pi^\prime) \right).
\end{equation}
Since $\bbG_{0,2h+m}$ and $\bbG_{0,J\sqcup\mathbbm{g}}$ are thin, it follows that $f_t=f_t^\prime$ and $f_s=f_s^\prime$. In particular, $f_\trgt = f^\prime_\trgt$. Moreover, $\ft_{I^c}(f_\src) = \ft_{I^c}(f_\src)$, implies that $\ft_{J\cup J^c}(f_\src) = \ft_{J\cup J^c}(f_\src)$. Since $f_\src$ and $f^\prime_\src$ must preserve the $n$-marking, the previous equality forces $f_\src = f_\src^\prime$. 
\end{proof}

\begin{const}\label{const: various cones for eacJ.}
    Let $(T_s,T_t,\pi)$ be an object of $\bbEAC_{d,h}^J(\vec{\mu})$. In the spirit of notational simplicity, let $G= \st_{g,J}(T_s)$ and $H=\st_{h,m}(T_t)$. The poic-fibrations $\st_{g,J}$ and $\st_{h,t}$ give a poic-morphism
    \begin{equation}
       \eta_{\st_{h,m},T_t}\times \eta_{\st_{g,J},T_s}: \ST_{T_t}\times\ST_{T_s} \to \sigma_H\times \sigma_G, \label{eq: linear map product of the two fibrations J}
    \end{equation}
    which is an embedding. On the other hand, the admissible cover $\pi:G\to H$ gives the poic $\sigma_\pi$, and forgetting the marking yields a morphism 
    \begin{equation}
        \Id\times\eta_{\ft_{J^c},\src(\pi)}\colon \sigma_\pi\to\sigma_H\times\sigma_G
    \end{equation}
    \end{const}
    \begin{defi}
    Following the notation ofConstruction \ref{const: various cones for eacJ.}, the \emph{extended cone of metrics} of the triple $(T_s,T_t,\pi)$ is the poic $\EST^J_{(T_s,T_t,\pi)}$ defined as the inverse image of $\Id\times\eta_{\ft_{J^c},\src(\pi)}\left(\sigma_\pi\right)$ under the map \eqref{eq: linear map product of the two fibrations J}. This poic naturally comes with an inclusion 
    \begin{equation}
        \eta_{\est^J_{d,h,\vec{\mu}},(T_s,T_t,\pi)}:\EST^J_{(T_s,T_t,\pi)}\to \Id\times\ft_{J^c}\left(\sigma_\pi\right), \label{eq: embedding for the triples J}
    \end{equation}
    as well as poic-morphisms given by the projection maps
    \begin{align}
        &\eta_{\src,(T_s,T_t,\pi)}\colon \EST^J_{(T_s,T_t,\pi)}\to \sigma_{T_s}, &\eta_{\trgt,(T_s,T_t,\pi)}\colon \EST^J_{(T_s,T_t,\pi)}\to \sigma_{T_t}\label{eq: trgt nt eachmJ mu}.
    \end{align}
    The natural projections, $\ST_{T_s}\times\ST_{T_t}\to\ST_{T_t}$ and $\ST_{T_s}\times\ST_{T_t}\to\ST_{T_s}$, together with the poic-morphisms, $(D_{g,J})_{T_s}$ and $(D_{h,m})_{T_t}$, define a poic-morphism
    \begin{equation}
        (D^J_{d,h,\vec{\mu}})_{(T_s,T_t,\pi)}\colon \EST^J_{(T_s,T_t,\pi)}\to \underline{\left(N_{\dist_{J\sqcup\mathbbm{g}}}\oplus N_{\dist_{2h+m}}\right)_\bbR}. \label{eq: linear structure for the triples J}
    \end{equation}
\end{defi}
\begin{remark}
    For an object $(T_s,T_t,\pi)$ of $\bbEAC^J_{d,h}(\vec{\mu})$, the relative interior of the cone $\EST^J_{(T_s,T_t,\pi)}$ corresponds to all possible metrics on $T_s$ and $T_t$ that are related under $\pi$ and forgetting the marking. Additionally, the faces of this cone correspond to the possible contractions of both $T_s$ and $T_t$ that are related through $\pi$, which are governed by contractions of the target.
\end{remark}

\begin{prop}\label{prop: est J as linear poic-complex}
    The association
    \begin{equation}
        \EST^J_{d,h,\vec{\mu}}: \bbEAC^J_{d,h}(\vec{\mu})^\op\to \POIC, (T_s,T_t,\pi)\mapsto \EST_{(T_s,T_t,\pi)}\label{eq: estjdhmu}
    \end{equation}
    is a poic-complex. The poic-morphisms \eqref{eq: linear structure for the triples J} define a morphism of poic-complexes
    \begin{equation}
        D^J_{d,h,\vec{\mu}}\colon\EST^J_{d,h,\vec{\mu}}\to \underline{\left(N_{\dist_{J\sqcup\mathbbm{g}}}\oplus N_{\dist_{2h+m}}\right)_\bbR},\label{eq: linear poic-complex est J}.
    \end{equation}
    The poic-morphisms \eqref{eq: embedding for the triples J} define a natural transformation $\eta_{\est^J_{d,h,\vec{\mu}}}\colon\EST^J_{d,h,\vec{\mu}}\to\AC_{d,h,\vec{\mu}}$, which together with \eqref{eq: underlying functor estJ fibration} (and the linear poic-complex structure given by \eqref{eq: linear poic-complex est J}) defines a linear poic-fibration
    \begin{equation}
        \est^J_{d,h,\vec{\mu}}:\left(\EST^J_{d,h,\vec{\mu}}\right)_{D^J_{d,h,\vec{\mu}}}\to \AC_{d,h,\vec{\mu}}. \label{eq: estJ fibration}
    \end{equation}
z<    The poic-morphisms \eqref{eq: trgt nt eachmJ mu} define corresponding natural transformations, which together with the integral linear maps given by the natural projections define morphisms of linear poic-complexes 
    \begin{align}
        &\src \colon \EST^J_{d,h,\vec{\mu}}\to \ST_{g,J},\label{eq: src lpcJ}, &\trgt\colon \EST^J_{d,h,\vec{\mu}}\to \ST_{h,m},
    \end{align}
    where $\src$ is weakly proper in top dimension. If $J=\{1,\dots, n\}$, then $\src$ is weakly proper.
\end{prop}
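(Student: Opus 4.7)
The plan is to treat $\EST^J_{d,h,\vec{\mu}}$ as a fibered-product-type construction assembled from three already-verified poic-structures: the linear poic-fibrations $\st_{g,J}$ and $\st_{h,m}$ together with the poic-space $\AC_{d,h,\vec{\mu}}$. With this viewpoint, most of the claims reduce to transferring the corresponding properties of these factors to the pullback defined by \eqref{eq: embedding for the triples J}.

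First I verify functoriality of \eqref{eq: estjdhmu}. Given a morphism $(f_s, f_t, f)$ of $\bbEAC^J_{d,h}(\vec{\mu})$, the three components induce face-embeddings via the functoriality of $\ST_{g,J}$, $\ST_{h,m}$, and $\AC_{d,h,\vec{\mu}}$ respectively. The compatibility conditions $\ft_{J^c}(f_\src)=\st_{g,J}(f_s)$ and $f_\trgt=\st_{h,m}(f_t)$ guarantee that these restrict coherently to a face-embedding $\EST^J_{(T_s,T_t,\pi)}\to \EST^J_{(T'_s,T'_t,\pi')}$. For the converse face-lifting property, a face $\tau\leq \EST^J_{(T_s,T_t,\pi)}$ projects to faces of $\sigma_{T_s}$, $\sigma_{T_t}$, and $\sigma_\pi$, and each such face is classified up to isomorphism by a unique morphism of the corresponding category (Construction \ref{const: glue cut cycles} and Lemma \ref{lem: functoriality of achm}). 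A diagram chase, using that $\ft_{J^c}$ commutes with the associations, shows these assemble into a unique morphism in $\bbEAC^J_{d,h}(\vec{\mu})$; combined with the thinness of Lemma \ref{lem: extended is thin}, this gives the poic-complex axioms.

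Next, the linear structure $D^J_{d,h,\vec{\mu}}$ is built from $D_{g,J}$ and $D_{h,m}$ via the natural projections of $\EST^J_{(T_s,T_t,\pi)}$, and naturality in $\bbEAC^J_{d,h}(\vec{\mu})^{\op}$ follows from naturality of each $D$ separately. The natural transformation $\eta_{\est^J_{d,h,\vec{\mu}}}$ is read off from \eqref{eq: embedding for the triples J} together with the defining embedding of $\EST^J_{(T_s,T_t,\pi)}$ into $\ST_{T_s}\times\ST_{T_t}$, and lands in $\sigma_\pi$ by construction. For the poic-fibration axioms: essential surjectivity follows by picking, for any $\pi$, a spanning tree $T_t$ for $\trgt(\pi)$ and, after marking, a spanning tree $T_s$ for $\ft_{J^c}\src(\pi)$; relative-interior bijectivity at each cone is transferred from the analogous bijectivity for $\st_{g,J}\times\st_{h,m}$ since the pullback is realized by linear conditions on an open cone. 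Morphism-lifting in $\AC_{d,h,\vec{\mu}}$ is obtained from the morphism-lifting of the two spanning tree fibrations: given $f\colon \pi\to\pi'$ we lift $f_\trgt$ to $\tilde f_t$ and $\ft_{J^c}(f_\src)$ to $\tilde f_s$, and the triple $(\tilde f_s,\tilde f_t,f)$ is the desired lift, its uniqueness up to isomorphism inherited from the uniqueness clauses of $\st_{g,J}$ and $\st_{h,m}$.

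Finally, $\src$ and $\trgt$ are induced by the natural projections, and they are linear poic-complex morphisms because $D^J_{d,h,\vec{\mu}}$ factors through $D_{g,J}$ and $D_{h,m}$ by construction. The main obstacle is weak properness of $\src$ in top dimension. A top-dimensional cone $\EST^J_{(T_s,T_t,\pi)}$ has $\src$-image $\ST_{T_s}\cap F_\pi^{-1}(\ST_{T_t})$ sitting inside $\ST_{T_s}$, and I must identify each facet of this image with a facet of $\EST^J_{(T_s,T_t,\pi)}$. Such facets arise either from contractions of edges of $T_t$ lifted through $\pi$ to contractions of $\ft_{J^c}\src(\pi)$ in the source (then lifted to $T_s$ using $\st_{g,J}$), or, when $J\subsetneq\{1,\dots,n\}$, from contractions involving the legs being forgotten. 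In each case Proposition \ref{prop: RH numbers add in contractions} ensures that the target contraction produces a valid discrete admissible cover of the contracted target, so the face of $\sigma_\pi$ exists, and then Lemma \ref{lem: functoriality of achm} gives the corresponding object of $\bbAC_{d,h}(\vec{\mu})$, which assembles into an object of $\bbEAC^J_{d,h}(\vec{\mu})$ realizing the required facet. For $J=\{1,\dots,n\}$ there are no legs to forget, so every facet of the image already arises this way in every dimension, yielding full weak properness. The delicate combinatorial step will be the clean bookkeeping that matches facets of $\src(\EST^J_{(T_s,T_t,\pi)})$ with target-side contractions, and this is where the hypothesis $J=\{1,\dots,n\}$ is used essentially.
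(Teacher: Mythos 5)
Your proposal follows essentially the same route as the paper: both realize $\est^J_{d,h,\vec{\mu}}$ as a pullback of the product fibration $\st_{g,J}\times\st_{h,m}$ along $\ft_{J^c}\circ\src\times\trgt$, transfer the poic-complex, linearity, and fibration axioms from the factors, and handle weak properness of $\src$ by the same dichotomy (for $J=\{1,\dots,n\}$ every face of $\EST^J_{(T_s,T_t,\pi)}$ comes from one of $\ST_{T_s}$ via target contractions; for $J\subsetneq\{1,\dots,n\}$ the forgetful morphism supplies the extra positivity conditions, yielding weak properness only in top dimension). One small expository slip: your closing sentence suggests the hypothesis $J=\{1,\dots,n\}$ is where top-dimensional weak properness is secured, whereas it is needed only to upgrade to weak properness in all dimensions — top-dimensional weak properness holds for every $J$.
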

\begin{proof}
    It is clear that $\bbEAC_{d,h}^J(\vec{\mu})$ is essentially finite, and we have readily shown in Lemma \ref{lem: extended is thin} that it is thin. The association $\EST^J_{d,h,\vec{\mu}}$ is a functor, because $\ST_{g,n}$, $\ST_{h,m}$, and $\AC_{d,h,\vec{\mu}}$ are functors. The definition of the functor $\EST^J_{d,h,\vec{\mu}}$ at triples, together with the fact that $\ST_{g,n}$, $\ST_{h,m}$, and $\AC_{g,h,\vec{\mu}}$ are also poic-complexes, implies that $\EST^J_{d,h,\vec{\mu}}$ is also a poic-complex.vIt follows from the definition of the inclusion \eqref{eq: embedding for the triples J}, and the fact that $\eta_{\st_{h,m}}$ and $\eta_{\st_{g,n}}$ are natural transformations, that $\eta_{\est^J_{d,h,\vec{\mu}}}$ is also a natural transformation. Hnece, $\est^J_{d,h,\vec{\mu}} = ({\est^J_{d,h,\vec{\mu}}},\eta_{\est^J_{d,h,\vec{\mu}}})$ is a morphism of poic-spaces, and it is clear that the functor ${\est^J_{d,h,\vec{\mu}}}$ is essentially surjective. The category $\bbEAC_{d,h}^J (\vec{\mu})$ and the morphism $\est^J_{d,h,\vec{\mu}}$ are obtained by pulling back the (linear) poic-fibration $\st_{h,m}\times\st_{g,n}$. Thus, it can be readily checked that they preserve the required lifting properties. For the condition on relative interiors, let $(T_s,T_t,\pi)$ be an object of $\bbEAC^J_{d,h}(\vec{\mu})$. Since both $\eta_{\st_{h,m},T_H}$ and $\eta_{\st_{g,n},T_G}$ induce an isomorphism between the corresponding relative interiors, this also holds for their product and, by definition, for $\eta_{\est^J_{d,h,\vec{\mu}}}$, because it is the restriction of this product to a dense subcone. As $D_{g,n}$ and $D_{h,m}$ are morphisms of poic-complexes, it is clear that the poic-morphisms \eqref{eq: linear structure for the triples J} define the morphism of poic-complexes \eqref{eq: linear poic-complex est J}. The statement concerning the poic-morphisms \eqref{eq: src lpcJ} is analogous to that of $\bbAC_{d,h}(\vec{\mu})$, so it is omitted. To finalize, we show that $\src$ is weakly proper in top dimension. Consider an object $(T_s,T_t,\pi)$ of $\bbEAC_{d,h}^J(\vec{\mu})$. If $J=\{1,\dots,n\}$, then the only possible edge contractions of the source come from edge contractions of the target, and every face of $\EST^J_{(T_s,T_t,\pi)}$ comes from one of $\ST_{T_s}$. If $J\subsetneq\{1,\dots,n\}$, then we obtain weak properness in the top dimension because the forgetting the marking morphism gives further positivity conditions between several edges of the target tree $T_t$.
\end{proof}

\begin{defi}\label{defi: extended spanning tree J}
    Suppose $h$, $m$, $d$, $\mu$, $n$, $g$, and $J$ are as in Definition \ref{defi: J extended ac}. We call $\EST^J_{d,h,\vec{\mu}}$, with the linear poic-complex structure \eqref{eq: linear poic-complex est J}, the \emph{linear poic-complex of $J$-marked extended admissible covers of degree $d$ with ramification $\vec{\mu}$ to $m$-marked genus-$h$ graphs}, and the poic-fibration \eqref{eq: estJ fibration} the \emph{extended spanning tree fibration of $J$-marked discrete admissible covers of genus-$h$ $m$-marked graphs with ramification $\vec{\mu}$}.\\
    If $J=\{1,\dots,n\}$, then we will denote $\EST^J_{d,h,\vec{\mu}}$ and $\est^J_{d,h,\vec{\mu}}$ simply by $\EST_{d,h,\vec{\mu}}$ and $\est_{d,h,\vec{\mu}}$. In this case, we call $\EST_{d,h,\vec{\mu}}$ the \emph{linear poic-complex of extended admissible covers of degree $d$ with ramification $\vec{\mu}$ to $m$-marked genus-$h$ graphs}, and the poic-fibration $\est_{d,h,\vec{\mu}}$ the \emph{extended spanning tree fibration of discrete admissible covers of genus-$h$ $m$-marked graphs with ramification $\vec{\mu}$}. 
\end{defi}

\begin{prop}\label{prop: est linear poic fib J}
    The morphisms of linear poic-complexes \eqref{eq: src lpcJ}, together with the morphisms of poic-spaces $\ft_{J^c}\circ \src\colon \AC_{d,h,\vec{\mu}}\to \Mtrop_{g,J}$ and $\trgt\colon \AC_{d,h,\vec{\mu}}\to \Mtrop_{h,m}$ define morphisms of linear poic-fibrations:
        \begin{align}
            \mathfrak{src}\colon \est^J_{d,h,\vec{\mu}}\to \st_{g,J},&&
            \mathfrak{trgt}\colon \est^J_{d,h,\vec{\mu}}\to\st_{h,m}, \label{eq: src and trgt poic-fibration J}
        \end{align}
        where $\mathfrak{src}^\pcmplxs$ and $\mathfrak{src}^\pspcs$ are the corresponding $\src$ morphisms, and $\mathfrak{trgt}^\pcmplxs$ and $\mathfrak{trgt}^\pspcs$ are the corresponding $\trgt$ morphisms. Additionally, the morphism of linear poic-complexes $\mathfrak{src}\colon\est_{d,h,\vec{\mu}}\to\st_{g,n}$ is weakly proper in top dimension.
\end{prop}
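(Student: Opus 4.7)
The plan is to assemble the morphisms of linear poic-fibrations directly from Proposition~\ref{prop: est J as linear poic-complex} and the morphisms of poic-spaces $\ft_{J^c}\circ \src$ and $\trgt$ established in Lemma~\ref{lem: functoriality of achm}, and then to verify the two defining conditions in turn. The pair $\mathfrak{src}=(\mathfrak{src}^{\pcmplxs},\mathfrak{src}^{\pspcs})$ with $\mathfrak{src}^{\pcmplxs}$ given by \eqref{eq: src lpcJ} and $\mathfrak{src}^{\pspcs}:=\ft_{J^c}\circ\src$ is the candidate for the morphism of linear poic-fibrations, and analogously for $\mathfrak{trgt}$.

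First I would check the commutativity condition $\st_{g,J}\circ \mathfrak{src}^{\pcmplxs}=\mathfrak{src}^{\pspcs}\circ \est^J_{d,h,\vec{\mu}}$ (and similarly for $\mathfrak{trgt}$) required of any morphism of poic-fibrations. At the level of underlying functors this is immediate because, by the definition of $\bbEAC^J_{d,h}(\vec{\mu})$, any object $(T_s,T_t,\pi)$ satisfies $\ft_{J^c}(\src(\pi))=\st_{g,J}(T_s)$ and $\trgt(\pi)=\st_{h,m}(T_t)$, and morphisms in $\bbEAC^J_{d,h}(\vec{\mu})$ are defined precisely as compatible triples. At the level of natural transformations, the statement reduces to verifying that the inclusion $\eta_{\est^J_{d,h,\vec{\mu}},(T_s,T_t,\pi)}$ followed by $\eta_{\ft_{J^c},\src(\pi)}\circ \eta_{\src,\pi}$ agrees with $\eta_{\st_{g,J},T_s}$ precomposed with the projection $\eta_{\src,(T_s,T_t,\pi)}$; this is built into the construction of $\EST^J_{(T_s,T_t,\pi)}$ as the preimage inside $\ST_{T_s}\times \ST_{T_t}$ of the image of $\sigma_\pi$ under $\Id\times \eta_{\ft_{J^c},\src(\pi)}$.

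Next I would verify the isomorphism lifting property upgrading the commuting square to a morphism of poic-fibrations (condition (2) of Definition~\ref{defi: proper morphism poic-fibrations}). An isomorphism in $\Mtrop_{g,J}$ comes from a relabeling automorphism in $\bbG_{g,J}$; such a relabeling extends to an automorphism of the underlying $n$-marked source of any given cover $\pi$, because the marking of the $J^c$-legs together with the ramification data $\vec{\mu}$ (which prescribes the weights of the legs above each target end) rigidifies the lift. Uniqueness then follows from the fact that once the full $n$-marking on $\src(\pi)$ is fixed, the cover $\pi$ admits no further automorphism freedom compatible with a prescribed automorphism of the target. For $\mathfrak{trgt}^{\pspcs}=\trgt$ the corresponding lifting is immediate since $\trgt$ is realized directly by the projection on the triples defining $\bbEAC^J_{d,h}(\vec{\mu})$.

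For the final assertion, the weak properness in top dimension of the underlying morphism of poic-complexes $\mathfrak{src}^{\pcmplxs}\colon \EST_{d,h,\vec{\mu}}\to \ST_{g,n}$ was already established in Proposition~\ref{prop: est J as linear poic-complex} for $J=\{1,\dots,n\}$: in this case every facet in the closure of the image of a top-dimensional cone in $\sigma_{T_s}\times\bbR_{>0}^g$ is witnessed by a facet of $\EST_{(T_s,T_t,\pi)}$, because the only edge contractions of $T_s$ compatible with the cover come from edge contractions of the target tree $T_t$, and these are present as faces of the extended cone of metrics. Combining this with the lifting property on $\mathfrak{src}^{\pspcs}$ yields the weak properness in top dimension of $\mathfrak{src}$ as a morphism of linear poic-fibrations in the sense of Definition~\ref{defi: proper morphism poic-fibrations}. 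The main obstacle I anticipate is the uniqueness part of the isomorphism lifting property: one must argue carefully that the $n$-marking on $\src(\pi)$ together with the partition data $\vec{\mu}$ leaves no residual automorphism freedom compatible with a prescribed automorphism of the target, which is where the rigidity of discrete admissible covers with full marking and ramification data is essential.
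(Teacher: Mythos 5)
Your proposal is correct and follows essentially the same route as the paper: commutativity is read off from the definition of $\bbEAC^J_{d,h}(\vec{\mu})$ and of the extended cones of metrics, weak properness in top dimension is imported from Proposition \ref{prop: est J as linear poic-complex}, and the remaining content is the isomorphism-lifting property for $\mathfrak{src}^\pspcs$ and $\mathfrak{trgt}^\pspcs$. The only divergence is in how that lifting property is justified: the paper obtains existence of the lift by noting that isomorphisms of $\bbG_{g,J}$ and $\bbG_{h,m}$ are degree-$1$ discrete admissible covers and that degrees multiply under composition (so pre- or post-composing $\pi$ with such an isomorphism again yields an object of $\bbAC_{d,h}(\vec{\mu})$), whereas you argue via rigidity of the full $n$-marking and the ramification data; both work, though your remark that the $\mathfrak{trgt}$ case is ``immediate since $\trgt$ is a projection on triples'' conflates $\trgt$ on $\bbEAC^J_{d,h}(\vec{\mu})$ with $\mathfrak{trgt}^\pspcs\colon \AC_{d,h,\vec{\mu}}\to\Mtrop_{h,m}$ --- for the latter one still needs the composition argument to lift an isomorphism of the target graph to an isomorphism of covers.
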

\begin{proof}
    The commutativity relations follow directly from the definition, and the weak properness in top dimension of $\mathfrak{src}^\pcmplxs$ has readily been stablished in Proposition \ref{prop: est J as linear poic-complex}. It remains to establish the lifting property of both $\mathfrak{src}^\pspcs$ and $\mathfrak{trgt}^\pspcs$, which are a direct consequence of the following facts: 
    \begin{itemize}
        \item isomorphisms of $\bbG_{g,n}$ (and $\bbG_{g,J}$) define corresponding degree-$1$ discrete admissible covers,
        \item the degree of a composition of harmonic morphisms is the product of the degrees. 
    \end{itemize}
\end{proof}

\section{Cycles of discrete admissible covers.}

Let $d$, $h$, $m$, $\vec{\mu}$, $n$, and $g$ be as in Notation \ref{nota: standing hdmmu}, and let $J\subset\{1,\dots,n\}$. In this section, we prove that the usual weight assignment on discrete admissible covers (\cite{CavalieriMarkwigRanganathan}) defines an $\est^J_{d,h,\vec{\mu}}$-equivariant Minkowski weight. The proof of this general result requires an additional algebrogeometric input and occupies a significant portion of the section. We explore a particular enumerative application of this theorem afterwards concerning covers of tropical rational curves (namely, metric trees).

\subsection{The standard weight}\label{sec: standard weight}
Let $d$, $h$, $m$, $\vec{\mu}$, and $g$ be as in Notation \ref{nota: standing hdmmu}, and let $J\subset\{1,\dots,n\}$. Consider the extended spanning tree fibration $\est^J_{d,h,\vec{\mu}}:\EST^J_{d,h,\vec{\mu}}\to \AC_{d,h,\vec{\mu}}$ of $J$-marked discrete admissible covers of genus-$h$ $m$-marked graphs with ramification $\vec{\mu}$ from Definition \ref{defi: extended spanning tree J}. It follows from \eqref{eq: estjdhmu} and the definition of $\EST^J_{d,h,\vec{\mu}}$ that every cone of this poic-complex is of dimension at most $m+3(h-1)$. We define the standard weight on the $(m+3(h-1))$-dimensional cones following the usual assignment (Definition 22 of \cite{CavalieriMarkwigRanganathan}\footnote{It is important to remark that in (W3) only the internal edges are considered. See Theorem 34 of loc. cit. for instance.}).
\begin{defi}\label{defi: weight of the cover}
    For a cover $\pi\colon G\to H$ of $\bbAC_{d,h}(\vec{\mu})$ the \emph{standard weight $\varpi^J_{d,h,\vec{\mu}}(\pi) $ of the cover} is the number given by 
    \begin{equation}
        \varpi^J_{d,h,\vec{\mu}}(\pi) = \frac{\left(\prod_{e\in E(G)}d_\pi(e)\right)\cdot \left(\prod_{V\in V(G)} H(V)\cdot \mathrm{CF}(V)\right)}{\prod_{h\in E(H)}\mathrm{lcm}\left((d_\pi(e))_{e\in\pi^{-1}(h)}\right)} ,\label{eq: standard weight}
    \end{equation}
    where for $V\in V(G)$:
    \begin{itemize}
        \item $H(V)$ denotes the local rational connected Hurwitz number. Namely, around $V$ the cover induces integer partitions partitions $\lambda_1,\dots,\lambda_k$ of $d_\pi(V)$ and therefore $H(V) = H_{0\to 0}(\lambda_1,\dots,\lambda_k)$\footnote{This Hurwitz number $H_{0\to 0}(\lambda_1,\dots,\lambda_k)$ is just the product of $\frac{1}{d!}$ with the number of isomorphism classes of branched coverings $f\colon\bbP^1\to\bbP^1$, where the branching locus consists of $m$ distinct points $p_1,\dots,p_m$ and the ramification profile of $f$ at $p_i$ is $\lambda_i$ (for $1\leq i\leq m$).}. 
        \item $\mathrm{CF}(V)$ is the product of factors of $k!$ for each $k$-tuple of adjacent edges or legs of the same weight that map to the same edge of the target.
    \end{itemize}
    The \emph{standard weight $\varpi^J_{d,h,\vec{\mu}}$} of $\EST^J_{d,h,\vec{\mu}}$ is the rational weight
\begin{equation*}
    \varpi^J_{d,h,\vec{\mu}}: [\bbEAC^J_{d,h}(\vec{\mu})](m+3(h-1))\to \bbQ, [(T_s
    ,T_t,\pi)]\mapsto \varpi^J_{d,h,\vec{\mu}}(\pi).
\end{equation*}
It is clear that if $\pi\cong \pi^\prime$, then $\varpi^J_{d,h,\vec{\mu}}(\pi)=\varpi^J_{d,h,\vec{\mu}}(\pi^\prime)$. This means that 
\begin{equation}
\varpi^J_{d,h,\vec{\mu}}\in [\est^J_{d,h,\vec{\mu}}]^*W_{m+3(h-1)}(\AC_{d,h,\vec{\mu}}).\label{eq: equivariance}
\end{equation}
\end{defi}

\begin{nota}
If $J=\{1,\dots,n\}$, then we simply denote $\varpi^J_{d,h,\vec{\mu}}$ by $\varpi_{d,h,\vec{\mu}}$.
\end{nota}

\begin{restatable}{thm}{globalbalancing}\label{thm: global balancing}
    The standard weight $\varpi^J_{d,h,\vec{\mu}}$ is an $\est^J_{d,h,\vec{\mu}}$-equivariant  Minkowski weight.
\end{restatable}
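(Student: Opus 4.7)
The $\est^J_{d,h,\vec{\mu}}$-equivariance is essentially automatic from the construction: the formula \eqref{eq: standard weight} depends only on the isomorphism class of the cover $\pi$ (the edge degrees, local Hurwitz numbers and combinatorial factors $\mathrm{CF}$ are all isomorphism invariants of $\pi$), so $\varpi^J_{d,h,\vec{\mu}}$ already factors through $[\est^J_{d,h,\vec{\mu}}]$ as recorded in \eqref{eq: equivariance}. Consequently, the content of the theorem is the $D^J_{d,h,\vec{\mu}}$-balancing condition of Definition \ref{defi: balanced weight} at each codimension-$1$ cone of $\EST^J_{d,h,\vec{\mu}}$.

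The plan is to first enumerate the top-dimensional and codimension-$1$ cones: a top-dimensional cone corresponds to a class $[(T_s,T_t,\pi)]$ with $T_t$ trivalent in $\bbG_{0,2h+m}$ (forcing $\st_{h,m}(T_t)$ trivalent, and then $\src(\pi)$ determined up to the spanning tree choice $T_s$), while a codimension-$1$ cone $s$ corresponds to a class $[(T_s^0,T_t^0,\pi^0)]$ where $T_t^0$ has exactly one non-trivalent vertex (or, if $J\subsetneq\{1,\dots,n\}$, an analogous degeneration controlled by $\ft_{J^c}$). The elements of $\Star^1_{\EST^J_{d,h,\vec{\mu}}}(s)$ are then the possible resolutions $e$ of that distinguished vertex on the target, together with a compatible admissible cover refining $\pi^0$.

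The next step is to reduce balancing to a local condition. By construction, the normal vectors $u^{D^J_{d,h,\vec{\mu}}}_{[s\to t]}$ live in $N_{\dist_{J\sqcup\mathbbm{g}}}\oplus N_{\dist_{2h+m}}$ and are read off from the resolution by the same formulas that appear for $\st_{g,J}$ and $\st_{h,m}$ (cf.\ Construction \ref{const: rel lin maps} applied to \eqref{eq: linear structure for the triples J}). Using Lemma \ref{lem: zero vectors in QN} in both the source and target factors, the global vanishing is equivalent to vanishing after projecting to every $4$-element subset of the markings of $T_s^0$ and every $4$-element subset of the markings of $T_t^0$. Because $\varpi^J_{d,h,\vec{\mu}}(\pi)$ is multiplicative over the vertices and edges of the cover, and because resolving the distinguished vertex of $T_t^0$ affects only those vertices of $\src(\pi^0)$ lying above it, this localization cuts the balancing equation into independent equations, one for each degenerate vertex $V\in V(\trgt(\pi^0))$ together with its fibre $\pi_0^{-1}(V)$.

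The main step, and the expected obstacle, is establishing these local identities. Each local identity says that, for a fixed target vertex $V$ with incident marked partitions $\lambda_1,\ldots,\lambda_k$ and a fixed source fibre of the partial cover, the sum over admissible resolutions of the factor $H(V^\prime)\,\mathrm{CF}(V^\prime)$ over the new source vertices $V^\prime$, weighted by edge-degree quotients $\prod d_\pi(e)/\mathrm{lcm}$ and by the normal vector projection, vanishes. This is precisely the statement of J.\ Li's degeneration formula specialized to genus-$0$ Hurwitz numbers on the target chart, or equivalently of the logarithmic degeneration formula of \cite{KimLhoRuddat}: the leading-order comparison of Hurwitz numbers at the degenerate vertex with the sum of products of Hurwitz numbers at the resolved vertices, multiplied by the gluing factors $\prod d_\pi(e)/\mathrm{lcm}$, yields exactly the balancing identity. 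The careful bookkeeping needed to match the normal-vector decomposition in the $(N_{\dist_{J\sqcup\mathbbm{g}}}\oplus N_{\dist_{2h+m}})_{\bbR}$ coordinates with the combinatorial factors $\mathrm{CF}$ and the $\mathrm{lcm}$ denominators of \eqref{eq: standard weight}, and in particular showing that cycles in $T_s$ above the degenerating target edge contribute trivially to the relevant projections, is where the proof requires most of its effort.
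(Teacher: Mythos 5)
Your proposal is correct and follows essentially the same route as the paper: equivariance is immediate from \eqref{eq: equivariance}, balancing is localized to the unique $4$-valent target vertex of a codimension-$1$ triple by projecting to $4$-element subsets of the markings via Lemma \ref{lem: zero vectors in QN}, and the resulting local identities are exactly what the paper establishes through J.~Li's degeneration formula (Lemma \ref{lem: estimation of the order} and Theorem \ref{thm: balancing of Htrop}, the target-side identity coming from the invariance of quadruple Hurwitz numbers as in \cite{CavalieriMarkwigRanganathan}). The only cosmetic difference is that the paper decomposes the local equation further, one block per vertex of the fibre over the degenerate target vertex (equation \eqref{eq: j-th block}), rather than treating the whole fibre at once.
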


The proof is postponed until the end of the next section. It is essentially split into two parts: a local deformation picture, and a local-to-global balancing argument. More precisely, the balancing of the standard weight comes from studying how an admissible cover can be deformed through edge contractions and extensions of the base graph. We first restrict ourselves to the behavior of a discrete admissible cover around a fixed edge of the target graph. From this, we obtain multiple admissible covers (by genus-$0$ curves) of a $3$-valent $4$-marked tree with a single edge. Then the contraction and extension of this single edge can be studied at each of these (our local deformation picture), and the global behavior of the cover can be understood out of these local cases (local-to-global balancing). As it turns out, the most intricate part corresponds to the local deformation picture which demands an algebrogeometric input. We do remark that we are not approaching any result concerning the corresponding existence problem. In our terms, this translates to determining when the categories $\bbAC_{d,h}(\vec{\mu})$ and $\bbEAC_{d,h}(\vec{\mu})$ are non-empty, and when do these have top dimensional cones with non-vanishing weight. On the contrary, we just show that this weight assignment (which might be trivial if the categories are empty) defines an equivariant Minkowski weight. We do subsequently  handle specific cases where these categories are non-empty and produce non-trivial cycles.

\subsection{Balancing of the standard weight} Let $d\geq 0$ be an integer and consider integer partitions $\alpha,\beta,\gamma,\delta \vdash d$, such that
\begin{equation} \ell(\alpha)+\ell(\beta)+\ell(\gamma)+\ell(\delta) = 2(d+1).\label{eq: RH for 4 valent}\end{equation}
We first consider $J =\{1,\dots,2(d+1)\}$ and focus on the linear poic-fibration $\est_{d,0,(\alpha,\beta,\gamma,\delta)}$. The condition given by the equation \eqref{eq: RH for 4 valent} has several implications:
\begin{itemize}    
    \item If $\pi\colon G\to T$ is an object of $\bbAC_{d,h}(\alpha,\beta,\gamma,\delta)$, then $g(G) = 0$.
    \item The category $\bbEAC_{d,h}(\alpha,\beta,\gamma,\delta)$ coincides with $\bbAC_{d,h}(\alpha,\beta,\gamma,\delta)$, and the poic-fibration $\est_{d,0,(\alpha,\beta,\gamma,\delta)}$ is just the identity functor.
    \item The category $\bbEAC_{d,h}(\alpha,\beta,\gamma,\delta)$ has an initial object. This can be represented as the cover of a graph defined by a single $4$-valent vertex from a graph defined by single vertex with valency $2(d+1)$, where the first $\ell(\alpha)$-marked legs lie above the first leg, the next $\ell(\beta)$-marked legs lie above the second leg, the next $\ell(\gamma)$-marked legs lie above the third leg, and the last $\ell(\delta)$-legs lie above the fourth leg.
\end{itemize}
In this case, the morphism of poic-complexes $D_{d,0,(\alpha,\beta,\gamma,\delta)}$ \eqref{eq: linear poic-complex est J} presents this linear poic-complex $\EST_{d,0,(\alpha,\beta,\gamma,\delta)}$ as a $1$-dimensional fan in the underlying vector space of $\cM_{0,2(d+1)}^\trop\times \cM_{0,4}^\trop$. Let $\Theta\in[\bbEAC_{d,h}(\alpha,\beta,\gamma,\delta)](1)$, with $(G_\Theta,H_\Theta,\pi_\Theta)$ an object of $\bbEAC_{d,h}(\alpha,\beta,\gamma,\delta)$ representing this isomorphism class. The integral generator of the ray given by $\Theta$ is $u_\theta =(u_{\src(\Theta)},u_{\trgt(\Theta)})$, where 
\begin{itemize}
    \item $u_{\trgt(\Theta)}$ is the integral generator of the ray in $\cM_{0,4}^\trop$ given by the isomorphism class $[H_\Theta]$,
    \item $u_{\src(\Theta)}$ is the integral generator of the ray generated by $ F_\pi(u_{\trgt(\Theta)})$, where $F_\pi$ is the matrix \eqref{eq: assoc matrix} associated to the corresponding admissible cover $\pi$ (this is invariant with respect to the isomorphism class).
\end{itemize}
The balancing of the fundamental cycle $\varpi_{d,0,(\alpha,\beta,\gamma,\delta)}$ is equivalent to showing the equation:
\begin{equation}
    \sum_{\Theta\in [\bbEAC_{d,h}(\alpha,\beta,\gamma,\delta)](1)} \varpi_{d,0,(\alpha,\beta,\gamma,\delta)}\left(\Theta\right)\cdot u_\Theta = 0. \label{eq: local balancing}
\end{equation}

Similar situations have already been studied in \cite{GathmannMarkwigOchseTMSSMC}, \cite{GathmannOchseMSCTV}, and \cite{BuchholzMarkwigTCCMSs} (and our approach to prove the balancing follows their spirit). These cover some special cases of \eqref{eq: local balancing}, but the general situation resisted its phrasing under these previous works. To show \eqref{eq: local balancing} we make use of Jun Li's degeneration formula in an analogous algebrogeometric situation. Namely, we look at (relative) stable maps of degree $d$ from rational curves to $\bbP^1$ with $4$ marked points, where the ramification above these marked points is prescribed by our partitions, and how these degenerate when two of the marked points come together into a new branch. This set-up fits under the deep and more general machinery of \cite{JUnLiSMSS} and \cite{JunLiDF}, and we apply the results therefrom for our case at hand. More specifically, we seek to use the degeneration formula from \cite{JunLiDF} to our advantage, which, for convenience, we recall together with its whole set-up under the notation of \cite{JUnLiSMSS} and \cite{JunLiDF} (we also intend to provide exact references thereto). Before proceeding, it is also worth mentioning that the log geometric version of this machinery \cite{KimLhoRuddat} can also be applied to this context, and shortcuts some of our combinatorial technicalities arising in the proof of Lemma 4.2.1. 

We take $\bbC$ as a ground field. Consider $\bbP^1$ with a closed point $0\in \bbP^1$, and a flat and projective family $W\to \bbP^1$ such that:
\begin{itemize}
    \item The fiber $W_t$ over any closed $t\in\bbP^1$ with $t\neq 0$ is isomorphic to $\bbP^1$.
    \item The fiber over $0\in\bbP^1$ is the union of two $\bbP^1$'s transversally intersecting at a point. For notational simplicity we denote this fiber by $W_0$ with the point of intersection $P$. In addition, we denote the corresponding $\bbP^1$'s by $Y_1$ and $Y_2$, and denote the point $P$ lying in $Y_i$ by $P_i\in Y_i$.
\end{itemize}
Let $\mathfrak{W}$ denote the stack of expanded degenerations of $W/\bbP^1$ (see Section 1.2 and Definition 1.9 of \cite{JUnLiSMSS}), and let $\mathfrak{M}(\mathfrak{W},\Gamma)$, with $\Gamma=(0,2(d+1),d)$, denote the moduli stack of stable morphisms to $\mathfrak{W}$ (see Section 3.1 of loc. cit.) of degree $d$ from $2(d+1)$-marked rational curves. It is shown in loc. cit. that this is a Deligne-Mumford stack that is separated and proper over $\bbP^1$ (Theorem 3.10 of loc. cit.). In addition, when $t\neq 0$ the fiber products $\mathfrak{M}(\mathfrak{W},\Gamma) \times_{\bbP^1} \{t\}$ are naturally isomorphic to the moduli stack of stable morphisms to $W_t$ of the prescribed topological type. It is shown in Theorem 2.5 of \cite{JunLiDF} that at the closed point $0\in\bbP^1$ the fiber product 
\begin{equation*}
    \mathfrak{M}(\mathfrak{W}_0,\Gamma):=\mathfrak{M}(\mathfrak{W},\Gamma) \times_{\bbP^1} \{0 \}
\end{equation*}
has a perfect obstruction theory and carries a well defined virtual fundamental cycle. It is shown in Theorem 3.15 of loc. cit. that this cycle satisfies the following degeneration formula
\begin{equation}
[\mathfrak{M}(\mathfrak{W}_0,\Gamma)]^{\virt}
=\sum_{\eta\in\overline{\Omega}}\frac{m(\eta)}{\abs{\Eq(\eta)}}\cdot {\Phi_\eta}_* [\mathfrak{M}(\mathfrak{Y}_1^{\rel},\Gamma_1)]^{\virt} \times [\mathfrak{M}(\mathfrak{Y}_2^{\rel},\Gamma_2)]^{\virt},\label{eq: degeneration formula full}
\end{equation}
where the notation is as follows:
\begin{itemize}
    \item The $\Gamma_i$ denote topological types, which are specified by admissible graphs (Definition 4.6 of \cite{JUnLiSMSS}) for $(Y_i,P_i)$ (for $i=1,2$). An admissible graph (not to be confused with our previous discrete graphs) $\Gamma_i$ for $(Y_i,P_i)$ is simply a finite collection of vertices, legs and roots (legs and roots are line segments with only one end attached to a vertex), with:
    \begin{enumerate}
        \item An ordering of the legs, an ordering and weight on the roots, and two weight functions on the set of vertices
        \begin{align*}
            g:V(\Gamma_i)\to \bbZ_{\geq0},  & &b:V(\Gamma_i)\to H_2Y_i,
        \end{align*}
        where $b$ assigns algebraic homology classes to vertices of $\Gamma_i$. In our case of interest, the function $g$ is constant with value $0$, and the functions $b$ are determined by an integer (the degree).
        \item The graphs $\Gamma_i$ are relatively connected. This means, by definition, that either $V(\Gamma_i)$ is a single element, or each vertex in $V(\Gamma_i)$ has at least one root attached to it.
    \end{enumerate}
    The admissible graphs $\Gamma_1$ and $\Gamma_2$ appearing in the formula are not arbitrary. These are given by the $\eta\in\overline{\Omega}$ (this will be shortly explained), thus they have some compatibility conditions.
    \item The set $\bar{\Omega}$ is the quotient of the set $\Omega$, which we now describe, by an equivalence relation, which we describe shortly afterwards. The set $\Omega$ consists of the triples $\left(\Gamma_1, \Gamma_2, I\right)$, where:
    \begin{enumerate}
        \item $\Gamma_1$ and $\Gamma_2$ are two admissible graphs that have the same number of roots and such that the weights of their $j$-th roots coincide. In addition, our case of interest requires the total number of legs adds to $2(d+1)$ and the total sum of the degrees is $d$.
        \item The graph obtained by joining all the $j$-th roots is a connected tree (the tree condition is due to our case of interest) and has no roots itself.
        \item The $I$ in the triple is an ordering on the set given by the legs of both $\Gamma_1$ and $\Gamma_2$. It is assumed to extend the ordering of the legs of each $\Gamma_i$.
    \end{enumerate}
    The equivalence relation is induced from the natural action of the symmetric groups on the roots. More precisely, let $\eta\in\Omega$. If the graphs of $\eta$ have $r$ roots, then any permutation $\sigma \in S_r$ defines a new element $\eta^\sigma$ by reordering the roots according to $\sigma$. For $\eta_1, \eta_2 \in \Omega$, we say $\eta_1 \sim \eta_2$ if $\eta_1=\eta_2^\sigma$ for some $\sigma$. Finally, the set $\bar{\Omega}$ is the set of equivalence classes $\Omega / S_r$. For an $\eta\in\overline{\Omega}$, the number $m(\eta)$ is just the product of the weights of the roots and $\Eq(\eta)$ denotes the stabilizer in $S_r$ of any representative of this class.
    \item The notation $\mathfrak{Y}_i^{\rel}$ refers to the stack of expanded relative pairs of $(Y_i,P_i)$ (see Section 4.1 and Definition 4.4 of \cite{JUnLiSMSS}).
    \item The cycle $[\mathfrak{M}(\mathfrak{Y}_i^{\rel},\Gamma_i)]^{\virt}$ refers to the virtual fundamental cycle (Proposition 3.9 of \cite{JunLiDF}) of $\mathfrak{M}(\mathfrak{Y}_i^{\rel},\Gamma_i)$, the moduli stack of relative stable morphisms to $\mathfrak{Y}_i^{\rel}$ of topological type $\Gamma_i$ (Definition 4.9 of \cite{JUnLiSMSS}). This is also a separated proper Deligne-Mumford stack (Proposition 4.10 of loc. cit.).
    \item The notation $\Phi_\eta$, for $\eta = (\Gamma_1,\Gamma_2,I)\in \overline{\Omega}$ as above, refers to the morphism defined by the root glueing construction (joining together the $j$-th roots of the underlying admissible graphs and reordering the legs according to $I$) applied to the corresponding (families) maps (Definition 4.11 and (4.10) of loc. cit.):
    \begin{equation*}
        \Phi_\eta: \mathfrak{M}\left(\mathfrak{Y}_1^{\rel}, \Gamma_1\right) \times \mathfrak{M}\left(\mathfrak{Y}_2^{\rel}, \Gamma_2\right) \longrightarrow \mathfrak{M}\left(\mathfrak{W}_0, \Gamma\right),
    \end{equation*}
    where $\Gamma=(0,2(d+1),d)$ is the topological type that we have been considering. It is shown in Proposition 4.13 of loc. cit. that this map is finite \'etale of pure degree $\# \Eq(\eta)$. The image $\mathfrak{M}(\mathfrak{Y}_1^{\rel}\sqcup\mathfrak{Y}_2^{\rel}, \Gamma)$ is the substack obtained by the glueing process. The degeneration formula takes the following form with the image stack (Corollary 3.13 of \cite{JunLiDF})
    \begin{equation}
        [\mathfrak{M}(\mathfrak{W}_0,\Gamma)]^{\virt}
=\sum_{\eta\in\overline{\Omega}}{m(\eta)}\cdot [\mathfrak{M}(\mathfrak{Y}_1^{\rel}\sqcup\mathfrak{Y}_2^{\rel},\eta)]^{\virt}.\label{eq: degeneration formula}
    \end{equation}
\end{itemize}

In general, the moduli stack $\mathfrak{M}(\mathfrak{W},\Gamma)$ does not contain as a substack the usual moduli space of stable maps $\overline{M}_{0,2(d+1)}(W,d)$, where $d$ refers to $d\cdot[\text{the fiber}]$. However, because of our setting we can fix our attention towards a common substack and aim to use \eqref{eq: degeneration formula} in order to gather information about our balancing weights. We fix distinct sections $s_\alpha$, $s_\beta$, $s_\gamma$, and $s_\delta$ of $W\to \bbP^1$ such that
\begin{equation*}Q_\alpha:=s_\alpha\times_{\bbP^1}\{0\},Q_\beta:=s_\beta\times_{\bbP^1}\{0\}\in Y_1 \textnormal{ and } Q_\gamma:=s_\gamma\times_{\bbP^1}\{0\},Q_\delta:=s_\delta\times_{\bbP^1}\{0\}\in Y_2
\end{equation*}
are distinct from the $P_i$ and consider inside the moduli space $\overline{M}_{0,2(d+1)}(W,d)$ the moduli subspace (substack) $\overline{M}_{0,(\alpha,\beta|\gamma,\delta)}(W,d)$ given by all stable maps $\cC=(C,x_1,\dots, x_{2(D+1)},\pi)$ where:
\begin{align*}
    \pi^*s_\alpha &=\sum_{i=1}^{\ell(\alpha)}  \alpha_i x_{i},& &\pi^*s_\beta =\sum_{i=1}^{\ell(\beta)} \beta_i x_{\ell(\alpha)+i},\\
    \pi^*s_\gamma &=\sum_{i=1}^{\ell(\gamma)} \gamma_i x_{\ell(\alpha)+\ell(\beta)+i},& &\pi^*s_\delta =\sum_{i=1}^{\ell(\delta)} \delta_i x_{\ell(\alpha)+\ell(\beta)+\ell(\gamma)+i},
\end{align*}
and $\alpha=(\alpha_i)$, $\beta=(\beta_i)$, $\gamma=(\gamma_i)$ and $\delta=(\delta_i)$.

This is a $1$-dimensional moduli subspace (substack), and it is also a substack of $\mathfrak{M}(\mathfrak{W},\Gamma)$. Our conditions on the partitions and special points force that there are no contracted components to take into account in the expanded degenerations. Indeed, in this specific context the expanded degenerations would add chains of $\bbP^1$'s but our conditions make that the new added components do not carry any special points different from the two nodes where other components are glued to. This is impossible because the curve has to have genus $0$ and must also be stable.

We denote the fiber product $\overline{M}_{0,(\alpha,\beta|\gamma,\delta)}(W,d)\times_{\bbP^1}\{0\}$ by $\overline{M}_{0,(\alpha,\beta|\gamma,\delta)}(W_0,d)$, which is zero dimensional. The underlying combinatorics of an isomorphism class $\Theta\in [\bbEAC_{d,h}(\alpha,\beta,\gamma,\delta)](1)$ determines a subset of points $\overline{M}_\Theta \subset \overline{M}_{0,(\alpha,\beta|\gamma,\delta)}(W_0,d)$, whenever the legs of the target tree with ramification $\alpha$ and $\beta$ share a common vertex. In fact, these subsets form a partition of $\overline{M}_{0,(\alpha,\beta|\gamma,\delta)}(W_0,d)$. Similar constructions and considerations give rise to the analogous moduli subspaces (substacks) $\overline{M}_{0,(\alpha,\gamma|\beta,\delta)}(W_0,d)$ and $\overline{M}_{0,(\alpha,\delta|\beta,\gamma)}(W_0,d)$.

\begin{nota}
    For a general subset $I=\{i,j,k,l\}$ of $\{1,\dots, 2(d+1)\}$ we have the forgetful morphism $\ft_{I^c}:\mathfrak{M}_{0,2(d+1)}(\mathfrak{W}_0,d)\to \overline{M}_{0,I}$, whose restriction to $\overline{M}_{0,(\alpha,\beta|\gamma,\delta)}(W_0,d)$ will be denoted identically. 
\end{nota}

\begin{lem}\label{lem: estimation of the order}
    Let $\pi\colon G\to H$ be an object of $\bbEAC_{d,h}(\alpha,\beta,\gamma,\delta)$, let $\Theta\in [\bbEAC_{0,4}(\alpha,\beta,\gamma,\delta)](1)$ denote its isomorphism class, and suppose that that the legs of $H$ with ramification $\alpha$ and $\beta$ share a common vertex. For a subset of distinct points $I=\{i,j,k,l\}$ of $\{1,\dots, 2(d+1)\}$, the order of  $\cC_\Theta\in \overline{M}_\Theta$ at the pull-back by $\ft_{I^c}$ of the divisor $(ij|kl)$ of $\overline{M}_{0,I}$ is 
    \begin{align*}
        \ord_{\cC_\Theta} \ft_{I^c}^*(ij|kl) =\prod_{V\in V(G)}\mathrm{CF}(V)\cdot \prod_{e\in E(G)}d_\pi(e)\cdot \sum_{e\in G(ij|kl)}\frac{1}{d_\pi(e)},
    \end{align*} 
    where $G(ij|kl)$ denotes the set of edges of $G$ between the tuples of legs $\{i,j\}$ and $\{k,l\}$ (that is, $G(ij|kl)$ consists of the edges lying in the intersection of: the path from $i$ to $k$, the path from $i$ to $l$, the path from $j$ to $k$, and the path from $j$ to $l$)\footnote{
    In the proof of this lemma we will do several computations with cross-ratios, for which we use the following notation and convention. For $a,b,c,d\in\bbP^1$, we let $\lambda(a,b,c,d)\in\bbP^1$ denote the point given by the image of $d$ under the unique automorphism of $\bbP^1$ that sends the points $a,b,c$ to $0,1,\infty\in\bbP^1$ respectively. More explicitly: $\lambda(a,b,c,d) = \frac{(b-c)}{(b-a)}\cdot \frac{(d-a)}{(d-c)}$.}.
\end{lem}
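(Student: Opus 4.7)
The plan is to compute the order by an explicit local analysis on a one-parameter deformation of $\cC_\Theta$ in $\overline{M}_{0,(\alpha,\beta|\gamma,\delta)}(W,d)$. The point $\cC_\Theta=(C,x_\bullet,\pi)$ is a nodal stable map with $C$ a tree of $\mathbb{P}^1$'s matching $G$: each component $C_V$ covers $Y_1$ or $Y_2$ (according to whether $V$ is above $u_1$ or $u_2$), and every source node maps to the single node of $W_0$ with ramification $d_\pi(e)$. Parametrize a local branch of the moduli through $\cC_\Theta$ by a uniformizer $\tau$, so that $t=c\,\tau^N$ (with $c$ a unit and $N$ the moduli-to-base ramification index) and each source smoothing is $\tau_e=c_e\,\tau^{N/d_\pi(e)}$, in agreement with the admissibility relation $\tau_e^{d_\pi(e)}=t$.

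To compute the cross-ratio, fix local coordinates on each component of $C$ so that the nodes along $G(ij|kl)=\{e_1,\ldots,e_m\}$ sit at $0$ or $\infty$ on their respective components and the marked points occupy prescribed positions. Starting from the coordinate $z$ on the component containing $x_i$, iteratively push through each node via the local smoothing relation $zw=\tau_e$: a point at coordinate $w=a$ on the far side becomes $z=\tau_e/a$ on the smoothed curve. After traversing the chain, $x_k$ and $x_l$ sit in the $z$-chart at positions $a_k\prod_{s}\tau_{e_s}$ and $a_l\prod_{s}\tau_{e_s}$ for distinct nonzero constants $a_k,a_l$. Substituting into the cross-ratio formula and extracting the leading behavior as $\tau\to 0$ gives
\[
\frac{1}{\lambda(x_i,x_j,x_k,x_l)}\;\sim\;c'\prod_{s=1}^m\tau_{e_s}\;\sim\;\tau^{\,N\sum_{e\in G(ij|kl)}1/d_\pi(e)}
\]
for a nonzero constant $c'$. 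In the case $V_i\ne V_j$ or $V_k\ne V_l$, auxiliary chains are used to express the positions of $x_i,x_j$ (respectively $x_k,x_l$) on the reference component; these contribute only to the leading constant and not to the exponent.

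Finally, identify $N=\prod_V\mathrm{CF}(V)\cdot\prod_e d_\pi(e)$. This combines two contributions to the inertia of the Deligne-Mumford stack of admissible covers at $\cC_\Theta$: (i) the combinatorial automorphisms of the cover fixing target markings, which permute equal-weight adjacent edges and legs mapping to the same target flag and contribute the factor $\prod_V\mathrm{CF}(V)$; and (ii) the cyclic $\mu_{d_\pi(e)}$-stabilizers of the smoothings at each source node, contributing the factor $\prod_e d_\pi(e)$. Together these give the ramification $N$ of the moduli-to-base map at $\cC_\Theta$; substituting into the chain computation yields the claimed order. The main technical obstacle is the rigorous identification of $N$, which requires the Deligne-Mumford machinery of admissible cover moduli at the boundary, either via Jun Li's expanded degenerations or the log-geometric framework of \cite{KimLhoRuddat}; the chain cross-ratio computation itself is then direct.
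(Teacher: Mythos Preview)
Your chain computation of the cross-ratio is essentially the same idea as the paper's: the paper decomposes $\lambda(i,j,k,l)$ via multiplicative cross-ratio identities into a product of factors along the chain $G(ij|kl)$, then invokes Vakil's localization to reduce each factor to the explicit model $\pi([z_0:z_1])=[\lambda z_0^d:z_1^d]$, where one reads off the contribution $1/d_\pi(e)$ directly. Your version with local smoothing charts $zw=\tau_e$ is a different packaging of the same calculation and yields the same exponent $\sum_{e\in G(ij|kl)}1/d_\pi(e)$.

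The gap is in your identification of $N$. You attribute the factor $\prod_V\mathrm{CF}(V)$ to ``combinatorial automorphisms of the cover fixing target markings, which permute equal-weight adjacent edges and legs.'' But in this setting all source legs are marked, so the cover $\cC_\Theta$ has no such automorphisms: two legs of the same weight over the same target leg are distinguished by their labels. The paper obtains $\prod_V\mathrm{CF}(V)$ from a different source entirely. It applies Li's degeneration formula
\[
[\mathfrak{M}(\mathfrak{W}_0,\Gamma)]^{\mathrm{virt}}=\sum_{\eta\in\overline{\Omega}} m(\eta)\,[\mathfrak{M}(\mathfrak{Y}_1^{\rel}\sqcup\mathfrak{Y}_2^{\rel},\eta)]^{\mathrm{virt}},
\]
and observes that several distinct classes $\eta=(\Gamma_1,\Gamma_2,I)\in\overline{\Omega}$ glue to the same point $\cC_\Theta$: the roots of $\Gamma_1,\Gamma_2$ carry an auxiliary ordering, and reordering roots of equal weight (together with compatible leg relabelings) produces distinct $\eta$'s with identical gluing. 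The count of such $\eta$'s is $\prod_V\mathrm{CF}(V)$, and each contributes $m(\eta)\sum_e 1/d_\pi(e)$ with $m(\eta)=\prod_e d_\pi(e)$. So the two factors have genuinely different origins: $\prod_e d_\pi(e)$ is the gluing multiplicity $m(\eta)$ built into the degeneration formula, while $\prod_V\mathrm{CF}(V)$ is a combinatorial overcount of $\eta$'s, not a stack inertia contribution at $\cC_\Theta$.

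Your sketch could likely be repaired in the log-geometric framework you cite, where the moduli is smooth and a single ramification index does package everything; but as written the inertia argument for $N$ does not go through, and you would still need the degeneration formula (or its log analogue) to pin down the constant.
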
 
\begin{proof}  
    Observe that if $G(ij|kl)$ is empty, then the order is zero and the sum is also zero. So we have to show the lemma when $G(ij|kl)$ is non-empty. We prove the statement about the order by several reductions. Let $\cC_\Theta = (C_\Theta,x_1,\dots,x_{2(D+1)},\pi_\Theta)\in \overline{M}_\Theta$, and observe that by definition $C_\Theta$ with the marked points is a stable curve of genus $0$. This means that $C_\Theta$ is a tree of $\bbP^1$'s and the set of edges of $G(ij|kl)$ determines a set of nodes of $C_\Theta$. We remark that $G(ij|kl)$ actually defines a path (of the source graph), and we order this set of edges so that the induced orientation is directed from $i$ or $j$ to $k$ or $l$. In terms of $C_\Theta$, the set $G(ij|kl)$ determines a chain of projective lines $C_1,\dots,C_N$ such that
    \begin{itemize}
        \item $C_i$ and $C_{i+1}$ intersect at a node that has weight $d_i$,
        \item $C_i\cap C_j$ is non-trivial if and only if $j = i\pm 1$,
        \item if $C_i$ maps to $Y_1$ (respectively $Y_2$), then $C_{i+1}$ maps to $Y_2$ (respectively $Y_1$). This is due to the non-emptiness of $G(ij|kl)$.
    \end{itemize}
    We depict this situation in Figure \ref{fig: cr1 chain of lines}, where we assume that the marked {\color{red}red} points lie over $Q_\alpha$, the {\color{olive}olive} points lie over $Q_\beta$, the {\color{purple}purple} lie over $Q_\gamma$ and the {\color{blue}blue} over $Q_\delta$. 
    \begin{figure}[!h]
        \centering
        \begin{tikzpicture}
            \draw[] (-4,-1)--(-0.5,2.5);
            \draw[] (-1.5,2.5)--(2,-1);
            \draw[fill=black] (-1,2) circle (3pt);
            \draw[fill=black] (1.5,-0.5) circle (3pt);
            \draw[fill=red] (-3,0) circle (3pt);
            \draw[red,fill=red] (-2.85,0.15) circle (1pt);
            \draw[red,fill=red] (-2.75,0.25) circle (1pt);
            \draw[red,fill=red] (-2.65,0.35) circle (1pt);
            \draw[fill=red] (-2.5,0.5) circle (3pt);
            \draw[fill=olive] (-2,1) circle (3pt);
            \draw[olive, fill=olive] (-1.85,1.15) circle (1pt);
            \draw[olive, fill=olive] (-1.75,1.25) circle (1pt);
            \draw[olive, fill=olive] (-1.65,1.35) circle (1pt);
            \draw[fill=olive] (-1.5,1.5) circle (3pt);
            \draw[fill=purple] (-0.5,1.5) circle (3pt);
            \draw[purple,fill=purple] (-0.35,1.35) circle (1pt);
            \draw[purple,fill=purple] (-0.25,1.25) circle (1pt);
            \draw[purple,fill=purple] (-0.15,1.15) circle (1pt);
            \draw[fill=purple] (0,1) circle (3pt);
            \draw[fill=blue] (0.5,0.5) circle (3pt);
            \draw[blue,fill=blue] (0.65,0.35) circle (1pt);
            \draw[blue,fill=blue] (0.75,0.25) circle (1pt);
            \draw[blue,fill=blue] (0.85,0.15) circle (1pt);
            \draw[fill=blue] (1,0) circle (3pt);

            \draw[] (1,-1)--(2,0);
            \draw[] (3,0)--(4,-1);
            \node at (2.55,0.1) {\Huge$\cdots$};

            \draw[] (3,-1)--(6.5,2.5);
            \draw[] (5.5,2.5)--(9,-1);
            \draw[fill=black] (6,2) circle (3pt);
            \draw[fill=black] (3.5,-0.5) circle (3pt);
            \draw[fill=red] (4,0) circle (3pt);
            \draw[red,fill=red] (4.15,0.15) circle (1pt);
            \draw[red,fill=red] (4.25,0.25) circle (1pt);
            \draw[red,fill=red] (4.35,0.35) circle (1pt);
            \draw[fill=red] (4.5,0.5) circle (3pt);
            \draw[fill=olive] (5,1) circle (3pt);
            \draw[olive, fill=olive] (5.15,1.15) circle (1pt);
            \draw[olive, fill=olive] (5.25,1.25) circle (1pt);
            \draw[olive, fill=olive] (5.35,1.35) circle (1pt);
            \draw[fill=olive] (5.5,1.5) circle (3pt);
            \draw[fill=purple] (6.5,1.5) circle (3pt);
            \draw[purple,fill=purple] (6.65,1.35) circle (1pt);
            \draw[purple,fill=purple] (6.75,1.25) circle (1pt);
            \draw[purple,fill=purple] (6.85,1.15) circle (1pt);
            \draw[fill=purple] (7,1) circle (3pt);
            \draw[fill=blue] (7.5,0.5) circle (3pt);
            \draw[blue,fill=blue] (7.65,0.35) circle (1pt);
            \draw[blue,fill=blue] (7.75,0.25) circle (1pt);
            \draw[blue,fill=blue] (7.85,0.15) circle (1pt);
            \draw[fill=blue] (8,0) circle (3pt);
            
        \end{tikzpicture}
        \caption{Depiction of our situation with a chain of projective lines determined by the edges $G(ij|kl)$}
        \label{fig: cr1 chain of lines}
    \end{figure}
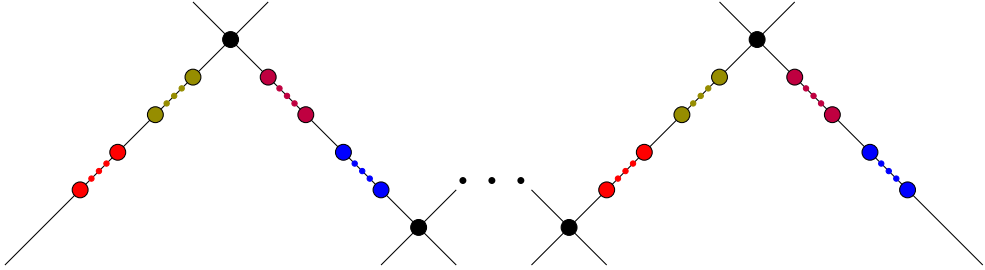
    We apply the usual multiplicative relations of cross ratios several times to understand how each node of this chain contributes to the cross ratio $\lambda(i,j,k,l)$.
    \begin{enumerate}
        \item We can assume without loss of generality that:
        \begin{itemize}
            \item The points $i$, $j$ and the node determined by the first edge of $G(ij|kl)$ lie on the same component. 
            \item The points $k$, $l$ and the node determined by the last edge of $G(ij|kl)$ lie on the same component.
        \end{itemize}
        Indeed, suppose $m$ is an additional point of $C_\Theta$, then
        \begin{equation}
            \lambda(i,j,k,l) = \lambda(i,j,k,m)\cdot \lambda(i,m,k,l). \label{eq: mult rels cr}
        \end{equation}
        Now, if we assume that 
        \begin{itemize}
            \item $i$ and $m$ (or $j$ and $m$) lie in the same component as the node given by the first edge of $G(ij|kl)$, 
            \item $j$ (or $i$) lies in a different component that is connected to this component at a different node, 
        \end{itemize}
        then $\lambda(i,j,k,m)$ is a unit.         
    \item The cross ratio $\lambda(i,j,k,l)$ can be expressed as 
    \begin{equation}
        \lambda(i,j,k,l) = \left(\textnormal{unit}\right)\cdot \prod_{s=1}^{N-1}\lambda(x_s,y_s,x_{s+1},y_{s+1}),\label{eq: decompostion of cross ratio}
    \end{equation}
    where $x_1=i$, $y_1= j$, $x_N=k$ and $y_N=l$, and for each $1\leq s\leq N-1$ the points $x_s$ and $y_s$ are distinct marked points of the component $C_s$.
    \\
    To see this claim suppose that $m$ and $n$ are two additional distinct points of $C_\Theta$. From \eqref{eq: mult rels cr} and making use of the multiplicative relations of cross ratios, it follows that
    \begin{equation}
    \lambda(i,j,k,l) = \lambda(i,j,m,n)\cdot \lambda(i,n,m,l)\cdot \lambda(j,m,n,k)\cdot \lambda(m,n,k,l), \label{eq: mult cross ratios 1}
    \end{equation}
    We observe that if 
    \begin{itemize}
        \item $i$ and $j$ lie in the same component,
        \item $m$ and $n$ lie in the same component different from the previous one,
        \item $k$ and $l$ lie in the same component different from the previous two,
    \end{itemize}
    then both $\lambda(j,m,n,k)$ and $\lambda(l,m,n,i)$ are units. Now, we can just fix $x_1=i$, $y_1=j$, $x_N=k$ and $y_N=l$, and proceed inductively to define the remaining points. Namely, after $x_i$ and $y_i$ have been defined, we just let $x_{i+1}$ and $y_{i+1}$ be distinct marked points of the ensuing component.

\item Following the notation of the previous item, we can further assume in \eqref{eq: decompostion of cross ratio} that for $1\leq s\leq N$ the points $x_s$ and $y_s$ lie over different fibers of the marked points. \\
Indeed, if $x^\prime$ denotes an additional point in the component $C_s$, then (without loss of generality $s\leq N-1$):
    \begin{equation}
        \lambda(x_s,y_s,x_{s+1},y_{s+1}) = \lambda(x^\prime,y_s,x_{s+1},y_{s+1})\cdot \lambda(x_s,y_s,x^\prime,y_{s+1}). \label{eq: mult cross ratios 2}
    \end{equation}
We observe that $\lambda(x_s,y_s,x^\prime,y_{s+1})$ is a unit, and hence our claim (see Figure \ref{fig: cr2 chain of lines multiplicative same component} for a depiction of this situation).
\begin{figure}[!ht]
        \centering
        \begin{tikzpicture}
            \draw[] (-7,-2)--(-2,0.5);
            \draw[] (-4,0.5)--(1,-2);
            
            \draw[fill=black] (-6,-1.5) circle (3pt);
            \draw[fill=black] (-5,-1) circle (3pt);
            \draw[fill=black] (-4,-0.5) circle (3pt);
            \draw[fill=black] (-3,0) circle (3pt);
            \draw[fill=black] (-2,-0.5) circle (3pt);
            \draw[fill=black] (-1,-1) circle (3pt);
            \node at (-6,-1) {$x^\prime$};
            \node at (-5,-0.5) {$x_s$};
            \node at (-4,0) {$y_s$};
            \node at (-2,0) {$x_{s+1}$};
            \node at (-1,-0.5) {$y_{s+1}$};
        \end{tikzpicture}
        \caption{Situation of the third item.}
        \label{fig: cr2 chain of lines multiplicative same component}
\end{figure}
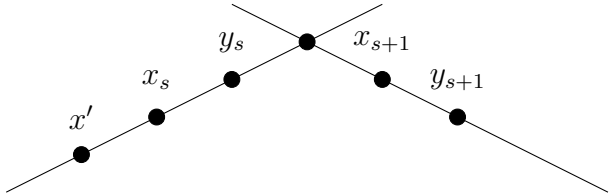
\end{enumerate} 

In conclusion, we are interested then in the contributions to the order of vanishing that come from the nodes because of \eqref{eq: decompostion of cross ratio} and the last item. Hence, we restrict ourselves to compute the order of vanishing of $\lambda(i,j,k,l)$ where 
\begin{itemize}
    \item the points $i$ and $j$ lie on the same component but on different fibers,
    \item the points $k$ and $l$ lie on a different component and on different fibers.
\end{itemize}

\noindent Now, we apply Proposition 1.1 of \cite{Vakil} (or 3.7 in the published version) showing that deformations are local around special loci, so that we can content ourselves with the following very special case: Let $\alpha = (d)$, $\beta=(1^d)$, $\gamma = (d)$ and $\delta = (1^d)$, consider $\bbP^1$ with the four marked points $0$, $1$, $\infty$, and $t$. We look at the family of smooth covers $(C,x_{t1},\dots,x_{td},x_\infty,x_0,x_{11},\dots,x_{1d}, \pi )$ of $\bbP^1$ of degree $D$ satisfying:
    \begin{align*}
       && \pi^* 0= d\cdot x_0, && \pi^*1 = \sum_i x_{1i},
        &&\pi^*\infty = d\cdot x_\infty, && \pi^*t= \sum_i x_{ti}.
    \end{align*}
On the source curve we set $x_0 = 0$ and $x_\infty= \infty$, and parameterize this family by $\pi ([z_0:z_1]) = [\lambda z_0^d: z_1^d]$, with  $\lambda\in \bbC^*$. This makes $\{x_{1s}\}$ be the $d$th-roots of $\lambda$ and $\{x_{ts}\}$ be the $d$th roots of $\lambda\cdot t$. Our cover of interest arises when $t\mapsto 0$, and we have to compute the order of vanishing of the cross-ratio $\lambda(x_0,x_{1q},x_\infty,x_{tp})$ for arbitrary $p$ and $q$. But we can readily see that $\lambda(x_0,x_{1q},x_\infty,x_{tp})=\frac{x_{tp}}{x_{1q}}$, which is a $d$th root of $t$, and therefore it vanishes at $t=0$ with order $\frac{1}{d}$.\\

To finish the lemma we look at the following form (Corollary 3.13 of \cite{JunLiDF}) of the degeneration formula \eqref{eq: degeneration formula}
    \begin{equation}
    [\mathfrak{M}_{0,2(d+1)}(\mathfrak{W}_0,d)]^{\virt}
    =\sum_{\eta\in\overline{\Omega}}m(\eta)[\mathfrak{M}(\mathfrak{Y}_1^{\rel}\sqcup \mathfrak{Y}_2^{\rel},\eta)]^{\virt},\label{eq: degeneration formula2}
\end{equation}
There are multiple $\eta\in\overline{\Omega}$ that contribute to $\ord_{\cC_\Theta}\ft_{I^c}^*(ij|kl)$. Namely, the different $\eta\in\overline{\Omega}$ that glue to this curve. For any of these, the factor $m(\eta)$ coincides (by definition) with $m(\eta) = \prod_{e\in E(G)}d_\pi(e)$, and the above procedure shows that each $\eta$ glueing to this curve contributes
    \begin{equation}
        m(\eta)\sum_{e\in G(ij|kl)} \frac{1}{d_\pi(e)}.\label{mults}
    \end{equation}
On the other hand, the number of $\eta$'s glueing to $\cC_\Theta$ is the product $\prod_{V\in V(G)}\mathrm{CF}(V)$, because we have to account the reordering possibilities of the legs (with their weights) of the admissible graphs of $\eta$ and the (non-simultaneous) permutation of the roots that have the same weight.
\end{proof}

\begin{remark}
As we have previously mentioned, the alternative approach with the log geometric version of the degeneration formula \cite{KimLhoRuddat} can be used to arrive directly at \eqref{mults} and bypasses some of our technicalities in the previous proof.
\end{remark}

\begin{thm}\label{thm: balancing of Htrop}
    Let $J\subset\{1,\dots,2(d+1)\}$. The standard weight $\varpi^J_{d,0,(\alpha,\beta,\gamma,\delta)}$ is a Minkowski weight.
\end{thm}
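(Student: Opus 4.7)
I would verify balancing at the unique $0$-dimensional cone of $\EST^J_{d,0,(\alpha,\beta,\gamma,\delta)}$ by proving that the sum in \eqref{eq: local balancing} vanishes. First I would treat the fully-marked case $J = \{1,\ldots,2(d+1)\}$, and then reduce the general case $J \subsetneq \{1,\ldots,2(d+1)\}$ via pushforward along $\ft_{J^c}$ together with the compatibility between the source morphism of Proposition \ref{prop: est linear poic fib J} and the definition of $\varpi^J$ in \eqref{eq: equivariance}.

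For $J = \{1,\ldots,2(d+1)\}$ the vector $u_\Theta = (u_{\src(\Theta)}, u_{\trgt(\Theta)})$ lies in $N_{\dist_{2(d+1)}} \oplus N_{\dist_4}$, so balancing must be checked in both factors. Since $\cM_{0,4}^\trop$ has only three rays, the target factor vanishes precisely when the three partial sums $\sum_{\Theta \text{ of type } \tau} \varpi(\Theta)$, indexed by the two-two partitions $\tau$ of $\{\alpha,\beta,\gamma,\delta\}$, are equal. For the source factor I would invoke Lemma \ref{lem: zero vectors in QN}: vanishing in $N_{\dist_{2(d+1)}}$ reduces to vanishing of the projection $p_{(I')^c}$ in $N_{\dist_{I'}}$ for every four-subset $I' \subset \{1,\ldots,2(d+1)\}$. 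Again $\cM_{0,I'}^\trop$ is a tropical line, so the claim becomes an equality of three partial sums indexed by the two-two partitions of $I'$. Unwinding the induced metric on the source, the coefficient of $v_{(ij|kl)}$ in $p_{(I')^c}(u_{\src(\Theta)})$ equals $\mathrm{lcm}\bigl((d_\pi(e))_{e\in\pi^{-1}(h_H)}\bigr) \cdot \sum_{e\in G(ij|kl)} 1/d_\pi(e)$, where $h_H$ is the unique internal edge of the target, and vanishes exactly when $G(ij|kl)$ is empty.

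The main input is algebrogeometric. For each of the three partition types, say $(\alpha\beta|\gamma\delta)$, I would work with the proper $1$-dimensional moduli substack $\overline{M}_{0,(\alpha,\beta|\gamma,\delta)}(W,d) \subset \mathfrak{M}_{0,2(d+1)}(\mathfrak{W}, \Gamma)$ together with the morphism $f_{I'} := \ft_{(I')^c} \circ \src : \overline{M}_{0,(\alpha,\beta|\gamma,\delta)}(W,d) \to \overline{M}_{0,I'} \cong \bbP^1$. Properness forces $\deg f_{I'}$ to be constant across all fibers, so its values at the three boundary points $(ij|kl), (ik|jl), (il|jk)$ of $\overline{M}_{0,I'}$ agree. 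Passing to the central fiber $t=0$ via the degeneration formula \eqref{eq: degeneration formula2} and applying Lemma \ref{lem: estimation of the order} to each contributing $\cC_\Theta$, I obtain that each such degree equals $\sum_{\Theta \text{ of type }(\alpha\beta|\gamma\delta)} m(\eta) \cdot \prod_V H(V)\,\mathrm{CF}(V) \cdot \sum_{e\in G(ij|kl)} 1/d_\pi(e)$. After absorbing the $\mathrm{lcm}$ factor this regroups as $\sum_\Theta \varpi(\Theta) \cdot c_{(ij|kl)}(\Theta)$ with $c_{(ij|kl)}(\Theta)$ exactly the tropical coefficient from the previous paragraph. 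Constancy of $\deg f_{I'}$ then yields equality of the three type-$(\alpha\beta|\gamma\delta)$ partial sums for $I'$; repeating for the other two partition types and summing gives source balancing. Target balancing then follows by specializing $I'$ to consist of one source leg above each target leg, which collapses the edge-count combinatorics to the target's partition type.

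The hard part will be the algebraic-to-tropical bookkeeping. I need to verify that the multiplicity of $\cC_\Theta$ in $[\mathfrak{M}(\mathfrak{Y}_1^\rel \sqcup \mathfrak{Y}_2^\rel, \eta)]^\virt$ is exactly $m(\eta) \cdot \prod_V H(V)\,\mathrm{CF}(V)$, with correct handling of automorphisms and of the étale degree $\#\Eq(\eta)$ of the glueing morphism $\Phi_\eta$ from \cite{JUnLiSMSS}, and that the $\mathrm{lcm}$ normalization cancels precisely against the denominator of $\varpi$ in \eqref{eq: standard weight}. As the author remarks, the log-geometric version of the degeneration formula in \cite{KimLhoRuddat} may simplify some of this combinatorial accounting.
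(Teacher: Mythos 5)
Your treatment of the source coordinates and the reduction of a general $J$ to $J=\{1,\dots,2(d+1)\}$ follows the paper's proof essentially verbatim: Lemma \ref{lem: zero vectors in QN} reduces the vanishing in $N_{\dist_{2(d+1)}}$ to four-element subsets $I'$, Lemma \ref{lem: estimation of the order} together with the degeneration formula identifies the coefficient of $v_{\{i,j\}}$ with $\deg \ft_{(I')^c}^*(ij|kl)$, and the linear equivalence of the three boundary divisors of $\overline{M}_{0,I'}$ (your constancy of $\deg f_{I'}$) forces the three coefficients to agree. The bookkeeping you flag at the end is exactly what the proof of Lemma \ref{lem: estimation of the order} carries out, so that part is sound.

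The gap is in your last step, the balancing of the $\cM_{0,4}^{\trop}$-coordinates. Choosing $I'=\{i,j,k,l\}$ with one source leg over each target leg does \emph{not} collapse the combinatorics to the target's partition type. For a cover $\Theta$ of type $(\alpha\beta|\gamma\delta)$ the legs $i,j$ are attached at vertices over one target vertex and $k,l$ over the other, but the source tree can still contain an edge separating $\{i,k\}$ from $\{j,l\}$: take a path with consecutive vertices $w_1,w_2,w_3,w_4$, where $w_1,w_3$ lie over the first target vertex and $w_2,w_4$ over the second, and attach $i$ at $w_1$, $k$ at $w_2$, $j$ at $w_3$, $l$ at $w_4$; the middle edge then lies in $G(ik|jl)$. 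Consequently the source-balancing identity for such an $I'$ mixes covers of all three target types, each weighted by $\sum_{e\in G(\cdot|\cdot)}1/d_\pi(e)$ rather than by $1$, and the three partial sums $\sum_{\Theta\textnormal{ of type }\tau}\varpi(\Theta)$ cannot be read off from it. The paper proves the equality of these three partial sums by a separate argument: dividing by the common factor $\mathrm{CF}(\alpha,\beta,\gamma,\delta)$ and invoking Theorem 2 of \cite{CavalieriMarkwigRanganathan}, each partial sum is identified with the classical quadruple Hurwitz number $H_{0\to 0}(\alpha,\beta,\gamma,\delta)$, which is symmetric in the four partitions. Your degeneration machinery would also deliver this (the weighted count of the central fiber of each of the three degenerations of the target equals the count over a generic fiber), but that is a different argument from the one you propose and it must be supplied.
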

\begin{proof}
   We first do the case of $J=\{1,\dots,2(d+1)\}$, and use it to establish the others. We first focus on the weight $\varpi_{d,0,(\alpha,\beta,\gamma,\delta)}$ and show that \eqref{eq: local balancing} holds. This is an equation in the space $\cM^{\trop}_{0,2(d+1)}\times\cM^{\trop}_{0,4}$. We show it in the $\cM_{0,4}^\trop$-coordinates (target) and the $\cM^\trop_{0,2(d+1)}$-coordinates (source).
   \begin{itemize}
       \item For the $\cM_{0,4}^\trop$-coordinates, let $\mathrm{CF}(\alpha,\beta,\gamma,\delta)$ denote the combinatorial factor of the vertex of an initial object of $\bbEAC_{d,h}(\alpha,\beta,\gamma,\delta)$. In the underlying vector space of $\cM_{0,4}^\trop$ the equation $v_{\{1,2\}}+v_{\{1,3\}}+v_{\{1,4\}} = 0$ holds (following the notation of Section \ref{ssection: realization of mod spaces of rat trop curves}), and to show our equation of interest we will make use of a result from \cite{CavalieriMarkwigRanganathan} (as well as their notation). An object $\pi\colon G\to H$ of $\bbEAC_{d,h}(\alpha,\beta,\gamma,\delta)$ corresponds to a combinatorial type of loc. cit. and furthermore (following the notation of loc. cit.) 
    \begin{equation}
        \#\Aut_0(\pi) = \frac{\mathrm{CF}(\alpha,\beta,\gamma,\delta)}{\prod_{V\in V(G)}\mathrm{CF}(V)}.\label{eq: aut0}
    \end{equation}
    In the sum
    \begin{equation}
        \sum_{\Theta\in[\bbEAC_{d,h}(\alpha,\beta,\gamma,\delta)](1)} \varpi_{d,0,(\alpha,\beta,\gamma,\delta)}(\Theta)\cdot \trgt(u_\Theta),\label{eq: sum of weights}
    \end{equation}
    the vector $v_{\{1,2\}}$ appears with factor given by the weighted sum
    \begin{equation}
        \sum \varpi_{d,0,(\alpha,\beta,\gamma,\delta)}(\Theta),\label{eq: teorema 2 tsac}
    \end{equation}
    where the index $\Theta$ runs over the isomorphism classes $[\bbEAC_{d,h}(\alpha,\beta,\gamma,\delta)](1)$ where the legs of the target tree with ramification $\alpha$ and $\beta$ share a common vertex. If we divide \eqref{eq: teorema 2 tsac} by $ \mathrm{CF}(\alpha,\beta,\gamma,\delta)$, then because of \eqref{eq: aut0} it follows from Theorem 2 of loc. cit. and its proof (Section 5.4.1), that we obtain $H_{0\to 0}(\alpha,\beta,\gamma,\delta)$ (the corresponding rational quadruple Hurwitz number). In orther words, the vector $v_{\{1,2\}}$ appears in the sum \eqref{eq: sum of weights} with factor $H_{0\to 0}(\alpha,\beta,\gamma,\delta)\cdot \mathrm{CF}(\alpha,\beta,\gamma,\delta)$ 
    Analogously, the vector $v_{\{1,3\}}$ appears with factor $H_{0\to 0}(\alpha,\gamma,\beta,\delta)\cdot \mathrm{CF}(\alpha,\beta,\gamma,\delta)$, and the vector $v_{\{1,4\}}$ appears with factor $H_{0\to 0}(\alpha,\delta,\beta,\gamma)\cdot \mathrm{CF}(\alpha,\beta,\gamma,\delta)$. Since these quadruple Hurwitz number coincide, it follows that
    \begin{equation*}
        \sum_{\Theta\in[\bbEAC_{d,h}(\alpha,\beta,\gamma,\delta)](1)}\hspace{-0.2cm} \varpi_{d,0,(\alpha,\beta,\gamma,\delta)}(\Theta)\cdot \trgt(u_\Theta) =H_{0\to0}(\alpha,\beta,\gamma,\delta)\cdot (v_{\{1,2\}}+v_{\{1,3\}}+v_{\{1,4\}})=0.
    \end{equation*}
    \item For the $\cM^\trop_{0,2(d+1)}$-coordinates, we will use Lemma \ref{lem: zero vectors in QN}. Let $I=\{i,j,k,l\}$ be an arbitrary subset of $\{0,\dots,2(d+1)\}$, and consider a triple $(T_s,T_t,\pi)$ of $\bbEAC_{d,h}(\alpha,\beta,\gamma,\delta)$ with isomorphism class $\Theta\in[\bbEAC_{d,h}(\alpha,\beta,\gamma,\delta)](1)$. An edge $e\in E(G)$ gives a partition of the legs of $G$ into two sets, say $A$ and $B$, which by definition must each be of size $\geq 2$ and therefore (following the notation of Section \ref{ssection: realization of mod spaces of rat trop curves}) $v_A=v_B$. We let $v_e$ denote this common vector. The definition of the associated matrix to $\pi$ \eqref{eq: assoc matrix} implies that 
    \begin{equation*}
        \varpi_{d,0,(\alpha,\beta,\gamma,\delta)}(\Theta)\cdot \src(u_\Theta) = \sum_{e\in E(G)}\left(\prod_{V\in V(G)}\mathrm{CF}(V)\cdot \prod_{h\in E(G)}d_\pi(h)\right) \frac{1}{d_\pi(e)}\cdot v_e.
    \end{equation*}
    It follows from Lemma \ref{lem: estimation of the order} that $v_{\{i,j\}}$ appears in the sum
    \begin{equation}
        \sum_{\Theta\in[\bbEAC_{d,h}(\alpha,\beta,\gamma,\delta)](1)} \varpi_{d,0,(\alpha,\beta,\gamma,\delta)}(\Theta)\cdot \ft_{I^c}\src(u_\Theta)\label{eq: sum srcs}
    \end{equation}
    with a factor of $\deg \ft_{I^c}^*(ij|kl)$. This same argument, as well as Lemma \ref{lem: estimation of the order}, hold for the other splittings of $I$, so that the sum \ref{eq: sum srcs} contains the vectors $v_{\{i,k\}}$ and $v_{\{i,l\}}$ with corresponding factors of $\deg \ft^*_{I^c}(ik|jl)$ and $\deg \ft^*_{I^c}(il|jk)$. Since the divisors $(ij|kl)$, $(ik|jl)$, and $(il|jk)$ are linearly equivalent, it follows then that 
    \begin{equation}
        \sum_{\Theta\in[\bbEAC_{d,h}(\alpha,\beta,\gamma,\delta)](1)}\hspace{-1cm} \varpi_{d,0,(\alpha,\beta,\gamma,\delta)}(\Theta)\cdot\ft_{I^c} \src(u_\Theta) = \deg \ft_{I^c}^*(ij|kl)\cdot  \left( v_{\{i,j\}}+v_{\{i,k\}}+v_{\{i,l\}}\right) =0.\label{eq: sum srcs 2}
    \end{equation}
    We apply Lemma \ref{lem: zero vectors in QN} to conclude that
        \begin{equation*}
        \sum_{\Theta\in[\bbEAC_{d,h}(\alpha,\beta,\gamma,\delta)](1)} \varpi_{d,0,(\alpha,\beta,\gamma,\delta)}(\Theta)\cdot \src(u_\Theta)=0.
    \end{equation*}
   \end{itemize}
    For $J\subsetneq \{1,\dots,2(d+1)\}$, we remark that, in this very special case, forgetting the marking is a proper morphism of linear poic-complexes, since the linear poic-complexes that we are dealing with can be considered as $1$-dimensional closed fans in a vector space. Thereofore, taking the pushforward through the respective forgetting the marking morphism gives the desired result, because the lattice indexes that arise are always $1$. 
\end{proof}

We conclude with a proof of Theorem \ref{thm: global balancing}.

\globalbalancing*

\begin{proof}\label{proof: proof of big theorem}
    Equivariance can readily be seen from \eqref{eq: equivariance}, so it is only necessary to show that $\varpi^J_{d,h,\vec{\mu}}$ is, in fact, a Minkowski weight. Similar to the case of Lemma \ref{thm: balancing of Htrop}, we first study the case of $J=\{1,\dots,n\}$, and subsequently explain how to deduce the others from this.\\ 
    Just as in Theorem \ref{thm: balancing of Htrop}, the balancing of $\varpi_{d,h,\vec{\mu}}$ follows from the balancing in both coordinates: $\cM^\trop_{0,m+2h}\times \bbR^h_{>0}$ and $\cM^\trop_{0,n+2g}\times \bbR^g_{>0}$. By default, this balancing just involves the $\cM^{\trop}_{0,m+2h}$- and $\cM^{\trop}_{0,n+2g}$-coordinates. 
    \begin{itemize}
        \item For the balancing along the $\cM^\trop_{0,m+2h}$-coordinates, observe that only the edges of the target graph whose fiber is acyclic are being contracted. Therefore we proceed exactly as in Lemma \ref{thm: balancing of Htrop} with $\cM^\trop_{0,4}$-coordinates but with several components. We omit the argument, since it is analogous to this case.
        \item The balancing along the $\cM^\trop_{0,n+2g}$-coordinates demands a more careful examination. Consider a triple $(T_s,T_t,\pi)$ of  $\bbEAC_{d,h}(\vec{\mu})$ of codimension $1$, and let $G=\st_{g,n}(T_s)$ and $H=\st_{h,m}(T_t)$. The triple being of codimension $1$ means that $H$ has a unique $4$-valent vertex, which we will denote by $W$ and let $\pi^{-1}(W)=\{V_1,\dots,V_k\}\subset V(G)$. Without loss of generality we will refer to the triple simply by $\pi$, and additionally assume that for any isomorphism class $\Theta\in\Star^1(\pi)$ there is given a representative object $(T_{s,\Theta},T_{t,\Theta},\pi_\Theta)$ with $G_\Theta:= \st_{g,n}(T_{s,\Theta})$ and $H_\Theta:= \st_{h,m}(T_{t,\Theta})$. For each $1\leq j\leq k$ and each $T_{s,\Theta}$, we let $T_{\Theta,j}$ denote the connected legless subtree of $G_\Theta$ that contracts to $V_j$.\\
        Just as in the proof of Lemma \ref{thm: balancing of Htrop}, any edge $e\in E(T_{\Theta,j})$ gives a partition of the legs of $G$ into two sets, say $A\sqcup B = L(G)$, which by definition must each be of size $\geq2$ and therefore $v_A=v_B$ (again following the notation of the vectors \ref{ssection: realization of mod spaces of rat trop curves}). We let $v_e$ denote this common vector, and observe that (just as in the previous proof) for $\Theta\in\Star^1(\pi)$ we have
    \begin{equation}
        \varpi_{d,h,\vec{\mu}}(\Theta)\src(u_\Theta) = K\cdot \sum_{t=1}^k\sum_{e\in E(T_{\Theta,t})}\left(\prod_{V\in V(T_{\Theta,t})}\mathrm{CF}(V)\cdot \prod_{h\in E(T_{\Theta,t})}d_\pi(h)\right)\frac{1}{d_\pi(e)}v_e, \label{eq: product of the weight with direction vector}
    \end{equation}
    where $K$ is a factor coming from $\varpi_{d,h,\vec{\mu}}(\Theta)$ but common along all the elements of $\Star^1(\pi)$ as it pertains to the vertices and edges not in the subtrees $T_{\Theta,j}$ (for $1\leq t\leq k)$. Therefore, to show balancing it is sufficient to show that for each $1\leq t\leq k$, the following equation holds in the corresponding quotient space
    \begin{equation}
        \sum_{\Theta\in\Star^1(\pi)}\sum_{e\in E(T_{\Theta,t})}\left(\prod_{V\in V(T_{\Theta,t})}\mathrm{CF}(V)\cdot \prod_{h\in E(T_{\Theta,t})}d_\pi(h)\right)\frac{1}{d_\pi(e)}v_e = 0.\label{eq: j-th block}
    \end{equation}
    As before, we seek to apply Lemma \ref{lem: zero vectors in QN}. For this, let $I=\{i,j,k,l\}\subset \{1,\dots,n\}$ be arbitrary. We observe that we can turn each $T_{\Theta,t}$ to the case of Theorem \ref{thm: balancing of Htrop} by letting the other edges and legs of $T_{s,\Theta}$ that are adjacent to vertices of $T_{\Theta,t}$ be legs and keeping their corresponding weights given by the cover $\pi_\Theta$. Then after applying $\ft_{I^c}$ to \eqref{eq: j-th block}, we arrive at the same equation \eqref{eq: sum srcs 2} and, in particular, its vanishing. Since $I$ and $1\leq t\leq k$ were arbitrary, we obtain the equation \eqref{eq: j-th block}. Hence, $\varpi_{d,h,\vec{\mu}}$ is an equivariant $\est_{d,h,\vec{\mu}}$-Minkowski weight.
    \end{itemize}
    To finalize the proof of this theorem, we consider an arbitrary $J\subsetneq\{1,\dots,n\}$ and explain the balancing of $\varpi^J_{d,h,\vec{\mu}}$. As before, we must consider the $\cM^{\trop}_{0,2h+m}$- and $\cM^{\trop}_{0,J\sqcup \mathbbm{g}}$-coordinates. The balancing along the first coordinates is identical to the previous situation (namely, the same as in Lemma \ref{thm: balancing of Htrop} with $\cM^\trop_{0,4}$-coordinates), so we focus on balancing along the $\cM^{\trop}_{0,J\sqcup\mathbbm{g}}$-coordinates. We remark that the forgetful morphism $\ft_{J^c}$ yields a morphism of poic-complexes $\EST_{d,h}(\vec{\mu})\to \EST_{d,h}^J(\vec{\mu})$, and at isomorphism classes we have the equality $\varpi^J_{d,h,\vec{\mu}}\circ\ft_{J^c}=\varpi_{d,h,\vec{\mu}}$. We do observe that this morphism is neither necessarily proper nor weakly-proper (in any dimension). The idea is to proceed in exactly the same manner as before, keeping track of the edges and legs that are being forgotten.\\
    Consider an object $(T_s,T_t,\pi)$ of  $\bbEAC^J_{d,h}(\vec{\mu})$ of codimension $1$. The triple being of codimension $1$ means that $\trgt(\pi)$ has a unique $4$-valent vertex, say $W$, and let $\pi^{-1}(W)=\{V_1,\dots,V_k\}\subset V(\src(\pi))$. In contrast to the previous case ($J$ being the whole set $\{1,\dots,n\}$), we do not have to consider the deformation around every $V_i$, but only around those that after applying $\ft_{J^c}$ to $\src(\pi)$ lie on an actual edge of $T_s$. Let us assume without loss of generality that these are all the $V_i$ with $i\leq k^\prime$ for a $k^\prime\leq k$.\\
    Just as before, we simply refer to the triple $(T_s,T_t,\pi)$ by $\pi$ and assume that for any isomorphism class $\Theta\in\Star^1(\pi)$ there is given a representative object $(T_{s,\Theta},T_{t,\Theta},\pi_\Theta)$. For each $1\leq j\leq k^\prime$ and each $T_{s,\Theta}$, we let $T_{\Theta,j}$ denote the connected legless subtree of $\src(\pi_\Theta)$ that contracts to $V_j$. Proceeding in the exact same way as above, for $\Theta\in\Star^1(\pi)$ we obtain the following analog of \eqref{eq: product of the weight with direction vector} in this situation 
    \begin{equation}
        \varpi^J_{d,h,\vec{\mu}}(\Theta)\src(u_\Theta) = K\cdot \sum_{t=1}^{k^\prime}\sum_{e\in E(T_{\Theta,t})}\left(\prod_{V\in V(T_{\Theta,t})}\mathrm{CF}(V)\cdot \prod_{h\in E(T_{\Theta,t})}d_\pi(h)\right)\frac{1}{d_\pi(e)}\ft_{J^c}v_e, \label{eq: product of the weight with direction vector J}
    \end{equation}
    where once again $K$ is a factor coming from $\varpi_{d,h,\vec{\mu}}^J(\Theta)$ but common along all the elements of $\Star^1(\pi)$ as it pertains to the vertices and edges not in the subtrees $T_{\Theta,j}$ (for $1\leq t\leq k^\prime)$. At the risk of being too repetitive, we observe, just as in the previous case, that for balancing it is sufficient to show that for each $1\leq t\leq k^\prime$, the following equation holds in the corresponding quotient space
    \begin{equation}
        \sum_{\Theta\in\Star^1(\pi)}\sum_{e\in E(T_{\Theta,t})}\left(\prod_{V\in V(T_{\Theta,t})}\mathrm{CF}(V)\cdot \prod_{h\in E(T_{\Theta,t})}d_\pi(h)\right)\frac{1}{d_\pi(e)}\ft_{J^c}v_e = 0.\label{eq: j-th block J}
    \end{equation}
    But this equation readily follows from what we have already argued after \eqref{eq: j-th block}. More precisely, for $I=\{i,j,k,l\}\subset\{1,\dots,n\}$ we have that $\ft_{I^c}\circ \ft_{J^c} = \ft_{I^c\cup J^c}$, then the only non-trivial case that arises is when $I\subset J$ and in this case $\ft_{I^c}\circ\ft_{J^c} =\ft_{I^c}$. Thus, it follows from the vanishing of \eqref{eq: j-th block J}, that $\varpi^J_{d,h,\vec{\mu}}$ is a $\est^J_{d,h,\vec{\mu}}$-equivariant Minkowski weight.
\end{proof}

\subsection{Covers of trees}

In this section we apply our results to the case of discrete admissible covers of trees with simple ramification over the legs. More precisely, we consider the following set-up: Let $g,r\geq0$ be integers with $g +r \equiv 0 \mod 2$, and let
\begin{equation}
\begin{aligned}
&h =0,
&&m = 3\cdot g+r,&&d = \frac{g+r}{2}+1,\\
&\vec{\mu} = (\mu_1,\dots,\mu_m),\textnormal{ with}\hspace{-0.5em}&&\mu_i=(2,1^{d-2}),&&\text{for all }1\leq i\leq m,\\
&n=(d-1)\cdot3g=\frac{3}{2}(g+r)g,\hspace{-4em}&&&&J=\{(i-1)(d-1)+1\}_{i=1}^r.
\end{aligned}
\label{eq: big align3}
\end{equation}
We can readily observe that $\EST_{d,h,\vec{\mu}}^J$ is of dimension $m-3 = 3g+r-3$, which is equal to the dimension of $\ST_{g,J}$. Since $\mathfrak{src}$ is weakly proper in top dimension, we can pushforward the $\est^J_{d,h,\vec{\mu}}$-equivariant Minkowski weight $\varpi^J_{d,h,\vec{\mu}}$ through $\mathfrak{src}$ (as in \eqref{eq: pushforward minkowkski weights of fibration}) to produce a top dimensional $\st_{g,J}$-equivariant tropical cycle. It follows from Proposition 4.5.8 of \cite{DARB} that $\st_{g,J}$ is an irreducible poic-fibration, which implies that this pushforward is an scalar multiple of the fundamental cycle. It turns out that it is a non-trivial integral multiple of $C_{\frac{g+r}{2}}$, the $\left(\frac{g+r}{2}\right)$th Catalan number. We first bring forward some combinatorial notations and definitions, as well as the statement of a (combinatorial) lemma whose proof we postpone to the end of the section. Subsequently, we apply our machinery and compute the aforementioned pushforward. We consider the poic-fibration $\est^J_{d,h,\vec{\mu}}$. It might be evident from \eqref{eq: big align3} that this choice of $J$ is motivated mainly by $\st_{g,J}$. More precisely, for an object $\pi$ of $\bbAC_{d,h}(\vec{\mu})$, we want to keep the first $r$-marked legs of $\src(\pi)$ that have weight $2$.
\begin{defi}
    Suppose $T$ is a discrete graph and let $V\in V(T)$. The \emph{leg valency} of $V$ is defined as $\legval(V):=\#\{\ell\in L(T): r_T(\ell) = V\}$. We say that an edge $e\in E(T)$ is a \emph{leaf} if there is $V\in\partial e$ with $\val(V)=3$ and $\legval(V)=2$.
\end{defi}

The following lemma is basically Lemma 79 of \cite{VargasDraisma}. We postpone its proof to the end of the section. The structure of the discrete admissible cover described in Lemma \ref{lem: loopandbridge} is depicted in Figure \ref{fig: lemloopandbridge}, where the legs are color coded to depict the correspondence of the cover $\pi$. 
\begin{restatable}{lem}{loopandbridge}\label{lem: loopandbridge}
    Following the notation of \eqref{eq: big align3}, let $\pi$ be an object of $\bbAC_{d,h}(\vec{\mu})$ of top dimension and such that the composition $\left(\eta_{\ft_{J^c},\src(\pi)}\circ F_\pi\right)$ is invertible, where $F_\pi$ is as in \eqref{eq: assoc matrix}. If $W\in V(\src(\pi))$ is such that 
    \begin{itemize}
        \item $W$ is also a vertex of $\ft_J(\src(\pi))$,
        \item $\val_{\ft_{J^c}(\src(\pi))}(W)=3$,
        \item $W$ is incident to a loop and an edge in $\ft_{J^c}(\src(\pi))$,
    \end{itemize}
    then $\pi(W)$ is incident to a leaf of $\trgt(\pi)$ and $\legval(\pi(W))=1$. Furthermore, if $e_1$ denotes the leaf of $\trgt(\pi)$ that $\pi(W)$ is incident to, $V$ denotes the other vertex of $\trgt(\pi)$ incident to $e_1$, and $e_2$ denotes the edge of $\trgt(\pi)$ that is not incident to $\pi(W)$, then we have the following:
    \begin{itemize}
        \item The ramification profile above $e_1$ is $(1^d)$.
        \item The ramification profile above $V$ is $(2,1^{d-2})$.
        \item The ramification profile above $e_2$ is $(2,1^{d-2})$.
    \end{itemize} 
\end{restatable}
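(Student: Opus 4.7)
\emph{Plan.} I plan to exploit top-dimensionality together with the Riemann--Hurwitz condition at $W$ and a careful combinatorial analysis of the forgetful morphism $\ft_{J^c}$ around $W$. Top-dimensionality forces $T=\trgt(\pi)$ to be a trivalent tree of genus $0$, so $\val_T(\pi(W))=3$, and the vanishing $r_\pi(W)=0$ gives $\val_G(W)=2+d_\pi(W)$, where $G=\src(\pi)$. The legs of $G$ in $J^c$ all have weight $1$ (the simple parts of the partitions $\mu_i$), so forgetting them under $\ft_{J^c}$ only prunes legs and collapses the $2$-valent vertices thereby created, leaving the cycle structure of $G$ rigid.

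With this in place, the hypotheses that $W$ remains in $\ft_{J^c}(G)$ with valency $3$ and is incident there to a loop and a single edge force the local picture at $W$ in $G$ to be a digon whose image in $T$ is a single edge $e_1$ traversed twice. Indeed, any cycle in $G$ projects to a back-and-forth in the tree $T$, and the constraint of being a bare ``loop $+$ edge'' after forgetting leaves no room for a longer back-and-forth threading through several edges of $T$; the invertibility of $\eta_{\ft_{J^c},\src(\pi)}\circ F_\pi$, which matches dimensions between $\sigma_\pi$ and $\sigma_{\ft_{J^c}(G)}$, rules out any extra collapsing that would accommodate such a cycle. Letting $V$ be the other endpoint of $e_1$, the remaining assumption that $W$ is still a vertex of $\ft_J(G)$ means $W$ carries a weight-$2$ leg of $J$; this positions $\pi(W)$ directly below a leaf vertex of $T$, and a count of the three flags at $\pi(W)$ then gives $\legval(\pi(W))=1$, identifying $V$ as the leaf vertex of $T$ attached to $e_1$.

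The ramification conclusions then follow by harmonicity together with the prescribed $\vec{\mu}$: the digon at $W$ uses two preimage edges of $e_1$ of weight $1$, and harmonicity at the two preimages of $V$ coming from the partition $(2,1^{d-2})$ on the two marked legs at $V$ forces the remaining $d-2$ preimage edges of $e_1$ to be simple as well, giving profile $(1^d)$ over $e_1$ and partition $(2,1^{d-2})$ over $V$; propagating harmonicity from $\pi(W)$ through the other edge at $\pi(W)$ then yields profile $(2,1^{d-2})$ over $e_2$. The main obstacle is the middle step: ruling out that the loop at $W$ in $\ft_{J^c}(G)$ comes from a longer cycle in $G$ threading through several edges of $T$ that collapses under $\ft_{J^c}$. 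The weight-$1$ structure of $J^c$-legs controls what collapsing is possible, and the invertibility hypothesis is the decisive ingredient; once the digon structure is pinned down, the remaining identifications of the ramification profiles are routine.
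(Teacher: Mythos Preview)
Your proposal has a factual error in its setup that undermines the subsequent reasoning, and the step you flag as ``the main obstacle'' is left as a hand-wave where the paper in fact does substantial work.

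First, the claim ``the legs of $G$ in $J^c$ all have weight $1$'' is false. In the setup \eqref{eq: big align3} one has $J=\{(i-1)(d-1)+1\}_{i=1}^r$, so $J$ consists only of the $r$ weight-$2$ legs lying above the \emph{first} $r$ marked legs of the target. The remaining $3g$ marked legs of the target also have ramification $(2,1^{d-2})$, and their weight-$2$ preimages lie in $J^c$. So forgetting $J^c$ is not merely pruning weight-$1$ legs; it also removes weight-$2$ legs, and your description of what collapses under $\ft_{J^c}$ is therefore incorrect. Relatedly, the inference ``$W$ is still a vertex of $\ft_J(G)$ means $W$ carries a weight-$2$ leg of $J$'' is backwards: $\ft_J$ \emph{forgets} the $J$-legs, so survival in $\ft_J(G)$ says nothing about $W$ carrying a $J$-leg.

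Second, and more seriously, the assertion that the loop at $W$ in $\ft_{J^c}(G)$ must come from a digon in $G$ whose image is a single edge of $T$ is precisely the heart of the lemma, and you do not prove it. The paper's argument is not a dimension count: it first rules out $\legval(\pi(W))\in\{0,2\}$ by case analysis (using the auxiliary Lemma~\ref{lem: above nodes} on fibers over vertices of leg valency $2$), and then for $\legval(\pi(W))=1$ it traces the two edges at $W$ that add to the loop along paths in $G$ until each reaches an edge lying over a leaf of $T$. The invertibility hypothesis is used repeatedly to exhibit explicit linear dependencies among columns of $\eta_{\ft_{J^c},\src(\pi)}\circ F_\pi$ (e.g.\ two columns being equal, or one column being the sum of two others) which yield contradictions unless the two paths terminate at the \emph{same} leaf and in fact have length one. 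This path-tracing and column-comparison argument, together with the preparatory Lemma~\ref{lem: expungednoglue} showing expunged edges and vertices have weight/degree $1$, is the real content; saying ``the invertibility hypothesis is the decisive ingredient'' does not substitute for it.
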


\begin{figure}[!ht]
    \centering
    \begin{tikzpicture}
        
        \draw[line width = 2pt] (-4.75,4) ellipse [x radius = 1.25, y radius=0.75];
        \draw[line width =3pt] (-3.5,4)--(-1.5,4) node [midway, above] {$2$};
        
        \draw[olive,line width =3pt, dashed] (-6,4)--(-7,5) node [midway, above] {$2$};
        \draw[purple, line width =3pt, dashed] (-3.5,4)--(-5,3) node [midway, above] {$2$};
        
        \draw[line width =2pt] (-6,2.5)--(-1.5,2.5);
        \draw[line width =2pt] (-6,1)--(-1.5,1);
        \draw[line width =2pt] (-6,-1)--(-1.5,-1);
        
        \draw[olive,line width =2pt, dashed] (-6,2.5)--(-7,3.5);
        \draw[teal,line width =3pt, dashed] (-6,4)--(-7.5,3) node [midway, above] {$2$};
        \draw[purple, line width =2pt, dashed] (-3.5,2.5)--(-5,1.5);
        
        \draw[teal,line width =2pt, dashed] (-6,1)--(-7.5,0);
        \draw[olive,line width =2pt, dashed] (-6,1)--(-7,2);
        \draw[purple, line width =2pt, dashed] (-3.5,1)--(-5,0);
        
        \draw[teal,line width =2pt, dashed] (-6,2.5)--(-7.5,1.5);

        \node at (-4.75,0) {\Huge$\vdots$};
        
        \draw[teal,line width =2pt, dashed] (-6,-1)--(-7.5,-2);
        \draw[olive,line width =2pt, dashed] (-6,-1)--(-7,0);
        \draw[purple, line width =2pt, dashed] (-3.5,-1)--(-5,-2);
        
        \node at (-0.75,4) {\Huge$\dots$};
        \node at (-0.75,2.5) {\Huge$\dots$};
        \node at (-0.75,1) {\Huge$\dots$};
        \node at (-0.75,-1) {\Huge$\dots$};

        \draw[fill=black] (-1.5,4) circle (3pt);
        \draw[fill=black] (-1.5,2.5) circle (3pt);
        \draw[fill=black] (-1.5,1) circle (3pt);
        \draw[fill=black] (-1.5,-1) circle (3pt);
        
        \draw[fill=black] (-3.5,4) circle (3pt);
        \draw[fill=black] (-3.5,2.5) circle (3pt);
        \draw[fill=black] (-3.5,1) circle (3pt);
        \draw[fill=black] (-3.5,-1) circle (3pt);

        \node at (7.25,1.5) {\Huge$\dots$};
        \draw[line width = 2pt] (2,1.5)--(4.5,1.5) node [midway, above] {$e_1$};
        \draw[line width = 2pt] (4.5,1.5)--(6.5,1.5) node [near end, above] {$e_2$};
        \draw[purple, line width = 2pt, dashed ] (4.5,1.5)--(3,0.5);
        \draw[teal, line width = 2pt, dashed ] (2,1.5)--(0.5,0.5);
        \draw[olive, line width = 2pt, dashed ] (2,1.5)--(1,2.5);
        \draw[fill=black] (2,1.5) circle (3pt);
        \draw[fill=black] (4.5,1.5) circle (3pt);
        \draw (0,6)--(8,6) node [midway, above] {$\trgt(\pi)$};
        \draw (-8,6)--(0,6) node [midway, above] {$\src(\pi)$};
        \draw (0,6.7)--(0,-3);
        \draw (-8,6.7)--(-8,-3)--(8,-3)--(8,6.7)--(-8,6.7);
        
        \node at (2,2) {$V$};
        \node at (4.5,2) {$\pi(W)$};
        \node at (-3.4,4.5) {$W$};
        
    \end{tikzpicture}
    \caption{The cover $\pi$ of Lemma \ref{lem: loopandbridge}}
    \label{fig: lemloopandbridge}
\end{figure}
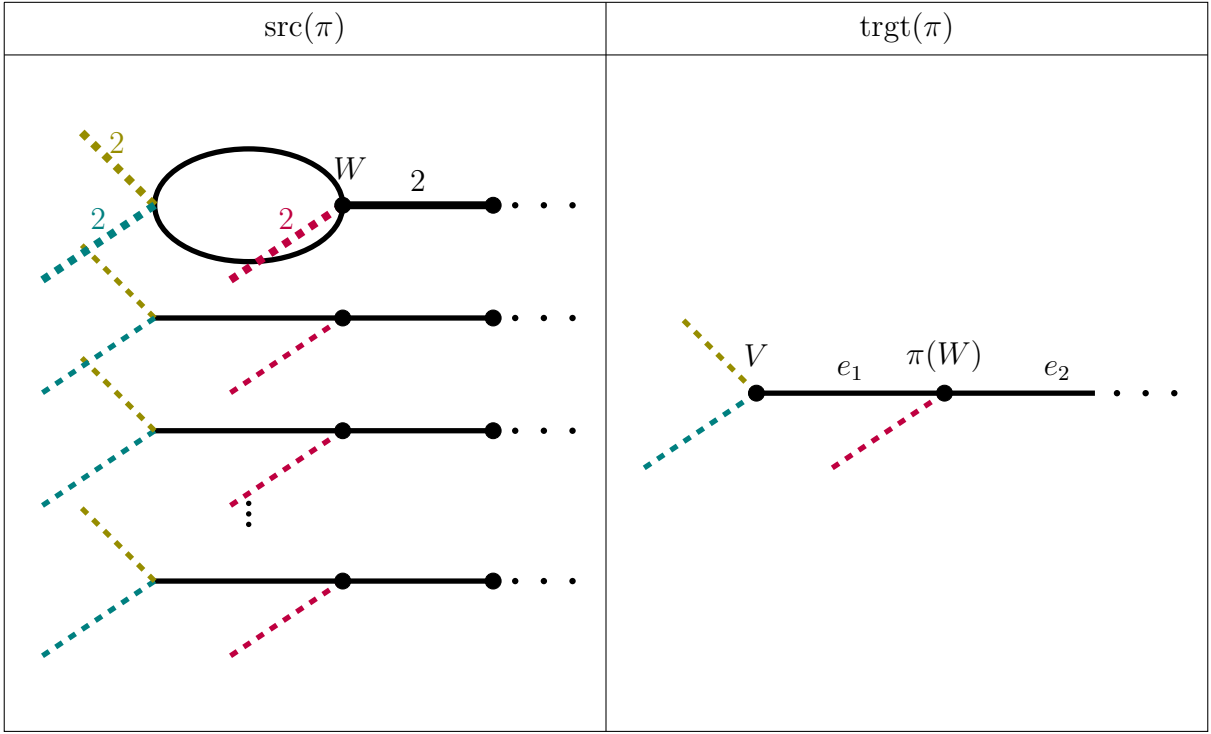

\begin{nota}\label{nota: notacion J}
    For the sake of simplicity, when $r>0$ we set $j_i = (i-1)(d-1)+1$ for $1\leq i \leq r$. With this notation we have that $J=\{j_1<j_2<\dots j_r\}$.
\end{nota}

\begin{defi}
    Suppose $r>0$ in the notation of \eqref{eq: big align3}. The genus-$g$ $J$-marked discrete graph $O_{g,J}$ (we illustrate $O_{g,J}$ for $g=4$ and $r=2$ in Figure \ref{fig: ogj}) is the object of $\bbG_{g,J}$ given by the following discrete graph:
    \begin{itemize}
        \item It has vertices $V_1$, $\dots$ ,$V_g$, $W_0$, $\dots$, $W_{g-3}$, and $C_1$, $\dots$, $C_r$. If $g=1$, we set $C_{r-1}=C_r$.
        \item There are $g$ loops $l_i$, for $1\leq i\leq g$, with $\partial l_i = \{V_i\}$.
        \item There are edges $e_0$, $\dots$, $e_{g-4}$ with $\partial e_i = \{W_i,W_{i+1}\}$ for $0\leq i\leq g-4$.
        \item There is an edge $h_1^\prime$ with $\partial h_1^\prime= \{V_1,C_1\}$. If $g>2$, there are edges $h_1$, $h_2$, $\dots$, $h_g$ with $\partial h_1^\prime = \{V_1,C_1\}$, $\partial h_1=\{C_r,W_0\}$, $\partial h_i = \{V_i,W_{i-2}\}$ for $2\leq i\leq g-1$, and $\partial h_g = \{V_g,W_{g-3}\}$. If $g=2$, there is an edge $h_1$ with $\partial h_1 = \{C_r, V_2\}$.
        \item There are edges $c_1$, $\dots$, $c_{r-2}$ with $\partial c_i = \{C_i,C_{i+1}\}$ for $0< i< r-1$. If $g>1$, then there is an additional edge $c_{r-1}$ with $\partial c_{r-1} = \{C_{r-1},C_r\}$.
        \item There are legs\footnote{We apologize for notational inconsistency. The legs of a graph have always been denoted by $\ell$, but this is quite similar to the way we are denoting loops. So in the spirit of clarity, we decided to denote the legs in these very special cases by $L$.} $L_{j_1}$, $\dots$, $L_{j_r}$ with $\partial L_{j_i} = C_i$ for $1\leq i\leq r$.
    \end{itemize}
\end{defi}
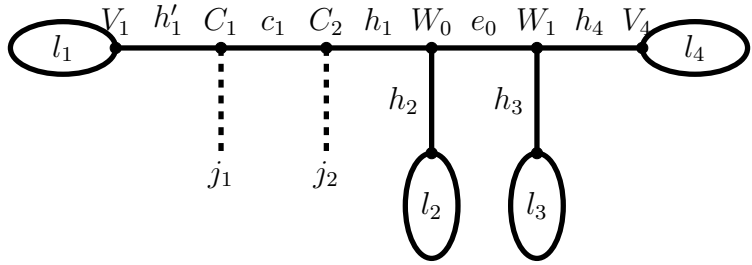
\begin{figure}[ht!]
        \centering
        \begin{tikzpicture}[scale=0.7]
            \draw[line width = 2pt] (-6,0) ellipse [x radius = 1 , y radius = 0.5];
            \draw[line width = 2pt] (-5,0)--(-3,0) node [midway, above]{$h_1^\prime$};
            \draw[line width = 2pt] (-3,0)--(-1,0) node [midway, above]{$c_1$};
            \draw[line width = 2pt] (-1,0)--(1,0) node [midway, above]{$h_1$};
            \draw[line width = 2pt] (1,0)--(3,0) node [midway, above]{$e_0$};
            \draw[line width = 2pt] (3,0)--(5,0) node [midway, above]{$h_4$};
            \draw[line width =2pt, dashed] (-3,0)--(-3,-2) node[pos=1.2] {$j_1$};
            \draw[line width =2pt, dashed] (-1,0)--(-1,-2) node[pos = 1.2] {$j_2$};
            \draw[line width =2pt] (1,0)--(1,-2) node[midway, left] {$h_2$};
            \draw[line width = 2pt] (1,-3) ellipse [y radius = 1 , x radius = 0.5];
            \draw[line width =2pt] (3,0)--(3,-2) node[midway, left] {$h_3$};
            \draw[line width = 2pt] (3,-3) ellipse [y radius = 1 , x radius = 0.5];
            \draw[line width = 2pt] (6,0) ellipse [x radius = 1 , y radius = 0.5];
            \draw[fill = black] (-5,0) circle (3pt);
            \draw[fill = black] (-3,0) circle (3pt);
            \draw[fill = black] (-1,0) circle (3pt);
            \draw[fill = black] (1,0) circle (3pt);
            \draw[fill = black] (3,0) circle (3pt);
            \draw[fill = black] (1,-2) circle (3pt);
            \draw[fill = black] (3,-2) circle (3pt);
            \draw[fill = black] (5,0) circle (3pt);
            
            \node at (-5,0.5) {$V_1$};
            \node at (-3,0.5) {$C_1$};
            \node at (-1,0.5) {$C_2$};
            \node at (1,0.5) {$W_0$};
            \node at (3,0.5) {$W_1$};
            \node at (4.9,0.5) {$V_4$};
            \node at (-6,0) {$l_1$};
            \node at (1,-3) {$l_2$};
            \node at (3,-3) {$l_3$};
            \node at (6,0) {$l_4$};
        \end{tikzpicture}
        \caption{Depiction of $O_{g,J}$ for $g=4$ and $r=2$}
        \label{fig: ogj}
\end{figure}

\begin{defi}
 Suppose $r=0$ in the notation of \eqref{eq: big align3}, so that $J=\varnothing$. The genus-$g$ discrete graph $O_{g}$ (we illustrate $O_{g}$ for $g=6$ in Figure \ref{fig: og}) is the object of $\bbG_{g,0}$ given by the following discrete graph:
        \begin{itemize}
        \item It has vertices $V_1,\dots,V_g$ and $W_0,\dots,W_{g-3}$.
        \item There are $g$ loops $l_i$, $1\leq i\leq g$, with $\partial l_i = \{V_i\}$.
        \item There are edges $e_0,\dots, e_{g-4}$ with $\partial e_i = \{W_i,W_{i+1}\}$ for $0\leq i\leq g-4$.
        \item There are edges $h_1,\dots, h_g$, with $\partial h_1=\{W_0,V_1\}$, $\partial h_i =\{V_i,W_{i-2}\}$ for $2\leq i\leq g-1$, and $\partial h_g = \{V_g,W_{g-3}\}$.
    \end{itemize}
\end{defi}
\begin{figure}[ht!]
        \centering
        \begin{tikzpicture}[scale = 0.7]
            \draw[line width = 2pt] (-6,0) ellipse [x radius = 1 , y radius = 0.5];
            \draw[line width = 2pt] (-5,0)--(-3,0) node [midway, above]{$h_1$};
            \draw[line width = 2pt] (-3,0)--(-1,0) node [midway, above]{$e_0$};
            \draw[line width = 2pt] (-1,0)--(1,0) node [midway, above]{$e_1$};
            \draw[line width = 2pt] (1,0)--(3,0) node [midway, above]{$e_2$};
            \draw[line width = 2pt] (3,0)--(5,0) node [midway, above]{$h_6$};
            \draw[line width =2pt] (-3,0)--(-3,-2) node[midway, left] {$h_2$};
            \draw[line width =2pt] (-1,0)--(-1,-2) node[midway, left] {$h_3$};
            \draw[line width =2pt] (1,0)--(1,-2) node[midway, left] {$h_4$};
            \draw[line width = 2pt] (1,-3) ellipse [y radius = 1 , x radius = 0.5];
            \draw[line width =2pt] (3,0)--(3,-2) node[midway, left] {$h_5$};
            \draw[line width = 2pt] (3,-3) ellipse [y radius = 1 , x radius = 0.5];
            \draw[line width = 2pt] (6,0) ellipse [x radius = 1 , y radius = 0.5];
            \draw[line width = 2pt] (-1,-3) ellipse [y radius = 1 , x radius = 0.5];
            \draw[line width = 2pt] (-3,-3) ellipse [y radius = 1 , x radius = 0.5];
            \draw[fill = black] (-5,0) circle (3pt);
            \draw[fill = black] (-3,0) circle (3pt);
            \draw[fill = black] (-1,0) circle (3pt);
            \draw[fill = black] (-1,-2) circle (3pt);
            \draw[fill = black] (-3,-2) circle (3pt);
            \draw[fill = black] (1,0) circle (3pt);
            \draw[fill = black] (3,0) circle (3pt);
            \draw[fill = black] (1,-2) circle (3pt);
            \draw[fill = black] (3,-2) circle (3pt);
            \draw[fill = black] (5,0) circle (3pt);
            
            \node at (-5,0.5) {$V_1$};
            \node at (-3,0.5) {$W_0$};
            \node at (-1,0.5) {$W_1$};
            \node at (1,0.5) {$W_2$};
            \node at (3,0.5) {$W_3$};
            \node at (4.9,0.5) {$V_4$};
            \node at (-6,0) {$l_1$};
            \node at (-1,-3) {$l_2$};
            \node at (-3,-3) {$l_3$};
            \node at (1,-3) {$l_4$};
            \node at (3,-3) {$l_5$};
            \node at (6,0) {$l_6$};
        \end{tikzpicture}
        \caption{Depiction of $O_{6}$}
        \label{fig: og}
\end{figure}
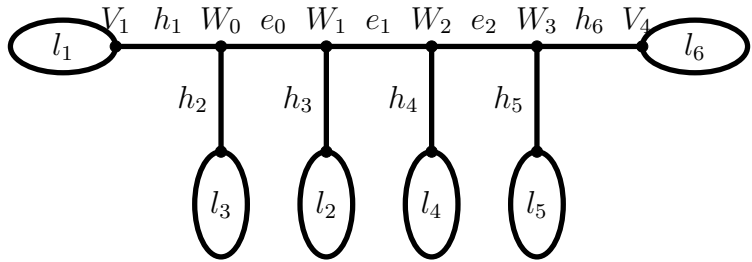

We now deal with some generalities concerning the permutation of the markings, and make use of the set-up \eqref{eq: big align3}. The group $(S_{d-2})^m$ acts naturally on $\AC_{d,h}(\vec{\mu})$ as follows: For an element $(\alpha_1,\dots,\alpha_m) \in (S_{d-2})^m$, we examine the following association
    \begin{equation*}
        {\vec{\alpha}}\cdot(\bullet) \colon \bbAC_{d,h}(\vec{\mu})\to \bbAC_{d,h}(\vec{\mu}), \pi\mapsto {\vec{\alpha}}\cdot (\pi),
    \end{equation*}
where ${\vec{\alpha}}\cdot(\pi)$ is the discrete admissible cover obtained by letting $\alpha_i$ permute the $(d-2)$ marked legs of weight $1$ of $\src(\pi)$ in the fiber of the $i$th marked leg of $T_t$. At morphisms this association is defined as the morphism between the targets obtained by the corresponding permutations of the marked legs. This actually gives rise to a functor, and we let $\eta_{\vec{\alpha}}$ denote the natural transformation $\AC_{d,h}(\vec{\mu})\implies \AC_{d,h}(\vec{\mu})\circ\phi_{\vec{\alpha}}$ given by the linear maps corresponding to the permutations of the marked legs. In other words, this defines an automorphism ${\vec{\alpha}}\cdot(\bullet)$ of the poic-space $\AC_{d,h}(\vec{\mu})$. It can be further observed that we actually obtain an action of $(S_{d-2})^m$ on $\AC_{d,h}(\vec{\mu})$.

Similarly, the group $S_m$ acts naturally on $\AC_{d,h}(\vec{\mu})$. Namely, if $\pi$ is an object of $\bbAC_{d,h}(\vec{\mu})$ and $\beta\in S_m$, then we let $\beta(\pi)$ denote the object of $\bbAC_{d,h}(\vec{\mu})$ obtained by letting $\beta$ permute the marking of the target tree $\trgt(\pi)$, and correspondingly permuting the marking of the source graph $\src(\pi)$. Just as in the previous paragraph, the analogous constructions actually give rise to an action of $S_m$ on the poic-space $\AC_{d,h}(\vec{\mu})$. 

\begin{thm}\label{thm: catalan many general}
     Suppose $g$, $h$, $d$, $m$, $\vec{\mu}$, $n$, and $J$ are as in \eqref{eq: big align3}. Then 
     \begin{equation}
         \mathfrak{src}_*\varpi^J_{d,h,\vec{\mu}}=\left( (3g)!\cdot \left(\left(\frac{g+r}{2}-1\right)!\right)^{3g+r}\cdot C_{\frac{g+r}{2}}\right)\cdot [\st_{g,J}].\label{eq: catalan many general}
     \end{equation}
\end{thm}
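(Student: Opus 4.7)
The plan is to reduce via the irreducibility of $\st_{g,J}$ to a single integer multiplicity, and then enumerate the contributing covers against a conveniently chosen top-dimensional cone. Since $\st_{g,J}$ is an irreducible linear poic-fibration of pure rank $3g+r-3$ (Proposition 4.5.8 of \cite{DARB}), its group of top-dimensional tropical cycles is free of rank one, generated by the fundamental class $[\st_{g,J}]$. Theorem \ref{thm: global balancing} combined with the weak properness of $\mathfrak{src}^\pcmplxs$ in top dimension (Propositions \ref{prop: est J as linear poic-complex} and \ref{prop: est linear poic fib J}) implies that the pushforward $\mathfrak{src}_*\varpi^J_{d,h,\vec{\mu}}$ lies in this group, so $\mathfrak{src}_*\varpi^J_{d,h,\vec{\mu}}=c\cdot [\st_{g,J}]$ for a unique integer $c$. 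To determine $c$, I would pick a representative cone $\sigma$ of $\st_{g,J}$ associated to a spanning-tree realisation of $O_{g,J}$ (or $O_g$ when $r=0$) and evaluate
\begin{equation*}
    c = \sum_{[\tau]}\varpi^J_{d,h,\vec{\mu}}([\tau])\cdot [N^\sigma:(\mathfrak{src}^\pcmplxs)_\tau N^\tau],
\end{equation*}
where $[\tau]=[(T_s,T_t,\pi)]$ ranges over the (finitely many) top-dimensional isomorphism classes of $\EST^J_{d,h,\vec{\mu}}$ with $\mathfrak{src}([\tau])=[\sigma]$.

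The key structural input for classifying such $[\tau]$ is Lemma \ref{lem: loopandbridge}. Any contributing $\tau$ satisfies $\ft_{J^c}(\src(\pi))\cong O_{g,J}$, hence $\src(\pi)$ contains exactly $g$ loops, each based at a trivalent vertex incident to a bridge. The lemma then forces $\trgt(\pi)$ to contain $g$ leaf structures of the described form: an edge of ramification $(1^d)$ whose leg-valency-$1$ endpoint has adjacent edge and leg both of ramification $(2,1^{d-2})$. Peeling off these $g$ leaves, the residual target subtree carries $r$ central marked legs and $g$ attachment points; the cover over this central part is then determined by lifting the central path of $O_{g,J}$ (formed from the edges $h_i^{(\prime)}$, $c_i$, and $e_i$) subject to the permissible $(2,1^{d-2})$ ramification data at each central flag.

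The enumeration should factor as a product of three independent combinatorial contributions, producing the three claimed factors. The factor $(3g)!$ should record the assignment of the $3g$ peripheral target markings among the $g$ leaf structures. The factor $((d-2)!)^m$ should come from the combinatorial factors $\mathrm{CF}(V)$ in \eqref{eq: standard weight} at the $m$ leg-valency-$1$ target vertices, which independently permute the $d-2$ unramified source legs lying above each weight-$2$ source leg. The Catalan number $C_{(g+r)/2}$ should count the admissible planar non-crossing configurations of the weight-$2$ ramification flag along the central path, mirroring the enumeration underlying the main theorem of \cite{VargasThesis}. A direct computation using \eqref{eq: standard weight} should then confirm that the remaining local Hurwitz numbers $H(V)$ and edge-weight contributions cancel to $1$, and that the associated matrix $F_\pi$ of \eqref{eq: assoc matrix} acts unimodularly on the relevant coordinates, so that every lattice index equals $1$.

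The main obstacle is the combinatorial enumeration along the central part: exhibiting the explicit bijection between admissible pairs $(T_t,\pi)$ on the central subtree and the Catalan-indexed family of non-crossing objects, and verifying that no further target configurations contribute. I expect the first to reduce to an edge-by-edge analysis along the central path of $O_{g,J}$ using the rigidity of the $(2,1^{d-2})$ profile together with the Riemann-Hurwitz constraint, and the second to follow from a connectivity argument on $\src(\pi)$ combined with degree bookkeeping over each central edge.
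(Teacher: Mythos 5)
Your overall strategy coincides with the paper's: reduce to a single scalar via the irreducibility of $\st_{g,J}$, evaluate the multiplicity over the cone of $O_{g,J}$, invoke Lemma \ref{lem: loopandbridge} to pin down the leaf structure of the target, and extract the Catalan number from the ramification data along the central path. The paper realizes this last step concretely as a bijection with Dyck paths from $0$ to $g+r$: the Riemann--Hurwitz constraint forces the weights of consecutive central edges to differ by $\pm1$ and positivity keeps the path above the axis, which is exactly the ``edge-by-edge analysis'' you anticipate. Your accounts of the $((d-2)!)^m$ factor (orbit-stabilizer for the $S_{d-2}^m$ action on source leg markings, with the $\mathrm{CF}(V)$'s as stabilizer sizes) and of the cancellation of the local Hurwitz numbers are also consistent with the paper.

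There is, however, one concrete error: the claim that $F_\pi$ ``acts unimodularly on the relevant coordinates, so that every lattice index equals $1$.'' It does not. Each loop of $\ft_{J^c}(\src(\pi))\cong O_{g,J}$ arises from a $2$-cycle in $\src(\pi)$ whose two weight-$1$ edges each have the length of the corresponding target edge $b_i$, so the matrix of $\mathfrak{src}^\pcmplxs$ on the cone is diagonal with entry $2$ in each of the $g$ loop coordinates and $1$ elsewhere; the lattice index is therefore $2^g$. This factor is not optional: the number of genuinely distinct target-marking assignments is $(3g)!/2^g$ rather than $(3g)!$, because swapping the two marked legs at each of the $g$ leg-valency-$2$ vertices $B_i^\prime$ of the target yields an isomorphic cover. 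In the paper these two factors of $2^g$ cancel to produce the stated $(3g)!$. As written, your count either comes out short by $2^g$ (if you count marking classes correctly) or lands on the right answer only through two compensating miscounts; you need to add both the index computation and the quotient by the leg-swapping symmetry to close the argument.
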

\begin{proof}
    We have readily observed that under these conditions if $\mathfrak{src}_*\varpi^J_{d,h,\vec{\mu}}$ is non-trivial, then it is a top dimensional cycle and hence a rational multiple of the fundamental cycle $[\st_{g,J}]$. To show that it is non-trivial we will compute its multiplicity. For this, our attention is shifted towards a particular cone, and we count the covers in the fiber with the correct multiplicity. We have to keep in mind that this is a pushforward, hence, in principle, the cone where we count should be a cone of a ($\mathfrak{src}^\pcmplxs$-fine) subdivision. Nevertheless, we place ourselves at the very special case of $O_{g,J}$ where we do not need to subdivide. This computation basically follows that of \cite{VargasThesis} section 13.2.\\
    Consider a cover $\pi$ of $\bbAC_{d,h}(\vec{\mu})$ with $\ft_{J^c}(\src(\pi)) \cong O_{g,J}$. It follows from Lemma \ref{lem: loopandbridge} that $\src(\pi)$ must have the edges of $h_1^\prime,h_2,h_3,\dots,h_6$ with weight $2$ (the condition on the matrix being invertible means that they give non-trivial contributions in the pushforward). We abuse notation and denote the corresponding edges of $\src(\pi)$ in the same way as those of $O_{g,J}$. The vanishing of RH numbers forces that:
    
    \begin{tabular}{c l c l}
         (a)& $d_\pi(c_1)-d_\pi(h_1^\prime) = \pm 1$,  &   (d)& $d_\pi(e_0)-d_\pi(h_{1})=\pm1$, \\
         (b)&  $d_\pi(c_{i})-d_\pi(c_{i-1}) = \pm 1$ for $2\leq i\leq r-1$, &   (e)& $d_\pi(e_{i})-d_\pi(e_{i-1}) = \pm 1$ for $1\leq i\leq g-4$,  \\
         (c)&  $d_\pi(h_1)-d_\pi(c_{r-1}=\pm1$, &   (f)&$d_\pi(h_g)-d_\pi(e_{g-4}) = \pm1$.
    \end{tabular}
    
    The previous items, Lemma \ref{lem: loopandbridge}, and the vanishing of RH numbers imply that that 
    \begin{enumerate}
        \item[(i)] The ramification profiles at $\pi(h_1^\prime),\pi(h_2),\pi(h_3),\dots,\pi(h_g)$ are $(2,1^{d-2})$.
        \item[(ii)] The ramification profile at $\pi(c_i)$ is $({d_\pi(c_i)},1^{d-d_\pi(c_i)})$ for $1\leq i\leq r-1$.
        \item[(iii)] The ramification profile at $\pi(e_i)$ is $({d_\pi(e_i)},1^{d-d_\pi(e_i)})$ for $0\leq i\leq g-4$.
    \end{enumerate}
    These last conditions allow us to classify the isomorphism class of the target tree $T_t$ up to permutation of the marking. The structure of the tree is as follows:
    \begin{itemize}
        \item If $J\neq \varnothing$, the target tree is isomorphic, up to permutation of the marking, to the tree $T_{g,J}$ specified as follows (we depict this tree for $g=3$ and $r=1$ in Figure \ref{fig: treetgj}):
        \begin{itemize}
            \item It has vertices $B_1$, $\dots$ ,$B_g$, $B_1^\prime$, $\dots$, $B_g^\prime$, $M_0$, $\dots$, $M_{g-3}$, and $R_1$, $\dots$, $R_r$.
            \item There are $g$ edges $b_i$, for $1\leq i\leq g$, with $\partial b_i = \{B_i,B^\prime_i\}$.
            \item There are edges $m_0$, $\dots$, $m_{g-4}$ with $\partial m_i = \{M_i,M_{i+1}\}$ for $0\leq i\leq g-4$.
            \item There are edges $q_1^\prime$, $q_1$, $q_2$, $\dots$, $q_g$ with $\partial q_1^\prime = \{B_1,R_1\}$, $\partial q_1=\{R_r,M_0\}$, $\partial q_i = \{B_i,M_{i-2}\}$ for $2\leq i\leq g-1$, and $\partial q_g = \{B_g,M_{g-3}\}$.
            \item There are edges $p_1$, $\dots$, $p_{r-1}$ with $\partial p_i = \{R_i,R_{i+1}\}$ for $1\leq i\leq r-1$. 
            \item There are legs $L_1$, $\dots$, $L_{3g+r}$, with:
            \begin{itemize}
                \item $\partial L_i = R_i$, for $1\leq i\leq r$.
                \item $\partial L_{r+3\cdot i-2} = \partial L_{r+3\cdot i-1} = \{B_i^\prime\} $ and $\partial L_{r+3\cdot i} = \{B_i\}$, for $1\leq i \leq g$.
            \end{itemize}
    \end{itemize}
        \item If $J=\varnothing$, the target tree is isomorphic, up to permutation of the marking, to the tree $T_{g,\varnothing}$ specified as follows:
        \begin{itemize}
            \item It has vertices $B_1$, $\dots$ , $B_g$, $B_1^\prime$, $\dots$, $B_g^\prime$, and $M_0$, $\dots$, $M_{g-3}$.
            \item There are $g$ edges $b_i$, $1\leq i\leq g$, with $\partial b_i = \{B_i,B_i^\prime\}$.
            \item There are edges $m_0,\dots, m_{g-4}$ with $\partial m_i = \{M_i,M_{i+1}\}$ for $0\leq i\leq g-4$.
            \item There are edges $q_1,\dots, q_g$, with $\partial q_1=\{M_0,B_1\}$, $\partial q_i =\{B_i,M_{i-2}\}$ for $2\leq i\leq g-1$, and $\partial q_g = \{B_g,M_{g-3}\}$.
            \item There are legs $L_1$, $\dots$, $L_{3g}$, with $\partial L_{3\cdot i-2}=\partial L_{3\cdot i-1} = \{B_i^\prime\}$ and $L_{3\cdot i} = \{B_i\}$ for $1\leq i\leq g$.
        \end{itemize}
    \end{itemize}
    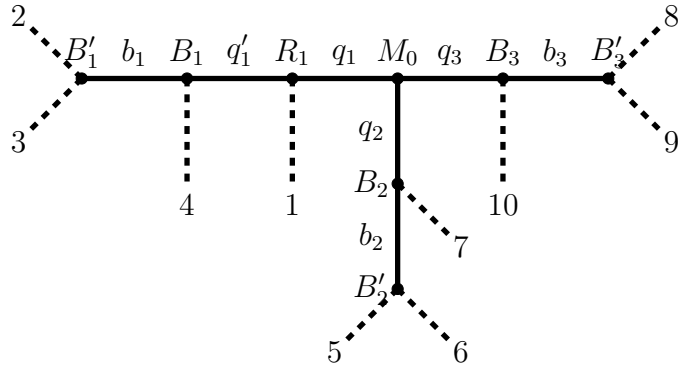
\begin{figure}[ht!]
        \centering
        \begin{tikzpicture}[scale=0.7]
            \draw[line width = 2pt, dashed] (-5,0)--(-6,1) node [pos = 1.2] {$2$};
            \draw[line width = 2pt, dashed] (-5,0)--(-6,-1) node [pos = 1.2] {$3$};
            \draw[line width = 2pt, dashed] (-3,0)--(-3,-2) node [pos = 1.2] {$4$};
            \draw[line width = 2pt, dashed] (1,-4)--(0,-5) node [pos = 1.2] {$5$};
            \draw[line width = 2pt, dashed] (1,-4)--(2,-5) node [pos = 1.2] {$6$};
            \draw[line width = 2pt, dashed] (1,-2)--(2,-3) node [pos = 1.2] {$7$};
            \draw[line width = 2pt, dashed] (5,0)--(6,1) node [pos = 1.2] {$8$};
            \draw[line width = 2pt, dashed] (5,0)--(6,-1) node [pos = 1.2] {$9$};
            \draw[line width = 2pt, dashed] (3,0)--(3,-2) node [pos = 1.2] {$10$};
            \draw[line width = 2pt] (-5,0)--(-3,0) node [midway, above]{$b_1$};
            \draw[line width = 2pt] (-3,0)--(-1,0) node [midway, above]{$q_1^\prime$};
            \draw[line width = 2pt] (-1,0)--(1,0) node [midway, above]{$q_1$};
            \draw[line width = 2pt] (1,0)--(3,0) node [midway, above]{$q_3$};
            \draw[line width = 2pt] (3,0)--(5,0) node [midway, above]{$b_3$};
            \draw[line width =2pt, dashed] (-1,0)--(-1,-2) node[pos =1.2] {$1$};
            \draw[line width =2pt] (1,0)--(1,-2) node[midway, left] {$q_2$};
            \draw[line width = 2pt] (1,-2)--(1,-4) node [midway, left ]{$b_2$};
            \draw[fill = black] (-5,0) circle (3pt);
            \draw[fill = black] (-3,0) circle (3pt);
            \draw[fill = black] (-1,0) circle (3pt);
            \draw[fill = black] (1,0) circle (3pt);
            \draw[fill = black] (3,0) circle (3pt);
            \draw[fill = black] (1,-2) circle (3pt);
            \draw[fill = black] (1,-4) circle (3pt);
            \draw[fill = black] (5,0) circle (3pt);
            
            \node at (-5,0.5) {$B_1^\prime$};
            \node at (0.5,-2) {$B_2$};
            \node at (0.5,-4) {$B_2^\prime$};
            \node at (-3,0.5) {$B_1$};
            \node at (-1,0.5) {$R_1$};
            \node at (1,0.5) {$M_0$};
            \node at (3,0.5) {$B_3$};
            \node at (5,0.5) {$B_3^\prime$};
        \end{tikzpicture}
        \caption{Depiction of $T_{g,J}$ for $g=4$ and $r=2$}
        \label{fig: treetgj}
\end{figure}
    We observe that if $T_t\cong T_{g,J}$, then under the cover $\pi$ we have that:
    \begin{enumerate}
        \item[(1)] The image of $h_1^\prime$ corresponds to $q_1^\prime$, and furthermore the image of $\pi(h_i)$ corresponds to $q_i$ for $1\leq i\leq g$.
        \item[(2)] The image of $c_i$ corresponds to $p_i$ for $1\leq i\leq r-1$.
        \item[(3)] The image of $e_i$ corresponds to $m_i$ for $0\leq i\leq g-4$.
        \item[(4)] The image of the loop $l_i$ corresponds to $b_i$ for $1\leq i\leq g$. 
    \end{enumerate}
    These previous correspondences fix the behavior of the cover $\pi$at the vertices.\\
    Coming back to the multiplicity, we observe that due to the markings of the legs of $\src(\pi)$, different sequences of weights for the edges $c_i$ (with $1\leq i\leq r-1$) and the edges $e_k$ (for $0\leq k\leq g-4$), satisfying the conditions (a) to (f), will give rise to different isomorphism classes that contribute to the computation of the multiplicity. More precisely, if we fix the isomorphism class of $T_s$ and of $T_t$, then different sequences of weights give rise to non-isomorphic covers. From this we obtain directly the Catalan factor of $C_{\frac{g+r}{2}}$. Indeed, given weights $d(h_1^\prime)$, $d(c_1)$, $d(c_2)$, $\dots$, $d(c_{r-1})$, $d(e_0)$, $\dots$, $d(e_{g-4})$, and $d(h_g)$ that satisfy the set of conditions (a) to (f) and $d(h_1^\prime)=d(h_g)=2$, then we obtain a Dyck path from $0$ to $g+r$ by considering the ensuing points of $\bbZ^2:$
    \begin{multline*}
        (0,0),\, (1,d(h_1^\prime)-1), \,(2,d(c_1)-1),\,\dots,\,(r,d(c_{r-1})-1),),\, (r+1,d(h_1)-1),\\
        (r+2,d(e_0)-1),\,\dots,\,(r+g-2,d(e_{g-4})-1),\,(r+g-1,d(h_g)-1),\,(r+g,0).
    \end{multline*}
    Since the weights are always positive, this path never goes below the $x$-axis. Therefore, it defines a Dyck path from $0$ to $g+r$, and, vice versa, any Dyck path from $0$ to $g+r$ gives in this precise manner a set of weights that gives rise to an isomorphism class that we count.\\
    The symmetric group $S_g$ acts on $\bbG_{0,\mathbbm{g}\sqcup J}$ by permuting the $\mathbbm{g}$-marked legs accordingly. The isomorphism classes of trees of $\bbG_{0,\mathbbm{g}\sqcup J}$ whose image is isomorphic to $O_{g,J}$ are all related by permutations of $S_g$. We fix one object $T_O$ of $\bbG_{0,\mathbbm{g}\sqcup J}$ with $\st_{g,J}(T_O)\cong O_{g,J}$ and proceed to count the isomorphism classes of the triples of $\bbEAC^J_{d,h}(\vec{\mu})$ whose image under the morphism $\mathfrak{src}^\pcmplxs$ is in the isomorphism class of $T_O$. If $(T_s,T_t,\pi)$ is an object of $\bbEAC^J_{d,h}(\vec{\mu})$ with $\mathfrak{src}^\pcmplxs(T_s,T_t,\pi) \cong T_O$, then the matrix corresponding to the map $\eta_{\src,(T_s,T_t,\pi)}\colon \EST_{(T_s,T_t,\pi)}\to \ST_{T_s}$ is square and diagonal. In fact, it coincides with the composition $\left(\eta_{\ft_{J^c},\src(\pi)} \circ F_\pi\right)$ up to reordering of columns and rows. Furthermore, we have that the only non-trivial entries are the ones corresponding to the loops (all others cancel due to the ramification profiles) which are $2$. This shows that the index of this map is $2^g$.\\
    We now study the contribution of the weight $\varpi^J_{d,h,\vec{\mu}}$. It follows from Lemma \ref{lem: loopandbridge} that the ramification profiles above the leaves of $T_t$ are always $(1^d)$, and we also have the conditions (i) to (iii) on ramification profiles above the other edges. This shows that in this case
    \begin{equation}
        \varpi^J_{d,h,\vec{\mu}}(\pi) = \prod_{V\in\src(\pi)} H(V)\cdot \mathrm{CF}(V).\label{eq: the weight}
    \end{equation}
    Since the RH numbers are trivial, the conditions (i) to (iii) on the ramification profiles at the edges of $T_t$ imply that there are the following possibilities for partitions of $d_\pi(V)$ at a vertex $V\in V(\src(\pi))$:
    \begin{enumerate}
        \item[(A)] If $d_\pi(V)\geq 2$, then we will see $(2,1^{d_\pi(V)-2})$, $(d_\pi(V)-1,1)$, and $(d_\pi(V))$.
        \item[(B)]  If $d_\pi(V)=1$, then we will just have $(1)$, $(1)$, and $(1)$.
    \end{enumerate}
    We observe that in (A) we obtain the following triple rational Hurwitz number
    \begin{enumerate}
        \item[(I)] If $d_\pi(V)= 2$, then $H_{0\to0}((2),(1,1),(2))=\frac{1}{2}$.
        \item[(II)] If $d_\pi(V)\geq 3$, then $H_{0\to0}((2,1^{d_\pi(V)-2}),(d_\pi(V)-1,1),(d_\pi(V))=1$. Indeed, this is best seen by observing that this Hurwitz number corresponds to the number of transpositions and $(d_\pi(V)-1)$-cycles that multiply to a $d_\pi(V)$-cycle in $S_{d_\pi(V)}$ divided by $d_\pi(V)!$. After fixing the $(d_\pi(V)-1)$-cycle, we obtain that there are $(d_\pi(V)-1)$ transpositions that when multiplied with the fixed $(d_\pi(V)-1)$-cycle give a $d_\pi(V)$-cycle. Since the number of $(d_\pi(V)-1)$-cycles is $\frac{d_\pi(V)!}{(d_\pi(V)-1)}$, it follows that $H_{0\to0}((2,1^{d_\pi(V)-2}),(d_\pi(V)-1,1),(d_\pi(V))=1$.
    \end{enumerate}
    Of course in (B) the rational triple Hurwitz number is also $1$. At the vertices where we forcibly run into case (I) from above, this fraction is canceled by the factor of $\mathrm{CF}(V)$ that is coming from the two edges with weight $1$ (this is the case, for instance, at all the $V_1,\dots,V_g$). The factors $\mathrm{CF}(V)$ of the vertices $V$ arising in case (II) are relevant when computing the number of isomorphism classes of marked discrete graphs realizing to the isomorphism class of the given $T_s$. Namely, after marking the target tree $T_t$, the markings of each weight-$1$ leg on the source can be permuted along the same fiber. This is the action of $S_{d-2}^{m}$ that we have previously described, and acting by an $\vec{\alpha}\in S_{d-2}^m$ may not necessarily produce new isomorphism classes. The action of the permutations $\vec{\alpha}$ is reflected locally at the vertices of $\src(\pi)$ (or $T_s$) by the combinatorial factors $\mathrm{CF}(\bullet)$, and the only vertices of interest are those with local degree bigger than $1$ that are not lying in the fibers of the $B_1$, $\dots$, $B_g$, $B_1^\prime$, $\dots$, $B_g^\prime$ (these are vertices where case (a) above applies and have nothing to do with the action of $S_{d-2}^m$). Our conditions (i) to (iii) on ramification profiles, the vanishing of the RH numbers, and Lemma \ref{lem: loopandbridge} imply that these vertices with local degree bigger than $1$ not in the fiber of the $B_i$ and $B_i^\prime$ ($1\leq i\leq g$) are exactly the $C_1$, $\dots$, $C_r$, $W_0$, $\dots$, $W_{g-3}$. We have already explained the behavior of the $\mathrm{CF}(\bullet)$ at the $V_i$ for $1\leq i\leq g$, so we focus on the others. If all these had weight $\leq 2$, then we would have $\#S_{d-2}^m=((d-2)!)^m$ possibilities. If there are vertices with higher weights, then we have to divide by the stabilizer, which is given by the factors from $\mathrm{CF}(\bullet)$. These cancel out with the remaining $\mathrm{CF}(V)$ in \eqref{eq: the weight}, and all together this shows that we obtain the factor of $((d-2)!)^m$. More succinctly, we are counting the orbits of the $S_{d-2}^m$ action as many times as the remaining factors $\mathrm{CF}(V)$ in \eqref{eq: the weight}, which turn out to be the size of the corresponding stabilizer.\\
    To finish, we just have to understand what happens when we permute the marking of the base tree. In other words, we study the previously defined action of $S_m$. Observe that when permuting the marking of the legs of $T_t$ (and change $\pi$ correspondingly) we will only stay in the same isomorphism class of $\bbEAC^J_{d,h}(\vec{\mu})$ if and only if the permutation switches legs incident to the same vertex and leaves the first $r$ legs fixed (because of the $J$-marked legs). This shows that there are $\frac{(m-r)!}{2^N}$ different permutations of the marking that produce different isomorphism classes, where $N$ is the number of vertices of $T_t$ with leg valency $2$. It is clear that, in this case, $N=g$, and therefore we have shown that
    \begin{align*}
        \mathfrak{src}_*\varpi^J_{d,h,\vec{\mu}} &= \left( \frac{(m-r)!}{2^g} \cdot ((d-2)!)^m\cdot 2^g\cdot C_{\frac{g+r}{2}}\right)[\st_{g,J}]\\
        &=\left((3g)! \cdot \left(\left(\frac{g+r}{2}-1\right)!\right)^{3g+r}\cdot C_{\frac{g+r}{2}}\right)[\st_{g,J}].
    \end{align*}
\end{proof}

By further studying the actions of $S_m$ and $S_{d-2}^m$, it is possible to just keep the Catalan factor $C_{\frac{g+r}{2}}$ in \eqref{eq: catalan many general}. We make this more precise in Theorem \ref{thm: generic} and for this purpose, we introduce a multiplicity for objects of $\bbEAC^J_{d,h}(\vec{\mu})$, an equivalence relation generated by the actions of these groups, and then we prove the corresponding enumerative statement.
\begin{defi}
    Let $(T_s,T_t,\pi)$ be an object of $\bbEAC^J_{d,h}(\vec{\mu})$. Its \emph{multiplicity} is the number
\begin{equation}
    \mt(T_s,T_t,\pi):=\frac{\varpi^J_{d,0,\vec{\mu}}\left(\pi\right) \cdot \left|\det(\eta_{\ft_{J^c},\src(\pi)}\circ F_\pi)\right|}{\HS(\pi)\cdot \VS(\pi)},\label{eq: multiplicity}
\end{equation}
    where $\VS(\pi)$ and $\HS(\pi)$ are the sizes of the stabilizers of the action of $(S_{d-2})^m$ and $S_m$ correspondingly.  
\end{defi}
We now proceed with the equivalence relation, which is the notion of $J$-marked discrete admissible covers.
\begin{defi}
    The \emph{set of $J$-marked discrete admissible covers} is the set of equivalence classes of $[\bbAC_{d,h}(\vec{\mu})]$ under the following relation: $[\pi]\sim[\pi^\prime]$ if there exist $\vec{\alpha}\in\left(S_{d-2}\right)^m$ and $\beta\in S_m$ such that $[\pi] = \vec{\alpha}\left(\beta\left([\pi^\prime]\right)\right)$ and $\ft_{J^c}\left(\src\left([\pi]\right)\right) =\ft_{J^c}\left(\src\left([\pi^\prime]\right)\right)$. It is clear that the multiplicity \eqref{eq: multiplicity} gives a well-defined function on the set of $J$-marked discrete admissible covers.
\end{defi}

In the following theorem, we make use of the notion of tropical modifications of a tropical curve. A tropical modification of a tropical curve $\Gamma$ is simply a tropical curve $\Gamma^\prime$ that is obtained by grafting several rational tropical curves (metric trees possibly with other marked legs) to vertices of $\Gamma$. In the context of the theorem, these tropical modifications arise as points in the fibers of forgetting the marking morphisms. To be much more precise, given a genus-$g$ $J$-marked tropical curve $\Gamma\in |\Mtrop_{g,J}|$, the fiber $|\ft_{J^c}|^{-1}(\Gamma)$ consists of tropical modifications of $\Gamma$.

\begin{thm}\label{thm: generic}
    Suppose $g$, $h$, $d$, $m$, $\vec{\mu}$, $n$, and $J$ are as in \eqref{eq: big align3}. Then for a generic genus-$g$ $J$-marked tropical curve $\Gamma$, there are $C_{\frac{g+r}{2}}$ many degree-$d$ $J$-marked discrete admissible covers (counted with multiplicity induced from \eqref{eq: multiplicity}) that have a tropical modification of $\Gamma$ as a source.
\end{thm}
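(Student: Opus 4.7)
The plan is to extract the enumerative count directly from the pushforward formula of Theorem~\ref{thm: catalan many general}. That theorem yields $\mathfrak{src}_*\varpi^J_{d,h,\vec{\mu}}=(3g)!\cdot((d-2)!)^m\cdot C_{\frac{g+r}{2}}\cdot[\st_{g,J}]$, so the associated Minkowski weight assigns the uniform integer $(3g)!\cdot((d-2)!)^m\cdot C_{\frac{g+r}{2}}$ to every top-dimensional cone of $\st_{g,J}$. By the definition of the pushforward, this coefficient equals
\[
\sum_{[\pi]}\varpi^J_{d,h,\vec{\mu}}(\pi)\cdot\left|\det(\eta_{\ft_{J^c},\src(\pi)}\circ F_\pi)\right|
\]
over isomorphism classes $[\pi]\in[\bbEAC^J_{d,h}(\vec{\mu})]$ whose $\mathfrak{src}$-image passes through a chosen generic $\Gamma\in|\Mtrop_{g,J}|$.

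Next I would partition these isomorphism classes into $J$-marked equivalence classes under the combined $S_m\times(S_{d-2})^m$-action. The key combinatorial observation is that for generic $\Gamma$ the underlying combinatorial type of $\ft_{J^c}(\src(\pi))$ is $O_{g,J}$, which has no non-trivial $J$-marked automorphisms (a direct check on its caterpillar-with-loops structure). Hence the constraint $\ft_{J^c}(\src([\pi]))=\ft_{J^c}(\src([\pi']))$ in the equivalence relation forces $\beta\in S_m$ to fix $\{1,\dots,r\}$ pointwise, i.e.\ $\beta\in S_{m-r}$. Together with the independent $(S_{d-2})^m$-action, each equivalence class accounts for exactly $\tfrac{(m-r)!\cdot((d-2)!)^m}{\HS(\pi)\cdot\VS(\pi)}$ isomorphism classes in the fiber, while $\varpi^J_{d,h,\vec{\mu}}(\pi)$ and $|\det(\eta_{\ft_{J^c},\src(\pi)}\circ F_\pi)|$ remain constant on each class.

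Combining the previous two steps with the definition of $\mt$ and using $m-r=3g$ produces the identity
\begin{align*}
(3g)!\cdot((d-2)!)^m\cdot C_{\frac{g+r}{2}} &= \sum_{[\pi]_\sim}\frac{(m-r)!\cdot((d-2)!)^m}{\HS(\pi)\cdot\VS(\pi)}\cdot\varpi^J_{d,h,\vec{\mu}}(\pi)\cdot|\det|\\
&= (3g)!\cdot((d-2)!)^m\cdot\sum_{[\pi]_\sim}\mt([\pi]_\sim),
\end{align*}
from which $\sum_{[\pi]_\sim}\mt=C_{\frac{g+r}{2}}$ follows after cancellation. The main obstacle is the orbit-size bookkeeping: one must verify that the stabilizers $\HS(\pi)$ and $\VS(\pi)$ appearing in the definition of $\mt$ coincide with those of the restricted $S_{m-r}\times(S_{d-2})^m$-action on $J$-preserving covers, and that the two actions are effectively independent on the combinatorial data of the covers. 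Both reduce to the rigidity of $O_{g,J}$ and the explicit Dyck-path classification of covers already established in the proof of Theorem~\ref{thm: catalan many general}.
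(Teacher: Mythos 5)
Your overall strategy---read the generic-fiber count off Theorem \ref{thm: catalan many general} and then collapse the fiber into $\sim$-equivalence classes by an orbit--stabilizer count for the $S_m$- and $(S_{d-2})^m$-actions---is the same as the paper's, which likewise reduces the weighted fiber count $(3g)!\cdot((d-2)!)^m\cdot C_{\frac{g+r}{2}}$ to $C_{\frac{g+r}{2}}$ by dividing out the orbits of these two actions. But there is a genuine error in your second step: you assert that for generic $\Gamma$ the combinatorial type of $\ft_{J^c}(\src(\pi))$ is $O_{g,J}$. That is false. The space $\Mtrop_{g,J}$ has many maximal cones and a generic curve may lie in the interior of any of them; $O_{g,J}$ was singled out in the proof of Theorem \ref{thm: catalan many general} only as a convenient cone at which to evaluate the coefficient of the pushforward, which is then uniform over all maximal cones because $\st_{g,J}$ is irreducible. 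As written, your argument establishes the claim only for curves of type $O_{g,J}$, which is not a dense (or even generic) locus. The fix is cheap: the only genericity you need is that the chosen point $p\in\left|\ST_{g,J}\right|$ is not fixed by any nontrivial permutation of the $J$-marks, which holds on a dense open subset of \emph{every} maximal cone. This is exactly how the paper proceeds: it introduces the subgroup $G_p\leq S_m$ of permutations preserving $\{1,\dots,r\}$ whose induced action fixes $p$, notes that the $S_m$-stabilizer of $\pi$ coincides with its $G_p$-stabilizer, and quotients by $G_p$ rather than by automorphisms of one specific graph.

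A second, smaller point: your orbit count $\frac{(m-r)!\,((d-2)!)^m}{\HS(\pi)\,\VS(\pi)}$ implicitly assumes that the stabilizer of the combined $S_{m-r}\times(S_{d-2})^m$-action has order $\HS(\pi)\cdot\VS(\pi)$, i.e.\ that the two actions are independent on the fiber. You flag this as the main obstacle, but ``rigidity of $O_{g,J}$'' cannot be its justification, for the reason above. The paper sidesteps the issue by performing the two quotients sequentially---first dividing out the $(S_{d-2})^m$-orbits to remove $((d-2)!)^m$, then the $G_p$-orbits to remove $(m-r)!$---using at each stage only that the action preserves the fiber and the multiplicities \eqref{eq: multiplicity of fibre}. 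If you retain the single product-action computation, you must verify this independence for covers over an arbitrary maximal cone, not just over $O_{g,J}$.
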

\begin{proof}
    Following Theorem \ref{thm: catalan many general} we take the geometric realization of $\mathfrak{src}\colon\EST^J_{d,h}(\vec{\mu})\to\ST_{g,J}$, and apply Corollary 2.36 of \cite{GathmannKerberMarkwigTFMSTC} (the argument follows verbatim). This shows that the fiber of a generic point $p\in\left|\ST_{g,J}\right|$ has $\left( (m-r)!\cdot ((d-2)!)^m\cdot C_{\frac{g+r}{2}}\right)$-many points counted with their respective multiplicity. By definition, if $q\in  |\mathfrak{src}^\pcmplxs|^{-1}(p)$ and lies in the realization of the cone of a triple $(T_s,T_t,\pi)$, then this multiplicity is
    \begin{equation}
        \varpi^J_{d,h,\vec{\mu}}(\pi)\cdot \left[ N^{\ST_{T_s}} : \mathfrak{src}^\pcmplxs\left( N^{\EST^J_{(T_s,T_t,\pi)}}\right)\right]= \varpi^J_{d,h,\vec{\mu}}(\pi)\cdot \left|\det\left(\eta_{\ft_{J^c},\src(\pi)}\circ F_\pi\right)\right|.\label{eq: multiplicity of fibre}
    \end{equation}
    We first consider the action of $(S_{d-2})^m$, which can be naturally extended to $\EST^J_{d,h}(\vec{\mu})$ (and hence to its realization). The action restricts to an action on the fiber $|\mathfrak{src}^\pcmplxs|^{-1}(p)$ preserving the corresponding multiplicities, hence if we divide by the stabilizers we can remove the factor $((d-2)!)^m$. By definition of the action, if a point $q\in |\mathfrak{src}^\pcmplxs|^{-1}(p)$ lies in the realization of the cone of a triple $(T_s,T_t,\pi)$, then the size of the stabilizer if $\VS(\pi)$. Let $G_p$ denote the subgroup of $S_m$ such that:
    \begin{itemize}
        \item $G_p\left(\{1,\dots,r\}\right)=\{1,\dots,r\}$.
        \item The induced action of $G_p$ on $J$ and its extension to $\ST_{g,J}$ leaves the point $p$ fixed. 
    \end{itemize}
    Similarly to the case of $(S_{d-2})^m$, the action of $S_m$ extends naturally to an action of $G_p$ on $\EST^J_{d,h}(\vec{\mu})$. By construction it restricts to an action on the fiber $|\mathfrak{src}^\pcmplxs|^{-1}(p)$, and the analogous argument shows that if we divide by the stabilizers then we can remove the factor $(m-r)!$. From the definition of $G_p$, the $S_m$-stabilizer of $\pi$ coincides with the $G_p$-stabilizer, and hence the size of the stabilizer if $\HS(\pi)$. It now follows from \eqref{eq: multiplicity of fibre} that counting the points in the fiber with the multiplicity induced from \eqref{eq: multiplicity} gives precisely $C_{\frac{g+r}{2}}$.
\end{proof}

We close with a proof of Lemma \ref{lem: loopandbridge}. This is a multi-step process for which we now build toward. The results and methods are simple, but the proofs are somewhat lengthy. We begin with Lemma \ref{lem: above nodes}, which consists of a useful classification of the structure of admissible covers that contribute to the multiplicity. We then proceed with Lemma \ref{lem: expungednoglue}, which requires the introduction of a definition. Afterward, we finally arrive at the proof of Lemma \ref{lem: loopandbridge}, which naturally makes use of the previous lemmas. The content of these lemmas and proofs is already known in the literature. However, in the spirit of clarity and self containment we provide concrete statements in our context and, correspondingly, proofs. More precisely, Lemma \ref{lem: above nodes} and its proof consist of a simple adaptation to our context of Remark 51, Lemmas 52 and 53, and their proofs of \cite{VargasDraisma}. Whereas Lemma \ref{lem: loopandbridge} and its proof are a minimalistic adaptation of Lemmas 60 and 79 of loc. cit..

\begin{lem}\label{lem: above nodes}
Following the notation of \eqref{eq: big align3}, let $\pi$ be an object of top dimension of $\bbAC_{d,h}(\vec{\mu})$ with $\left(\eta_{\ft_{J^c},\src(\pi)}\circ F_\pi\right)$ invertible. If $V$ is a vertex of $\trgt(\pi)$ with $\legval(V)=2$, then the fiber $\pi^{-1}(V)$ is given as follows:
\begin{itemize}
    \item There is a unique vertex $\hat{V}$ with $d_\pi(\hat{V})=2$.
    \item All the other vertices have local degree $1$. 
\end{itemize}
Furthermore, if $e\in E(\trgt(\pi))$ is the unique edge with $V\in\partial e$, then the ramification profile of $e$ is $(1^d)$. 
\end{lem}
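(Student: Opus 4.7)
The plan is to combine a local Riemann--Hurwitz analysis at each vertex of $\pi^{-1}(V)$ with a global fiber count, and then to invoke the invertibility hypothesis to isolate the desired configuration. Since $\pi$ is top-dimensional, $\trgt(\pi)$ is trivalent, so $V$ with $\legval(V)=2$ is incident to exactly one edge $e$ and two legs $l_1, l_2$, both with ramification profile $(2,1^{d-2})$. For $W\in\pi^{-1}(V)$, the vanishing of $r_\pi(W)$ gives $\val(W)=d_\pi(W)+2$, and harmonicity forces the local partition of $d_\pi(W)$ above each $l_i$ to be a sub-partition of $(2,1^{d-2})$, hence either $(1^{d_\pi(W)})$ or $(2,1^{d_\pi(W)-2})$. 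A direct case check yields exactly four possible local vertex types, which I would label (A), (B), (C), (D) according to $d_\pi(W)\in\{1,2,2,3\}$ and the profile triples $((1),(1),(1))$, $((2),(1,1),(2))$ (up to swap of $l_1\leftrightarrow l_2$), $((2),(2),(1,1))$, $((2,1),(2,1),(3))$ above $(l_1, l_2, e)$.

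Summing local degrees to $d$ and imposing that exactly one weight-$2$ leg sits globally above each of $l_1, l_2$ yields a small linear system whose non-negative integer solutions give exactly three possible global fiber structures, distinguished by the ramification profile above $e$: (I) two type-(B) vertices and $d-4$ type-(A) vertices with profile $(2^2,1^{d-4})$; (II) one type-(C) vertex and $d-2$ type-(A) vertices with profile $(1^d)$; or (III) one type-(D) vertex and $d-3$ type-(A) vertices with profile $(3,1^{d-3})$. Case (II) is precisely the desired conclusion, so it suffices to rule out (I) and (III).

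To do so, I would trace the action of $\eta_{\ft_{J^c},\src(\pi)}$ on the edges above $e$ via the three cases of the construction of $\ft_a$. A type-(A) vertex is trivalent with two $J^c$-legs and one adjacent edge, so forgetting one of its legs triggers case (3) of that construction, deleting the vertex and the adjacent edge. The same mechanism (after first shaving the weight-$2$ $J^c$-leg via case (1)) removes the weight-$2$ edge above $e$ at a type-(B) vertex and the weight-$3$ edge above $e$ at a type-(D) vertex. Thus in (I) and (III) \emph{every} edge above $e$ is erased by $\ft_{J^c}$, so $\eta_{\ft_{J^c},\src(\pi)}\circ F_\pi$ annihilates $v_e$, contradicting invertibility. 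In (II), by contrast, after removing the two $J^c$-legs from the unique type-(C) vertex one reaches a valency-$3$ vertex incident to two edges, which case (2) merges into a single edge above $e$--leaving exactly one surviving edge, compatible with invertibility.

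The main obstacle is this last contraction/forgetting bookkeeping: one must pick an order of leg-forgettings at each vertex so that every step matches exactly one of the three cases of the construction of $\ft_a$, and verify that subsequent steps at other vertices of $\src(\pi)$ neither re-introduce nor identify additional surviving edges above $e$.
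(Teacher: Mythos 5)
Your proposal is correct and follows essentially the same route as the paper: a local Riemann--Hurwitz case analysis isolating the same four vertex types, followed by ruling out the unwanted configurations because all edges above $e$ would be expunged by $\ft_{J^c}$, making the $e$-column of $\eta_{\ft_{J^c},\src(\pi)}\circ F_\pi$ trivial and contradicting invertibility. The only difference is organizational (you assemble the local types into three global fiber structures before invoking invertibility, while the paper kills the bad local types directly), and your explicit tracing through the three cases of the $\ft_a$ construction is a sound way to justify the "trivial column" claim that the paper merely asserts.
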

\begin{proof}
Observe that every vertex of $\trgt(\pi)$ must be $3$-valent, because $\pi$ is of top dimension. Let $\ell_i$ and $\ell_j$ denote the marked legs of $\trgt(\pi)$ that are incident to $V$. Suppose $W\in \pi^{-1}(V)$ and consider partitions $\lambda_i,\lambda_j,\lambda \vdash d_\pi(W) $ such that
\begin{itemize}
    \item $\lambda_i$ is the partition given by the weights of the marked legs incident to $W$ in the fiber of $\ell_i$.
    \item $\lambda_j$ is the partition given by the weights of the marked legs incident to $W$ in the fiber of $\ell_j$.
    \item $\lambda$ is the partition given by the weights of the edges incident to $W$ in the fiber of $e$.
\end{itemize}
Since the ramification profile of $\ell_i$ and $\ell_j$ is $(2,1^{d-2})$, it follows that there are only two possibilities for $\lambda_i$ and $\lambda_j$: either $(2^{d_\pi(W)},1^{d_\pi(W)-2})$ or $(1^{d_\pi(W)})$. By an exhaustive argument we will show that the only cases are:
\begin{enumerate}
    \item[(\textasteriskcentered)] $d_\pi(W)= 2$, $\lambda_i = (2)$, $\lambda_j=(2)$, and $\lambda= (1,1)$.
    \item[(\textasteriskcentered\textasteriskcentered)] $d_\pi(W)= 1$, $\lambda_i = (1)$, $\lambda_j=(1)$, and $\lambda= (1)$. 
\end{enumerate}
We consider all the possible cases:
\begin{itemize}
    \item Suppose that both $\lambda_i=\lambda_j = (1^{d_\pi(W)})$. The vanishing of the RH number at $W$ yields the equation
    \begin{equation*}
        d_\pi(W)= \ell(\lambda_i)+\ell(\lambda_j)+\ell(\lambda)-2= d_\pi(W)+d_\pi(W)+\ell(\lambda)-2.
    \end{equation*}
    From this it follows that $d_\pi(W) + \ell(\lambda) = 2$, and since $\ell(\lambda),d_\pi(W)\geq 1$, we necessarily have that $d_\pi(W) = 1$ and $\lambda= (1)$. 
    \item Suppose that both $\lambda_i=\lambda_j = (2,1^{d_\pi(W)-2})$. The vanishing of the RH number at $W$ yields the equation
    \begin{equation*}
        d_\pi(W)= \ell(\lambda_i)+\ell(\lambda_j)+\ell(\lambda)-2= 2(d_\pi(W)-1)+\ell(\lambda)-2.
    \end{equation*}
    This implies that $d_\pi(W)+\ell(\lambda) = 4$. Observe that $d_\pi(W)\geq 2$ due to $\lambda_i$ and $\lambda_j$, and $\ell(\lambda)\geq 1$. Therefore, either $d_\pi(W) = 3$ and $\lambda=(3)$ or $d_\pi(W)=2$ and $\lambda= (1,1)$. We claim that the former is impossible. Indeed, in this case every other vertex of $\pi^{-1}(V)$ would necessarily have local degree $1$. This would imply that in the matrix $\left(\eta_{\ft_{J^c},\src(\pi)}\circ F_\pi\right)$ the column corresponding to the edge $e$ would be trivial, but this is not the case as this matrix is invertible. In conclusion, in this case $d_\pi(W)=2$ and $\lambda= (1,1)$.
    \item Suppose that $\lambda_i=(2^{d_\pi(W)},1^{d_\pi(W)-2})$ and $\lambda_j=(1^{d_\pi(W)})$ (or vice versa, the argument is the same). We argue that this is not a possibility. The vanishing of the RH number at $W$ yields the equation
    \begin{equation*}
        d_\pi(W)=\ell(\lambda_i)+\ell(\lambda_j)+\ell(\lambda)-2= (d_\pi(W)-1)+d_\pi(w)+\ell(\lambda)-2.
    \end{equation*}
    In particular, we obtain that $3=d_\pi(W)+\ell(\lambda)$. Observe that $d_\pi(W)\geq 2$ due to $\lambda_i$. Since $\lambda$ is a partition, necessarily we have that $\ell(\lambda)\geq 1$. These conditions force that $d_\pi(W) = 2$, $\lambda= (2)$. In this case, there would be another vertex $W^\prime\in \pi^{-1}(V)$ with $d_\pi(W^\prime)=2$ but the partitions above the $\ell_i$ and $\ell_j$ legs reversed, and every other vertex of $\pi^{-1}(V)$ would necessarily have local degree $1$. This is not possible due to $\left(\eta_{\ft_{J^c},\src(\pi)}\circ F_\pi\right)$ being invertible. Indeed, the column corresponding to the edge $e$ of $\trgt(\pi)$ would be trivial and the matrix could not possibly be invertible.
\end{itemize}
It readily follows from the above that the only possibilities are (\textasteriskcentered) and (\textasteriskcentered\textasteriskcentered).
Since the ramification profiles over $\ell_i$ and $\ell_j$ are $(2,1^{d-2})$, we must necessarily run into (\textasteriskcentered) only once in the fiber $\pi^{-1}(V)$ and at all the other vertices we run into (\textasteriskcentered\textasteriskcentered).
\end{proof}

We continue with a useful but technical definition, which is analogous to the dangling edges and vertices of \cite{VargasDraisma}.
\begin{defi}
Let $A$ be an arbitrary finite set, let $I\subset A$, let $q$ be a non-negative integer with $2q+\#(A\backslash I)-2\geq 0$ and let $G$ be an object of $\bbG_{g,A}$. We say that an edge $e\in E(G)$ is \emph{expunged after forgetting the $I$-marked legs} if the column corresponding to $e$ in the matrix $\eta_{\ft_I, G}$ is trivial. A vertex $V\in V(G)$ is \emph{expunged after forgetting the $I$-marked legs} if every leg incident to $V$ is $I$-marked and every edge incident to $V$ is expunged after forgetting the $I$-marked legs.
\end{defi}

\begin{lem}\label{lem: expungednoglue}
Following the notation of \eqref{eq: big align3}, let $\pi$ be an object of top dimension of $\bbAC_{d,h}(\vec{\mu})$ with $\left(\eta_{\ft_{J^c},\src(\pi)}\circ F_\pi\right)$ invertible. Any vertex of $\src(\pi)$ that is expunged after forgetting the $J^c$-marked legs has local degree $1$. Analogously, every edge of $\src(\pi)$ that is expunged after forgetting the $J^c$-marked legs has weight $1$. 
\end{lem}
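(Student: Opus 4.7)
The plan is to first describe the expunged subgraph of $G := \src(\pi)$ structurally and then propagate $d_\pi=1$ along each ``expunged tail'' from leaves to root via harmonicity and the vanishing Riemann--Hurwitz condition.

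First, I would establish that an edge $e\in E(G)$ is expunged iff $e$ is a bridge of $G$ one component of whose complement $G\setminus e$ is a tree $T$ with every leg in $J^c$; in this case every edge of $T$ is also expunged and every vertex of $T$ satisfies the definition of an expunged vertex, and conversely every expunged edge or vertex arises this way. This follows from tracing the iterative forgetting $\ft_{J^c}$: among the three cases of a single-leg forgetting, case $1$ preserves columns, case $2$ merges two columns into a nonzero one, and only case $3$ annihilates a column; iterated case-$3$ pruning removes exactly the bridges whose one side is a $J^c$-tree together with all edges of that tree. Cycles, and bridges with either a $J$-leg or positive genus on each side, cannot be pruned since $\ft_{J^c}(G)$ must preserve both the $J$-markings and the genus $g$ of $G$.

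Second, I fix one expunged tail $T$, let $V_r\in V(T)$ be its root adjacent to the boundary bridge $e^{\ast}$, and induct on the distance to $V_r$ inside $T$ to prove $d_\pi(V)=1$ at every $V\in V(T)$ and $d_\pi(e)=1$ at every $e\in E(T)$ (including $e^{\ast}$). At a leaf $W\in V(T)$ (one edge $e_W\in E(T)$ and $k$ legs, all in $J^c$ of weight $1$), top-dimensionality of $\pi$ makes $\pi(W)$ $3$-valent, and since $W$ has only one edge, harmonicity forces $\pi(W)$ to have exactly one edge-flag and two leg-flags (otherwise some target edge-flag would have an empty fiber at $W$, contradicting $d_\pi(W)>0$). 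Harmonicity on the two leg-fibers gives $k=2d_\pi(W)$, and $r_\pi(W)=0$ gives $\val(W)=d_\pi(W)+2$, i.e., $1+k=d_\pi(W)+2$; together these yield $d_\pi(W)=1$ and hence $d_\pi(e_W)=1$. At an interior vertex $V\in V(T)$ with one upward edge $e_{\textnormal{up}}$, $j\geq 1$ children edges $e_{d_1},\dots,e_{d_j}$ (of weight $1$ by induction), and $k$ legs (of weight $1$), the parallel analysis on the possible number of edge-flags at $\pi(V)$ (which must be $2$ or $3$; the case of a single edge-flag is excluded since it forces $d_\pi(V)+j=1$ through the leg-fiber equations and RH), combined with $\val(V)=d_\pi(V)+2$, forces $e_{\textnormal{up}}$ to form its own edge-fiber and yields $d_\pi(V)=d_\pi(e_{\textnormal{up}})=1$; grouping $e_{\textnormal{up}}$ with a child makes the remaining edge-fiber degenerate, contradicting $d_\pi(V)>0$. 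Iterating up to $V_r$ also treats the boundary bridge $e^{\ast}$.

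The main obstacle is the interior case analysis: one must verify that among all flag-partitions at $V$ into the three fibers over $\pi(V)$ compatible with harmonicity, the integrality of $d_\pi$, and $\val(V)=d_\pi(V)+2$, only the degenerate distribution forcing $d_\pi(V)=1$ survives; all other distributions produce either non-positive $d_\pi(V)$ or non-integer partitions.
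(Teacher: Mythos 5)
Your structural reduction (the expunged edges and vertices form trees hanging off bridges, with all their legs $J^c$-marked) is sound and plays the same role as observation (\textbullet\textbullet\textbullet) in the paper's proof, and a leaf-to-root induction is a legitimate alternative organization to the paper's path-tracing contradiction. However, there is a genuine gap: your propagation step never invokes the hypothesis that $\eta_{\ft_{J^c},\src(\pi)}\circ F_\pi$ is invertible, and the conclusion is not a purely local consequence of harmonicity and vanishing Riemann--Hurwitz numbers. Concretely, you assert that the legs incident to a vertex of an expunged tail are ``all in $J^c$ of weight $1$,'' but in the setup \eqref{eq: big align3} the set $J$ contains only $r$ of the $m=3g+r$ weight-$2$ legs, so $J^c$ contains $3g$ legs of weight $2$, and nothing in the definition of \emph{expunged} prevents such a leg from being incident to an expunged vertex.

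Your local flag-count cannot exclude these configurations. At a leaf $W$ of the tail, the configuration with one weight-$2$ leg over $\ell_i$, two weight-$1$ legs over $\ell_j$, and a single edge of weight $2$ satisfies harmonicity, $r_\pi(W)=0$, and $\val(W)=d_\pi(W)+2$ with $d_\pi(W)=2$ once the unjustified identity $k=2d_\pi(W)$ is dropped; similarly, at an interior vertex $V$ with $\legval(\pi(V))=1$, the distribution $d_\pi(V)=2$ with one weight-$2$ leg, one weight-$2$ edge over one target edge and two weight-$1$ edges over the other passes every local test. These cases are ruled out only by a global argument: one compares, across the \emph{entire} fiber $\pi^{-1}(\pi(V))$, the columns of $\eta_{\ft_{J^c},\src(\pi)}\circ F_\pi$ indexed by the target edges at $\pi(V)$ and finds that they would coincide or that one would vanish, contradicting invertibility. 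This is exactly what the paper does in items (\textbullet) and (\textbullet\textbullet) of its proof and in the third case of the proof of Lemma \ref{lem: above nodes}. The leaf case of your induction can be repaired by citing Lemma \ref{lem: above nodes} (whose $d_\pi=2$ alternative forces two incident edges, impossible at a leaf of the tail), but the interior $\legval(\pi(V))=1$ case cannot be closed without importing the column-comparison argument, so as written the induction does not go through.
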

\begin{proof}
  Since $\pi$ is of top dimension, every vertex of $\trgt(\pi)$ is $3$-valent. We begin with some observations concerning the structure of vertices of $\src(\pi)$ that are expunged after fogetting the $J^c$-marked legs. Let $V\in V(\src(\pi))$ be expunged after forgetting the $J^c$-marked legs, then we have the following:
    \begin{enumerate}
        \item[(\textbullet)] If $\legval(\pi(V))=2$, it necessarily holds $d_\pi(V)=1$.\\
        Indeed, because of Lemma \ref{lem: above nodes} we know that $d_\pi(V)=1$ or $d_\pi(V)=2$. In fact, every vertex in $\pi^{-1}(\pi(V))$ with local degree $1$ is expunged after forgetting the $J^c$-marked legs. If $d_\pi(V)=2$, then the column of the matrix $\left(\eta_{\ft_{J^c},\src(\pi)}\circ F_\pi\right)$ correspoding to the unique edge of $\trgt(\pi)$ incident to $\pi(V)$ would be trivial. Since the matrix is invertible, this is impossible and therefore $d_\pi(V)=1$.
        \item[(\textbullet\textbullet)] If $\legval(\pi(V))=1$ and $e_1,e_2\in E(\trgt(\pi))$ denote the two edges incident to $\pi(V)$, then every leg of $\src(\pi)$ incident to $V$ has degree $1$.\\
        We argue by contradiction and assume that there is a leg of $\src(\pi)$ incident to $V$ with degree $2$. Observe that there must be at least one vertex in the fiber $\pi^{-1}(\pi(V))$ that is not expunged after forgetting the $J^c$-marked legs because the matrix $\left(\eta_{\ft_{J^c,\src(\pi)}}\circ F_\pi\right)$ is invertible. If $W\in\pi^{-1}(\pi(V))$ is one of these vertices that are not expunged after forgetting the $J^c$-marked legs, then every leg of $\src(\pi)$ incident to $W$ must have weight $1$. Together with the vanishing of the RH number at $W$, this implies that there are two edges $e^W_1,e^W_2\in E(\src(\pi))$ incident to $W$ with $\pi(e^W_i) = e_i$ and $d_\pi(e^W_i) = d_\pi(W)$ for $i=1,2$. Since the vertex $W$ was arbitrary, this shows that the columns of the matrix $\left(\eta_{\ft_{J^c},\src(\pi)}\circ F_\pi\right)$ corresponding to the edges $e_1$ and $e_2$ are identical. But this is impossible because the matrix is invertible. Therefore, there is no leg of $\src(\pi)$ incident to $V$ with degree $2$.
        \item[(\textbullet\textbullet\textbullet)] If $A,B\in V(\src(\pi))$ are two vertices that are not expunged after forgetting the $J^c$-marked legs, then there cannot be a path between $A$ and $B$ consisting of edges that are expunged after forgetting the $J^c$-marked legs.\\
        This follows from the behavior of forgetting the marking morphisms. 
    \end{enumerate}
    After these previous observations, we turn our attention toward edges. We will show that every edge that is expunged after forgetting the $J^c$-marked legs must have weight $1$. We argue by contradiction and suppose that $a_1\in E(\src(\pi))$ is expunged after forgetting the $J^c$-marked legs with $d_\pi(e)>1$.\\
    Let us set $a_1 = e$ and $\partial a_1=\{W_1,V_1\}$. Because of (\textbullet\textbullet\textbullet) there is a vertex of $\partial a_1$ that is expunged after forgetting the $J^c$-marked legs, say $V_1$. Due to (\textbullet) and (\textbullet\textbullet) we can assume that every leg (if there are any) of $\src(\pi)$ that is incident to $V_1$ has weight $1$. Observe that the vanishing of the RH number at $V_1$ implies that there must exist an edge $a_2\in E(\src(\pi))$ incident to $a_1$ with $d_\pi(a_2)>1$. Let $V_2$ denote the other vertex of $\src(\pi)$ that is incident to $a_2$. Either $V_2$ is expunged after forgetting the $J^c$-marked legs or not. In the former case, clearly $V_2\neq V_1$ and we repeat the argument to produce $a_3$ and $V_3$. Again, either $V_3$ is expunged after forgetting the $J^c$-marked leg or not. In the former situation, we additionally have that $V_3\neq V_2$ and $V_3\neq V_1$ (because forgetting the marking preserves the genus of the graphs). Since there are only finitely many edges and vertices, reiterating this process we produce a sequence of edges $a_1,\dots,a_s\in E(\src(\pi))$ and different vertices $V_1,V_2,\dots,V_s,V_{s+1}\in V(\src(\pi))$ such that 
    \begin{itemize}
        \item All the edges $a_i$ and vertices $V_i$ are expunged after forgetting the $J^c$-marked legs and $d_\pi(a_i)>1$ (for $1\leq i\leq s$).
        \item The vertex $V_{s+1}$ is incident to $a_s$ and is not expunged after forgetting the $J^c$-marked leg.
    \end{itemize}
   Now, (\textbullet\textbullet\textbullet) implies that $W_1$ is expunged after forgetting the $J^c$-marked legs. Reiterating the process above we obtain a sequence of edges $b_1,\dots,b_t\in E(\src(\pi))$ (with $b_1=a_1=e$) and different vertices $W_1,\dots,W_t,W_{t+1}\in V(\src(\pi))$ such that
    \begin{itemize}
        \item All the edges $b_i$ and vertices $W_i$ are expunged after forgetting the $J^c$-marked legs and $d_\pi(b_i)>1$ (for $1\leq i\leq t$).
        \item The vertex $W_{s+1}$ is incident to $b_t$ and is not expunged after forgetting the $J^c$-marked leg.
    \end{itemize}
    If $\{W_1,\dots,W_t\}\cap \{V_1,\dots, V_s\} =\varnothing$, we arrive at a contradiction because of (\textbullet\textbullet\textbullet). Otherwise, we would also arrive at a contradiction. Because if $\{W_1,\dots,W_t\}\cap \{V_1,\dots, V_s\} \neq \varnothing$, then a cycle of $\src(\pi)$ is expunged after forgetting the $J^c$-marked legs, but this is not possible. In conclusion, if $e\in E(\src(\pi))$ is expunged after forgetting the $J^c$-marked legs, then $d_\pi(e)=1$.\\
    To finish the proof of the lemma we must show that a vertex $V\in V(\src(\pi))$ that is expunged after forgetting the $J^c$-marked legs has local degree $1$. If $\legval(\pi(V))=2$, then this readily follows from (\textbullet). If $\legval(\pi(V))<2$, then we have shown that every edge (and leg) of $\src(\pi)$ incident to $V$ has weight $1$. In this case, the vanishing of the RH number at $V$ forces that $d_\pi(V)=1$.
\end{proof}

Before finally proving Lemma \ref{lem: loopandbridge}, we introduce, in the spirit of clarity and conciseness, one last definition. 

\begin{defi}\label{defi: add}
Following the notation of \eqref{eq: big align3}, let $\pi$ denote an object of $\bbAC_{d,h}(\vec{\mu})$ and let $h\in E(\ft_{J^c}(\src(\pi))$. We say that an edge $e\in E(\src(\pi))$ \emph{adds} to the edge $e$ if the $(h,e)$-entry of the matrix $\eta_{\ft_{J^c},\src(\pi)}$ is non-zero.
\end{defi}

For the convenience of the reader, we restate Lemma \ref{lem: loopandbridge}. 
\loopandbridge*

\begin{proof}
    As we have mentioned before, every vertex of $\trgt(\pi)$ is $3$-valent because $\pi$ is of top dimension. We examine all the possibilities for $\legval(\pi(W))$. We first show that $\legval(\pi(W)) = 2$ and $\legval(\pi(W))=0$ are impossible. Then we show that the only possibility is $\legval(\pi(W))=1$ together with the ensuing statements from the lemma. 
    \begin{enumerate}
        \item[(\textopenbullet)] We observe that if $\legval(\pi(W))=2$, then $d_\pi(W)\geq 3$. This is impossible because of Lemma \ref{lem: above nodes}.
        \item[(\textopenbullet\textopenbullet)] Suppose that $\legval(\pi(W))=0$, and let us argue by contradiction. Let $a,b,c\in E(\trgt(\pi))$ denote the three edges that are incident to $\pi(W)$. Let $S$ denote the set of edges of $\src(\pi)$ that are incident to $W$. Recall that Lemma \ref{lem: expungednoglue} shows that every edge of $\src(\pi)$ that is expunged after forgetting the $J^c$-marked legs must have weight $1$.
        \begin{itemize}
            \item If every edge of $\pi^{-1}(a)\cap S$ and every edge of $\pi^{-1}(b)\cap S$ is expunged after forgetting the $J^c$-marked legs, then $d_\pi(W)\geq 3$ and $\pi^{-1}(c)\cap S\geq 3$. This is impossible due to the vanishing of the RH number at $W$.
            \item If every edge of $\pi^{-1}(a)\cap S$ is expunged after forgetting the $J^c$-marked legs, then $\left(\pi^{-1}(b)\cup\pi^{-1}(c)\right)\cap S\geq3$. Again, this is impossible due to the vanishing of the RH number at $W$.   
        \end{itemize}
        This shows that there are edges $e_a\in \pi^{-1}(a)$,  $e_b\in\pi^{-1}(b)$, and $e_c\in \pi^{-1}(c)$ that are not expunged after forgetting the $J^c$-markings and such that $e_a,e_b,e_c\in S$. In fact, $e_a$ (resp. $e_b$ and $e_c$) is the unique edge of $\pi^{-1}(a)\cap S$ (resp. $\pi^{-1}(b)\cap S$ and $\pi^{-1}(c)\cap S$) that is not expunged after forgetting the $J^c$-marked legs. Two of these, say $e_a$ and $e_b$ must {add} to the loop of $\ft_{J^c}(\src(\pi))$ incident to $W$, and the other, in this case $e_c$, {adds} to the other edge. Most importantly, since $e_a$ and $e_b$ {add} to the loop, each of these edges traces a path in $\src(\pi)$ that eventually leads to different edges $h_a$ and $h_b$ (respectively) of $\src(\pi)$. These edges $h_a$ and $h_b$ {add} to the loop after forgetting the $J^c$-marked legs, and $\pi(h_a)$ and $\pi(h_b)$ are different leaves of $\trgt(\pi)$. In this case, the columns of the matrix $\left(\eta_{\ft_{J^c,\src(\pi)}}\circ F_\pi\right)$ corresponding to the leaves $\pi(h_a)$ and $\pi(h_b)$ would be scalar multiples of each other, and the matrix would not be invertible. This is a contradiction, and therefore $\legval(W)=0$ is impossible.
    \end{enumerate}
    From the above, it follows that $\legval(W)=1$ is the only possibility.\\
    For the remaining statements from the lemma, let $e_1$ and $e_2$ denote the edges of $\trgt(\pi)$ that are incident to $\pi(W)$. Let $S\subset E(\src(\pi))$ denote the set of edges that are incident to $W$. Since the vertex $W$ is $3$-valent in $\ft_{J^c}(\src(\pi))$, it follows that there are exactly three edges in $\left(\pi^{-1}(e_1)\cup\pi^{-1}(e_2)\right)\cap S$ that are not expunged after forgetting the $J^c$-marked legs. Let us denote these edges by $a$, $b$ and $c$. Furthermore, the vanishing of the RH number at $W$ implies that $\{a,b,c\}\not\subset\pi^{-1}(e_i)$ for $i=1,2$. In addition, two of these edges, say $a$ and $b$, {add} to the loop of $\ft_{J^c}(\src(\pi))$ incident to $W$, and the other edge, in this case $c$, {adds} to the other edge. The same argument at the end of (\textopenbullet\textopenbullet) shows that $a$ and $b$ have to lie in the same fiber. Furthermore, since these edges {add} to the loop, each of these edges traces a path in $\src(\pi)$ that eventually leads to edges $h_a$ and $h_b$ (respectively) of $\src(\pi)$ such that: these edges {add} to the loop after forgetting the $J^c$-marked legs, and $\pi(h_a)$ and $\pi(h_b)$ are leaves of $\trgt(\pi)$. Once again, the same argument at the end of (\textopenbullet\textopenbullet) shows that it is impossible to have $\pi(h_a)\neq \pi(h_b)$. Therefore, it necessarily holds that $\pi(h_a)=\pi(h_b)$.\\
    We will show that $h_a=a$ and $h_b=b$. Let $p=\pi(h_a)=\pi(h_b)$ and $\partial p = \{P_1,P_2\}$ with $\legval(P_2) = 2$. In addition, we have that $\legval(P_2)<2$ (there is at least an additional edge since $\val_{\ft_{J^c}}(\src(\pi))=3$). It follows from Lemma \ref{lem: above nodes} that the ramification profile of $p$ is $(1^{d})$, and that there is a unique vertex $\hat{P}_1\in \pi^{-1}(P_1)$ with $d_\pi(\hat{P}_1)=2$. Furthermore, $\hat{P}_1$ is incident to both $h_a$ and $h_b$, and we also have that $d_\pi(h_a)=d_\pi(h_b)=1$. Let $P_{2,a}$ denote the other vertex of $h_a$, and $P_{2,b}$ denote the other vertex of $h_b$. It will be first shown that $\legval(P_2)\neq 0$.\\
    For the sake of contradiction, assume that $\legval(P_2)=0$, and let $q$ and $z$ denote the other edges of $\trgt(\pi)$ that are incident to $P_2$. Since $\trgt(\pi)$ is a tree and the edges $h_a$ and $h_b$ {add} to a loop of $\ft_{J^c}(\src(\pi))$, only one of the following is possible:
    \begin{itemize}
        \item Every edge of $\pi^{-1}(q)$ incident to $P_{2,a}$ is expunged after forgetting the $J^c$-marked legs, and the same happens for every edge of this fiber incident to $P_{2,b}$.
        \item Every edge of $\pi^{-1}(z)$ incident to $P_{2,a}$ is expunged after forgetting the $J^c$-marked legs, and the same happens for every edge of this fiber incident to $P_{2,b}$.
    \end{itemize}
    Without loss of generality, we assume the latter and carry on with the argument. In this case there is a unique edge $q_a\in\pi^{-1}(q)$ that is incident to $P_{2,a}$ and a unique edge $q_b\in \pi^{-1}(q)$ that is incident to $P_{2,b}$. Both $q_a$ and $q_b$ must have weight $1$. We also observe that the ramification profile of $p$ implies that every vertex $X\in\pi^{-1}(P_2)$ different from $P_{2,a}$ and $P_{2,b}$ has a unique incident edge $q_X\in\pi^{-1}(q)$ and a unique incident edge $z_X\in\pi^{-1}(z)$. In addition, the vanishing of the RH number at $X$ implies that $d_\pi(q_X) = d_\pi(z_X)=d_\pi(X)$. At this point we have arrived at a contradiction, since the previous conclusions give rise to a non-trivial linear relation among the columns of the matrix $\left(\eta_{\ft_{J^c,\src(\pi)}}\circ F_\pi\right)$. To be much more precise, the column corresponding to the edge $q$ of $\trgt(\pi)$ is the sum of the columns corresponding to the edges $z$ and $p$.\\
    The previous work implies that $\legval(P_2) = 1$. Let $q\in E(\trgt(\pi))$ denote the additional edge incident to $P_2$. We observe that every vertex of $\pi^{-1}(P_2)$ different from $P_{2,a}$ and $P_{2,b}$ does not give rise to a vertex of $\ft_{J^c}(\src(\pi))$. Indeed, such a vertex is either expunged after forgetting the $J^c$-marked legs or incident to a $J$-marked leg (in which case it gets ``absorbed'' by the leg). If $P_{2,a}\neq P_{2,b}$, then the columns of the matrix $\left(\eta_{\ft_{J^c},\src(\pi)} \circ F_\pi\right)$ corresponding to the edges $p$ and $q$ are identical. This is not possible since the matrix is invertible. Therefore, we have the equality $P_{2,a}=P_{2,b}$, which means that $h_a$ and $h_b$ define a cycle, and hence they must equal $a$ and $b$ respectively. Furthermore, we have that $W=P_{2,a}=P_{2,b}$, $e_2=q$, and $e_1$ is the leaf $p$. The vanishing of the RH numbers at all the other vertices of $\pi^{-1}(P_2) = \pi^{-1}(\pi(W))$ implies that $d_\pi(c)=2$, and the ramification profile above $q=e_2$ is $(2,1^{d-2})$. 
    \end{proof}
    
\subsection{Dictionary}

In this section we establish a dictionary between our framework and that of \cite{VargasDraisma} and \cite{VargasThesis}. In the spirit of concreteness, we avoid excessive length and refer to the specific results and terminology, while pointing toward the precise reference. 

In contrast to discrete admissible covers, in \cite{VargasDraisma} the authors consider \emph{DT-morphisms} (Definition 10 in loc. cit.) between a loopless unmarked graph and an unmarked tree, and these are allowed to have vertices of arbitrary positive valency. In our terms, a DT-morphism is simply a harmonic morphism with non-negative RH numbers. It can be shown that any DT-morphism can be obtained by forgetting the marked legs of a discrete admissible cover. The comparison is motivated by illustrating in Figure \ref{fig: vargas draisma example1} an example of \cite{VargasDraisma} (Case 7a of Section 4 in loc. cit.) and in Figure \ref{fig: vargas draisma example2} the same example within our context (namely, with markings).

\begin{figure}[!ht]
\centering
\begin{minipage}{0.5\textwidth}
    \centering
    \begin{tikzpicture}[xscale=0.7]
    
    \draw[line width= 1.5pt] (-3.5,3.6) -- (-3,2.8);
    \draw[line width= 1.5pt] (-4,1.6) -- (-3,2.8);
    \draw[line width= 1.5pt] (-3,2.8) -- (-1.5,2.8);
    \draw[line width= 1.5pt] (-2,1.8) -- (-1.5,2.8);
    \draw[line width= 1.5pt] (-1.5,2.8) -- (0,2.8);
    \draw[line width= 1.5pt] (0,2.8) -- (1.5,2.8);
    \draw[line width= 1.5pt] (1.5,2.8) -- (3,2.8);
    \draw[line width= 1.5pt] (3,2.8) -- (4,3.6);
    \draw[line width= 1.5pt] (3,2.8) -- (3.5,1.6); 
    
    \draw[line width= 1.5pt] (-3.5,3.6) -- (-3,2.4);
    \draw[line width= 1.5pt] (-4,1.6) -- (-3,2.4);
    \draw[line width= 1.5pt] (-3,2.4) -- (-1.5,2.4);
    \draw[line width= 1.5pt] (-2,1.2) -- (-1.5,2.4);
    \draw[line width= 1.5pt] (-1.5,2.4) -- (0,2.2);
    \draw[line width= 1.5pt] (1.5,2.2) -- (3,2.4);
    \draw[line width= 1.5pt] (3,2.4) -- (4,3.6);
    \draw[line width= 1.5pt] (3,2.4) -- (3.5,1.6); 
    
    \draw[line width= 1.5pt] (-3.5,3) -- (-3,2);
    \draw[line width= 1.5pt] (-4,1) -- (-3,2);
    \draw[line width= 1.5pt] (-3,2) -- (-1.5,2);
    \draw[line width= 1.5pt] (-2,1.2) -- (-1.5,2);
    \draw[line width= 1.5pt] (-1.5,2) -- (0,2.2);
    \draw[line width= 4pt] (0,2.2) -- (1.5,2.2) node [midway, above] {$2$};
    \draw[line width= 1.5pt] (1.5,2.2) -- (3,2);
    \draw[line width= 1.5pt] (3,2) -- (4,3);
    \draw[line width= 1.5pt] (3,2) -- (3.5,1);

    \draw[black, fill=black] (-3.5,3.6) circle (3pt);
    \draw[black, fill=black] (-3.5,3) circle (3pt);
    \draw[black, fill=black] (-4,1.6) circle (3pt);
    \draw[black, fill=black] (-4,1) circle (3pt);
    \draw[black, fill=black] (-3,2) circle (3pt);
    \draw[black, fill=black] (-3,2.4) circle (3pt);
    \draw[black, fill=black] (-3,2.8) circle (3pt);
    \draw[black, fill=black] (-2,1.2) circle (3pt);
    \draw[black, fill=black] (-2,1.8) circle (3pt);
    \draw[black, fill=black] (-1.5,2) circle (3pt);
    \draw[black, fill=black] (-1.5,2.4) circle (3pt);
    \draw[black, fill=black] (-1.5,2.8) circle (3pt);
    \draw[black, fill=black] (0,2.2) circle (3pt);
    \draw[black, fill=black] (0,2.8) circle (3pt);
    \draw[black, fill=black] (1.5,2.2) circle (3pt);
    \draw[black, fill=black] (1.5,2.8) circle (3pt);
    \draw[black, fill=black] (3,2) circle (3pt);
    \draw[black, fill=black] (3,2.4) circle (3pt);
    \draw[black, fill=black] (3,2.8) circle (3pt);
    \draw[black, fill=black] (4,3) circle (3pt);
    \draw[black, fill=black] (4,3.6) circle (3pt);
    \draw[black, fill=black] (3.5,1) circle (3pt);
    \draw[black, fill=black] (3.5,1.6) circle (3pt);

    \draw[line width= 1.5pt] (-3.5,-0.7) -- (-3,-1.7);
    \draw[line width= 1.5pt] (-4,-2.7) -- (-3,-1.7);
    \draw[line width= 1.5pt] (-3,-1.7) -- (-1.5,-1.7);
    \draw[line width= 1.5pt] (-2,-2.7) -- (-1.5,-1.7);
    \draw[line width= 1.5pt] (-1.5,-1.7) -- (0,-1.7);
    \draw[line width= 1.5pt] (0,-1.7) -- (1.5,-1.7);
    \draw[line width= 1.5pt] (1.5,-1.7) -- (3,-1.7);
    \draw[line width= 1.5pt] (3,-1.7) -- (4,-0.7);
    \draw[line width= 1.5pt] (3,-1.7) -- (3.5,-2.7);

    \draw[black, fill=black] (-3.5,-0.7) circle (3pt);
    \draw[black, fill=black] (-4,-2.7) circle (3pt);
    \draw[black, fill=black] (-3,-1.7) circle (3pt);
    \draw[black, fill=black] (-2,-2.7) circle (3pt);
    \draw[black, fill=black] (-1.5,-1.7) circle (3pt);
    \draw[black, fill=black] (0,-1.7) circle (3pt);
    \draw[black, fill=black] (1.5,-1.7) circle (3pt);
    \draw[black, fill=black] (3,-1.7) circle (3pt);
    \draw[black, fill=black] (4,-0.7) circle (3pt);
    \draw[black, fill=black] (3.5,-2.7) circle (3pt);    

    \draw (-4.5,4)--(4.5,4) node [midway, above]{Source};
    \draw (-4.5,4.7)--(-4.5,-3.4)--(4.5,-3.4)--(4.5,4.7)--(-4.5,4.7);
    \draw (-4.5,0.5)--(4.5,0.5);
    \draw (-4.5,-0.2)--(4.5,-0.2)node[midway, above]{Target};
    
    \end{tikzpicture}
    \caption{An example from \cite{VargasDraisma}}
    \label{fig: vargas draisma example1}
\end{minipage}%
\begin{minipage}{0.5\textwidth}
    \centering
    \begin{tikzpicture}[xscale=0.8]
    \definecolor{urobilin}{rgb}{0.4, 0.3, 0.05}
    \definecolor{cadmiumgreen}{rgb}{0.0, 0.6, 0.24}
    
    \draw[red, dashed, line width= 2pt] (-3.5,3.6) -- (-2.5,3.6) node [pos=1.1] {$2$};
    \draw[blue, dashed, line width= 2pt] (-3.5,3.6) -- (-4.5,3.6) node [pos=1.1] {$2$};
    \draw[red, dashed, line width= 2pt] (-3.5,3) -- (-2.5,3);
    \draw[blue, dashed, line width= 2pt] (-3.5,3) -- (-4.5,3);

    \draw[cadmiumgreen, dashed, line width= 2pt] (4,3.6) -- (5,3.6) node [pos=1.1] {$2$};
    \draw[cyan, dashed, line width= 2pt] (4,3.6) -- (3,3.6) node [pos=1.1] {$2$};
    \draw[cadmiumgreen, dashed, line width= 2pt] (4,3) -- (5,3);
    \draw[cyan, dashed, line width= 2pt] (4,3) -- (3,3);

    \draw[line width= 1.5pt] (-3.5,3.6) -- (-3,2.8);
    \draw[line width= 1.5pt] (-4,1.6) -- (-3,2.8);
    \draw[line width= 1.5pt] (-3,2.8) -- (-1.5,2.8);
    \draw[line width= 1.5pt] (-2,1.8) -- (-1.5,2.8);
    \draw[line width= 1.5pt] (-1.5,2.8) -- (0,2.8);
    \draw[line width= 1.5pt] (0,2.8) -- (1.5,2.8);
    \draw[line width= 1.5pt] (1.5,2.8) -- (3,2.8);
    \draw[line width= 1.5pt] (3,2.8) -- (4,3.6);
    \draw[line width= 1.5pt] (3,2.8) -- (3.5,1.6); 
    
    \draw[line width= 1.5pt] (-3.5,3.6) -- (-3,2.4);
    \draw[line width= 1.5pt] (-4,1.6) -- (-3,2.4);
    \draw[line width= 1.5pt] (-3,2.4) -- (-1.5,2.4);
    \draw[line width= 1.5pt] (-2,1.2) -- (-1.5,2.4);
    \draw[line width= 1.5pt] (-1.5,2.4) -- (0,2.2);
    \draw[line width= 1.5pt] (1.5,2.2) -- (3,2.4);
    \draw[line width= 1.5pt] (3,2.4) -- (4,3.6);
    \draw[line width= 1.5pt] (3,2.4) -- (3.5,1.6); 
    
    \draw[line width= 1.5pt] (-3.5,3) -- (-3,2);
    \draw[line width= 1.5pt] (-4,1) -- (-3,2);
    \draw[line width= 1.5pt] (-3,2) -- (-1.5,2);
    \draw[line width= 1.5pt] (-2,1.2) -- (-1.5,2);
    \draw[line width= 1.5pt] (-1.5,2) -- (0,2.2);
    \draw[line width= 4pt] (0,2.2) -- (1.5,2.2) node [midway, above] {$2$};
    \draw[line width= 1.5pt] (1.5,2.2) -- (3,2);
    \draw[line width= 1.5pt] (3,2) -- (4,3);
    \draw[line width= 1.5pt] (3,2) -- (3.5,1);   

    \draw[magenta, dashed, line width= 2pt] (-4,1.6) -- (-4.5,2.1) node [pos=1.3] {$2$};
    \draw[gray, dashed, line width= 2pt] (-4,1.6) -- (-3.5,1.1) node [pos=1.3] {$2$};
    \draw[magenta, dashed, line width= 2pt] (-4,1) -- (-4.5,1.5);
    \draw[gray, dashed, line width= 2pt] (-4,1) -- (-3.5,0.5);
    
    \draw[brown, dashed, line width= 2pt] (-2,1.8) -- (-3,1.8);
    \draw[violet, dashed, line width= 2pt] (-2,1.8) -- (-1,1.8);
    \draw[brown, dashed, line width= 2pt] (-2,1.2) -- (-3,1.2) node [midway,below] {$2$};
    \draw[violet, dashed, line width= 2pt] (-2,1.2) -- (-1,1.2) node [midway,below] {$2$};
    
    \draw[olive, dashed, line width= 2pt] (0,2.8) -- (0.5,1.8);
    \draw[olive, dashed, line width= 2pt] (0,2.2) -- (0.5,1.2) node [pos=1.2] {$2$};
    
    \draw[orange, dashed, line width= 2pt] (1.5,2.8) -- (2,1.8);
    \draw[orange, dashed, line width= 2pt] (1.5,2.2) -- (2,1.2) node [pos=1.2] {$2$};
    
    \draw[purple, dashed, line width= 2pt] (3.5,1) -- (3,0.5);
    \draw[urobilin, dashed, line width= 2pt] (3.5,1) -- (4,1.5);
    \draw[purple, dashed, line width= 2pt] (3.5,1.6) -- (3,1.1) node [pos=1.3] {$2$};
    \draw[urobilin, dashed, line width= 2pt] (3.5,1.6) -- (4,2.1) node [pos=1.3] {$2$};

    \draw[black, fill=black] (-3.5,3.6) circle (3pt);
    \draw[black, fill=black] (-3.5,3) circle (3pt);
    \draw[black, fill=black] (-4,1.6) circle (3pt);
    \draw[black, fill=black] (-4,1) circle (3pt);
    \draw[black, fill=black] (-3,2) circle (3pt);
    \draw[black, fill=black] (-3,2.4) circle (3pt);
    \draw[black, fill=black] (-3,2.8) circle (3pt);
    \draw[black, fill=black] (-2,1.2) circle (3pt);
    \draw[black, fill=black] (-2,1.8) circle (3pt);
    \draw[black, fill=black] (-1.5,2) circle (3pt);
    \draw[black, fill=black] (-1.5,2.4) circle (3pt);
    \draw[black, fill=black] (-1.5,2.8) circle (3pt);
    \draw[black, fill=black] (0,2.2) circle (3pt);
    \draw[black, fill=black] (0,2.8) circle (3pt);
    \draw[black, fill=black] (1.5,2.2) circle (3pt);
    \draw[black, fill=black] (1.5,2.8) circle (3pt);
    \draw[black, fill=black] (3,2) circle (3pt);
    \draw[black, fill=black] (3,2.4) circle (3pt);
    \draw[black, fill=black] (3,2.8) circle (3pt);
    \draw[black, fill=black] (4,3) circle (3pt);
    \draw[black, fill=black] (4,3.6) circle (3pt);
    \draw[black, fill=black] (3.5,1) circle (3pt);
    \draw[black, fill=black] (3.5,1.6) circle (3pt);

    \draw[line width= 1.5pt] (-3.5,-0.7) -- (-3,-1.7);
    \draw[line width= 1.5pt] (-4,-2.7) -- (-3,-1.7);
    \draw[line width= 1.5pt] (-3,-1.7) -- (-1.5,-1.7);
    \draw[line width= 1.5pt] (-2,-2.7) -- (-1.5,-1.7);
    \draw[line width= 1.5pt] (-1.5,-1.7) -- (0,-1.7);
    \draw[line width= 1.5pt] (0,-1.7) -- (1.5,-1.7);
    \draw[line width= 1.5pt] (1.5,-1.7) -- (3,-1.7);
    \draw[line width= 1.5pt] (3,-1.7) -- (4,-0.7);
    \draw[line width= 1.5pt] (3,-1.7) -- (3.5,-2.7);

    \draw[red, dashed, line width= 2pt] (-3.5,-0.7) -- (-2.5,-0.7);
    \draw[blue, dashed, line width= 2pt] (-3.5,-0.7) -- (-4.5,-0.7);
    \draw[magenta, dashed, line width= 2pt] (-4,-2.7) -- (-4.5,-2.2);
    \draw[gray, dashed, line width= 2pt] (-4,-2.7) -- (-3.5,-3.2);
    \draw[brown, dashed, line width= 2pt] (-2,-2.7) -- (-3,-2.7);
    \draw[violet, dashed, line width= 2pt] (-2,-2.7) -- (-1,-2.7);
    \draw[olive, dashed, line width= 2pt] (0,-1.7) -- (0.5,-2.7);
    \draw[orange, dashed, line width= 2pt] (1.5,-1.7) -- (2,-2.7);
    \draw[purple, dashed, line width= 2pt] (3.5,-2.7) -- (3,-3.2);
    \draw[urobilin, dashed, line width= 2pt] (3.5,-2.7) -- (4,-2.2);
    \draw[cadmiumgreen, dashed, line width= 2pt] (4,-0.7) -- (5,-0.7);
    \draw[cyan, dashed, line width= 2pt] (4,-0.7) -- (3,-0.7);

    \draw[black, fill=black] (-3.5,-0.7) circle (3pt);
    \draw[black, fill=black] (-4,-2.7) circle (3pt);
    \draw[black, fill=black] (-3,-1.7) circle (3pt);
    \draw[black, fill=black] (-2,-2.7) circle (3pt);
    \draw[black, fill=black] (-1.5,-1.7) circle (3pt);
    \draw[black, fill=black] (0,-1.7) circle (3pt);
    \draw[black, fill=black] (1.5,-1.7) circle (3pt);
    \draw[black, fill=black] (3,-1.7) circle (3pt);
    \draw[black, fill=black] (4,-0.7) circle (3pt);
    \draw[black, fill=black] (3.5,-2.7) circle (3pt);

    \draw (-5,4)--(5.5,4) node [midway, above]{Source};
    \draw (-5,4.7)--(-5,-3.4)--(5.5,-3.4)--(5.5,4.7)--(-5,4.7);
    \draw (-5,0.3)--(5.5,0.3);
    \draw (-5,-0.4)--(5.5,-0.4) node [midway,above]{Target};
    \end{tikzpicture}
    \caption{Same example as Figure \ref{fig: vargas draisma example1}, but with our markings}
    \label{fig: vargas draisma example2}
    \end{minipage}
\end{figure}
In loc. cit. the authors restrict their attention to \emph{full-dimensional combinatorial types of DT-morphisms} (Definitions 21 and 34 of loc. cit.), and to approach this class of morphisms they are compelled to introduce (Section 3.3 of loc. cit.) several combinatorial properties (which are of course attested by every morphism of this class). Among these properties lies \emph{change-minimality} (Definitions 47 and 48 of loc. cit.), which gives rise to their dimension formula (Proposition 29 of loc. cit.) for a DT-morphism $\phi\colon G\to T$. In addition, this condition also forces the target tree $T$ to be at most $3$-valent, and furthermore:
\begin{enumerate}
    \item[(A)] If $V\in V(T)$ is $1$-valent, then $\sum_{W\in\phi^{-1}(V)}r_\phi(W) = 2$.
    \item[(B)] If $V\in V(T)$ is $2$-valent, then $\sum_{W\in\phi^{-1}(V)}r_\phi(W) = 1$. In particular, there is a unique $W\in\phi^{-1}(V)$ with $r_\phi(W) = 1$, and all other have vanishing RH number.
    \item[(C)] If $V\in V(T)$ is $3$-valent, then $\sum_{W\in\phi^{-1}(V)}r_\phi(W) = 0$. In particular, the RH number of every $W\in \phi^{-1}(W)$ vanishes.
\end{enumerate}

In the proof of the following proposition we describe a process of obtaining a discrete admissible cover from a DT-morphism. In Figure \ref{fig: vargas draisma example1} we have depicted a DT-morphism from \cite{VargasDraisma}, and in Figure \ref{fig: vargas draisma example2} we have depicted the discrete admissible cover produced out of this DT-morphism through the process described in the proof. 

\begin{prop}\label{prop: dictionary}
    Suppose $T$ is a tree, whose vertices have valency at most $3$. Any change-minimal degree-$d$ DT-morphism $\phi:G\to T$ can be obtained by forgetting the marked legs from an admissible cover $(G^\prime,T^\prime,\pi_\phi)$ in $\bbAC_{d,0}(\vec{\mu})$ where
    \begin{itemize}
        \item $m := 3+\#E(T)$.
        \item $\vec{\mu} := ((2,1^{d-2})^m)$.
    \end{itemize}
\end{prop}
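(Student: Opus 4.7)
The plan is to construct $(G',T',\pi_\phi)$ by adjoining marked legs to both $T$ and $G$ and extending $\phi$ across them, selecting weights so that the RH numbers of the extension vanish identically and the ramification profile above each new leg of $T'$ is $(2,1^{d-2})$. I first augment $T$ to $T'$ by attaching $3-\val(V)$ marked legs at every vertex $V\in V(T)$, which is admissible by the hypothesis $\val(V)\leq 3$ and makes $T'$ everywhere $3$-valent. The identities $\sum_{V}\val(V)=2\#E(T)$ and $\#V(T)=\#E(T)+1$ force the total number of added legs to equal $3\#V(T)-2\#E(T)=\#E(T)+3=m$, and an arbitrary ordering of these new legs furnishes the marking.

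For the source, at each new leg $\ell$ of $T'$ rooted at $V$ and each $W\in\phi^{-1}(V)$, I attach new marked legs to $W$ mapping to $\ell$ whose weights sum to $d_\phi(W)$ (ensuring harmonicity). The total number $K_W$ of new legs attached at $W$, summed over all new legs at $V$, is chosen to be $K_W=(3-\val(V))d_\phi(W)-r_\phi(W)$; a direct expansion of the RH formula against $\val_{T'}(V)=3$ and $\val_{G'}(W)=\val_G(W)+K_W$ then yields $r_{\pi_\phi}(W)=0$. Of these $K_W$ legs, exactly $r_\phi(W)$ are given weight $2$ (the rest weight $1$), with at most one weight-$2$ leg per new leg $\ell$ of $T'$ at $V$.

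Change-minimality, in the form of the generalization of properties (A)--(C) established in \cite{VargasDraisma}, asserts precisely that $\sum_{W\in\phi^{-1}(V)}r_\phi(W)=3-\val(V)$, so the total number of weight-$2$ preimages to distribute above $V$ matches the number of new legs $3-\val(V)$ to be created at $V$. The distribution is then a finite combinatorial choice: either collect several weight-$2$ preimages at a single $W$ with large $r_\phi(W)$, or spread them across distinct $W$'s with $r_\phi(W)=1$, assigning each selected $W$ to a distinct new leg at $V$. In every case each new leg at $V$ receives exactly one weight-$2$ preimage, and tallying the remaining weight-$1$ legs shows the ramification profile above each new leg of $T'$ is $(2,1^{d-2})$.

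It remains to verify that $(G',T',\pi_\phi)$ is an object of $\bbAC_{d,0}(\vec{\mu})$ recovering $\phi$ after forgetting the added markings. Harmonicity and the vanishing of all RH numbers hold by construction, genus and connectivity are preserved by adjoining legs, and the total leg counts match the global Riemann--Hurwitz identity. Every vertex of $G'$ has valency at least $3$ because $\val_{G'}(W)=\val_G(W)+K_W$ combined with $d_\phi(W)\geq 1$ and $r_\phi(W)\geq 0$ forces this in a direct case-by-case check indexed by $\val(V)\in\{0,1,2,3\}$. Forgetting the newly added marked legs of $T'$ and $G'$ contracts exactly the new flags and recovers $(G,T,\phi)$. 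The main obstacle is the combinatorial consistency of the weight-$2$ distribution in the third paragraph, which is where change-minimality enters in an essential way.
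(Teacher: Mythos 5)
Your construction is essentially the same as the paper's: augment $T$ to a $3$-valent tree $T'$ by grafting $3-\val(V)$ legs at each vertex (the paper phrases this case-by-case for $1$- and $2$-valent vertices), then graft legs on the source so that each new target leg acquires ramification profile $(2,1^{d-2})$, with change-minimality in the form of properties (A)--(C) (i.e.\ $\sum_{W\in\phi^{-1}(V)}r_\phi(W)=3-\val(V)$) supplying exactly the right number of weight-$2$ legs to distribute. Your uniform formula $K_W=(3-\val(V))d_\phi(W)-r_\phi(W)$ is just a repackaging of the paper's case analysis, and the argument is correct.
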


\begin{proof}
        We first describe how to obtain $T^\prime$ out of $T$, then we explain how to obtain $G^\prime$ out of $G$, and finally we construct $\pi_\phi$. For the first part, we let $T^\prime$ denote a tree obtained by:
        \begin{itemize}
            \item Grafting two legs at every $1$-valent vertex of $T$.
            \item Grafting one leg at every $2$-valent vertex of $T$.
        \end{itemize}
        Of course $T^\prime$ has genus $0$, and since $T$ was at most $3$-valent, it follows that every vertex of $T^\prime$ is $3$-valent. To show that $T^\prime$ is actually an object of $\bbG_{0,m}$, we first observe that 
        \begin{equation*}
            \#V(T) = \#\val_1(T)+\#\val_2(T)+\#\val_3(T).
        \end{equation*}
        The fact that $T$ is a tree implies that
        \begin{equation}
        \#E(T)+1         =\#\val_1(T)+\#\val_2(T)+\#\val_3(T).\label{eq: 4}
        \end{equation}
        Additionally, we remark that
        \begin{equation}
            2\cdot\#E(T) =\sum_{v\in V(T)} \val(v) = \#\val_1(T)+2\cdot\#\val_2(T)+3\cdot\#\val_3(T),\label{eq: 5}
        \end{equation}
        and then multiplying \eqref{eq: 4} by $3$ and substracting \eqref{eq: 5} from the result we obtain that
    \begin{equation}
        \#E(T)+3 = 2\cdot\#\val_1(T)+\#\val_2(T).\label{eq: rhs}
    \end{equation}
    We observe that the right-hand side of \eqref{eq: rhs} coincides, by construction, with $\#L(T^\prime)$. As $m:=\#E(T)+3$, we have thus shown that $T^\prime$ is actually an object of $\bbG_{0,m}$.\\
    We now describe how to produce $G^\prime$ out of $G$. The idea here is to graft the legs lying above the newly grafted legs of $T^\prime$. Before we describe the process, it is worth remarking that by definition of the objects of $\bbAC_{d,0}(\vec{\mu})$, the legs lying above $\ell_{i}(T^\prime)$ (for $1\leq i\leq m$) must be $\ell_{1+(i-1)\cdot(d-1)},\ell_{1+(i-1)\cdot(d-1)+1}\dots,\ell_{1+(i-1)\cdot(d-1)-1}$. Additionally, the leg $\ell_{1+(i-1)\cdot(d-1)}$ must have weight $2$ and all the others must have weight $1$. We proceed as follows:
    \begin{itemize}
        \item Let $V\in V(T^\prime)$ be such that $\legval(V) =1$, and let $\ell_i(T^\prime)$ denote the leg incident to $V$. From (B) it follows that there is a unique $W\in\phi^{-1}(V)$ with $r_\phi(W)=1$. At every vertex $W^\prime \in \phi^{-1}(V)$ with $W^\prime\neq W$ we graft $d_\phi(W^\prime)$ many legs to $W^\prime$ from the set \begin{equation}
            \{\ell_{1+(i-1)\cdot(d-1)+1}\dots,\ell_{1+i\cdot(d-1)-1}\}.\label{eq: set of legs }
        \end{equation}
        This forces the vanishing of RH number (now with the legs) at these vertices. At the vertex $W$ we graft the leg $\ell_{1+i\cdot(d-1)}$, and graft the remaining $d_\phi(W)-2$ legs from \eqref{eq: set of legs }. As before, this also forces the vanishing of the RH number (now with the legs) at these vertices.
        \item Let $V\in V(T^\prime)$ be such that $\legval(V)=2$, and let $\ell_i(T^\prime)$ and $\ell_j(T^\prime)$ denote the legs incident to $V$. Because of (A) there are only two possiblities:
        \begin{itemize}
            \item There is a unique vertex $W\in \phi^{-1}(V)$ with $r_\phi(W)=2$, and the RH number vanishes at all the other vertices in this fiber.\\
            In this case, we graft the two weight $2$ legs $\ell_{1+(i-1)\cdot(d-1)}$ and $\ell_{1+(j-1)\cdot (d-1)}$ to this vertex. If necessary, we graft weight $1$ edges until the weights in both directions sum to $d_\phi(W)$. This forces the vanishing of the RH number at the vertex $W$. At all the other vertices $W^\prime\in\phi^{-1}(V)$ with $W^\prime\neq W$, we just graft $d_\phi(W^\prime)$ many weight $1$ edges from each of the sets
            \begin{equation*}
              \{\ell_{1+(i-1)\cdot(d-1)+1}\dots,\ell_{1+i\cdot(d-1)-1}\}, \textnormal{ and }\{\ell_{1+(j-1)\cdot(d-1)+1}\dots,\ell_{1+j\cdot(d-1)-1}\}.
            \end{equation*}
            A routine check shows that this causes the vanishing of the RH number at all these vertices.
            
            \item There are two vertices $W_1,W_2\in\phi^{-1}(V)$ with $r_\phi(W_1)=r_\phi(W_2)=1$, and the RH number vanishes at all the other vertices in this fiber. The vertices $W_1$ and $W_2$ are necessarily $1$-valent, and therefore $d_\phi(W_1)=d_\phi(W_2)=2$. We graft to $W_1$ the weight $2$ leg $\ell_{1+(i-1)\cdot(d-1)}$ and two weight $1$ legs from 
            \begin{equation*}
               \{\ell_{1+(j-1)\cdot(d-1)+1}\dots,\ell_{1+j\cdot(d-1)-1}\}.
            \end{equation*}
            This implies the vanishing of the RH number at $W_1$. Similarly, we graft to $W_2$ the weight $2$ leg $\ell_{1+(j-1)\cdot (d-1)}$ and two weight $1$ legs from 
            \begin{equation*}
               \{\ell_{1+(i-1)\cdot(d-1)+1}\dots,\ell_{1+i\cdot(d-1)-1}\}.
            \end{equation*}
            As before, this implies the vanishing of the RH number at $W_2$. We observe that every vertex $W\in\phi^{-1}(V)$ with $W\neq W_1$ and $W\neq W_2$ must satisfy $d_\phi(W)=1$. Therefore, we graft to every one of these vertices one weight $1$ edge (of the remaining) from each of the sets  
            \begin{equation*}
              \{\ell_{1+(i-1)\cdot(d-1)+1}\dots,\ell_{1+i\cdot(d-1)-1}\}, \textnormal{ and }\{\ell_{1+(j-1)\cdot(d-1)+1}\dots,\ell_{1+j\cdot(d-1)-1}\}.
            \end{equation*}
            This forces the vanishing of the RH number at the vertices $W\in\phi^{-1}(V)$ with $W\neq W_1$ and $W\neq W_2$.
        \end{itemize}
    \end{itemize}
    The previous concoction gives rise to a discrete graph $G^\prime$ which is forcibly $m\cdot (d-1)$-marked. Moreover, we obtain a harmonic morphism $\pi_\phi\colon G^\prime\to T^\prime$ by just using $\phi$ and specifying for each $0\leq i\leq m-1$
    \begin{equation*}
        \pi_\phi\left(\left\{ \ell_{1+i\cdot(d-1)},\dots,\ell_{1+i\cdot(d-1)+(d-2)}\right\}\right) :=\ell_{i+1}(T^\prime).
    \end{equation*}
    To finalize, we claim that this is, in fact, an admissible cover. We have already remarked the vanishing at the RH numbers at vertices of $G^\prime$ that lie over vertices of $T^\prime$ arising from $1$- or $2$-valent vertices of $T$. Since we are not modifying the behavior of $\phi$ at vertices of $G^\prime$ lying over vertices of $T^\prime$ that arise from $3$-valent vertices of $T$, the vanishing of the RH numbers readily follows from (C).
\end{proof}
\begin{remark}
    The previous argument can be carefully extended to arbitrary DT-morphisms. The change-minimality condition only simplified the execution of the argument, but was by no means fundamental to the whole procedure. The underlying idea is clear from an algebrogeometric perspective: A positive RH number represents the number of marked legs with simple ramification (weight $2$) that have to be grafted. In addition, we observe that grafting legs to vertices of the target and modifying the corresponding vertices of the source by grafting weight $1$ legs according to the local degrees preserves the harmonicity of the morphism as well as the RH numbers. 
\end{remark}

Following Theorem \ref{eq: catalan many general}, we can recover Theorem 2 of \cite{VargasDraisma} through Proposition \ref{prop: dictionary} by means of the following instances:
    \begin{itemize}
        \item Let $g$ be even, $r=0$, $d=\frac{g}{2}+1$, and $J=\varnothing$.
        \item Let $g$ be odd, $r=1$, $d=\frac{g+1}{2}$, and $J=\{1\}$.
    \end{itemize}
In both cases, Theorem \ref{eq: catalan many general} shows that any genus-$g$ tropical curve has a tropical modification that appears as the source of a degree-$d$ DT-morphism (in the second case we rely on the surjective of the forgetting the marking morphism between the realizations of the corresponding moduli spaces). Following Corollary 32 of \cite{VargasDraisma}, this is sufficient for Theorem 2 of loc. cit.. The second statement of the following proposition is the same enumerative result as that of Theorem 13.20 of \cite{VargasThesis}.
\begin{prop}
    Suppose that $g$ is even and $d=\frac{g}{2}+1$. Let $\phi\colon G\to T$ be a change-minimal full-rank DT-morphism and let $(G^\prime,T^\prime,\pi_\phi)$ be an object of $\bbAC_{d,h}(\vec{\mu})$ that gives $\phi$ after forgetting the marked legs as in Proposition \ref{prop: dictionary}. For any object $(T_s,T_t,\pi_\phi)$ of $\bbEAC_{d,h}^\varnothing(\vec{\mu})$, we have that 
    \begin{equation}
        \mt(T_s,T_t,\pi_\phi) = \textnormal{m}(\phi),\label{eq: VDmultiplicity}
    \end{equation}
    where $\textnormal{m}(\phi)$ is the Vargas-Draisma multiplicity of the DT-morphism $\phi$ (Definition 13.18 of \cite{VargasThesis}). In particular, for a generic genus-$g$ tropical curve $\Gamma$, there are $C_{\frac{g}{2}}$ many degree-$d$ $\varnothing$-marked discrete admissible covers (counted with multiplicity \eqref{eq: VDmultiplicity}) that have a tropical modification of $\Gamma$ as a source.
\end{prop}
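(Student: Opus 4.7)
The plan is to reduce everything to the equality of multiplicities \eqref{eq: VDmultiplicity}, after which the enumerative statement follows immediately from Theorem \ref{thm: generic} specialized to $r=0$, $J=\varnothing$. Concretely, Theorem \ref{thm: generic} applied in this range says that a generic genus-$g$ tropical curve $\Gamma$ has exactly $C_{g/2}$ discrete admissible covers (with tropical modification of $\Gamma$ as source) when counted with the multiplicity \eqref{eq: multiplicity}. Once \eqref{eq: VDmultiplicity} is established, the same enumeration holds for the Vargas-Draisma multiplicity applied to the underlying DT-morphisms $\phi$, which is exactly the second assertion. So the entire content is the comparison of multiplicities.

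To prove \eqref{eq: VDmultiplicity}, I would first unpack both sides. The numerator of our multiplicity \eqref{eq: multiplicity}, $\varpi^{\varnothing}_{d,0,\vec{\mu}}(\pi_\phi)\cdot|\det(\eta_{\ft_{J^c},\src(\pi_\phi)}\circ F_{\pi_\phi})|$, breaks by \eqref{eq: standard weight} into: an edge-weight product $\prod_{e\in E(G')}d_{\pi_\phi}(e)$, a target-edge lcm product $\prod_{h\in E(T')}\mathrm{lcm}(\ldots)$ in the denominator, a product of local Hurwitz numbers $H(V)$, a product of combinatorial factors $\mathrm{CF}(V)$, and the lattice index coming from $\eta_{\ft_{\varnothing^c}}\circ F_{\pi_\phi}$ (the latter is the same data used to define the index $\mathrm{ind}(\phi)$ in Vargas-Draisma's multiplicity, up to the contribution of the added weight-$2$ legs). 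The Vargas-Draisma multiplicity $\mathrm{m}(\phi)$ (Definition 13.18 of \cite{VargasThesis}) packages precisely the same ingredients but viewed through the DT-morphism $\phi\colon G\to T$. The first step is to identify, vertex by vertex and edge by edge, each of these combinatorial factors on the two sides.

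The next step is the key bookkeeping, and I expect it to be the main obstacle. The construction of Proposition \ref{prop: dictionary} introduces artificial marked legs that do not exist in $\phi$, and these legs contribute in two places: through the combinatorial factors $\mathrm{CF}(V)$ at vertices with several parallel weight-$1$ legs mapping to the same target leg, and through the denominators $\HS(\pi_\phi)\cdot \VS(\pi_\phi)$. The claim is that these two contributions exactly cancel against one another in the same way that the proof of Theorem \ref{thm: generic} removes the factors $(m-r)!$ and $((d-2)!)^m$. More precisely, $\VS(\pi_\phi)$ is the size of the stabilizer of $\pi_\phi$ under $(S_{d-2})^m$, which acts by permuting weight-$1$ legs in each fiber over a marked leg of $T'$; a direct computation shows that this stabilizer is exactly $\prod_{V\in\pi_\phi^{-1}(L(T'))}\mathrm{CF}_L(V)$, where $\mathrm{CF}_L(V)$ is the part of $\mathrm{CF}(V)$ coming from weight-$1$ legs lying over a common target leg. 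Similarly, $\HS(\pi_\phi)$ measures the $S_m$-symmetries that leave the marking of $T'$ invariant, and this cancels the contribution from grafted leg orderings. Once these cancellations are made explicit, only the data intrinsic to $\phi$ survives.

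After these cancellations, the remaining ingredients on our side are exactly: edge weights in $G$ (the weight-$2$ legs have pre-images of weight $2$ already accounted for in $F_{\pi_\phi}$), lcm-factors over edges of $T$, local Hurwitz numbers at vertices of $G$ (where for vertices with $r_\phi(V)\in\{1,2\}$ the appropriate local Hurwitz number is the one appearing in $\mathrm{m}(\phi)$, and for $r_\phi(V)=0$ both sides collapse to $1$ as in case (II) of the proof of Theorem \ref{thm: catalan many general}), and the lattice index of $\eta_{\ft_{\varnothing^c}}\circ F_{\pi_\phi}$ which coincides with the index term in $\mathrm{m}(\phi)$. Matching these term by term recovers $\mathrm{m}(\phi)$. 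With \eqref{eq: VDmultiplicity} proved, the enumerative ``in particular'' statement is an immediate corollary of Theorem \ref{thm: generic} together with Proposition \ref{prop: dictionary}, since every top-dimensional cover in the fiber of a generic $\Gamma$ arises as some $\pi_\phi$ and the two multiplicities agree.
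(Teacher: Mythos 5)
Your overall skeleton matches the paper's: reduce everything to the identity $\mt(T_s,T_t,\pi_\phi)=\mathrm{m}(\phi)$, derive the enumerative claim from Theorem \ref{thm: generic} with $r=0$, and prove the identity by cancelling the contributions of the artificial marked legs against the stabilizer factors $\VS(\pi_\phi)$ and $\HS(\pi_\phi)$ and then matching what survives. Your identification of $\VS(\pi_\phi)$ with the leg part of the combinatorial factors is also correct and is used implicitly throughout the paper's argument. However, the proposal has a genuine gap exactly where the paper does essentially all of its work.

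First, the comparison needs the structural form of the Vargas--Draisma multiplicity, $\mathrm{m}(\phi)=D_\phi\cdot|\det(\hat A_\phi)|$ (Lemma 13.23 of \cite{VargasThesis}), where $\hat A_\phi$ is the \emph{reduced} edge-length matrix, whose leaf columns are the columns of $A_\phi$ divided by $2$. Your statement that the lattice index of $\eta_{\ft_{\varnothing^c}}\circ F_{\pi_\phi}$ ``coincides with the index term in $\mathrm{m}(\phi)$'' glosses over this normalization: the product of $\left|\det(\eta_{\ft_{J^c},\src(\pi)}\circ F_\pi)\right|$ with the lcm-denominator of \eqref{eq: standard weight} gives $|\det(A_\phi)|$, not $|\det(\hat A_\phi)|$, and the discrepancy $2^N$ ($N$ the number of vertices of $T^\prime$ with leg valency $2$) is absorbed precisely by $\HS(\pi_\phi)=2^N$ --- not by ``grafted leg orderings'' as you suggest. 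Second, and more seriously, after these reductions one is left with the identity
\begin{equation*}
    \frac{\left(\prod_{e\in E(G_{\pi_\phi})}d_\pi(e)\right)\cdot\left(\prod_{V} H(V)\cdot \mathrm{CF}(V)\right)}{\VS(\pi_\phi)} = D_\phi,
\end{equation*}
which you assert follows by ``matching term by term'' but which is not a formal cancellation: the local Hurwitz numbers are genuinely fractional in several configurations ($H(V)=\tfrac1D$ at vertices over nodes and over $2$-leg vertices in the ``unramified'' configuration, $H(V)=\tfrac12$ when $i=j$ or at $4$-valent vertices over nodes), and showing that these fractions, the edge-weight products along paths of $G_{\pi_\phi}$ that contract to single edges of $\ft_{J^c}G_{\pi_\phi}$, and the residual $\mathrm{CF}$ factors assemble into exactly the integers $d_h$ whose product is $D_\phi$ requires (i) an explicit computation of each local Hurwitz number (e.g.\ the case of three partitions $(i,1^{D-i})$, $(j,1^{D-j})$, $(k,1^{D-k})$ with $i+j+k=2D+1$, where one must actually count the permutations to get $H(V)=1$), and (ii) a path-by-path analysis of the edges of $\ft_{J^c}G_{\pi_\phi}$ against Lemma 13.24 of \cite{VargasThesis}, including a separate treatment of the extremal vertices of those paths using the dangling-no-glue property of change-minimal morphisms. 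None of this is routine bookkeeping, and your proposal does not indicate how it would be carried out.
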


\begin{proof}
    The second statement of the proposition is a consequence of Theorem \ref{thm: generic} and the first statement. Hence, we focus on the first statement of the proposition. It is shown in Lemma 13.23 of \cite{VargasThesis} that $\textnormal{m}(\phi) = D_\phi \left|\det(\hat{A}_\phi)\right|$, where $\hat{A}_\phi$ is the reduced edge-length matrix and the $D_\phi$ is the product of all the smallest positive integers $d_h$ that make the $h$-row of $\hat{A}_\phi$ integral for $h\in E(\ft_{J^c}(G_{\pi_\phi})$. The reduced edge-length matrix is just the edge-length matrix $A_\phi$ (Section 2.4 of \cite{VargasDraisma}) with the columns corresponding to leaves of $T^\prime$ divided by $2$. In other words, $\left|\det(\hat{A}_\phi)\right|= \frac{1}{2^N}\left|\det({A}_\phi)\right|$, where $N$ is the number of vertices of $T^\prime$ with leg valency $2$. Observe that the product of $\left|\det\left(\eta_{\ft_{J^c},\src(\pi)}\circ F_\pi\right)\right|$ with the denominator of $\varpi^J_{d,0,\vec{\mu}}(\pi_\phi)$ from \eqref{eq: standard weight}, give precisely $\left|\det\left(A_\phi\right)\right|$. More interestingly, $\HS(\pi_\phi) = 2^N$, so that if $G_{\pi_\phi}$ denotes $\src\left(\pi_\phi\right)$, then we just have to show that
    \begin{equation}
        \frac{\left(\prod_{e\in E(G_{\pi_\phi})}d_\pi(e)\right)\cdot\left(\prod_{V\in V(G_{\pi_\phi})} H(V)\cdot \mathrm{CF}(V)\right)}{\VS(\pi_\phi)} = D_\phi.\label{eq: reduction}
    \end{equation}
    We first seek to understand the local Hurwitz numbers, we follow an exhaustive classification of all the local possibilities arising at the vertices of $G_{\pi_\phi}$ following the classification preceding Proposition \ref{prop: dictionary} of the vertices of $\phi$. Consider a vertex $V\in V\left(G_{\pi_\phi}\right)$ with $D=d_{\pi_\phi}(V)$. The coloring of the following pictures describes the edges that lie in the same fiber.
    \begin{enumerate}
        \item[(I)] If $\pi_\phi(V)$ is incident to three edges and no legs, then we get the unique possibility for $G_{\pi_\phi}$ around $V$ depicted in Figure \ref{fig: 3valent case}.
        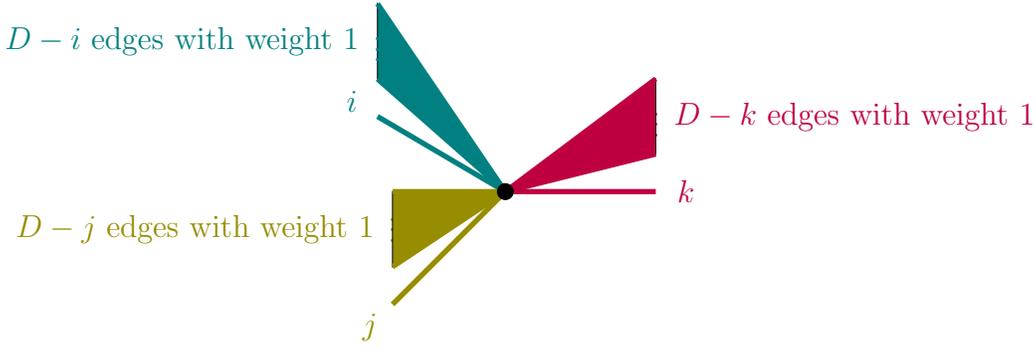
\begin{figure}[ht]
            \centering
            \begin{tikzpicture}
            
                \draw[fill = teal, opacity=0.1,line width=0pt] (0,0)--(-1.7,2.5)--(-1.7,1.5)--(0,0);
                \node[teal] at (-1.7,2) {$\boldsymbol\vdots$};
                \node[teal,anchor=east,align=right] at (-1.8,2) {$D-i$ edges with weight $1$};
                \draw[teal, line width =2pt] (0,0)--(-1.7,2.5);
                \draw[teal, line width =2pt] (0,0)--(-1.7,1.5);
                \draw[teal, line width =2pt] (0,0)--(-1.7,1) node [pos =1.2] {$i$};

                \node[olive] at (-1.5,-0.4) {$\boldsymbol\vdots$};
                \node[olive,anchor=east,align=right] at (-1.6,-0.5) {$D-j$ edges with weight $1$};
                \draw[fill = olive, opacity=0.1,line width=0pt] (0,0)--(-1.5,0)--(-1.5,-1)--(0,0);
                \draw[olive, line width =2pt] (0,0)--(-1.5,0);
                \draw[olive, line width =2pt] (0,0)--(-1.5,-1);
                \draw[olive, line width =2pt] (0,0)--(-1.5,-1.5) node [pos =1.2] {$j$};

                \node[purple] at (2,1) {$\boldsymbol\vdots$};
                \node[purple,anchor=west,align=left] at (2.1,1) {$D-k$ edges with weight $1$};
                \draw[fill = purple, opacity=0.1,line width=0pt] (0,0)--(2,1.5)--(2,0.5)--(0,0);
                \draw[purple, line width =2pt] (0,0)--(2,1.5);
                \draw[purple, line width =2pt] (0,0)--(2,0.5);
                \draw[purple, line width =2pt] (0,0)--(2,0) node [pos =1.2] {$k$};
                \draw[fill = black] (0,0) circle (3pt);
                
            \end{tikzpicture}
            \caption{Unique possiblity of $G_{\pi_\phi}$ around $V$ in case (I)}
            \label{fig: 3valent case}
        \end{figure}
        
        Since the RH number at $V$ is $0$, it follows from Figure \ref{fig: 3valent case} that $i+j+k=2D+1$. If one of $i$, $j$, or $k$ is $1$, then the other two must necessarily be equal to $D$ and therefore $H(V) = \frac{1}{D}$. In this special situation, the combinatorial factor is killed by $\VS(\pi_\phi)$. We now assume that $\min\{i,j,k\}>1$ and observe that the $H(V)$ is the number of permutations  $\sigma_i$, $\sigma_j$, $\sigma_k$ of $S_D$ with cycle types $(i,1^{D-i})$, $(j,1^{D-j})$, and $(k,1^{D-k})$ respectively, that multiply to the identity and generate a transitive group, divided by $D!$. In this case, we observe that since $\langle \sigma_i,\sigma_j\rangle$ must generate a transitive group, then the $i$-cycle of $\sigma_i$ and the $j$-cycle of $\sigma_j$ must intersect in $(i+j-D)$ numbers. Separately, the permutation $\sigma_k$ (and hence $\sigma_i\sigma_j=\sigma_k^{-1}$) has to fix $(D-k)$ numbers. Observe that a number $q\in \{1,\dots,D\}$ is fixed by $\sigma_i\sigma_j$ if and only if the permutations intersect in both $q$ and $\sigma_j(q)$, and contain these in opposite order. Since $D-k = i+j-D-1$, it follows that the cycles of $\sigma_i$ and $\sigma_j$ must consist of $(i+j-D)$ common numbers in opposite order, followed by $(D-j)$ and $(D-i)$ other numbers respectively. There are $\binom{D}{i+j-D}\cdot (i+j-D)!$ different possibilities for these $(i+j-D)$ common numbers because the order matters. Similarly, there are $\binom{D-(i+j-D)}{(D-j)}(D-j)!$ different possibilities arising from the ensuing $(D-j)$ numbers, and there are $\binom{D-(i+j-D)-(D-j)}{(D-i)}(D-i)!=(D-i)!$ different possibilities arising from the remaining $(D-i)$ numbers. In conclusion, we obtain that
        \begin{align*}
            H(V) &= \frac{1}{D!}\cdot \left(\binom{D}{i+j-D}\cdot (i+j-D)!\cdot \binom{D-(i+j-D)}{(D-j)}(D-j)!\cdot (D-i)! \right)\\
            &=\frac{1}{D!}\cdot \left(\frac{D!}{(D-(i+j-D))!}\cdot \frac{(D-(i+j-D))!}{(D-i)!}\cdot (D-i)! \right)\\
            &= 1.
        \end{align*}
        
        \item[(II)] If $\pi_\phi(V)$ is incident to only one leg of $T_t=\trgt\left(\pi_\phi\right)$, then we get the two possibilities depicted in Figures \ref{fig: interesting 2valent case} and \ref{fig: not so interesting 2valent case}. In the second possibility, it is well known that $H(V) = \frac{1}{D}$. Hence, we focus on computing the Hurwitz numbers appearing in the first possibility. Here, we get the additional condition that $D=i+j$. To compute $H(V)$ we compute the number of transpositions $\tau$ and permutations $\sigma$ of cycle type $(i)(j)$ of $S_D$, such that $\sigma\tau$ is a $D$-cycle (the transitivity of $\langle \sigma,\tau\rangle$ is therefore immediate). Assume without loss of generality that $D-i\geq j$. If $i\neq j$, then the number of different possibilities for $\sigma$ is simply  $\frac{D!}{i\cdot (D-i)!}\cdot\frac{(D-i)!}{j\cdot (D-i-j)! } =\frac{D!}{i\cdot j}$. If $i=j$, then we have to divide the previous computation by $2$. Since $\sigma\tau$ has to be a transposition, after fixing $\sigma$ there are only $i\cdot j$ different possibilities for $\tau$ satisfying this condition. In conclusion:
        \begin{itemize}
            \item[(i)] If $i\neq j$, then $H(V)=1$.
            \item[(ii)] If $i=j$, then $H(V)=\frac{1}{2}$.
        \end{itemize}
        
        \begin{figure}[ht]
        \centering
        \begin{minipage}{0.5\textwidth}
            \centering
            \begin{tikzpicture}
                \node[teal] at (-1.7,2) {$\boldsymbol\vdots$};
                \node[teal,anchor=east,align=right] at (-1.8,2) {$D-2$ edges\\with weight $1$};
                \draw[fill = teal, opacity=0.1,line width=0pt] (0,0)--(-1.7,2.5)--(-1.7,1.5)--(0,0);
                \draw[teal, line width =2pt] (0,0)--(-1.7,2.5);
                \draw[teal, line width =2pt] (0,0)--(-1.7,1.5);
                \draw[teal, line width =2pt] (0,0)--(-1.7,1) node [pos =1.2] {$2$};

                \draw[olive, line width =2pt] (0,0)--(-1.5,-1) node [pos =1.2] {$D$};
                
                \draw[purple, line width =2pt] (0,0)--(2,0.5) node [pos=1.2] {$i$};
                \draw[purple, line width =2pt] (0,0)--(2,-0.5) node [pos =1.2] {$j$};
                \draw[fill = black] (0,0) circle (3pt);
                
            \end{tikzpicture}
            \caption{First possiblity of $G_{\pi_\phi}$ around $V$ in case (II)}
            \label{fig: interesting 2valent case} 
            \end{minipage}%
            \begin{minipage}{0.5\textwidth}
            \centering
            \begin{tikzpicture}
                \node[teal] at (-1.7,1.5) {$\boldsymbol\vdots$};
                \node[teal,anchor=east,align=right] at (-1.8,1.5) {$D$ edges\\with weight $1$};\draw[fill = teal, opacity=0.1,line width=0pt] (0,0)--(-1.7,2)--(-1.7,1)--(0,0);
                \draw[teal, line width =2pt] (0,0)--(-1.7,2);
                \draw[teal, line width =2pt] (0,0)--(-1.7,1);

                \draw[olive, line width =2pt] (0,0)--(-1.5,-1) node [pos =1.2] {$D$};
                
                \draw[purple, line width =2pt] (0,0)--(2,0) node [pos=1.2] {$D$};
                
            \end{tikzpicture}
            \caption{Second possibility of $G_{\pi_\phi}$ around $V$ in case (II)}
            \label{fig: not so interesting 2valent case}
            \end{minipage}
        \end{figure}
        \item[(III)] If $\pi_\phi(V)$ is a node, then either $\val(V)=3$ or $\val(V)=4$. This means that we get the two possibilities depicted in Figures \ref{fig: first 1valent case} and \ref{fig: second 1valent case}. In the former we get $H(V)=1$, and in the latter we get $H(V)=\frac{1}{2}$. 
        \begin{figure}[ht]
            \centering
            \begin{minipage}{0.5\textwidth}
            \begin{tikzpicture}
                \draw[teal, line width =2pt] (0,0)--(-1.7,1) node [pos =1.2] {$1$};
                
                \draw[olive, line width =2pt] (0,0)--(-1.5,-1.5) node [pos =1.2] {$1$};
                
                \draw[purple, line width =2pt] (0,0)--(2,0) node [pos =1.2] {$1$};
                \draw[fill = black] (0,0) circle (3pt);
                
            \end{tikzpicture}
            \caption{Possibility of $G_{\pi_\phi}$ around $V$ with $\val(V)=3$}
            \label{fig: first 1valent case}
            \end{minipage}%
            \begin{minipage}{0.5\textwidth}
            \begin{tikzpicture}
                \draw[teal, line width =2pt] (0,0)--(-1.7,1) node [pos =1.2] {$2$};

                \draw[olive, line width =2pt] (0,0)--(-1.5,-1.5) node [pos =1.2] {$2$};

                \draw[purple, line width =2pt] (0,0)--(2,-0.5) node [pos=1.2]{$1$};
                \draw[purple, line width =2pt] (0,0)--(2,0.5) node [pos =1.2] {$1$};
                \draw[fill = black] (0,0) circle (3pt);
                
            \end{tikzpicture}
            \caption{Possibility of $G_{\pi_\phi}$ around $V$ with $\val(V)=4$}
            \label{fig: second 1valent case}
            \end{minipage}
        \end{figure}
    \end{enumerate}
    We use this same classification and Lemma 13.24 of \cite{VargasThesis} to continue with the proof. We observe that if $e\in E(\ft_{J^c}G_{\pi_\phi})$, then the edges of $G_{\pi_\phi}$ that add (Definition \ref{defi: add}) to $e$ form a path of $G_{\pi_\phi}$. In this sense, edges of $\ft_{J^c}G_{\pi_\phi}$ are given by paths of $G_{\pi_\phi}$, and this lemma classifies all the possibilities and explains the contribution of such an edge to $D_\phi$. It consists of $3$ cases, which we now explicit, and, additionally, we also explain the behavior of the contributions of the vertices in the interior of these paths:
    \begin{enumerate}
        \item[($\triangle$)] If the edge comes from a path passing through through a leaf, then it contributes a factor of $1$. In this case the only non-trivial contribution that we see from a vertex in the path is that of the Hurwitz number of the vertex above a node, which gets killed by the corresponding $\mathrm{CF}(\bullet)$. In this case, the $\VS(\pi_\phi)$ does nothing, and the contributions from our multiplicity coincide.
        \item[($\triangle\triangle$)] If the edge comes from a path that passes through vertices of $G_{\pi_\phi}$ whose number $r_\phi$ vanishes, then all edges of $G_{\pi_\phi}$ in this path have the same weight $D$ and the edge given by the path contributes a factor of $D$. It is perhaps a good moment to recall that $r_\phi$ referes to the RH numbers of $\phi$, whereas $r_{\pi_\phi}$ refers to the RH numbers of $\pi_\phi$. Since $\pi_\phi$ is a discrete admissible covers, the latter always vanish. In our terms, the vanishing of the $r_\phi$ corresponds to the following two possibilities for interior vertices of the path (i. e. vertices not in the boundary of the path):
        \begin{itemize}
            \item we are placed in case (I) from our previous classification, where (without loss of generality) $i=1$ and every {\color{teal}teal edge} is expunged after forgetting the marked legs,
            \item we are placed in the second possibility of case (II) from our previous classification, where every {\color{teal}teal edge} actually is a marked leg of $G_{\pi_\phi}$, and hence is expunged after forgetting the marked legs.
        \end{itemize}
        In these cases, the factors $\mathrm{CF}(\bullet)$ (which are all just $D!$) coming from the {\color{teal}edges (or legs)} are killed by the $\VS(\pi_\phi)$. All the edges of $G_{\pi_\phi}$ in this path contribute a factor of $D$, and all but one are killed by the local Hurwitz numbers. So, the contributions coming from our multiplicity coincide.
        \item[($\triangle\triangle\triangle$)] Otherwise the edge comes from two paths of $G_{\pi_\phi}$ joined at a vertex with a local picture as in the first possibility of our case (II) from the previous classification, and each of these paths passes through vertices whose images are vertices with trivial leg valency. Moreover, for each of these paths, the edges have all the same weight, and the difference between the weights from one path and the other is just $1$. In this case, the edges of each path have the same weight, the contribution of this edge to the $D_\phi$ is the product of both weights. In our terms, this means that the vertex where these paths are joined has a local picture as in the first possibility of our case (II) with $i=1$, $j=D-1$, and all the {\color{teal} edges} are actually marked legs of $G_{\pi_\phi}$. Here, the edge with weight $i$ is erased after forgetting the marking, and the same analysis as in the previous case yields that we get the same contribution from each path, and we just have the explain the contribution from the vertex where these paths join. The factor $\mathrm{CF}(\bullet)$ coming from the possible repetitions in the {\color{teal} marked legs} is killed by the $\VS(\pi_\phi)$. The local Hurwitz number is either $1$ and there are no accompanying factors $\mathrm{CF}(\bullet)$, or $\frac{1}{2}$ (in case $D=2$) and the factor $\mathrm{CF}(\bullet)$ kills this contribution. So, in other words, the local contributions coming from our multiplicity coincide. 
    \end{enumerate}
    The previous reasoning shows that the products of the weights actually give rise to the $D_\phi$, taking into account the local Hurwitz numbers.\\
    To finish the proof it is necessary to show that the contribution of the extremal vertices of the paths given by the cases ($\triangle$) to ($\triangle\triangle\triangle$) is trivial or cancels with the what remains of $\VS(\pi_\phi)$. A change-minimal full-rank DT-morphism satisfies the dangling-no-glue condition (Definition 48 and Lemma 53 of loc. cit.), which implies that these extremal vertices are only of the following form:
    \begin{itemize}
        \item The local picture is the unique possibility of case (I), where every edge except the three edges with weights $i$, $j$, and $k$ is expunged after forgetting the marked legs. If $\min\{i,j,k\}>1$, then the local Hurwitz number is $1$ and the factors $\mathrm{CF}(\bullet)$ are cancelled by the remaining factors of $\VS(\pi)$. If (without loss of generality) $i=1$, then the local Hurwitz number is $\frac{1}{D}$ and the combinatorial factors give $D!$, but in this case $\VS(\pi_{\phi})$ is precisely $(D-1)!$ coming from the permutations of the {\color{teal}teal edges} that are erased after forgetting the marking. So, the contribution of the extremal vertices having this form is always $1$.
        
        \item The local picture is the first possibility of case (II), where the {\color{teal} teal edges} are actually marked legs, and none of the {\color{purple} purple edges} are erased after forgetting the marking. If $i\neq j$, then the local Hurwitz number is $1$, and the only factor $\mathrm{CF}(\bullet)$ arises from the {\color{teal} teal marked legs} which actually gets killed by what remains of $\VS(\pi_\phi)$. If $i=j$, then the local Hurwitz number is $\frac{1}{2}$, which gets killed by the factor $\mathrm{CF}(\bullet)$ coming from the {\color{purple} purple edges}, and the factor $\mathrm{CF}(\bullet)$ coming from the {\color{teal}teal marked legs} is, once again, killed by the remainings of $\VS(\pi_\phi)$.    \end{itemize}
        After exhausting all the possibilities, our comprehensive analysis shows that the extremal vertices contribute trivially, and hence the theorem follows.
\end{proof}

\end{document}